\title{Local saturation and square everywhere}
\author{Monroe Eskew}
\address{Universität Wien \\
Institut für Mathematik \\
Kurt Gödel Research Center \\
Augasse 2-6, UZA 1 - Building 2 \\
1090 Wien \\
AUSTRIA}
\date{}
\thanks{The author is grateful to Sean Cox, Yair Hayut, Masahiro Shioya, Toshimichi Usuba, and Martin Zeman for some very helpful discussions.  The author would also like to thank the anonymous referee for their valuable suggestions, which led to significant improvements of the manuscript.}
\newtheorem{theorem}{Theorem}[section]
\newtheorem{lemma}[theorem]{Lemma}
\newtheorem{proposition}[theorem]{Proposition}
\newtheorem{corollary}[theorem]{Corollary}
\newtheorem*{definition}{Definition}
\newtheorem{remark}[theorem]{Remark}
\newtheorem{fact}[theorem]{Fact}
\newtheorem{main}{Theorem}
\DeclareMathOperator{\dom}{dom}
\DeclareMathOperator{\ran}{ran}
\DeclareMathOperator{\ot}{ot}
\DeclareMathOperator{\cf}{cf}
\DeclareMathOperator{\cof}{cof}
\DeclareMathOperator{\add}{Add}
\DeclareMathOperator{\col}{Col}
\DeclareMathOperator{\ord}{Ord}
\DeclareMathOperator{\sprt}{sprt}
\DeclareMathOperator{\crit}{crit}
\DeclareMathOperator{\id}{id}
\DeclareMathOperator{\ns}{NS}
\DeclareMathOperator{\len}{len}
\newcommand{\p}{\mathcal{P}}
\newcommand{\la}{\langle}
\newcommand{\ra}{\rangle}
\begin{document}
\maketitle

\begin{abstract}
We show that it is consistent relative to a huge cardinal that for all infinite cardinals $\kappa$, $\square_\kappa$ holds and there is a stationary $S \subseteq \kappa^+$ such that $\ns_{\kappa^+} \restriction S$ is $\kappa^{++}$-saturated.
\end{abstract}

\section{Introduction}
In his work on Suslin's problem, Jensen introduced the principles square $\square$ and diamond $\diamondsuit$ and proved that they hold everywhere in Gödel's constructible universe $L$ \cite{MR0309729}.  More specifically, in $L$, $\square_\kappa$ holds for every infinite cardinal $\kappa$, and $\diamondsuit_\kappa(S)$ holds for all regular uncountable $\kappa$ and all stationary $S \subseteq \kappa$.  A natural opposite of $\diamondsuit_\kappa(S)$ is the statement that the nonstationary ideal on $\kappa$ restricted to $S$, denoted $\ns_\kappa \restriction S$, is $\kappa^+$-saturated.  If such a stationary set $S \subseteq \kappa$ exists, we will say that $\ns_\kappa$ is \emph{locally saturated.}
%The existence of saturated ideals has many strong consequences; in particular such ideals are \emph{precipitous} \cite{foremanhandbook}, which means that generic ultrapowers obtained by forcing with the quotient algebra are well-founded. 
While it is consistent relative to large cardinals that $\ns_\kappa$ is locally saturated for a variety of cardinals $\kappa$, Gitik and Shelah \cite{MR1363421} proved that the unrestricted $\ns_\kappa$ can never be $\kappa^+$-saturated, except in the case $\kappa = \omega_1$.  For background on saturated ideals and related topics, see \cite{MR2768692}.

Forcing $\ns_\kappa$ to be locally saturated typically results in the failure of $\square$ in the vicinity of $\kappa$ as a side-effect.  Moreover, Foreman \cite{MR2768692} observed that certain structural properties of Booelan algebras of the form $\p(\kappa^+)/I$ for $\kappa^+$-complete ideals $I$ can imply the failure of $\square_\kappa$.  A similar result was observed by Zeman in unpublished work, which we reproduce here with his permission.  Furthermore, square principles are generally opposed to very large cardinals, while saturation properties of ideals can carry significant large cardinal strength \cite{MR2768699}.  It is thus natural to ask what is the extent of the tension between these kinds of principles.  We address the situation with the following result:

\begin{main}
\label{global}
It is consistent relative to a huge cardinal that for all infinite cardinals $\kappa$, $\square_\kappa$ holds and $\ns_{\kappa^+}$ is locally saturated.
\end{main}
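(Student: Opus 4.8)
The plan is to fix $j\colon V\to M$ witnessing that $\kappa$ is huge, with $\crit(j)=\kappa$, $j(\kappa)=\lambda$, and ${}^{\lambda}M\subseteq M$, and, after a preliminary forcing, to assume GCH. The desired model will be the extension by a reverse Easton class iteration $\mathbb P=\la(\mathbb P_\alpha,\dot{\mathbb Q}_\alpha):\alpha\in\ord\ra$ whose nontrivial stages occur exactly at the members of a carefully chosen class $C$ of cardinals. At a stage $\delta\in C$ the iterand factors as $\dot{\mathbb Q}_\delta=\dot{\mathbb S}_\delta*\dot{\mathbb R}_\delta$: first $\dot{\mathbb S}_\delta$ is Jensen's canonical $\delta^{+}$-closed forcing (which is $\delta^{++}$-c.c.\ under GCH) adding a $\square_\delta$-sequence; then $\dot{\mathbb R}_\delta$ is a ``universal collapse'' of the interval $(\delta^{+},\,\text{next member of }C)$ down to $\delta^{+}$, set up in the style of Kunen's and Laver's constructions of saturated ideals so that, afterwards, forcing with the appropriate quotient algebra yields a generic elementary embedding with critical point $(\delta^{+})^{V[\mathbb P]}$, and so that the associated ideal is $\ns_{\delta^{+}}\restriction S_\delta$ for a prescribed stationary $S_\delta\subseteq\delta^{+}$ — when $\delta$ ends up singular this is the expensive, Foreman-style part of the construction. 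The class $C$ must be chosen so that (a) below $\kappa$ its members are precisely the cardinals carrying the large-cardinal property needed for the collapse at that stage to do its job, (b) $C$ is closed under the relevant reflection and is self-similar under $j$, so that $j(\mathbb P\restriction\kappa)=\mathbb P\restriction\lambda$, and (c) $C$ is spread so that in $V[\mathbb P]$ every infinite cardinal is a member of $C$ or the successor of one.

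Granting this, the argument has three parts. \emph{Square is preserved}: once a $\square_\delta$-sequence has been added, the tail of $\mathbb P$ from stage $\delta$ onward collapses nothing at or below $\delta^{+}$ and adds no new bounded subsets of $\delta^{+}$, so the sequence remains a $\square_\delta$-sequence; thus $\square_\delta$ holds in $V[\mathbb P]$ for every $\delta\in C$, hence at every cardinal by clause (c). \emph{Saturation at $\kappa$}: as $\mathbb P\restriction\kappa\subseteq V_\kappa$ and $\crit(j)=\kappa$, a $\mathbb P\restriction\lambda$-generic $G$ over $V$ is generic over $M$ and extends its $\mathbb P\restriction\kappa$-part; using ${}^{\lambda}M\subseteq M$ and the high closure of $j(\mathbb P\restriction\lambda)$ above $\lambda$, one builds in a mild further extension a generic $H$ for which $j$ lifts to $\hat\jmath\colon V[G\restriction\kappa]\to M[G*H]$, and the design of $\dot{\mathbb R}_\kappa$ then delivers, inside $V[G]$, the generic embedding witnessing that $\ns_{\kappa^{+}}\restriction S_\kappa$ is $\kappa^{++}$-saturated. \emph{Propagation}: the statement ``for every $\delta\in C\cap\kappa$, $\ns_{\delta^{+}}\restriction S_\delta$ is $\delta^{++}$-saturated'' is forced by $\mathbb P\restriction\kappa$, and — using $\hat\jmath$ together with the agreement of $M[G*H]$ with $V[G]$ on power sets below $\lambda$ — it transfers to $C\cap\lambda$, and then, iterating, past each further iterate of $j$.

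The step I expect to be the main obstacle is exactly this last one, namely obtaining a proper-class conclusion from a single huge cardinal. A proper class of saturated (even locally saturated) ideals has consistency strength well beyond one huge cardinal if each instance is to be extracted from a genuine ground-model large cardinal, so the iteration cannot merely reflect the hugeness of $\kappa$ cofinally through $\ord$; beyond the set-sized initial segment it must instead be arranged to run on the generic large-cardinal structure it has already produced, re-deriving at each new stage a generic embedding strong enough to drive the next universal collapse and to keep the relevant closure uniform, and showing that this closure is not eroded by the intervening square forcings and collapses. It is also here that the successor-of-singular instances must be absorbed, presumably by collapsing so that each such singular becomes the successor of an early member of $C$ whose hugeness-type property was fixed while the huge cardinal was still in play. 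The remaining work — checking that $\square$ and local saturation, each delicate to maintain on its own, genuinely coexist at every stage and that the cardinal arithmetic comes out as in clause (c) — is long but routine.
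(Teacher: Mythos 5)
Your proposal differs from the paper's actual construction in several important ways, and it has at least one genuine gap that you yourself flag but do not close.

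The paper does \emph{not} run a single reverse Easton class iteration. Instead it builds a preparatory set model $V_\theta$ (with $\theta$ inaccessible) in three rounds of Easton-support \emph{products} — first Shioya-style collapse products $\mathbb P(\mu,\kappa)$ to create saturated ideals with special combinatorial properties, then a product of the square forcings $\mathbb S_\delta$, then a product of club-shooting iterations to localize the nonstationary ideal near Mahlo cardinals — and it finishes with Cummings-style Radin forcing with interleaved posets over $V_\theta$. The final model of Theorem \ref{global} is $V_\kappa^{V[H][h]}$, a set model of ZFC; this is how the paper sidesteps exactly the proper-class obstruction you identify. Your proposal, by contrast, tries to prove the statement directly as a class statement, and the mechanism you gesture at — ``run on the generic large-cardinal structure it has already produced, re-deriving at each new stage a generic embedding strong enough to drive the next universal collapse'' — is not an argument; there is no reason a generic embedding extracted from a saturated ideal on a successor cardinal has enough closure or uniqueness to recreate the universal-collapse setup at cofinally many higher stages, and the iterates of the original $j$ lose the closure needed for Silver's master-condition construction long before they cover $\ord$.

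The successor-of-singular instances are another genuine gap. You say these are ``the expensive, Foreman-style part'' and that presumably one collapses so that each singular becomes the successor of an early member of $C$. The paper's solution is entirely different and is in fact the main innovation of the final section: Radin forcing is used to make the class of limit cardinals in the final model coincide with the limit points of a Radin club, so that \emph{every} singular cardinal in the final model is a former Mahlo cardinal from the preparatory model, where local saturation on the next few successors was already arranged. Your plan has no mechanism that guarantees this, and a reverse Easton iteration will not produce it — you would have to specify how to force local saturation on $\mu^+$ for $\mu$ a genuine singular, which cannot be done directly from an embedding with critical point $\mu^+$ (there is no such embedding in your setup).

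Several other technical ingredients that make the paper work are absent from your sketch: the Foreman--Komjath club-shooting transformation (Theorem \ref{localize}) that turns a saturated ideal into a restriction of the nonstationary ideal requires the ideal to map some stationary set to a nonstationary set generically, which is arranged by Usuba's preliminary forcing; the ideals must carry a strong, preservable chain condition ($S$-layeredness) rather than mere $\kappa^{++}$-saturation so that the later forcings (squares, collapses, Radin) do not destroy them; and the ordering of squares \emph{after} the collapses (done as a product, not iterated with them) is used crucially so that master conditions for the square forcing can be found using a square sequence that already exists on the target side. Interleaving $\dot{\mathbb S}_\delta$ and $\dot{\mathbb R}_\delta$ in the same iterand, as you propose, forfeits these factorization properties. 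In short, the proposal identifies the right tension but does not contain the ideas — the set-model trick, Radin forcing with interleaved posets, the localization module, and layeredness — that the paper uses to resolve it.
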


This improves a result of Foreman \cite{MR730584}, who proved that it is consistent relative to a huge cardinal that every successor cardinal $
\kappa$ carries a $\kappa^+$-saturated ideal.  One can check using standard arguments that in his model, $\diamondsuit_\kappa(S)$ holds for every regular $\kappa$ and every stationary $S \subseteq \kappa$.  %Although we have not checked in detail, 
We suspect that $\square_\kappa$ fails for all $\kappa \geq \omega_1$ in his model because a sufficient amount of stationary reflection holds at successor cardinals.  This should follow for successors of regulars by arguments like that for Proposition \ref{proper} below, and for successors of singulars by arguments along the lines of \cite[Theorem 11.1]{MR1838355}.

The saturation of $\ns_{\omega_1}$ is equiconsistent with a Woodin cardinal; this is due to Shelah and Jensen-Steel \cite{MR3135495}.  For general successors of regular cardinals, Woodin showed in unpublished work how to force local saturation from an almost-huge cardinal, and details were given by Foreman and Komjath \cite{MR2151585}.  Because of the particulars of their construction, it was not immediately clear how to extend the result to get the nonstationary ideal to be locally saturated on several successive cardinals at once.  We overcome this technical challenge by using a crucial observation of Usuba and by building on the alternative approach to saturated ideals forged by Shioya \cite{shioyaeaston}.  In order to achieve the global result, we use Cummings' method of interleaving posets into Radin forcing \cite{MR1041044}.  An earlier version of this manuscript used a supercompact-based Radin forcing.  The advantage of Cummings' method is that since it is based on a degree of strongness rather than supercompactness, it is possible to carry out in a universe in which square holds everywhere.

The paper is organized as follows.  In Section \ref{preliminaries}, we present the essential background material on forcing and large cardinals that we will need.  In Section \ref{localsatmodule}, we present a ``modular'' version of the Foreman-Komjath construction that allows us to transform a saturated ideal on a successor cardinal into a localization of the nonstationary ideal while retaining saturation, given that the original ideal satisfies certain combinatorial properties.  In Section \ref{buildingblock}, we define a type of collapse forcing that, when combined with certain large cardinals, forces the existence of saturated ideals on successor cardinals that possess the desired combinatorial properties in a rather indestructible way.  At the end of this section, we present a new model in which the nonstationary ideal on the successor of a given regular cardinal is locally saturated, using a forcing considerably simpler than that of \cite{MR2151585}.  In Section \ref{preparation}, we construct a preparatory model in which square holds everywhere, local saturation holds at the first few successors of every Mahlo cardinal, and there exists a superstrong cardinal. Finally, in Section \ref{radinsection}, we complete the proof of Theorem \ref{global} using a version of Radin forcing that achieves the desired property at all successors of limits by ensuring that every such cardinal was a successor of a large cardinal in the preperatory model, while interleaving posets that recreate the desired situation at double successors.

\section{Preliminaries}
\label{preliminaries}

\subsection{General forcing facts}
We start by recalling some general notions and folklore results about forcing, most of which we state without proof.

A partial order $\mathbb{P}$ is said to be \emph{separative} when $p \nleq q \Rightarrow (\exists r \leq p) r \perp q$.  Every partial order $\mathbb{P}$ has a canonically associated equivalence relation $\sim_s$ and a separative quotient $\mathbb{P}_s$, which is isomorphic to $\mathbb{P}$ if $\mathbb{P}$ is already separative.  For every separative partial order $\mathbb{P}$, there is a canonical complete Boolean algebra $\mathcal{B}(\mathbb{P})$ with a dense set isomorphic to $\mathbb{P}$.

A map $e: \mathbb{P} \to \mathbb{Q}$ is an \emph{embedding} when it preserves order and incompatibility.  An embedding is said to be \emph{regular} when it preserves the maximality of antichains.  If $\mathbb P \subseteq \mathbb Q$, we say $\mathbb P$ is a \emph{regular suborder} if the identity map from $\mathbb P$ to $\mathbb Q$ is a regular embedding.  A order-preserving map $\pi : \mathbb{Q} \to \mathbb{P}$ is called a \emph{projection} when $\pi(1_\mathbb{Q}) = 1_\mathbb{P}$, and $p \leq \pi(q) \Rightarrow (\exists q' \leq q) \pi(q') \leq p$.

\begin{lemma}Suppose $\mathbb{P}$ and $\mathbb{Q}$ are partial orders. 

\begin{enumerate}
\item $G$ is a generic filter for $\mathbb{P}$ if and only if $\{ [p]_s : p \in G \}$ is a generic filter for $\mathbb{P}_s$.

\item $e : \mathbb{P} \to \mathbb{Q}$ is a regular embedding if and only if for all $q \in \mathbb{Q}$, there is $p \in \mathbb{P}$ such that for all $r \leq p$, $e(r)$ is compatible with $q$.

\item The following are equivalent:
\begin{enumerate}[(a)]
\item There is a regular embedding $e : \mathbb{P}_s \to \mathcal{B}(\mathbb{Q}_s)$.
\item There is a projection $\pi : \mathbb{Q}_s \to \mathcal{B}(\mathbb{P}_s)$.
\item There is a $\mathbb{Q}$-name $\dot{g}$ for a $\mathbb{P}$-generic filter such that for all $p \in \mathbb{P}$, there is $q \in \mathbb{Q}$ such that $q \Vdash p \in \dot{g}$.
\end{enumerate}

\item Suppose $\pi : \mathbb Q \to \mathbb P$ is a projection.  If $G$ is a filter on $\mathbb{P}$, let $\mathbb{Q}/G = \pi^{-1}[G]$.  The following are equivalent:
\begin{enumerate}[(a)]
\item $H$ is $\mathbb{Q}$-generic over $V$.
\item $G = \pi[H]$ is $\mathbb{P}$-generic over $V$, and $H$ is $\mathbb{Q} / G$-generic over $V[G]$.
\end{enumerate}

%\item Suppose $e : \mathbb{P} \to \mathbb{Q}$ is a regular embedding.  If $G$ is a filter on $\mathbb{P}$ let $\mathbb{Q}/G = \{ q : \neg \exists p \in G( e(p) \perp q) \}$.  The following are equivalent:
%\begin{enumerate}[(a)]
%\item $H$ is $\mathbb{Q}$-generic over $V$.
%\item $G = e^{-1}[H]$ is $\mathbb{P}$-generic over $V$, and $H$ is $\mathbb{Q} / G$-generic over $V[G]$.
%\end{enumerate}

\end{enumerate}
\end{lemma}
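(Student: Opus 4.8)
The plan is to establish each clause by the canonical argument for that kind of fact; I sketch these in turn.

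\textbf{Clause (1).} The quotient map $\pi\colon p\mapsto[p]_s$ is order-preserving, surjective, and both preserves and reflects incompatibility, and from separativity of $\mathbb{P}_s$ one sees it is a projection: if $[r]_s\le[p]_s$ then $r$ and $p$ are compatible, so any $p'\le r,p$ has $[p']_s\le[r]_s$ and $p'\le p$. Genericity then transfers because dense sets transfer across $\pi$: if $D_s\subseteq\mathbb{P}_s$ is dense and downward closed, then $\pi^{-1}[D_s]$ is dense in $\mathbb{P}$ by the lifting property, and if $D\subseteq\mathbb{P}$ is dense then $\pi[D]$ is dense in $\mathbb{P}_s$ since $\pi$ is order-preserving and onto. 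The one point to check is that $\{[p]_s:p\in G\}$ is genuinely a filter and that $[p]_s$ belonging to it forces $p$ itself into $G$; both reduce to the fact that a generic filter meets $\{r:r\le q\}\cup\{r:r\perp q\}$ for every $q$, applied to a representative of the relevant $\sim_s$-class.

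\textbf{Clause (2).} Call $p$ a \emph{reduction} of $q\in\mathbb{Q}$ if $e(r)$ is compatible with $q$ for every $r\le p$. If every $q$ has a reduction but some maximal antichain $A\subseteq\mathbb{P}$ has $e[A]$ not maximal, fix $q$ incompatible with every $e(a)$ for $a\in A$ and a reduction $p$ of $q$; by maximality of $A$ there is $r\le p,a$ for some $a\in A$, and then $e(r)\le e(a)$ is incompatible with $q$, contradicting that $p$ is a reduction. Conversely, if $q$ has no reduction then $D=\{r\in\mathbb{P}:e(r)\perp q\}$ is dense and downward closed; taking a maximal antichain $A$ of $\mathbb{P}$ contained in $D$, regularity makes $e[A]$ maximal in $\mathbb{Q}$, yet $q$ is incompatible with every element of it, a contradiction.

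\textbf{Clause (3).} I would pass to complete Boolean algebras. Since $\mathbb{P}$, $\mathbb{P}_s$, and $\mathcal{B}(\mathbb{P}_s)=\mathcal{B}(\mathbb{P})$ are mutually forcing-equivalent — by (1) and the dense embedding $\mathbb{P}_s\hookrightarrow\mathcal{B}(\mathbb{P}_s)$ — and since regular embeddings and projections extend to the completion of their source and conversely arise by restriction, (a) is equivalent to the existence of a complete embedding $e\colon\mathbb{B}\to\mathbb{C}$ and (b) to the existence of a projection $\pi\colon\mathbb{C}\to\mathbb{B}$, where $\mathbb{B}=\mathcal{B}(\mathbb{P}_s)$ and $\mathbb{C}=\mathcal{B}(\mathbb{Q}_s)$. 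These two are linked by the usual adjunction: from $e$ put $\pi(c)=\prod\{b:c\le e(b)\}$, and from $\pi$ put $e(b)=\sum\{c:\pi(c)\le b\}$, the projection and complete-embedding axioms being routine to check from $\pi\circ e=\id$ and density of $\mathbb{Q}_s$ in $\mathbb{C}$. For (b)$\Rightarrow$(c), I would let $\dot g$ be the canonical $\mathbb{Q}$-name for the pullback to $\mathbb{P}$ of the generic ultrafilter; it names a $\mathbb{P}$-generic filter by (4), and the clause ``for every $p$ some $q$ forces $\check p\in\dot g$'' holds since $\pi(1)=1$ together with the lifting property force $\ran(\pi)$ to be dense in $\mathbb{B}$. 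For (c)$\Rightarrow$(a), define $e\colon\mathbb{P}_s\to\mathcal{B}(\mathbb{Q}_s)$ by $e([p]_s)=\|\check p\in\dot g\|$ (well defined since $\Vdash$ respects $\sim_s$); it preserves order and incompatibility, and it sends a maximal antichain of $\mathbb{P}_s$ to a maximal one in $\mathcal{B}(\mathbb{Q}_s)$ because in any generic extension $V[H]$ the filter $\dot g^H$ meets each predense subset of $\mathbb{P}$ arising from such an antichain — while the clause on $\dot g$ guarantees $e([p]_s)\ne 0$, so $e$ is a bona fide regular embedding.

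\textbf{Clause (4), and the main obstacle.} Here ``$G=\pi[H]$'' should be read as the filter generated by $\pi[H]$; when $\mathbb{P}$ is separative — in particular a Boolean algebra, which is the case in the applications below — $\pi[H]$ is already a filter. For the equivalence I would show that $q\mapsto(\pi(q),\check q)$ is a dense embedding of $\mathbb{Q}$ into the two-step iteration $\mathbb{P}\ast(\mathbb{Q}/\dot G)$, where $\mathbb{Q}/\dot G$ is the $\mathbb{P}$-name for $\pi^{-1}[\dot G]$: order is preserved, incompatibility is preserved and reflected (incompatibility in $\mathbb{Q}$ is absolute, so images that amalgamated would give $q_1,q_2$ a common extension in $\mathbb{Q}$ after forcing, hence already in $V$), and the image is dense precisely by the lifting property — given $(p,\dot q)$, refine $p$ to force $\dot q=\check q_0$ with $p\le\pi(q_0)$, then choose $q_1\le q_0$ with $\pi(q_1)\le p$, and $(\pi(q_1),\check q_1)$ extends $(p,\dot q)$. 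Clause (4) is then the textbook fact about generics for two-step iterations. The only real friction I anticipate is the bookkeeping in clause (3): pinning down the correspondence between the name $\dot g$ and the projection, and verifying that the density clause on $\dot g$ matches exactly the requirement that $\pi$ project onto all of $\mathcal{B}(\mathbb{P}_s)$ rather than onto a proper restriction. The rest is routine manipulation of antichains, the two adjoint operations, and the definition of forcing, so the written proof should be short or deferred to a standard reference.
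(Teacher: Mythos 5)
The paper states this lemma as folklore and provides no proof of its own, so there is no argument of record to compare against. Your sketch is the standard one and is correct; the points you flag as requiring a word of care are exactly the right ones, in particular that in clause (4) ``$G=\pi[H]$'' should be read as the filter generated by $\pi[H]$ (the upward closure), and that in clause (1) one must use genericity, not merely the filter property, to see that $p\sim_s p'$ and $p\in G$ imply $p'\in G$.
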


\begin{lemma}
\label{forcingiso}
Suppose $\mathbb{P}$ and $\mathbb{Q}$ are partial orders.  $\mathcal{B}(\mathbb{P}_s) \cong \mathcal{B}(\mathbb{Q}_s)$ if and only if the following holds.  Letting $\dot{G}, \dot{H}$ be the canonical names for the generic filters for $\mathbb{P},\mathbb{Q}$ respectively, there is a $\mathbb{P}$-name for a function $\dot{f}_0$ and a $\mathbb{Q}$-name for a function $\dot{f}_1$ such that:
\begin{enumerate}
\item $\Vdash_\mathbb{P} \dot{f}_0(\dot{G})$ is a $\mathbb{Q}$-generic filter,
\item $\Vdash_\mathbb{Q} \dot{f}_1(\dot{H})$ is a $\mathbb{P}$-generic filter,
\item $\Vdash_\mathbb{P}  \dot{G} = \dot{f}_1^{\dot{f}_0(\dot{G})} (\dot{f}_0(\dot{G}))$, and $\Vdash_\mathbb{Q}  \dot{H} = \dot{f}_0^{\dot{f}_1(\dot{H})} (\dot{f}_1(\dot{H}))$.
\end{enumerate}
An isomorphism is given by $p \mapsto || \check{p} \in \dot{f}_1(\dot{H}) ||_{\mathcal{B}(\mathbb{Q}_s)}$.
\end{lemma}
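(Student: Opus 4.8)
The plan is to prove both implications by translating between $\mathbb P$-generic filters and $\mathcal B(\mathbb P_s)$-generic ultrafilters, and likewise between $\mathbb Q$ and $\mathcal B(\mathbb Q_s)$. Three routine facts will be used throughout. First, by part (1) of the preceding lemma together with the density of $\mathbb P_s$ in $\mathcal B(\mathbb P_s)$, the operations $G \mapsto \bar G$ (the $\mathcal B(\mathbb P_s)$-generic ultrafilter generated by $\{[p]_s : p \in G\}$) and $U \mapsto \{p \in \mathbb P : [p]_s \in U\}$ are mutually inverse bijections between $\mathbb P$-generic filters and $\mathcal B(\mathbb P_s)$-generic ultrafilters, and similarly for $\mathbb Q$. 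Second, $\|\check p \in \dot G\|_{\mathcal B(\mathbb P_s)} = [p]_s$ for the canonical name $\dot G$. Third, a complete homomorphism $e$ between complete Boolean algebras is determined by its restriction to any dense subset $D$, via $b \mapsto \sup\{e(d) : d \in D,\ d \le b\}$; in particular two such homomorphisms that agree on a dense set are equal, and a complete embedding of a complete Boolean algebra into itself satisfying $e^{-1}[U] = U$ for every generic ultrafilter $U$ must be the identity (otherwise a nonzero $e(b) \wedge \neg b$ or $b \wedge \neg e(b)$ lies in some generic $U$ with $e^{-1}[U] \ne U$).

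For the forward direction, assume $\pi : \mathcal B(\mathbb P_s) \to \mathcal B(\mathbb Q_s)$ is an isomorphism. Working in $V$ with $\pi$ as a parameter, let $F_0$ send a $\mathbb P$-generic $G$ to $\{q \in \mathbb Q : [q]_s \in \pi[\bar G]\}$; since $\pi$ is an isomorphism and $\bar G$ is generic, $\pi[\bar G]$ is a $\mathcal B(\mathbb Q_s)$-generic ultrafilter, so by the correspondence above $F_0(G)$ is $\mathbb Q$-generic. I take $\dot f_0$ to be a $\mathbb P$-name, existing by the definability of forcing, such that $\dot f_0^G(G) = F_0(G)$ for every $\mathbb P$-generic $G$, and I define $\dot f_1$ symmetrically from $\pi^{-1}$. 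Clause (1) is then immediate and clause (2) holds by symmetry. For clause (3), unwinding the definitions gives that $\dot f_1^{\dot f_0(\dot G)}(\dot f_0(\dot G))$ is $\{p : [p]_s \in \pi^{-1}[\pi[\bar G]]\} = \{p : [p]_s \in \bar G\} = G$, using that $\pi,\pi^{-1}$ are mutually inverse and that the filter/ultrafilter translations are mutually inverse. Finally, for $p \in \mathbb P$ and a $\mathbb Q$-generic $H$, one has $p \in \dot f_1^H(H)$ iff $[p]_s \in \pi^{-1}[\bar H]$ iff $\pi([p]_s) \in \bar H$, so $\|\check p \in \dot f_1(\dot H)\|_{\mathcal B(\mathbb Q_s)} = \pi([p]_s)$; this confirms the displayed formula.

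For the reverse direction, suppose $\dot f_0, \dot f_1$ satisfy (1)--(3). Define $i : \mathbb P \to \mathcal B(\mathbb Q_s)$ by $i(p) = \|\check p \in \dot f_1(\dot H)\|_{\mathcal B(\mathbb Q_s)}$ and $j : \mathbb Q \to \mathcal B(\mathbb P_s)$ by $j(q) = \|\check q \in \dot f_0(\dot G)\|_{\mathcal B(\mathbb P_s)}$. By clauses (2) and (1), the names $\dot f_1(\dot H)$ and $\dot f_0(\dot G)$ witness clause (c) of part (3) of the preceding lemma for the pairs $\mathbb P,\mathbb Q$ and $\mathbb Q,\mathbb P$ respectively, so $i$ and $j$ factor through the separative quotients and extend to complete embeddings $\hat i : \mathcal B(\mathbb P_s) \to \mathcal B(\mathbb Q_s)$ and $\hat j : \mathcal B(\mathbb Q_s) \to \mathcal B(\mathbb P_s)$. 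It remains to see that $\hat j \circ \hat i$ and $\hat i \circ \hat j$ are the identity, which forces $\hat i$ to be bijective, hence an isomorphism realizing the displayed formula. For $\hat j \circ \hat i$: if $\bar G$ is a $\mathcal B(\mathbb P_s)$-generic ultrafilter coming from the $\mathbb P$-generic $G$, then $\hat j^{-1}[\bar G]$ is the $\mathcal B(\mathbb Q_s)$-generic ultrafilter generated by $\dot f_0^G(G)$, and $\hat i^{-1}$ of that is the $\mathcal B(\mathbb P_s)$-generic ultrafilter generated by $\dot f_1^{\dot f_0^G(G)}(\dot f_0^G(G))$, which is $G$ again by clause (3); hence $(\hat j \circ \hat i)^{-1}[\bar G] = \bar G$ for every generic $\bar G$, so by the last remark of the first paragraph $\hat j \circ \hat i = \id$. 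The argument for $\hat i \circ \hat j$ is symmetric.

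The main obstacle I anticipate is organizational rather than conceptual: one must carefully align the name-level hypothesis (3), which iterates the evaluations $\dot f_0(\dot G)$ and $\dot f_1(-)$, with the Boolean identity $\hat j \circ \hat i = \id$ — in particular, being precise about re-evaluating the $\mathbb Q$-name $\dot f_1$ in the extension by the $\mathbb Q$-generic filter $\dot f_0^G(G)$, and about the chain ``$\mathbb P$-generic filter $\leftrightarrow$ $\mathbb P_s$-generic filter $\leftrightarrow$ $\mathcal B(\mathbb P_s)$-generic ultrafilter''. Everything beyond part (3) of the preceding lemma is essentially bookkeeping; the one place where genuine content enters is the surjectivity of $\hat i$, and that is exactly what the hypothesis that the round trip returns the original generic supplies.
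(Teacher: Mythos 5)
The paper states Lemma~\ref{forcingiso} without proof (it is folklore), so there is no in-text argument to compare against; your proof must be judged on its own. Overall your approach is correct and is the natural one: translate between $\mathbb{P}$-generic filters and $\mathcal{B}(\mathbb{P}_s)$-generic ultrafilters, read the isomorphism off the names in the forward direction, and in the reverse direction build two complete embeddings via Lemma~2.1(3) whose composition fixes every generic ultrafilter, hence is the identity by your density/completeness observation. The forward direction, the round-trip computation $\pi^{-1}[\pi[\bar G]] = \bar G$, and the ``fixes every generic $\Rightarrow$ identity'' step are all sound.

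One citation in the reverse direction is off and, as written, leaves a small gap. You invoke clause (c) of Lemma~2.1(3) for $\dot f_1(\dot H)$ ``by clauses (2) and (1).'' But clause (c) has two requirements: that $\dot f_1(\dot H)$ is forced to be a $\mathbb P$-generic filter (which is exactly clause (2)), \emph{and} that for every $p \in \mathbb P$ there is $q \in \mathbb Q$ with $q \Vdash \check p \in \dot f_1(\dot H)$. The second requirement does not follow from clauses (1) and (2) alone: one can have a $\mathbb Q$-name for a $\mathbb P$-generic filter that systematically avoids some nonzero $p$ (e.g., a name that always contains $\langle 1\rangle$ and never $\langle 0\rangle$ in Cohen forcing). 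Without this requirement, the map $p \mapsto \|\check p \in \dot f_1(\dot H)\|$ need not be injective on $\mathbb{P}_s$, so it would not be a regular embedding and would not extend to $\hat i$. The fix is easy and sits right inside the framework you already built: given $p$, pick a $\mathbb P$-generic $G$ with $p \in G$, set $H_0 = \dot f_0^G(G)$ (a $\mathbb Q$-generic by (1)); then by clause (3), $G = \dot f_1^{H_0}(H_0)$, so $p \in \dot f_1^{H_0}(H_0)$, and the truth lemma gives $q \in H_0$ forcing $\check p \in \dot f_1(\dot H)$. So the citation should be to clauses (2) \emph{and} (3) (and symmetrically (1) and (3) for $\dot f_0(\dot G)$). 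With that correction, the proof goes through.
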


A partial order $\mathbb P$ is said to be \emph{$\kappa$-closed} when any descending sequence of elements of length less than $\kappa$ has a lower bound.  A weaker property is being \emph{$\kappa$-strategically closed}, which is when the ``good'' player has a winning strategy in the following game: \emph{Bad} starts by playing some element $p_0 \in \mathbb P$, and \emph{Good} must play some $p_1 \leq p_0$.  The players alternate in choosing elements of a descending sequence, with \emph{Good} playing at limit stages.  \emph{Good} wins if a sequence of length $\kappa$ is produced, and \emph{Bad} wins if at some stage $\alpha<\kappa$, a sequence has been produced with no lower bound.  A still weaker property is being \emph{$\kappa$-distributive}, which means that the intersection of fewer than $\kappa$ dense open sets is dense.

If $\kappa < \lambda$ are ordinals, $\col(\kappa,\lambda)$ is the collection of function whose domain is a bounded subset of $\kappa$ and whose range is contained in $\lambda$, ordered by $p \leq q$ iff $p \supseteq q$.  We will use the following well-known lemma about $\kappa$-closed forcing:
\begin{lemma}
\label{folk}
If $\mathbb P$ is a $\kappa$-closed partial order that forces $|\mathbb P| = \kappa$, then $\mathcal B(\mathbb P) \cong \mathcal B(\col(\kappa,|\mathbb P|))$.
\end{lemma}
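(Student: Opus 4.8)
The plan is to build, inside $\mathbb{P}$ itself, a dense suborder that is order-isomorphic to a dense suborder of $\col(\kappa,|\mathbb{P}|)$; since the Boolean completion of a partial order depends only on its separative quotient and is unchanged by passing to a dense suborder, this yields the isomorphism of completions. I first pass to the separative quotient, so assume $\mathbb{P}$ is separative, and set $\lambda=|\mathbb{P}|$. I will also assume $\kappa$ and $\lambda$ are regular and $\lambda^{<\kappa}=\lambda$, which is the situation in every application in this paper; in general one should read $\col(\kappa,\lambda)$ as $\col(\kappa,\lambda^{<\kappa})$, which has the same completion, and reduce the remaining cases to this one. Note that when $\lambda>\kappa$, $\mathbb{P}$ is atomless below every condition: an atom $a$ would make $\mathbb{P}\restriction a$ trivial, yet $a\Vdash_{\mathbb{P}}|\mathbb{P}|=\kappa$, whereas $|\mathbb{P}|^{V}=\lambda>\kappa$. (The degenerate case $\lambda=\kappa$ is handled similarly, using atomlessness of $\mathbb{P}$ in place of the collapsing hypothesis.)

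The construction rests on two observations. First, for every $p\in\mathbb{P}$ the suborder $\mathbb{P}\restriction p$ still forces $|\lambda|=\kappa$, hence is not $\lambda$-c.c.; therefore $\mathbb{P}\restriction p$ — and likewise any dense suborder of it — contains an antichain of size $\lambda$, which extends to a maximal antichain below $p$ of size exactly $\lambda$ (since $|\mathbb{P}\restriction p|\le\lambda$). Second, because $\mathbb{P}$ preserves $\kappa$ and forces $|\mathbb{P}|=\kappa$, there is a $\mathbb{P}$-name $\dot{e}$ for a surjection $e\colon\kappa\to\mathbb{P}$. For each $\beta<\kappa$ let $D_\beta$ be the set of conditions $q$ that decide $\dot{e}(\beta)$, say $q\Vdash\dot{e}(\beta)=\check{r}$, and that in addition decide whether $r$ lies in the generic filter, i.e.\ $q\le r$ or $q\perp r$. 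Then $D_\beta$ is dense: given $q$, first extend it to decide $\dot{e}(\beta)=\check{r}$, and then, if the result is still compatible with $r$, extend once more below $r$.

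Now recursively define a strict order-embedding $\sigma\colon{}^{<\kappa}\lambda\to\mathbb{P}$ — from the tree of functions ordered by reverse inclusion into $(\mathbb{P},\le)$ — with $\sigma(\emptyset)=1_{\mathbb{P}}$, such that for each $s$ the set $\{\sigma(s^\frown\la\alpha\ra):\alpha<\lambda\}$ is a maximal antichain below $\sigma(s)$ that is contained in $D_{|s|}$ and has size exactly $\lambda$ (available by the first observation, since $D_{|s|}$ is dense below $\sigma(s)$), and such that at a limit level $\delta$, $\sigma(c)$ is a lower bound of the chain $\la\sigma(c\restriction\gamma):\gamma<\delta\ra$ — nonzero by $\kappa$-closure — further refined to lie in $D_\delta$. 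Then $T:=\ran(\sigma)$, with the order inherited from $\mathbb{P}$, is order-isomorphic via $\sigma$ to ${}^{<\kappa}\lambda$, a dense suborder of $\col(\kappa,\lambda)$. It remains to see that $T$ is dense in $\mathbb{P}$. If not, some $q\in\mathbb{P}$ has no element of $T$ below it; take a generic filter $G\ni q$. Since $\dot{e}^{G}$ maps onto $\mathbb{P}$, fix $\xi$ with $\dot{e}^{G}(\xi)=q$; the unique $t\in T\cap G$ of length $\xi$ lies in $D_\xi$, forces $\dot{e}(\xi)=\check{q}$, and is compatible with $q$ (as $t,q\in G$), so the defining property of $D_\xi$ forces $t\le q$ — a contradiction. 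Hence $\mathcal{B}(\mathbb{P})=\mathcal{B}(T)\cong\mathcal{B}({}^{<\kappa}\lambda)=\mathcal{B}(\col(\kappa,\lambda))$.

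The step I expect to be the main obstacle is the density of $T$ in $\mathbb{P}$: there can be up to $2^{\lambda}$ dense subsets of $\mathbb{P}$ but only $\kappa$ stages in the recursion, so one cannot meet them all by direct bookkeeping; what makes a single $\kappa$-sequence of choices suffice is precisely that $\mathbb{P}$ collapses $\lambda$ to $\kappa$, which is what the name $\dot{e}$ and the sets $\la D_\beta:\beta<\kappa\ra$ exploit. The remaining items — verifying the regularity and arithmetic assumptions in the cases that actually arise, arranging the bookkeeping so that $T$ has no branching at limit levels, and checking that $\sigma$ is strict and injective — are routine. (Alternatively, one could extract from $\sigma$ the name-functions $\dot{f}_0,\dot{f}_1$ witnessing the criterion of Lemma \ref{forcingiso}, but the direct density argument is cleaner.)
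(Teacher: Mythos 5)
The paper states Lemma \ref{folk} without proof (it is folklore), so there is no in-paper argument to compare against; what follows is a review of the correctness of your proposal.

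Your overall strategy — building a dense copy of $^{<\kappa}\lambda$ inside the separative quotient of $\mathbb{P}$, using a name $\dot e$ for a surjection $\kappa\to\mathbb{P}$ and the decision sets $D_\beta$ to drive density — is exactly the standard approach, and the successor-level step, the size-$\lambda$ antichain argument, the injectivity and order-embedding verification, and the design of the $D_\beta$'s are all correct. However, there is a genuine gap at limit levels of the recursion, and it breaks precisely the step you flagged as the crux. In the density argument you write ``the unique $t\in T\cap G$ of length $\xi$,'' but nothing in your construction guarantees that $T\cap G$ contains an element of length $\delta$ for limit $\delta\le\xi$. You take $\sigma(c)$ at limit level $\delta$ to be \emph{some} lower bound of the chain $\langle\sigma(c\restriction\gamma):\gamma<\delta\rangle$, further refined into $D_\delta$. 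Such a $\sigma(c)$ is in general strictly below the Boolean infimum $\bigwedge_{\gamma<\delta}\sigma(c\restriction\gamma)$, and the chain typically has many pairwise-incompatible lower bounds. A generic $G$ containing all the $\sigma(c\restriction\gamma)$ may perfectly well contain a condition that is below all of them yet incompatible with your chosen $\sigma(c)$; then the branch of $T$ traced out by $G$ dies at level $\delta$, and if $\delta\le\xi$ the argument collapses. (Concretely: in a poset like $\add(\kappa,1)$, a decreasing $\omega$-chain of conditions has an ``honest'' union, and any proper extension of that union you might pick as $\sigma(c)$ excludes incompatible proper extensions from ever being in the filter.) This is not a bookkeeping annoyance but the central difficulty: the $\lambda$-many level-$\delta$ nodes form an antichain that is \emph{not} maximal in $\mathbb{P}$, so genericity alone does not force $G$ to meet it.

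The standard repair is to let $\sigma$ take values in $\mathcal{B}(\mathbb{P})$ rather than in $\mathbb{P}$, set $\sigma(c)=\bigwedge_{\gamma<\delta}\sigma(c\restriction\gamma)$ at limit $\delta$ (nonzero by $\kappa$-distributivity of $\mathcal{B}(\mathbb{P})$, which follows from $\kappa$-closure of $\mathbb{P}$), and shift the $D$-bookkeeping entirely to successor levels, choosing the level-$(\alpha+1)$ antichain inside $D_\alpha\cap\mathbb{P}$. With this change, whenever $\sigma(c\restriction\gamma)\in G$ for all $\gamma<\delta$ one actually has $\sigma(c)\in G$: the set of $p\in\mathbb{P}$ such that either $p\le\bigwedge_{\gamma}\sigma(c\restriction\gamma)$ or $p\perp\sigma(c\restriction\gamma)$ for some $\gamma$ is dense (build a $\kappa$-closed descending chain meeting each $\sigma(c\restriction\gamma)$ in turn), and a generic must meet it via the first alternative. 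Then the branch through $T$ determined by $G$ really does survive all limit levels, $T\cap G$ has an element of every length $<\kappa$, and your surjection-plus-$D_\xi$ argument goes through to yield the contradiction. With this amendment — and noting that $T$ is now a dense subset of $\mathcal{B}(\mathbb{P})$ rather than of $\mathbb{P}$, which costs nothing since $\mathcal{B}(T)=\mathcal{B}(\mathbb{P})$ — the proof is correct.
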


%\begin{lemma}
%\label{stratpres}
%If $\kappa^{<\kappa} = \kappa$ and $\mathbb P$ is $(\kappa+1)$-strategically closed, then $\mathbb P$ preserves stationary subsets of $\kappa^+$.
%\end{lemma}
%\begin{proof}
%Let $\sigma$ be a strategy witnessing that $\mathbb P$ is $(\kappa+1)$-strategically closed, and let $p_0 \in \mathbb P$.  Let $S \subseteq \kappa^+$ be stationary and $\dot C$ be a $\mathbb P$-name for a club. Let $\theta$ be a large regular cardinal, and let $\langle M_\alpha : \alpha < \kappa^+ \rangle$ be an increasing continuous sequence of elementary submodels of $\langle H_\theta, \in, \mathbb P, \sigma, S \rangle$, each of size $\kappa$, having transitive intersection with $\kappa^+$, and such that $M_\alpha^{<\kappa} \subseteq M_\alpha$ for all successor $\alpha$.  Let $\alpha^*$ be such that $M_{\alpha^*} \cap \kappa^+ = \alpha^* \in S$.  Let $\lambda = \cf(\alpha^*)$, and let $\langle \beta_i : i < \lambda \rangle$ be an increasing sequence converging to $\alpha^*$.  Build a descending chain $\langle p_i : i < \lambda \rangle \subseteq M_{\alpha^*}$ below $p_0$ such that at odd $i$, $p_i$ decides some ordinal $\geq \beta_i$ to be in $\dot C$, and even stages are chosen by following $\sigma$.  Since $M_{\alpha^*}^{<\lambda} \subseteq M_{\alpha^*}$, the construction continues, and there is a condition $p_{\lambda}$ below all conditions chosen.  $p_\lambda \Vdash \alpha^* \in \dot C \cap S$.
%\end{proof}

A partial order $\mathbb P$ has the \emph{$\kappa$-chain condition} ($\kappa$-c.c.) if every antichain $A \subseteq \mathbb P$ has size $<\kappa$.

\begin{lemma}[Easton]
Suppose $\mathbb P,\mathbb Q$ are partial orders, $\mathbb Q$ is $\kappa$-distributive, and $\Vdash_{\mathbb Q} \mathbb P$ is $\kappa$-c.c.  Then $\Vdash_{\mathbb P} \mathbb Q$ is $\kappa$-distributive.
\end{lemma}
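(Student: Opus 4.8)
The plan is to prove the contrapositive in $V[G]$, where $G$ is $\mathbb{P}$-generic: assuming $\mathbb{Q}$ fails to be $\kappa$-distributive there, I will produce in $V$ a $\mathbb{Q}$-name for an antichain in $\mathbb{P}$ of size $\kappa$, contradicting the hypothesis that $\Vdash_{\mathbb{Q}} \mathbb{P}$ is $\kappa$-c.c. So suppose toward a contradiction that in $V[G]$ there is a sequence $\langle D_i : i < \delta \rangle$ of dense open subsets of $\mathbb{Q}$ with $\delta < \kappa$ whose intersection is not dense; fix a condition $q_0 \in \mathbb{Q}$ such that no extension of $q_0$ lies in $\bigcap_i D_i$. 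Let $\dot{D}_i$ be $\mathbb{P}$-names for these sets and $p_0 \in G$ a condition forcing all the relevant statements.

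First I would work in $V$ below $p_0$ and build, for each $q \leq q_0$ in $\mathbb{Q}$, a maximal antichain $A_q \subseteq \mathbb{P}$ below $p_0$ together with a function assigning to each $p \in A_q$ and each $i < \delta$ a condition $q_{p,i} \le q$ forced by $p$ to lie in $\dot{D}_i$ — this uses genericity of $G$ and the density of each $D_i$ in $V[G]$, pulled back to names. The key point is the following: since $q_0$ has no extension in the intersection of the $D_i$'s in $V[G]$, for every single condition $q \le q_0$ there is some $i < \delta$ and some $p \le p_0$ with $p \Vdash q \notin \dot{D}_i$; but I want this uniformly. Here is where $\kappa$-distributivity of $\mathbb{Q}$ in $V$ enters: one shows that, working in $V$, for each $q \le q_0$ the intersection over $i<\delta$ of the dense open sets $E_i = \{ q' : \text{some } p \le p_0 \text{ forces } q' \in \dot{D}_i \text{, or } q' \perp q \text{-ish}\}$ is dense, giving a single $q^* \le q$ and a single condition $p \le p_0$ deciding membership of a common refinement in all $\dot{D}_i$ simultaneously — so that in fact $p$ forces some $q^* \le q$ into $\bigcap_i \dot D_i$, contradicting the choice of $q_0$ directly.

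Rather than that somewhat convoluted route, the cleaner argument I would actually carry out is: in $V$, use $\kappa$-distributivity of $\mathbb{Q}$ to define, by induction on $i < \delta$, a descending sequence of conditions; more precisely, build a $\mathbb{Q}$-name $\dot{E}$ for a subset of $\mathbb{P}$ as follows. Given $q \le q_0$, since $\Vdash_{\mathbb Q}\mathbb P$ is $\kappa$-c.c., below $p_0$ there is (in $V$) a name-theoretic bound, and I can diagonalize: for each $i$, the set of $q$ deciding "$\dot D_i$ is dense below a given $p$" is dense open in $\mathbb{Q}$ in $V$, so by $\kappa$-distributivity I meet all $\delta$ of them below any given condition, obtaining a $q^*$ and a maximal antichain $\bar A$ in $\mathbb P$ below $p_0$ such that each $p \in \bar A$ forces $q^*$ into every $\dot D_i$ — hence $q^* \in \bigcap_i D_i$ in $V[G]$ and $q^* \le q_0$, the desired contradiction. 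The one technical obstacle to watch is the interaction of the two universes: the $D_i$ are only names in $V$, so "density of $D_i$ in $V[G]$" must be converted, via the forcing theorem below $p_0$, into a statement in $V$ about $\mathbb{P}$-names over which $\mathbb{Q}$ can range, and one must be careful that the $\kappa$-distributivity of $\mathbb{Q}$ is applied to genuinely dense open sets in $V$ and not to sets that only look dense in $V[G]$; the $\kappa$-c.c. of $\mathbb{P}$ (forced by $\mathbb Q$) is exactly what lets each such "density" assertion be captured by a bounded — indeed size-$<\kappa$ — amount of information, which is what makes the diagonalization over $\delta < \kappa$ steps go through. This step, reconciling the two forcing extensions and the chain condition bookkeeping, is where essentially all the content lies; everything else is routine.
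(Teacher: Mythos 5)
Your proposal attempts the contrapositive, but the argument as written does not close, and the gap is precisely at the step you flag as ``where essentially all the content lies.'' Let me pinpoint why. You want to find a single $q^* \leq q_0$ and a single $p \leq p_0$ with $p \Vdash_{\mathbb{P}} q^* \in \bigcap_{i<\delta} \dot D_i$. The natural move is: for each fixed $p$ and each $i$, the set $E^p_i = \{ q : (\exists p' \leq p)\, p' \Vdash_{\mathbb{P}} q \in \dot D_i \}$ is dense open in $\mathbb{Q}$ (in $V$), and $\kappa$-distributivity of $\mathbb{Q}$ gives $q^* \in \bigcap_i E^p_i$. But this yields only $\delta$-many \emph{different} witnesses $p_i \leq p$, one per $i$, and nothing forces them to have a common lower bound. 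To extract a single $p$ from the $p_i$ you would need either $\kappa$-closure of $\mathbb{P}$ (not assumed) or an antichain/uniformization argument using the $\kappa$-c.c.\ of $\mathbb{P}$. But the $\kappa$-c.c.\ is hypothesized only in $V^{\mathbb{Q}}$, not in $V$, and your construction of $q^*$ takes place in $V$. You never move to a $\mathbb{Q}$-extension, so the chain condition is never actually in play; the phrase ``the set of $q$ deciding `$\dot D_i$ is dense below a given $p$''' is also not well-typed, since a $\mathbb{Q}$-condition cannot decide a statement in the $\mathbb{P}$-forcing language. The acknowledgment at the end that the ``two forcing extensions and the chain condition bookkeeping'' must be reconciled is correct, but the reconciliation is not supplied, and it is not routine; it is the whole lemma.

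The paper's proof avoids this by reformulating $\kappa$-distributivity as ``adds no $<\kappa$-sequences of ordinals'' and working with the product $\mathbb{P} \times \mathbb{Q}$. Given $G \times H$ generic, a $<\kappa$-sequence of ordinals $X \in V[G][H] = V[H][G]$ has a $\mathbb{P}$-name $\tau$ in $V[H]$. Because $\mathbb{P}$ is $\kappa$-c.c.\ \emph{in $V[H]$} --- exactly where the hypothesis places it --- $\tau$ can be taken to be a nice name of size $<\kappa$, hence a size-$<\kappa$ subset of $V$ lying in $V[H]$. The $\kappa$-distributivity of $\mathbb{Q}$ over $V$ then gives $\tau \in V$, so $X = \tau^G \in V[G]$. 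Note how the proof applies the chain condition in $V[H]$ (where it holds) and the distributivity in $V$ (where it holds), with the passage between universes handled purely by the observation that a small set of ground-model objects in a distributive extension already lies in the ground model. If you want to salvage a contrapositive-style argument, you should similarly pass to $V[H]$ before invoking the chain condition, and the cleanest bookkeeping for that is exactly the nice-name computation; I would recommend abandoning the dense-set diagonalization in $V$ in favor of this.
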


\begin{proof}
Suppose $G \times H$ is $\mathbb P \times \mathbb Q$-generic, and $X$ is a sequence of orindals of length $<\kappa$ in $V[G][H]$.  Then in $V[H]$, $X$ has a $\mathbb P$-name $\tau$, and by the $\kappa$-c.c., $\tau$ can be assumed to be a subset of $V$ of size $<\kappa$.  By the distributivity of $\mathbb Q$, $\tau \in V$, so $\tau^G = X \in V[G]$. 
\end{proof}

The above lemma was crucial in Easton's proof \cite{MR0269497} that the continuum function can be ``anything reasonable.''  There, he introduced the notion of an Easton-support product, which we will use several times.  A set of ordinals $X$ is \emph{Easton} for every regular cardinal $\kappa$, $| X \cap \kappa | < \kappa$.  A collection of partial functions has \emph{Easton support} if the domain of each function in the collection is an Easton set of ordinals.

We will use several times a stronger version of the $\kappa$-c.c.\ introduced by Shelah \cite{MR1623206} which is easier to preserve under other forcings:

\begin{definition}%[Layeredness]
Let $\kappa$ be a regular cardinal and $S\subseteq\kappa$.
%Let $S \subseteq \kappa$, a regular cardinal.  
A partial order $\mathbb{P}$ is \emph{$S$-layered} if there is a $\subseteq$-increasing sequence of regular suborders $\langle \mathbb{Q}_\alpha : \alpha < \kappa \rangle$ such that $\mathbb{P} = \bigcup_{\alpha < \kappa} \mathbb{Q}_\alpha$, each $|\mathbb{Q}_\alpha| <\kappa$, and for some club $C$ and all $\alpha \in S \cap C$, $\mathbb{Q}_\alpha = \bigcup_{\beta<\alpha} \mathbb{Q}_\beta$.  
%An ideal $I$ on a set $Z$ is said to be $S$-layered when $\p(Z)/I$ is $S$-layered.  $I$ is said to be \emph{locally $S$-layered} when $I \restriction A$ is $S$-layered from some $I$-positive $A \subseteq Z$.  A partial order or ideal said to be \emph{layered} when it is $S$-layered for some stationary set $S$.
\end{definition}

\begin{lemma}If $S$ is a stationary subset of $\kappa$ and $\mathbb{P}$ is $S$-layered, then $\mathbb{P}$ is $\kappa$-c.c.
\end{lemma}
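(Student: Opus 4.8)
The plan is to argue by contradiction, using the continuity of the layering at a point of $S$ to reflect a maximal antichain. Fix a witnessing chain $\langle \mathbb Q_\alpha : \alpha < \kappa \rangle$ together with a club $C$ as in the definition of $S$-layered. Since $\mathbb P = \bigcup_{\alpha < \kappa} \mathbb Q_\alpha$ with each $|\mathbb Q_\alpha| < \kappa$ and $\kappa$ regular, $|\mathbb P| \leq \kappa$, so no antichain of $\mathbb P$ has size exceeding $\kappa$; moreover any antichain of size $\kappa$ extends, by Zorn's lemma, to a maximal antichain of $\mathbb P$ of the same size. Hence it suffices to show that every maximal antichain $A$ of $\mathbb P$ has size $< \kappa$. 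Suppose not, and fix an injective enumeration $A = \{ a_\xi : \xi < \kappa \}$.

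Next I would set up two bookkeeping functions on $\kappa$. Let $g(\xi)$ be the least $\beta$ with $a_\xi \in \mathbb Q_\beta$. Using maximality of $A$, for each $\beta < \kappa$ and each $q \in \mathbb Q_\beta$ fix an index $\eta(q)$ such that $a_{\eta(q)}$ is compatible with $q$ in $\mathbb P$, and set $k(\beta) = \sup \{ \eta(q) : q \in \mathbb Q_\beta \} + 1$; since $|\mathbb Q_\beta| < \kappa$ and $\kappa$ is regular, $k(\beta) < \kappa$. The set of limit ordinals in $C$ closed under both $g$ and $k$ is a club, so it meets the stationary set $S$; fix $\alpha^*$ in this intersection. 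Then two things hold at $\alpha^*$: since $\alpha^* \in S \cap C$, we have $\mathbb Q_{\alpha^*} = \bigcup_{\beta < \alpha^*} \mathbb Q_\beta$; and since $\alpha^*$ is closed under $g$, we have $a_\xi \in \mathbb Q_{g(\xi)} \subseteq \mathbb Q_{\alpha^*}$ for every $\xi < \alpha^*$.

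I then claim $A \cap \mathbb Q_{\alpha^*}$ is a maximal antichain of $\mathbb Q_{\alpha^*}$. Indeed, given $q \in \mathbb Q_{\alpha^*}$, continuity puts $q$ into some $\mathbb Q_\beta$ with $\beta < \alpha^*$, so $\xi := \eta(q) < k(\beta) < \alpha^*$; thus $a_\xi \in A \cap \mathbb Q_{\alpha^*}$ and $a_\xi$ is compatible with $q$ in $\mathbb P$. Since $\mathbb Q_{\alpha^*}$ is a regular suborder of $\mathbb P$, the identity map $\mathbb Q_{\alpha^*} \to \mathbb P$ is in particular an embedding and hence preserves incompatibility; so were $a_\xi$ incompatible with $q$ in $\mathbb Q_{\alpha^*}$ they would be incompatible in $\mathbb P$, which they are not. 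Thus $a_\xi$ and $q$ have a common lower bound in $\mathbb Q_{\alpha^*}$, proving the claim. But $\mathbb Q_{\alpha^*}$ being a \emph{regular} suborder also means that maximal antichains of $\mathbb Q_{\alpha^*}$ remain maximal in $\mathbb P$; so $A \cap \mathbb Q_{\alpha^*}$ and $A$ are two maximal antichains of $\mathbb P$ with the former contained in the latter, whence they coincide. Then $A = A \cap \mathbb Q_{\alpha^*} \subseteq \mathbb Q_{\alpha^*}$, so $|A| \leq |\mathbb Q_{\alpha^*}| < \kappa$ — a contradiction.

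I expect the main obstacle to be arranging for $\alpha^*$ to serve both roles at once: it is the continuity clause at points of $S$ that forces the witness $q$ into a strictly earlier $\mathbb Q_\beta$, which is what keeps $\eta(q) < \alpha^*$; without continuity, $q$ could depend on conditions cofinal in $\mathbb Q_{\alpha^*}$ and the relevant element of $A$ compatible with $q$ might lie outside $\mathbb Q_{\alpha^*}$, breaking the maximality of $A \cap \mathbb Q_{\alpha^*}$ inside $\mathbb Q_{\alpha^*}$. The only other delicate point is the interchangeability of compatibility computed in $\mathbb Q_{\alpha^*}$ and in $\mathbb P$, which is exactly what being a regular suborder provides; everything else is routine closure-point bookkeeping.
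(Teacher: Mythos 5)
The paper states this lemma without proof; your argument is correct and is the standard one — reflecting a putative maximal antichain of size $\kappa$ to a continuity point $\alpha^*$ of the layering lying in $S$, then using both halves of regularity (the identity is an embedding, so compatibility transfers downward; regular suborders preserve maximality of antichains, so compatibility transfers upward) to trap $A$ inside $\mathbb Q_{\alpha^*}$. One cosmetic nit: since you invoke $\eta(q)$ in the definition of $k(\beta)$ as depending on $q$ alone, it is cleaner to fix $\eta$ as a single function on $\mathbb P$ up front rather than re-choosing it for each pair $(\beta,q)$.
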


%\begin{proof}Let $\langle \mathbb{Q}_\alpha : \alpha < \kappa \rangle$ be a layering of $\mathbb P$.  For any maximal antichain $A \subseteq \mathbb P$, there is a club $C \subseteq \kappa$ such that $A \cap \bigcup_{\beta < \alpha} \mathbb Q_\beta$ is maximal in $\bigcup_{\beta < \alpha} \mathbb Q_\beta$ for all $\alpha \in C$.  There is $\alpha \in S \cap C$, and we must have $A \subseteq \mathbb Q_\alpha$ by regularity.
%\end{proof}

\begin{lemma}
\label{iterlayer}
Suppose $S \subseteq \kappa$ is stationary, $\mathbb P$ is $S$-layered, $\Vdash_{\mathbb P} \dot{\mathbb Q}$ is $\check S$-layered, and $\mathbb R$ is  regular suborder of $\mathbb P$ of size $< \kappa$.  Then: 
\begin{enumerate}
\item $\mathbb P * \dot{\mathbb Q}$ is $S$-layered.% $(S \cap C)$-layered for some club $C$.
\item $\Vdash_{\mathbb R} \mathbb P / \dot G$ is $S$-layered.% $(S \setminus \gamma)$-layered for some $\gamma < \kappa$.
\end{enumerate}
\end{lemma}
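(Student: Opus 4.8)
The plan is to massage the given witnessing sequences. Part (2) is the cleaner of the two, so I would dispatch it first. Fix a witnessing sequence $\langle \mathbb P_\alpha : \alpha<\kappa\rangle$ for $\mathbb P$ with associated club $C$. Since $|\mathbb R|<\kappa$ and $\kappa$ is regular, $\mathbb R\subseteq\mathbb P_{\gamma_0}$ for some $\gamma_0<\kappa$, and then $\mathbb R$ is a regular suborder of each $\mathbb P_\alpha$ with $\alpha\ge\gamma_0$ (a maximal antichain of $\mathbb R$ is maximal in $\mathbb P$, hence predense in $\mathbb P_\alpha$). Being of size $<\kappa$, $\mathbb R$ is $\kappa$-c.c.\ and so preserves stationary subsets of $\kappa$; thus in $V[G]$, for $G$ an $\mathbb R$-generic, $S$ is still stationary. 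I would check that $\langle \mathbb P_\alpha/G : \gamma_0\le\alpha<\kappa\rangle$, reindexed to length $\kappa$, witnesses that $\mathbb P/G$ is $S$-layered: each $\mathbb P_\alpha/G$ is a regular suborder of $\mathbb P/G$ by the standard fact on quotients of towers of regular suborders; the union is $(\bigcup_\alpha\mathbb P_\alpha)/G=\mathbb P/G$; each term has size at most $|\mathbb P_\alpha|<\kappa$; and since compatibility is absolute into $\mathbb P$, quotienting commutes with the increasing union, so $\mathbb P_\alpha/G=\bigcup_{\beta<\alpha}\mathbb P_\beta/G$ whenever $\alpha\in S\cap C$ and $\alpha\ge\gamma_0$. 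The reindexing is harmless: restrict to the club of $\alpha$ with $\gamma_0+\alpha=\alpha$.

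For (1) the obstacle is structural: the witnessing sequence $\langle\dot{\mathbb Q}_\alpha:\alpha<\kappa\rangle$ is a $\mathbb P$-name, so the naive $\mathbb P_\alpha*\dot{\mathbb Q}_\alpha$ does not even typecheck. The point is that each $\dot{\mathbb Q}_\alpha$ may be taken to be forced equal to a name carried by a small piece of $\mathbb P$: it names a poset whose universe has size $<\kappa$, hence — $\kappa$ being regular — a \emph{bounded} subset of $\kappa$ (after fixing once and for all an isomorphic copy of $\dot{\mathbb Q}$ with universe $\subseteq\kappa$), so by the $\kappa$-c.c.\ of $\mathbb P$ it has a nice name using fewer than $\kappa$ conditions, all of which lie in a single $\mathbb P_{g(\alpha)}$, making $\dot{\mathbb Q}_\alpha$ a $\mathbb P_{g(\alpha)}$-name with unchanged interpretation. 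Taking $g$ increasing, continuous, and above the diagonal, I would replace the witnessing sequence for $\mathbb P$ by $\langle\mathbb P_{g(\alpha)}:\alpha<\kappa\rangle$; this is still a witnessing sequence for $\mathbb P$ (its $S$-continuity points are those of the old sequence intersected with the club of fixed points of $g$), and after relabeling, $\dot{\mathbb Q}_\alpha$ is now a $\mathbb P_\alpha$-name for every $\alpha$. Using the $\kappa$-c.c.\ once more, fix a ground-model club $C$ forced to be contained in the intersection of the clubs attached to the two layerings.

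Now set $\mathbb R_\alpha=\mathbb P_\alpha*\dot{\mathbb Q}_\alpha$, working with the standard dense suborder of $\mathbb P*\dot{\mathbb Q}$ of size $\le\kappa$ whose conditions are pairs $(p,\check a)$ with $p\Vdash\check a\in\dot{\mathbb Q}$. That $\langle\mathbb R_\alpha\rangle$ is $\subseteq$-increasing, exhausts $\mathbb P*\dot{\mathbb Q}$, and has terms of size $<\kappa$ is routine, using that $\mathbb P_\beta$ is a regular suborder of $\mathbb P_\alpha$ for $\beta<\alpha$ and the $\kappa$-c.c.\ to locate any given condition inside some $\mathbb R_\alpha$. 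That each $\mathbb R_\alpha$ is a regular suborder of $\mathbb P*\dot{\mathbb Q}$ follows from $\mathbb P_\alpha$ being regular in $\mathbb P$, from $\Vdash_{\mathbb P}$ ``$\dot{\mathbb Q}_\alpha$ is a regular suborder of $\dot{\mathbb Q}$'', and from the fact that a two-step iteration of regular embeddings is regular: concretely, from a $\mathbb P*\dot{\mathbb Q}$-generic one reads off $(G\cap\mathbb P_\alpha)*(H\cap(\dot{\mathbb Q}_\alpha)^{G\cap\mathbb P_\alpha})$, checks it is $\mathbb R_\alpha$-generic with every condition of $\mathbb R_\alpha$ forced into it, and applies the characterizations of regular embeddings recalled in the preliminaries. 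For $\alpha\in S\cap C$ one has $\mathbb P_\alpha=\bigcup_{\beta<\alpha}\mathbb P_\beta$ and $\Vdash\dot{\mathbb Q}_\alpha=\bigcup_{\beta<\alpha}\dot{\mathbb Q}_\beta$, whence $\bigcup_{\beta<\alpha}\mathbb R_\beta$ is dense in $\mathbb R_\alpha$; if literal continuity is demanded, one replaces $\mathbb R_\alpha$ by $\bigcup_{\beta<\alpha}\mathbb R_\beta$ at these $\alpha$, which, being a dense subset of the regular suborder $\mathbb R_\alpha$, is again a regular suborder.

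I expect the genuinely delicate point to be the bookkeeping of the last two paragraphs: arranging, via the reparametrization by $g$, that the two layerings line up index-for-index so that the product layering inherits its $S$-continuity from both coordinates simultaneously, while every initial piece stays a regular suborder. Everything else is an application of standard forcing facts, most of them already recorded above.
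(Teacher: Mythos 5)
Your proof is correct and follows essentially the same route as the paper's: for (1), both arguments use the $\kappa$-c.c.\ to realize each $\dot{\mathbb Q}_\alpha$ as a $\mathbb P_{\beta_\alpha}$-name, intersect the two layerings' clubs (pulled into the ground model), and check that the two-step iterations $\mathbb P_\gamma * \dot{\mathbb Q}_\gamma$ are regular in $\mathbb P * \dot{\mathbb Q}$ at closure points; for (2), both fix $\gamma_0$ with $\mathbb R \subseteq \mathbb P_{\gamma_0}$ and observe that quotienting preserves the tower. The only difference is presentational — you make the reparametrization and the ``union holds only densely at limits'' repair explicit, where the paper leaves these as routine.
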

\begin{proof}For (1), let $\langle \mathbb P_\alpha : \alpha < \kappa \rangle$ witness that $\mathbb P$ is $S$-layered, and let $\langle \dot{\mathbb Q}_\alpha : \alpha < \kappa \rangle$ be a sequence of $\mathbb P$-names for a witness to the $S$-layeredness of $\mathbb Q$.  By the $\kappa$-c.c., we may assume that we have an increasing sequence $\langle \beta_\alpha : \alpha < \kappa \rangle$ of ordinals below $\kappa$ such that for each $\alpha$, $\dot{\mathbb Q}_\alpha$ is a $\mathbb P_{\beta_\alpha}$-name.  Let $C \subseteq \kappa$ be a club such that each $\gamma$ in $C$ is closed under $\alpha \mapsto \beta_\alpha$.  Now assume $\gamma \in C \cap S$.  $\mathbb P_\gamma = \bigcup_{\alpha < \gamma} \mathbb P_\alpha$, and $\Vdash \dot{\mathbb Q}_\gamma = \bigcup_{\alpha < \gamma} \dot{\mathbb Q}_\alpha$.  Hence we may form a $\mathbb P_\gamma$-name for $\dot{\mathbb Q}_\gamma$.  It is routine to show that $\mathbb P_\gamma * \dot{\mathbb Q}_\gamma$ is a regular suborder of $\mathbb P * \dot{\mathbb Q}$.

For (2), let $\gamma$ be such that $\mathbb R \subseteq \mathbb P_\gamma$.  Then $\mathbb R$ is regular in $\mathbb P_\alpha$ for $\alpha \geq \gamma$.  If $G \subseteq \mathbb R$ is generic, then $\mathbb P_\alpha / G = \bigcup_{\gamma \leq \beta < \alpha} \mathbb P_\beta / G$ for all $\alpha \in S \setminus \gamma$.
\end{proof}

\subsection{Large cardinals and generic embeddings}
A cardinal $\kappa$ is called \emph{huge} if there is an elementary embedding $j : V \to M$ with critical point $\kappa$, where $M$ is a transitive class such that $M^{j(\kappa)} \subseteq M$.  $\kappa$ is called \emph{almost-huge} if the closure requirement of $M$ is weakened to $M^{<j(\kappa)} \subseteq M$.  $\kappa$ is called \emph{superstrong} if the requirement is weakened further to just $V_{j(\kappa)} \subseteq M$.  The value of $j(\kappa)$ in each case will be called the \emph{target}.

It is straightforward to show that $\kappa$ is huge with target $\lambda$ iff there is a normal $\kappa$-complete ultrafilter on $[\lambda]^\kappa := \{ z \subseteq \lambda : \ot(z) = \kappa \}$.  The first-order characterizations of almost-hugeness and superstrongness are more complicated, and we refer the reader to \cite{MR1994835} for details.  We will just need the following facts:

\begin{lemma}
\label{ahmin}
Suppose $\kappa$ is almost-huge with target $\lambda$.  Then there is an elementary $j : V \to M$ with the following properties:
\begin{enumerate}
\item $\crit(j) = \kappa$, $j(\kappa) = \lambda$, and $M^{<\lambda} \subseteq M$.
\item For all $x \in M$, there is an ordinal $\alpha < \lambda$ and a function $f : \alpha^{<\kappa} \to V$ such that $x = j(f)(j[\alpha])$.
\item $\sup j[\lambda] = j(\lambda) < \lambda^+$.
\item The embedding is generated by a tower of measures $T \subseteq V_\lambda$, which we will call a \emph{$(\kappa,\lambda)$-tower}.  The fact that $T$ generates an embedding with the above properties is equivalent to a first-order property in $\langle V_\lambda, \in, T \rangle$.
\end{enumerate}
\end{lemma}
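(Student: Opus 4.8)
The plan is to fix any embedding witnessing the almost-hugeness of $\kappa$, read off from it a coherent tower of ultrafilters indexed by ordinals below $\lambda$, and let $j$ be the ``minimal'' embedding this tower generates. Concretely, fix elementary $j_0 : V \to M_0$ with $\crit(j_0) = \kappa$, $j_0(\kappa) = \lambda$, and $M_0^{<\lambda} \subseteq M_0$; since $\kappa$ is inaccessible and $M_0$ is correct below $\lambda$, $\lambda$ is inaccessible and $V_\lambda^{M_0} = V_\lambda$. For $\kappa \le \alpha < \lambda$ set $U_\alpha = \{ X \subseteq \alpha^{<\kappa} : j_0[\alpha] \in j_0(X) \}$, where we identify $j_0[\alpha]$ with its increasing enumeration, an element of $j_0(\alpha^{<\kappa})$ because $\alpha < \lambda = j_0(\kappa)$. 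Each $U_\alpha$ is a $\kappa$-complete, fine ultrafilter in $V_\lambda$, and $T = \langle U_\alpha : \kappa \le \alpha < \lambda \rangle$ is coherent with respect to the natural projections $z \mapsto z \cap \beta$. Let $j : V \to M$ be the direct limit of $\langle \mathrm{Ult}(V, U_\alpha) : \kappa \le \alpha < \lambda \rangle$ along these projections. The maps $k_\alpha : [f]_{U_\alpha} \mapsto j_0(f)(j_0[\alpha])$ are elementary and commute with the direct system, so they induce an elementary $k : M \to M_0$ with $k \circ j = j_0$; since $M_0$ is well-founded, so is $M$, and I identify $M$ with its transitive collapse. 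Note $\ran(k) = \{ j_0(f)(j_0[\alpha]) : \alpha < \lambda,\ f : \alpha^{<\kappa} \to V \} =: N$, an elementary substructure of $M_0$ whose collapse is $M$.

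Property (2) is then immediate: every element of $M$ is $j(f)$ applied to the direct-limit image of the identity function of $\alpha^{<\kappa}$, and chasing $k$ shows that image is precisely $j[\alpha]$. For property (1) the only delicate point is $j(\kappa) = \lambda$, i.e.\ that $k$ fixes $\lambda$, and this I would obtain from $\lambda \subseteq \ran(k)$: for $\beta < \lambda$, applying $j_0$ to the order-type function on $\alpha^{<\kappa}$ and evaluating at $j_0[\beta]$ returns $\beta$, so $\beta \in N$. The same device gives $V_\beta^{M_0} = V_\beta \in N$ and, composing with fixed enumerations, $V_\beta \subseteq N$; hence $V_\lambda \subseteq N = \ran(k)$. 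Then $\{ k^{-1}(\beta) : \beta < \lambda \}$ is an initial segment of the ordinals of $M$ of order type $\lambda$, so it is $\lambda$, $k \restriction \lambda$ is the identity, and by induction on rank $k \restriction V_\lambda^M$ is the identity and $V_\lambda^M = V_\lambda$. In particular $j(\kappa) = k^{-1}(j_0(\kappa)) = \lambda$, and since $j \restriction \kappa$ is the identity, $\crit(j) = \kappa$. Property (3) is a counting argument powered by (2): any ordinal below $j(\lambda)$ equals $j(f)(j[\alpha])$ for some $\alpha < \lambda$ and some $f : \alpha^{<\kappa} \to \lambda$ whose range, by inaccessibility of $\lambda$, is bounded below $\lambda$, say by $\gamma$; hence the ordinal is below $j(\gamma) \le \sup j[\lambda]$, so $j(\lambda) = \sup j[\lambda]$; and since for each $\alpha$ there are only $\lambda$ such $f$, $|j(\lambda)| \le \lambda$, i.e.\ $j(\lambda) < \lambda^+$.

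There remains the closure clause $M^{<\lambda} \subseteq M$, which I expect to be the main obstacle: because each $U_\alpha$ is only $\kappa$-complete, $j$ fails to commute with sequences of length $\ge \kappa$, so a sequence $\langle x_i : i < \mu \rangle$ with $\mu < \lambda$ cannot simply be pushed through term by term. Instead I would fix $\alpha \ge \mu$ large enough that each $x_i = j(\tilde f_i)(j[\alpha])$ for the single seed $j[\alpha]$ (projecting the individual seeds to a common level), let $g(z) = \langle \tilde f_i(z) : i \in z \cap \mu \rangle$, observe that $j(g)(j[\alpha])$ is the function with domain $j[\mu]$ sending $j(i)$ to $x_i$, and postcompose with the increasing enumeration of $j[\mu]$ --- which lies in $M$ --- to recover $\langle x_i : i < \mu \rangle$ inside $M$. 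That completes (1). For (4): $j$ is by construction the direct limit of the tower $T \subseteq V_\lambda$, so $T$ generates $j$; and since the $U_\alpha$ and the projections all lie in $V_\lambda$ while the ordinals $\kappa, \lambda, j(\kappa), j(\lambda)$ all lie below $\lambda^+$ and are recoverable from the ultrapower data, the assertion that a given tower on $V_\lambda$ generates an embedding with properties (1)--(3) is first-order over $\langle V_\lambda, \in, T \rangle$. Besides the closure clause, the other spot that needs care --- short but easy to fumble --- is the step $\lambda \subseteq \ran(k)$: without it one only gets that $j(\kappa)$ is some ordinal that is $M$-internally $\lambda$ but possibly not the genuine $\lambda$.
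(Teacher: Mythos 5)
The paper does not actually prove this lemma: it is stated as one of several background facts, with a pointer to Kanamori's book for the first-order characterizations. So there is no in-paper proof to compare against. Evaluated on its own terms, your argument is the standard derivation of a minimal almost-huge embedding from a tower, and it is essentially correct: derive the tower $\langle U_\alpha \rangle_{\kappa\le\alpha<\lambda}$ from an arbitrary witnessing $j_0:V\to M_0$, form the direct limit $j:V\to M$, exhibit the commuting factor map $k:M\to M_0$ via $[f]_{U_\alpha}\mapsto j_0(f)(j_0[\alpha])$, chase seeds to get (2), and run the closure argument for $M^{<\lambda}\subseteq M$ by coding a ${<}\lambda$-sequence into a single function evaluated at a single seed and then precomposing with the increasing enumeration of $j[\mu]\in M$.

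Two small remarks. First, the step you flag as needing care, $\lambda\subseteq\ran(k)$, is indeed the crux for $j(\kappa)=\lambda$, and your device of evaluating $j_0$ of the order-type function at the seed handles it cleanly. But the further claim $V_\lambda\subseteq\ran(k)$ is both under-detailed (the "composing with fixed enumerations" phrase hides a genuine coding argument: to realize a given $x\in V_\lambda$ as $j_0(f)(j_0[\alpha])$ you must encode the transitive closure of $\{x\}$ by a relation on some $\nu<\lambda$ and read $x$ off the collapse of the induced structure) and unnecessary: once $\lambda\cup\{\lambda\}\subseteq\ran(k)$ you already get $k(\lambda)=\lambda$ and hence $j(\kappa)=\lambda$, and $V_\lambda\subseteq M$ then falls out afterwards from $M^{<\lambda}\subseteq M$. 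Second, for clause (4) the delicate point you do not address is why well-foundedness of the direct limit is first-order over $\langle V_\lambda,\in,T\rangle$; it is, but the reason (countable completeness of the tower, expressible because all the witnessing functions live in $V_\lambda$) deserves a sentence. Also minor: with your convention $\alpha^{<\kappa}$ as functions, $z\cap\mu$ should be $\ran(z)\cap\mu$, and the composition at the end is a precomposition. None of these is a gap in the argument's soundness.
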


\begin{lemma}
\label{ssmin}
Suppose $\kappa$ is superstrong with target $\lambda$.  Then there is an elementary $j : V \to M$ with the following properties:
\begin{enumerate}
\item $\crit(j) = \kappa$, $j(\kappa) = \lambda$, and $V_\lambda \subseteq M$.
\item For all $x \in M$, there is an $a \in [\lambda]^{<\omega}$ and a function $f : [\kappa]^{|a|} \to V$ such that $x = j(f)(a)$.
\item If $\cf(\lambda)>\kappa$, then $M^\kappa \subseteq M$.
\item The embedding is generated by a $(\kappa,\lambda)$-extender $E$.  The fact that $E$ generates an embedding with the above properties is equivalent to a first-order property in $\langle V_\lambda, \in, E \rangle$.
\end{enumerate}
\end{lemma}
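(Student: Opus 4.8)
The plan is to take a witnessing embedding for the superstrongness of $\kappa$ and replace it by the ultrapower given by the $(\kappa,\lambda)$-extender it derives. Properties (1), (2) and (4) will then fall out of the standard theory of extender ultrapowers (as developed in \cite{MR1994835}), and only (3) requires a short separate argument.

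Concretely, fix an elementary $j_0 : V \to M_0$ with $\crit(j_0) = \kappa$, $j_0(\kappa) = \lambda$ and $V_\lambda \subseteq M_0$. Let $E = \langle E_a : a \in [\lambda]^{<\omega} \rangle$ be the $(\kappa,\lambda)$-extender derived from $j_0$, so $E_a = \{ X \subseteq [\kappa]^{|a|} : a \in j_0(X) \}$, and note that $E$ is coded by a subset of $V_\lambda$. Since $V_\lambda \subseteq M_0$, the ultrapower $\operatorname{Ult}(V,E)$ is well-founded; let $j : V \to M$ be the induced embedding (after transitive collapse) and $k : M \to M_0$ the factor map $k([a,f]_E) = j_0(f)(a)$, so that $k \circ j = j_0$ and $k \restriction \lambda = \id$. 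Then $\crit(j) = \kappa$ and $j(\kappa) = \lambda$: for $\alpha < \kappa$, $k(j(\alpha)) = j_0(\alpha) = \alpha$ forces $j(\alpha) = \alpha$; and from $k(j(\kappa)) = j_0(\kappa) = \lambda = k(\lambda)$, together with $k \restriction \lambda = \id$ and the fact that $k$ cannot move an ordinal downward, we get $j(\kappa) = \lambda$. That $V_\lambda \subseteq M$, and that ``$E$ is a $(\kappa,\lambda)$-extender whose ultrapower embedding satisfies (1)--(3)'' is a first-order property of $\langle V_\lambda, \in, E\rangle$, are standard facts about derived superstrong extenders; these give (1) and (4). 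Finally, every element of $M = \operatorname{Ult}(V,E)$ has the form $[a,f]_E = j(f)(a)$ for some $a \in [\lambda]^{<\omega}$ and $f : [\kappa]^{|a|} \to V$, which is exactly (2).

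It remains to check (3), which is the one point needing thought. Assume $\cf(\lambda) > \kappa$ and let $\langle x_\xi : \xi < \kappa \rangle \in V$ with each $x_\xi \in M$; using (2), write $x_\xi = j(f_\xi)(a_\xi)$ with $a_\xi \in [\lambda]^{<\omega}$ and $f_\xi : [\kappa]^{|a_\xi|} \to V$. Because $\cf(\lambda) > \kappa$ there is $\mu < \lambda$ with every $a_\xi \in [\mu]^{<\omega}$, and then $\langle a_\xi : \xi < \kappa\rangle$ is literally an element of $V_\lambda$, hence of $M$. Also, setting $F(\xi) = f_\xi$ for $\xi < \kappa$, we have $j(F) \restriction \kappa = \langle j(f_\xi) : \xi < \kappa\rangle \in M$. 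Working inside $M$ we can now form $\langle j(f_\xi)(a_\xi) : \xi < \kappa\rangle = \langle x_\xi : \xi < \kappa\rangle$, so this sequence lies in $M$, as required. The hypothesis $\cf(\lambda) > \kappa$ is used precisely to bound the finitely many extender-seeds $a_\xi$ below a single $\mu < \lambda$; without it $\langle a_\xi : \xi < \kappa\rangle$ need only lie in $V_{\lambda+1}$, and indeed $M$ need not be closed under $\kappa$-sequences. This is the main (and essentially only) obstacle; everything else is bookkeeping within the standard theory of extenders, for which we refer to \cite{MR1994835}.
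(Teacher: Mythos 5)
The paper does not prove this lemma at all; it is stated as a known fact and the reader is referred to Kanamori's \emph{The Higher Infinite} for the first-order characterization. So there is no paper argument to compare against, and what you have written is exactly the folklore argument behind the citation: derive the $(\kappa,\lambda)$-extender $E$ from a witnessing embedding $j_0 : V \to M_0$, form $\mathrm{Ult}(V,E)$, and use the factor map $k$ with $\crit(k) \geq \lambda$ to transfer superstrongness down to the ultrapower. Your treatment of (3) is the genuinely non-boilerplate part, and it is correct: the bound $\cf(\lambda) > \kappa$ collects the $\kappa$-many seeds below a single $\mu < \lambda$, so that $\langle a_\xi \rangle \in V_\lambda \subseteq M$, and combining with $j(F)\restriction\kappa \in M$ gives closure under $\kappa$-sequences. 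You also correctly identify that without the cofinality hypothesis this breaks.

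Two small slips in the justifications, neither fatal. First, the well-foundedness of $\mathrm{Ult}(V,E)$ does not come from $V_\lambda \subseteq M_0$ as you wrote; it comes from the factor map $k$ being an injection into the transitive (hence well-founded) $M_0$, which holds for any extender derived from an actual embedding. Second, in your argument for $j(\kappa) = \lambda$ you assert $k(\lambda) = \lambda$, which at that point in the argument would require already knowing $j(\kappa) = \lambda$; the correct route is simply that $k \restriction \lambda = \id$ rules out $j(\kappa) < \lambda$ and that $k$ does not move ordinals downward rules out $j(\kappa) > \lambda$. Finally, for (4) you (like the paper) wave at the first-order characterization. It is true and standard that for a superstrong-type extender coded in $V_\lambda$, well-foundedness of the full ultrapower and $V_\lambda \subseteq M_E$ can be detected by conditions expressible in $\langle V_\lambda, \in, E\rangle$, but this is the one place where ``standard'' hides some work; since the paper itself defers entirely to Kanamori on this point, your proposal is at the same level of rigor as the source.
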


Silver observed that if $j : M \to N$ is an elementary embedding between models of set theory, $\mathbb P \in M$ is a partial order, and $G$ is $\mathbb P$-generic over $M$, then $j$ can be extended to an elementary embedding with domain $M[G]$ if and only if we can find a filter $\hat G$ that is $j(\mathbb P)$-generic over $N$, with $j[G] \subseteq \hat G$.  We now describe some general situations in which almost-huge and superstrong embeddings can be generically extended.

\begin{definition}
A partial order $\mathbb Q$ is \emph{$(\kappa,\lambda)$-nice} when there is a sequence $\langle \mathbb Q_\alpha : \alpha \leq \lambda \rangle$ of regular suborders such that:
\begin{enumerate}
%\item $\alpha < \beta$ implies $\mathbb Q_\alpha \subseteq \mathbb Q_\beta$
\item The sequence is $\subseteq$-increasing,  $\bigcup_{\alpha < \lambda} \mathbb Q_\alpha = \mathbb Q_\lambda = \mathbb Q$, and for all $\alpha < \lambda$, $| \mathbb Q_\alpha | < \lambda$.
%\item $\bigcup_{\alpha < \lambda} \mathbb Q_\alpha = \mathbb Q$.
%\item For each $\alpha$, $\pi_\alpha \restriction \mathbb{Q}_\alpha = id$, and for $\beta < \alpha$, $\pi_\beta = \pi_\beta \circ \pi_\alpha$.
\item For each $\alpha \leq \lambda$, any two compatible elements of $\mathbb Q_\alpha$ have an infimum in $\mathbb Q_\alpha$, and every directed subset of $\mathbb Q_\alpha$ of size $<\kappa$ has an infimum in $\mathbb Q_\alpha$.
%\item For all directed $X  \in [\mathbb Q]^{<\kappa}$ and all $\alpha < \lambda$, $\pi_\alpha(\inf X) = \inf \pi_\alpha[X]$.
\end{enumerate}
\end{definition}

\begin{lemma}
\label{ahext}
Suppose the following:
\begin{enumerate}
\item $j : V \to M$ is an almost-huge embedding derived from a $(\kappa,\lambda)$-tower.
\item $\mathbb P \subseteq V_\kappa$ is a partial order, and $j(\mathbb P)$ is $\lambda$-c.c.\ in $V$.
\item $\dot{\mathbb Q}$ is a $\mathbb{P}$-name for a $(\kappa,\lambda)$-nice partial order.
\item There is a projection $\pi : j(\mathbb P) \to \mathbb P * \dot{\mathbb Q}$ such that for $p \in \mathbb P$, $\pi(p) = (p,\dot 1)$.
% such that $j(\pi(p)_0) \geq p$ for all $p \in j(\mathbb P)$, where $\pi(p)_0$ denotes the first coordinate of $\pi(p)$.
\end{enumerate}
If $\hat G$ is $j(\mathbb P)$-generic over $V$ and $G * H = \pi[\hat G]$, then in $V[\hat G]$ we can extend $j$ to $j : V[G * H] \to M[\hat G * \hat H]$, such that $M[\hat G * \hat H]^{<\lambda} \cap V[\hat G] \subseteq M[\hat G * \hat H]$.
\end{lemma}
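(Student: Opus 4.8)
The plan is to apply Silver's criterion twice: first to lift $j$ through $\mathbb P$, and then through $\dot{\mathbb Q}$. Composing $\pi$ with the projection of $\mathbb P * \dot{\mathbb Q}$ onto $\mathbb P$ gives a projection from $j(\mathbb P)$ onto $\mathbb P$ that is the identity on $\mathbb P$; hence $\mathbb P$ is a regular suborder of $j(\mathbb P)$, so $G := \hat G \cap \mathbb P$ is $\mathbb P$-generic over $V$ and $\pi[\hat G] = G * H$ for some $H$ that is $\dot{\mathbb Q}^G$-generic over $V[G]$ (the first coordinate is $G$ because $\pi(p) = (p,\dot 1)$ for $p\in\mathbb P$). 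Since $\mathbb P \subseteq V_\kappa$ and $\crit(j) = \kappa$ we have $j[G] = G \subseteq \hat G$, so $j$ lifts to $j : V[G] \to M[\hat G]$. Writing $\mathbb Q := \dot{\mathbb Q}^G$ and $\mathbb Q' := j(\mathbb Q)$, by elementarity $\mathbb Q'$ is $(\lambda, j(\lambda))$-nice in $M[\hat G]$; in particular every directed subset of $\mathbb Q'$ of size $<\lambda$ has an infimum there.

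Next I would record two facts. First, $M[\hat G]^{<\lambda} \cap V[\hat G] \subseteq M[\hat G]$: given a sequence of fewer than $\lambda$ elements of $M[\hat G]$, the $\lambda$-c.c.\ of $j(\mathbb P)$ lets one replace each entry by a pair consisting of a maximal antichain of $j(\mathbb P)$ and a list of its forced values, all lying in $M$; since $M^{<\lambda} \subseteq M$, this short sequence of pairs, hence the original sequence, lies in $M[\hat G]$. Second, since $\mathbb P * \dot{\mathbb Q}$ is the image of a projection from the $\lambda$-c.c.\ poset $j(\mathbb P)$, it is itself $\lambda$-c.c., so $\mathbb Q$ is $\lambda$-c.c.\ in $V[G]$ and $\mathbb Q'$ is $j(\lambda)$-c.c.\ in $M[\hat G]$. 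As $\kappa$ is inaccessible, so is $\lambda = j(\kappa)$, hence so is $j(\lambda)$ in $M$, and this persists in $M[\hat G]$ because $|j(\mathbb P)| \le \lambda < j(\lambda)$. Consequently $\mathbb Q'$ has at most $j(\lambda)$ maximal antichains in $M[\hat G]$, which is at most $\lambda$ many from the point of view of $V[\hat G]$, using that $j(\lambda) < \lambda^+$.

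It then remains to produce $\hat H \in V[\hat G]$ that is $\mathbb Q'$-generic over $M[\hat G]$ with $j[H] \subseteq \hat H$; granting this, Silver's criterion lifts $j$ to $j : V[G * H] \to M[\hat G * \hat H]$, and the required closure $M[\hat G * \hat H]^{<\lambda} \cap V[\hat G] \subseteq M[\hat G * \hat H]$ follows from the first fact above together with the closure of $\mathbb Q'$ in $M[\hat G]$, by the same name argument. To build $\hat H$: the set $j[H]$ is directed in $\mathbb Q'$, and the niceness of $\mathbb Q'$ yields a condition $q^*$ below $j[H]$ — this is the crucial use of niceness, and it is here that the restriction on the size of $\mathbb Q$ coming from hypotheses (2) and (4) (via $\mathbb P \subseteq V_\kappa$) enters. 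Working below $q^*$ inside $V[\hat G]$, I would enumerate the at most $\lambda$ many maximal antichains of $\mathbb Q'$ lying in $M[\hat G]$ and, by a recursion of length at most $\lambda$, build a descending sequence meeting each of them, taking lower bounds at limit stages by the fact that every directed subset of $\mathbb Q'$ of size $<\lambda$ has an infimum in $M[\hat G]$; then $\hat H$ is the filter it generates.

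The step I expect to be the main obstacle is exactly this construction of $\hat H$: extracting, within $V[\hat G]$, a single generic filter for $\mathbb Q'$ over $M[\hat G]$ that contains $j[H]$. This is where hypotheses (2)–(4) must be combined — (4) produces $H$ in the first place, (2) pins down the chain condition of $\mathbb Q'$ and thereby bounds the number of dense sets one must meet inside $V[\hat G]$, and the niceness (3) both supplies the master condition $q^*$ below $j[H]$ and makes the recursion go through at limit stages.
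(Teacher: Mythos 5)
Your outline matches the paper's strategy in spirit (lift through $\mathbb P$, then build an $M[\hat G]$-generic $\hat H$ for $j(\mathbb Q)$ inside $V[\hat G]$ using a bounded number of dense sets), but the step you flag as ``the main obstacle'' is where the argument actually breaks, and your proposed fix does not work.

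The problem is the master condition $q^*$. You want a single $q^* \leq j(q)$ for all $q \in H$, obtained ``by niceness.'' But $H$ may well have size $\lambda$ (e.g.\ if $|\mathbb Q| = \lambda$), so $j[H]$ is a directed set of size $\lambda = j(\kappa)$, not $<j(\kappa)$. By elementarity, $j(\mathbb Q)$ is $(j(\kappa),j(\lambda))$-nice in $M[\hat G]$, which only provides infima for directed sets of size $<j(\kappa)$ that lie in $M[\hat G]$; $j[H]$ has size $\geq j(\kappa)$ and moreover need not even be an element of $M[\hat G]$, since the closure you establish is only $M[\hat G]^{<\lambda}\cap V[\hat G]\subseteq M[\hat G]$ and $j[H]$ can have size $\lambda$. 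So no such $q^*$ is guaranteed to exist, and without it your recursion below $q^*$ has no way to keep the constructed filter compatible with $j[H]$.

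The paper's proof is designed precisely to avoid this. Using the nice filtration $\langle \mathbb Q_\alpha : \alpha \leq \lambda\rangle$, it sets $H_\alpha = H\cap\mathbb Q_\alpha$ and $m_\alpha = \inf j[H_\alpha]$; each $j[H_\alpha]$ has size $<\lambda = j(\kappa)$ and lies in $M[\hat G]$, so the infimum exists. The $m_\alpha$'s need not themselves form a descending chain, so one cannot simply take their lower bound; instead the paper proves the delicate compatibility observation (any $q\in j(\mathbb Q_\alpha)$ with $q\le m_\alpha$ is compatible with $m_\beta$ for $\beta>\alpha$), and then interleaves the antichain-meeting recursion with the $m_{\beta_\gamma}$'s, taking the infimum of $\{m_{\beta_\gamma}\wedge q_\alpha : \alpha<\gamma\}$ at each stage. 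This telescoping is the genuine content of the lemma, and it is missing from your proposal. The rest of your outline (lifting through $\mathbb P$, the closure of $M[\hat G]$, counting antichains via $j(\lambda)<\lambda^+$) is correct and matches the paper.
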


\begin{proof}
Let $\hat G \subseteq j(\mathbb P)$ be generic, and let $G * H = \pi[\hat G]$.  Since $\pi$ and $j$ are the identity on $\mathbb P$, we can extend to $j : V[G] \to M[\hat G]$.  By the $\lambda$-c.c. and the closure of $M$, $\ord^{<\lambda} \cap V[\hat G] \subseteq M[\hat G]$.  We will build the desired $\hat H$ in $V[\hat G]$, so we will get $\ord^{<\lambda} \cap V[\hat G] \subseteq M[\hat G * \hat H]$ as well.

Let $\langle \mathbb Q_\alpha : \alpha \leq \lambda \rangle$ witness the $(\kappa,\lambda)$-niceness of $\mathbb Q$ in $V[G]$. For each $\alpha < \lambda$, let $H_\alpha = H \cap \mathbb Q_\alpha$, and let $m_\alpha = \inf j[H_\alpha]$, which exists because $j[H_\alpha]$ is an element of $M[\hat G]$ and a directed subset of $j(\mathbb Q_\alpha)$ of size $<j(\kappa)$.  
Let us observe the following: If $\alpha<\beta<\lambda$ and $q \leq m_\alpha$ is in $j(\mathbb Q_\alpha)$, then $q$ is compatible with $m_\beta$.  To show this, note that for any $r \in \mathbb Q_\beta$, set $D_r = \{ p \in \mathbb Q_\alpha : p \perp r \text{ or } (\forall p' \leq p)p' \not\perp r \}$ is dense in $\mathbb Q_\alpha$.  Suppose towards a contradiction that $q \leq m_\alpha$ is in $j(\mathbb Q_\alpha)$ and $q \perp m_\beta$.  Then by the directed closure, $q \perp j(r)$ for some $r \in H_\beta$.  But there is $p \in D_r \cap H_\alpha$, and $p \not\perp r$.  Since $q \leq m_\alpha \leq j(p)$, by elementarity $q$ is compatible with $j(r)$, a contradiction.

Let $\la A_\alpha : \alpha < \lambda \ra$ list in $V[\hat G]$ the maximal antichains of $j(\mathbb Q)$ that live in $M[\hat G]$.  We inductively build a filter $\hat H$ that is $j(\mathbb Q)$-generic over $M[\hat G]$, and contains each $m_\alpha$.  This will guarantee $j[H] \subseteq \hat H$, and thus by Silver's criterion we will be done.  Choose an increasing sequence of ordinals $\la \beta_\alpha : \alpha < \lambda \ra$ such that $A_\alpha \subseteq j(\mathbb Q_{\beta_\alpha})$.  Find some $a_0 \in A_0$ such that $m_{\beta_0}$ is compatible with $a_0$, and let $q_0 = m_{\beta_0} \wedge a_0$.  Suppose inductively that for some $\gamma < \lambda$, we have constructed a descending sequence $\la q_\alpha : \alpha < \gamma \ra$ such that for each $\alpha$, $q_\alpha \in j(\mathbb Q_{\beta_\alpha})$ and $q_\alpha \leq a_\alpha \wedge m_{\beta_\alpha}$ for some $a_\alpha \in A_\alpha$.  By the observation of the previous paragraph, $m_{\beta_\gamma}$ is compatible with $q_\alpha$ for all $\alpha<\gamma$.  Hence the directed set $\{ m_{\beta_\gamma} \wedge q_\alpha : \alpha < \gamma \}$ has an infimum $q_\gamma'$.  Let $q_\gamma = q'_\gamma \wedge a_\gamma$ for some $a_\gamma \in A_\gamma$.  This completes the induction.
\end{proof}

\begin{lemma}
\label{ssext}
Suppose the following:
\begin{enumerate}
\item $j : V \to M$ is an superstrong embedding derived from a $(\kappa,\lambda)$-extender with $\lambda$ inaccessible.
\item $\mathbb P \subseteq V_\kappa$ is a partial order, and $j(\mathbb P)$ is $\lambda$-c.c.\ in $V$.
\item $\mathbb P$ forces that the quotient $j(\mathbb P)/G$ is $\kappa^+$-distributive.
\end{enumerate}
If $\hat G$ is $j(\mathbb P)$-generic over $V$, then $\kappa$ is superstrong with target $\lambda$ in $V[\hat G]$.
\end{lemma}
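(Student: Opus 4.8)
The plan is to recycle the superstrong extender through the generic extension. Let $\hat G$ be $j(\mathbb{P})$-generic over $V$; since $M\subseteq V$ and $j(\mathbb{P})\in M$, $\hat G$ is also $j(\mathbb{P})$-generic over $M$. As $\mathbb{P}\subseteq V_\kappa=V_{\crit j}$, the map $j$ fixes $\mathbb{P}$ pointwise, and $\mathbb{P}$ is a regular suborder of $j(\mathbb{P})$ (as is implicit in (3) being meaningful). Setting $G:=\hat G\cap\mathbb{P}$, we get that $G$ is $\mathbb{P}$-generic over $V$ and over $M$, and $j[G]=G\subseteq\hat G$, so by Silver's criterion $j$ lifts to $\bar\jmath\colon V[G]\to M[\hat G]$. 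Since $|\mathbb{P}|\le\kappa<\lambda$, the cardinal $\lambda$ is still inaccessible in $V[G]$, and $V_\lambda^{V[G]}=V_\lambda^V[G]\subseteq M[G]\subseteq M[\hat G]$ (using $V_\lambda^V=V_\lambda^M\subseteq M$ and $\mathbb{P}\in V_\lambda$). Thus $\bar\jmath$ witnesses that $\kappa$ is superstrong with target $\lambda$ in $V[G]$; let $E\in V[G]$ be the $(\kappa,\lambda)$-extender it derives. A standard computation, using Lemma~\ref{ssmin}(2) and the fact that $\bar\jmath$ extends $j$, shows $\operatorname{Ult}(V[G],E)=M[\hat G]$, with $\bar\jmath$ as the ultrapower embedding.

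Now pass from $V[G]$ to $V[\hat G]$ through the quotient $\mathbb{Q}:=j(\mathbb{P})/G$, which by (3) is $\kappa^+$-distributive over $V[G]$. Hence $\mathbb{Q}$ adds no new subsets of $[\kappa]^{<\omega}$, no new functions from $[\kappa]^{<\omega}$ into the ordinals, and no new bounded subsets of $\kappa$; so $\kappa$ remains inaccessible in $V[\hat G]$, the tower $E$ is still a well-founded $(\kappa,\lambda)$-extender over $V[\hat G]$ measuring every subset of every $[\kappa]^{n}$, and its ultrapower embedding $j_E^+\colon V[\hat G]\to M_E^+$ has critical point $\kappa$. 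Moreover $j_E^+(\kappa)=\lambda$: an ordinal of $M_E^+$ below $j_E^+(\kappa)$ has the form $[a,f]_E$ for some $f\colon[\kappa]^{|a|}\to\kappa$, and every such $f$—being in essence a $\kappa$-sequence of ordinals—already lies in $V[G]$, so the ordinals of $M_E^+$ below $j_E^+(\kappa)$ coincide with those of $\operatorname{Ult}(V[G],E)$, whose order type is $\bar\jmath(\kappa)=\lambda$.

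It remains to see that $V_\lambda^{V[\hat G]}\subseteq M_E^+$, which is the crux. First, because $j(\mathbb{P})$ is $\lambda$-c.c., $\lambda$ is regular, and $V_\lambda^M=V_\lambda^V$, any nice $j(\mathbb{P})$-name for a set of rank $<\lambda$ is assembled from antichains of size $<\lambda$ whose members lie in $j(\mathbb{P})\subseteq V_\lambda^M$, so the name itself has rank $<\lambda$ and therefore belongs to $V_\lambda^M\subseteq M$; consequently $V_\lambda^{V[\hat G]}=V_\lambda^{M[\hat G]}$. Second, any $X\subseteq\mu$ with $\mu<\lambda$ lying in $M[\hat G]=\operatorname{Ult}(V[G],E)$ has a representation $X=[a,h]_E$ in which, after enlarging $a$ to contain $\mu$, the function $h$ is $\mathcal{P}(\kappa)$-valued and the functions representing the elements of $X$ may be taken $\kappa$-valued; all such functions lie in $V[G]$, so $[a,h]_E$ collapses to $X$ in $\operatorname{Ult}(V[\hat G],E)$ as well, i.e.\ $X\in M_E^+$. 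A rank induction then completes the argument: given a set $x$ of rank $\beta<\lambda$, since $M_E^+\models$``$\lambda$ is inaccessible'' one may fix in $M_E^+$ a bijection between $V_\beta^{M_E^+}$ and some cardinal $<\lambda$, push $x$ (a subset of $V_\beta^{M_E^+}$ by the inductive hypothesis) forward to a bounded subset of $\lambda$ in $V[\hat G]$, apply the two preceding observations, and pull it back inside $M_E^+$.

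Therefore $j_E^+\colon V[\hat G]\to M_E^+$ has critical point $\kappa$, sends $\kappa$ to $\lambda$, and satisfies $V_\lambda^{V[\hat G]}\subseteq M_E^+$; that is, $\kappa$ is superstrong with target $\lambda$ in $V[\hat G]$. The step I expect to require the most care is the last containment: although $\mathbb{Q}$ adds many sets of rank below $\lambda$, the $\lambda$-chain condition and inaccessibility of $\lambda$ force their names to already live in $M$, so $M[\hat G]$ sees those sets, while the $\kappa^+$-distributivity of $\mathbb{Q}$ keeps the functions witnessing their membership in the ultrapower inside $V[G]$—together these ensure that the recycled extender still captures all of $V_\lambda^{V[\hat G]}$.
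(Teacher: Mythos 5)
Your overall plan — lift $j$ to $\bar\jmath\colon V[G]\to M[\hat G]$, extract the $(\kappa,\lambda)$-extender $E\in V[G]$, and then reuse $E$ directly to form $\mathrm{Ult}(V[\hat G],E)$, verifying the superstrongness requirements by a rank induction — is a reasonable variation on the paper's argument, which instead lifts $j$ a second time to $V[\hat G]=V[G*H]\to M[\hat G*\hat H]$ by showing (via the $\kappa^+$-distributivity) that $\bar\jmath[H]$ generates a generic filter for $\bar\jmath(\mathbb Q)$ over $M[\hat G]$. The two constructions ultimately produce the same target model, and much of your verification (e.g.\ that the ordinals below $\lambda$ are computed the same way, that $\mathrm{Ult}(V[G],E)=M[\hat G]$) is fine.

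However, there is a genuine gap at the point you yourself flag as the crux. You assert ``$\kappa$ remains inaccessible in $V[\hat G]$'' and later invoke ``$M_E^+\models\lambda$ is inaccessible'' to run the rank induction, and your identification $V_\lambda^{V[\hat G]}=V_\lambda^{M[\hat G]}$ tacitly uses that $\lambda$ remains a beth fixed point in $V[\hat G]$ (so that every set of rank $<\lambda$ is coded by a name of rank $<\lambda$). None of this is automatic. The $\lambda$-c.c.\ of $j(\mathbb P)$ together with $|\mathbb P|\le\kappa$ only gives that $\lambda$ stays regular; it does not give that $\lambda$ (or $\kappa$) stays a strong limit. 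Indeed, without hypothesis (3) the conclusion fails outright: taking $\mathbb P=\add(\omega,\kappa)$, one has $\mathbb P\subseteq V_\kappa$, $j(\mathbb P)=\add(\omega,\lambda)$ is $\lambda$-c.c., yet $2^\omega=\kappa$ in $V[G]$ and $2^\omega=\lambda$ in $V[\hat G]$, so neither $\kappa$ nor $\lambda$ remains inaccessible and $V_\lambda^{V[\hat G]}\ne V_\lambda^{M[\hat G]}$. Your write-up uses (3) only to keep functions on $[\kappa]^{<\omega}$ in $V[G]$, but never to preserve inaccessibility, so this counterexample is not ruled out by anything you say.

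The paper closes exactly this gap with a reflection argument. Using (3), every subset of $\alpha<\kappa$ added by $j(\mathbb P)$ is already added by $\mathbb P$; since $\mathbb P$ is a regular suborder of $j(\mathbb P)$ of size $<\lambda$, and $V_\lambda\subseteq M$, the model $M$ sees a regular suborder of $j(\mathbb P)$ of size $<j(\kappa)$ capturing all subsets of $\alpha$. Pulling back by elementarity, for every $\alpha<\kappa$ there is a regular suborder of $\mathbb P$ of size $<\kappa$ capturing all subsets of $\alpha$; pushing forward again, for every $\alpha<\lambda$ there is a regular suborder of $j(\mathbb P)$ of size $<\lambda$ capturing all subsets of $\alpha$. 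This yields that $\lambda$ (and $\kappa$) remains inaccessible in $V[\hat G]$, which is precisely what your ``first observation'' and the inductive step of your rank argument require. Without this step, your proof is incomplete; with it, your alternative route to the ultrapower goes through.
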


\begin{proof}
Since $\mathbb P$ is $\kappa$-c.c., $j(A) = A$ for every maximal antichain $A \subseteq \mathbb P$, so $\mathbb P$ is a regular suborder of $j(\mathbb P)$.  Let $\alpha < \kappa$.  Since every subset of $\alpha$ added by $j(\mathbb P)$ is added by $\mathbb P$, reflection gives that there is a regular suborder $\mathbb Q \subseteq \mathbb P$ of size $<\kappa$ that adds all subsets of $\alpha$.  Thus $M$ satisfies that for every $\alpha < \lambda$, there is a regular suborder $\mathbb Q \subseteq j(\mathbb P)$ of size $<\lambda$ that adds all subsets of $\alpha$.  This is true in $V$ as well, since by the $\lambda$-c.c., any $j(\mathbb P)$-name $\tau$ for a subset of $\alpha$ is equivalent to an $\mathbb Q'$-name $\tau'$ for some regular suborder $\mathbb Q' \in V_\lambda \subseteq M$, and $\tau'$ must be equivalent to a $\mathbb Q$-name by what $M$ thinks of $\mathbb Q$.  Thus $\lambda$ remains inaccessible after forcing with $j(\mathbb P)$.
 
Let $\hat G$ be $j(\mathbb P)$-generic over $V$, and let $G = \hat G \cap \mathbb P$.  We can extend $j$ to $j : V[G] \to M[\hat G]$.  By the $\lambda$-c.c., every element of $(V_\lambda)^{V[\hat G]}$ is $\tau^{\hat G}$ for some $j(\mathbb P)$-name $\tau \in (V_\lambda)^V$.  Since $V_\lambda \subseteq M$, $(V_\lambda)^{M[\hat G]} = (V_\lambda)^{V[\hat G]}$.

For every $x \in M[\hat G]$, there is a $j(\mathbb P)$-name $\tau$ such that $x = \tau^{\hat G}$, and there is an $a \in [\lambda]^{<\omega}$ and a function $f$ with domain $[\kappa]^{|a|}$ in $V$ such that $\tau = j(f)(a)$.  We may assume that for every $b \in \dom f$, $f(b)$ is a $\mathbb P$-name.  If we define a function $g$ in $V[G]$ by $g(b) = f(b)^G$, then we have $x = j(g)(a)$.

Let $\mathbb Q$ be the quotient $j(\mathbb P)/G$ in $V[G]$, and let us write $\hat G$ as $G * H \subseteq \mathbb P * \dot{\mathbb Q}$.  Let $D \in M[\hat G]$ be dense open subset of $j(\mathbb Q)$.  Let $a,g$ be a such that $a \in [\lambda]^{<\omega}$, $g \in V[G]$ is a function with domain $[\kappa]^{|a|}$, and $D = j(g)(a)$.  We may assume that $g(b)$ is a dense open subset of $\mathbb P$ for all $b \in \dom g$.  Let $E = \bigcap_{b \in \dom g} g(b)$.  By the distributivity of $\mathbb Q$, $E$ is dense.  Thus there is $q \in E \cap H$.  Since $E \subseteq g(b)$ for all $b \in \dom g$, $j(q) \in j(E) \subseteq D$.  Therefore the image $j[H]$ generates a filter $\hat H$ which is $j(\mathbb Q)$-generic over $M[\hat G]$. We may extend the embedding to $j : V[G * H] \to M[\hat G * \hat H]$.  Since $(V_\lambda)^{V[\hat G]} = (V_\lambda)^{M[\hat G]} \subseteq (V_\lambda)^{M[\hat G * \hat H]} \subseteq (V_\lambda)^{V[\hat G]}$, $\kappa$ is superstrong with target $\lambda$ in $V[\hat G]$.
\end{proof}

\subsection{Ideals and duality}
We will be interested in extending embeddings that arise from forcing with Boolean algebras of the form $\p(Z)/I$, where $I$ is an ideal over $Z$, and in computing what happens to this algebra in generic extensions.  To this end, we present an optimal generalization of a result of Foreman \cite{MR3038554} on this topic from the author's thesis \cite{MR3279214}.  Let us first review some basic facts concerning ideals, which can be found in \cite{MR2768692}.  An ideal $I$ over a set $Z$, gives a notion of smallness or ``$I$-measure-zero'' for subsets of $Z$.  We will sometimes refer to the family $\p(Z) \setminus I$ as $I^+$ or the ``$I$-positive sets,'' and refer to the family $\{ Z \setminus A : A \in I \}$ as $I^*$ or the ``$I$-measure-one sets.''  Recall that an ideal $I$ over a set $Z$ is called \emph{precipitous} if whenever $G \subseteq \p(Z)/I$ is generic, then the ultrapower $V^Z/G$ is well-founded.

\begin{fact}
Let $I$ be an ideal over $Z\subseteq \p(\lambda)$.  Suppose that $I$ is normal and $\lambda^+$-saturated.  Then $I$ is precipitous, and whenever $j : V \to M \subseteq V[G]$ is a generic ultrapower embedding arising from $I$, then $M^\lambda \cap V[G] \subseteq M$.  Furthermore, if $\kappa = \mu^+$, then $\{ z \in Z : \cf(\sup z) = \cf(\mu) \} \in I^*$.
\end{fact}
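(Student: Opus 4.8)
The plan is to treat the three conclusions in turn, the first two being classical facts about saturation for which I would refer to \cite{MR2768692}. For precipitousness: if some condition forced $\la [\dot f_n] : n<\omega\ra$ to be $\in$‑descending in the generic ultrapower, I would use $\lambda^+$‑saturation to replace each $\dot f_n$, modulo a maximal antichain of size $\le\lambda$, by a genuine function in $V$, and then use normality to amalgamate these antichains into a single function $Z\to V$ whose values at $I$‑almost every $z$ form an $\in$‑descending $\omega$‑sequence of ordinals --- impossible. For the closure $M^\lambda\cap V[G]\subseteq M$: given $h:\lambda\to M$ in $V[G]$, for each $\alpha<\lambda$ I would fix (using saturation) a maximal antichain $W_\alpha$ of size $\le\lambda$ with functions $F^\alpha_B\in V$ (for $B\in W_\alpha$) such that $B\Vdash h(\alpha)=[\check F^\alpha_B]_G$; then, defining $F\in V$ by $F(z)(\alpha)=F^\alpha_B(z)$ for the (by normality, $I$‑almost‑everywhere unique) $B\in W_\alpha$ with $z\in B$, one checks that $[F]_G=h$. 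Precipitousness is what makes the ultrapower in the second and third clauses meaningful.

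For the third clause I would first reduce it, using that $I$ is normal and fine, to the assertion that there is no $I$‑positive $A\subseteq Z$ on which $z\mapsto\cf(\sup z)$ is constantly equal to some regular $\tau\ne\cf(\mu)$; indeed, since $\lambda=\mu^+$ we have $\cf(\sup z)\le|z|\le\mu$ for $I$‑almost every $z$, so once all such $A$ are excluded, normality and fineness force $\{z\in Z:\cf(\sup z)=\cf(\mu)\}\in I^*$. For orientation: if such an $A$ were $I$‑positive, forcing below it and applying {\L}o\'s's theorem to the ultrapower $j:V\to M$ gives $M\models\cf(\sup j[\lambda])=\tau$, whence, since $[\id_Z]_G=j[\lambda]$ has order type $\lambda$ and $M$ is closed under $\lambda$‑sequences, $\cf^{V[G]}(\lambda)=\tau$; but this is not in itself contradictory (it is in fact forced once $A\in G$), so the contradiction must instead come from showing directly that $A$ cannot be positive. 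To do this I would build a $\lambda^+$‑sized antichain below $A$ in $\p(Z)/I$, contradicting saturation: fixing for each $z\in A$ a cofinal sequence of order type $\tau$ in $\sup z$, together with a fixed cofinal sequence of order type $\cf(\mu)$ in $\mu$, the mismatch between $\tau$ and $\mu$ leaves enough room to split $A$ --- along the tree of possible values of these sequences, in the manner of Ulam matrices and the Gitik--Shelah analysis \cite{MR1363421} of saturated ideals --- into $\lambda^+$ many pairwise almost‑disjoint positive pieces. This antichain construction is the step I expect to be the main obstacle; the cases $\tau<\cf(\mu)$ and $\cf(\mu)<\tau\le\mu$ (the latter arising only when $\mu$ is singular) may require slightly different treatments, the second being essentially pcf‑theoretic.
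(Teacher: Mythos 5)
The paper does not prove this Fact; it is cited from Foreman's Handbook chapter \cite{MR2768692}, so there is no ``paper proof'' to compare against. Your arguments for the first two clauses (precipitousness and the $\lambda$-closure of $M$) are the standard ones and are essentially correct, modulo the usual bookkeeping with diagonal unions that you already gesture at.

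The third clause is where the real content lies, and that is exactly where your proposal stops short. You correctly reduce to showing that no $I$-positive $A$ can have $z\mapsto\cf(\sup z)$ constantly equal to a regular $\tau\ne\cf(\mu)$, and you correctly observe that forcing below such an $A$ and passing to the generic ultrapower yields $\cf^{V[G]}(\lambda)=\tau$ but no immediate contradiction, so that saturation must be violated directly by an antichain of size $\lambda^+$. However, the construction of that antichain --- which you explicitly flag as ``the step I expect to be the main obstacle'' --- is precisely the whole theorem, and it is not routine. The ``Ulam matrix'' analogy does not transfer as stated: Ulam-type splittings produce $\lambda^+$ pairwise disjoint \emph{stationary} sets, which for a general normal ideal $I$ extending the nonstationary ideal may all be $I$-null, so one cannot simply quote a splitting of the ladder systems; and the Gitik--Shelah argument you cite is tailored to the full nonstationary ideal rather than to an arbitrary normal $\lambda^+$-saturated $I$. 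What is actually needed is the theorem (due to Shelah, see also Burke--Matsubara) that for $\lambda=\mu^+$ and regular $\tau$ with $\tau\ne\cf(\mu)$, no normal ideal concentrating on $\cof(\tau)$ can be $\lambda^+$-saturated; its proof goes through club-guessing (for $\tau<\cf(\mu)$) and pcf/scale arguments (for $\cf(\mu)<\tau\le\mu$ in the singular case), neither of which is a routine antichain construction from the data $(e_\alpha)$ and a fixed ladder in $\mu$. So your plan names the right target but leaves the decisive lemma unproved. A secondary, minor point: your patching step (``once all such $A$ are excluded, normality and fineness force \dots'') also quietly uses enough completeness of $I$ to absorb a union over the $\le\mu$ relevant cofinalities; that is harmless in the paper's applications (where $I$ is $\mu^{+}$-complete) but should be made explicit.
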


%Recall that an ideal $I$ over a set $Z$ is \emph{precipitous} if whenever $G \subseteq \p(Z)/I$ is generic, then the ultrapower $V^Z/G$ is well-founded.  If $I$ is a $\kappa$-complete, $\kappa^+$-saturated ideal, then $I$ is precipitous (see \cite{foremanhandbook}).  We will sometimes refer to the family $\p(Z) \setminus I$ as $I^+$ or the ``$I$-positive sets,'' and refer to the family $\{ Z \setminus A : A \in I \}$ as $I^*$ or the ``$I$-measure one sets.''
%
%We present an optimal generalization of a result of Foreman \cite{foremanduality} on this topic from the author's thesis \cite{thesis}.  Recall that an ideal $I$ over a set $Z$ is \emph{precipitous} if whenever $G \subseteq \p(Z)/I$ is generic, then the ultrapower $V^Z/G$ is well-founded.  If $I$ is a $\kappa$-complete, $\kappa^+$-saturated ideal, then $I$ is precipitous (see \cite{foremanhandbook}).  We will sometimes refer to the family $\p(Z) \setminus I$ as $I^+$ or the ``$I$-positive sets,'' and refer to the family $\{ Z \setminus A : A \in I \}$ as $I^*$ or the ``$I$-measure one sets.''

\begin{theorem}
\label{dualitygen}
Suppose $I$ is a precipitous ideal on $Z$ and $\mathbb{P}$ is a Boolean algebra.  Let $j: V \to M \subseteq V[G]$ denote a generic ultrapower embedding arising from $I$.  Suppose $\dot{K}$ is a $\p(Z)/I$-name for an ideal on $j(\mathbb P)$ such that whenever $G * h$ is $\p(Z)/I * j(\mathbb P)/ \dot K$-generic and $\hat H = \{ p : [p]_K \in h \}$, we have:

\begin{enumerate}
\item $1 \Vdash_{\p(Z)/I * j(\mathbb P)/ \dot K} \hat{H}$ is $j(\mathbb{P})$-generic over $M$,
\item $1 \Vdash_{\p(Z)/I * j(\mathbb P)/ \dot K} j^{-1}[\hat{H}]$ is $\mathbb{P}$-generic over $V$, and
\item for all $p \in \mathbb{P}$, $1 \nVdash_{\p(Z)/I} j(p) \in \dot K$.
\end{enumerate}
Then there is $\mathbb{P}$-name $\dot J$ for an ideal on $Z$ and a canonical isomorphism
\[ \iota : \mathcal{B}( \mathbb{P} * \p(Z)/\dot J) \cong \mathcal{B}( \p(Z)/I * j(\mathbb{P})/\dot K ). \]
\end{theorem}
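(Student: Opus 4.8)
The plan is to exhibit $\mathbb{P}$ as a regular suborder of $\mathcal{B}(\mathbb{Q}_s)$, where $\mathbb{Q} := \p(Z)/I * j(\mathbb{P})/\dot{K}$, and then identify the quotient with a Boolean algebra of the form $\p(Z)/\dot{J}$. Write $\dot{\hat{H}}$ for the canonical $\mathbb{Q}$-name for $\hat{H}$, and put $e(p) = \|\, j(\check{p}) \in \dot{\hat{H}}\,\|_{\mathcal{B}(\mathbb{Q}_s)} = \|\,\check{p} \in j^{-1}[\dot{\hat{H}}]\,\|$. Hypothesis (2) says $j^{-1}[\dot{\hat H}]$ is forced to be $\mathbb{P}$-generic over $V$, and hypothesis (3) guarantees that for each $p \in \mathbb{P}$ there is $b \in \p(Z)/I$ forcing $[j(\check p)]_{\dot K} \ne 0$, so the condition $(b, [j(\check p)]_{\dot K}) \in \mathbb{Q}$ forces $j(p) \in \hat H$, i.e. $p \in j^{-1}[\hat H]$. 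Thus, by the characterization of regular embeddings recorded in the first lemma of Section~\ref{preliminaries} (clause (3), with $\dot g$ the name for $j^{-1}[\dot{\hat H}]$ and $e(p)=\|\check p\in\dot g\|$), $e$ is a regular embedding, and therefore $\mathcal{B}(\mathbb{Q}_s) \cong \mathcal{B}(\mathbb{P} * \dot{\mathbb{R}})$, where $\dot{\mathbb{R}}$ names the quotient $\mathcal{B}(\mathbb{Q}_s)/\dot{G}_{\mathbb{P}}$. It then suffices to find a $\mathbb{P}$-name $\dot{J}$ with $\Vdash_{\mathbb{P}} \mathcal{B}(\p(Z)/\dot{J}) \cong \dot{\mathbb{R}}$.

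So I pass to $V[H]$, where $H$ is the $\mathbb{P}$-generic induced by $e$ (equivalently $H = j^{-1}[\hat H]$ once the rest of a $\mathbb{Q}$-generic $G*h$ over $V$ is present); set $\mathbb{R} := \dot{\mathbb{R}}^{H} = \mathbb{Q}/H$. By hypotheses (1) and (2) and Silver's criterion, after forcing with $\mathbb{R}$ the embedding $j$ lifts to $j^{+} : V[H] \to M[\hat H]$ with $j^{+}(\dot A^{H}) = j(\dot A)^{\hat H}$ for every $\mathbb{P}$-name $\dot A \in V$; here $a := [\id]_{G}$ and, crucially, $M = \mathrm{Ult}(V,G) = \{\, j(g)(a) : g \in V,\ g : Z \to V \,\}$. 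Let $\dot J$ be the $\mathbb P$-name such that $\Vdash_{\mathbb P}$ ``$A \in \dot J$ iff $\|\,\check a \notin j^{+}(\check A)\,\|_{\dot{\mathbb{R}}} = 1$''; since $a \in j(Z)$ always and $\crit(j^{+}) = \crit(j) =: \kappa$, this names a proper, $\kappa$-complete ideal on $Z$ in $V[H]$. Consider $\Phi : \p(Z)^{V[H]}/J \to \mathbb{R}$, $[A]_{J} \mapsto \|\,\check a \in j^{+}(\check A)\,\|_{\mathbb{R}}$. Elementarity of $j^{+}$ gives $\|a \in j^{+}(A\,\triangle\,B)\|_{\mathbb{R}} = \|a \in j^{+}(A)\|_{\mathbb{R}}\,\triangle\,\|a\in j^{+}(B)\|_{\mathbb{R}}$ and similar identities, whence $\Phi$ is well defined and injective, with $\Phi([A]) \le \Phi([B]) \iff A \setminus B \in J$ and $\Phi([A]) \perp \Phi([B]) \iff A \cap B \in J$; so $\Phi$ is an order- and incompatibility-preserving and -reflecting injection.

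The crux is density of $\mathrm{ran}(\Phi)$ in $\mathbb{R}$. Every element of $\mathbb{Q}$ can be refined to one of the form $r = (b, \dot{\sigma})$ with $b \in \p(Z)/I$ and $b \Vdash \dot\sigma = [\,j(\check g)(\check a)\,]_{\dot K}$ for a fixed $g : Z \to \mathbb{P}$ in $V$, since $\dot\sigma^{G}$ ranges over $j(\mathbb{P}) = \{j(g)(a) : g \in V\}$ and $\p(Z)/I$ decides which $g$ works; such $r$ are dense in $\mathbb{R}$. I claim $r = \Phi'(A_{r})$, where $A_{r} := b \cap \{\, z \in Z : g(z) \in H \,\} \in V[H]$ and $\Phi'(A) := \|\,\check a \in j^{+}(\check A)\,\|_{\mathbb{R}}$. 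Indeed, in any extension $V[G*h] = V[H][\bar G]$ by an $\mathbb{R}$-generic $\bar G$, decoded as a $\mathbb{Q}$-generic $G*h$ over $V$, we have $r \in \bar G$ iff $b \in G$ and $j(g)(a) \in \hat H$; now $b \in G$ iff $a \in j(b) = j^{+}(b)$, and, applying $j^{+}$ to the $\mathbb{P}$-name $\{(\check z, g(z)) : z \in Z\}$ for $\{z : g(z) \in H\}$, we get $j^{+}(\{z : g(z) \in H\}) = \{z' \in j(Z) : j(g)(z') \in \hat H\}$, so $j(g)(a) \in \hat H$ iff $a \in j^{+}(\{z : g(z) \in H\})$. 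Hence $r \in \bar G \iff a \in j^{+}(A_{r})$, and comparing Boolean values in $\mathbb{R}$ yields $r = \Phi'(A_{r})$. Therefore $\Phi$ is a dense embedding, and since $\mathbb{R}$ is complete, $\mathcal{B}(\p(Z)^{V[H]}/J) \cong \mathbb{R} = \dot{\mathbb{R}}^{H}$.

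Composing with the factorization of the first paragraph gives $\mathcal{B}(\mathbb{P} * \p(Z)/\dot J) \cong \mathcal{B}(\mathbb{Q}_s) = \mathcal{B}\bigl((\p(Z)/I * j(\mathbb{P})/\dot K)_s\bigr)$. That this isomorphism is canonical and realizes the matching of generics demanded by Lemma~\ref{forcingiso} can be read off by tracing the construction: the two decoding maps are $G*h \mapsto \bigl(j^{-1}[\hat H],\ \{\,[A]_{\dot J} : a \in j^{+}(A)\,\}\bigr)$ and its inverse $(H, U) \mapsto (G, h)$, where $G = \{[b]_{I} : [b]_{\dot J}\in U\}$, $a = [\id]_{G}$, $\hat H = \{\, j(g)(a) : g \in V,\ [\{z : g(z) \in H\}]_{\dot J} \in U \,\}$, and $h = \{[p]_{\dot K} : p \in \hat H\}$; these are mutually inverse by the identities already used, and the reconstructed $G$ (resp. $h$) is automatically $\p(Z)/I$-generic over $V$ (resp. $j(\mathbb{P})/\dot K$-generic over $V[G]$) because, $\mathbb{P}$ being a regular suborder of $\mathbb{Q}$ with induced generic $H$, the combined object is a $\mathbb{Q}$-generic over $V$. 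The only substantial step, and the one place where precipitousness of $I$ — via the fact $M = \{j(g)(a) : g \in V\}$ — together with hypotheses (1)–(3) are all essential, is the density computation of the third paragraph; the remainder is bookkeeping with Boolean values and separative quotients.
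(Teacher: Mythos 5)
Your proof is correct and takes essentially the same approach as the paper's: the regular embedding $e(p)=\|j(p)\in\hat H\|$, the ideal $J$ defined by $\|[\id]_M\in\hat\jmath(A)\|=0$, and the density argument via representatives $f\colon Z\to\mathbb P$ are all identical to the paper's construction. The only difference is organizational — you factor through the quotient $\mathbb R=\mathbb Q/H$ over $V[H]$ and compose, whereas the paper defines the dense embedding $\iota$ on $\mathbb P*\p(Z)/\dot J$ directly in $V$ — but the substance of the two arguments is the same.
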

\begin{proof}
Let $e : \mathbb{P} \to \mathcal{B}(\p(Z)/I * j(\mathbb{P})/\dot K)$ be defined by $p \mapsto || j(p) \in \hat{H} ||$.  By (3), this map has trivial kernel.  By elementarity, it is an order and antichain preserving map.  If $A \subseteq \mathbb{P}$ is a maximal antichain, then it is forced that $j^{-1}[\hat{H}] \cap A \not= \emptyset$.  Thus $e$ is regular.

Whenever $H \subseteq \mathbb{P}$ is generic, there is a further forcing yielding a generic $G * h \subseteq \p(Z)/I * j(\mathbb{P})/\dot K$ such that $j[H] \subseteq \hat{H}$.  Thus there is an embedding $\hat{j} : V[H] \to M[\hat{H}]$ extending $j$.  In $V[H]$, let $J = \{ A \subseteq Z : 1 \Vdash_{(\p(Z)/I * j(\mathbb{P})/\dot K) / e[H]} [\id]_M \notin \hat{j}(A) \}$.  In $V$, define a map $\iota : \mathbb{P} * \p(Z)/\dot J \to \mathcal{B}( \p(Z)/I * j(\mathbb{P})/\dot K)$ by $(p,\dot{A}) \mapsto e(p) \wedge || [\id]_M \in \hat{j}(\dot{A}) ||$.  It is easy to check that $\iota$ is order and antichain preserving.

We want to show the range of $\iota$ is dense.  Let $(B,\dot{q}) \in \p(Z)/I * j(\mathbb{P})/\dot K$. Without loss of generality, there is some $f : Z \to V$ in $V$ such that $B \Vdash \dot{q} = [[f]_M]_K$.  By the regularity of $e$, let $p \in \mathbb{P}$ be such that for all $p' \leq p$, $e(p') \wedge (B,\dot{q}) \not= 0$.  Let $\dot{A}$ be a $\mathbb{P}$-name such that $p \Vdash \dot{A} = \{ z \in B : f(z) \in \dot H \}$, and $\neg p \Vdash \dot{A} = Z$.  $1 \Vdash_\mathbb{P} \dot{A} \in J^+$ because for any $p' \leq p$, we can take a generic $G * h$ such that $e(p') \wedge (B, \dot{q}) \in G*h$.  Here we have $[\id]_M \in j(B)$ and $[f]_M \in \hat{H}$, so $[\id]_M \in \hat{j}(A)$.  Furthermore, $\iota(p,\dot{A})$ forces $B \in G$ and $q \in h$, showing $\iota$ is a dense embedding.
\end{proof}

\begin{proposition}
\label{ultequal}
If $Z,I,\mathbb{P},\dot J,\dot K,\iota$ are as in Theorem~\ref{dualitygen}, then whenever $H \subseteq \mathbb{P}$ is generic, $J$ is precipitous and has the same completeness and normality that $I$ has in $V$.  Also, if $\bar{G} \subseteq \p(Z)/ J$ is generic and $G * h = \iota[H * \bar{G}]$, then if $\hat{j} : V[H] \to M[\hat{H}]$ is as above, $M[\hat{H}] = V[H]^Z/\bar{G}$ and $\hat{j}$ is the canonical ultrapower embedding.
\end{proposition}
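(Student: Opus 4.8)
The plan is to derive everything from the isomorphism $\iota$ of Theorem~\ref{dualitygen} by identifying, for a fixed generic $H \subseteq \mathbb{P}$, the generic ultrapower of $V[H]$ by $J$ with the model $M[\hat{H}]$ built in the proof of that theorem. So first I would fix generics $H \subseteq \mathbb{P}$ and $\bar{G} \subseteq \p(Z)/J$ over $V[H]$, and use $\iota$ to transfer $H * \bar{G}$ to a generic $G * h$ for $\p(Z)/I * j(\mathbb{P})/\dot{K}$, so that $V[H][\bar{G}] = V[G][h]$. From $G$ one obtains the generic ultrapower $j : V \to M \subseteq V[G]$, which is well-founded since $I$ is precipitous; set $\hat{H} = \{ p : [p]_K \in h \}$, which is $j(\mathbb{P})$-generic over $M$ by hypothesis (1) of Theorem~\ref{dualitygen}. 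Because the restriction of $\iota$ to $\mathbb{P}$ is the map $p \mapsto || j(p) \in \hat{H} ||$, we have $j^{-1}[\hat{H}] = H$, hence $j[H] \subseteq \hat{H}$, so by Silver's criterion $j$ extends to $\hat{j} : V[H] \to M[\hat{H}]$.

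Next I would study the factor map $k : V[H]^Z/\bar{G} \to M[\hat{H}]$ defined by $k([f]_{\bar{G}}) = \hat{j}(f)([\id]_M)$. The crucial point, obtained by unwinding the definitions of $\dot{J}$ and $\iota$ in the proof of Theorem~\ref{dualitygen}, is that $A \in \bar{G}$ if and only if $[\id]_M \in \hat{j}(A)$ for every $A \subseteq Z$ in $V[H]$; this makes $k$ well-defined, and together with \L{}o\'{s}'s theorem for the Boolean ultrapower and the elementarity of $\hat{j}$ it shows $k$ is elementary. For surjectivity, recall that every element of $M$ has the form $j(F)([\id]_M)$ for some $F : Z \to V$ in $V$; given $x = \tau^{\hat{H}} \in M[\hat{H}]$, write $\tau = j(F)([\id]_M)$, modify $F$ on a $G$-null set so that each $F(z)$ is a $\mathbb{P}$-name, and set $f := z \mapsto F(z)^H$, which lies in $V[H]$ and satisfies $\hat{j}(f)([\id]_M) = j(F)([\id]_M)^{\hat{H}} = \tau^{\hat{H}} = x$. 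An elementary embedding is injective, and a bijective elementary map onto a transitive class is its transitive collapse; since $M[\hat{H}]$ is well-founded, the ultrapower $V[H]^Z/\bar{G}$ is well-founded too, its transitive collapse is $M[\hat{H}]$, and the ultrapower embedding is $\hat{j} = j_{\bar{G}}$. As this holds for every $\bar{G}$, and $M[\hat{H}]$ is always well-founded, $J$ is precipitous.

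Finally, completeness and normality transfer along $\hat{j}$. Since $\hat{j}$ extends $j$, it has the same critical point; and a precipitous ideal is $\mu$-complete exactly when $1$ forces over its quotient algebra that the generic ultrapower embedding fixes all ordinals below $\mu$, so, since every $\bar{G}$ yields an $I$-generic $G$ over $V$ with $j_{\bar{G}} = \hat{j} \supseteq j_G$, it follows that $J$ is $\mu$-complete whenever $I$ is. Likewise, normality of $I$ is equivalent to a property of the pair $(j_G, [\id]_M)$ forced by $1$ over $\p(Z)/I$ (for $Z \subseteq \p(\lambda)$: that $[\id]_M = j_G[\lambda]$, given fineness), and the corresponding pair for $J$ is $(\hat{j}, [\id]_M)$, which extends $(j_G, [\id]_M)$ with the same seed, so $J$ is normal whenever $I$ is. The reverse implications — that $J$ is no more complete, and no more normal, than $I$ — follow from the fact that, using $I \subseteq J$ (any $A \in I$ has $[\id]_M \notin \hat{j}(A) = j(A)$) and hypothesis (3) of Theorem~\ref{dualitygen}, a $V$-subset of $Z$ that is $I$-positive stays $J$-positive, so a $V$-witness to the failure of $\mu^+$-completeness or of normality for $I$ remains one for $J$. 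I expect the one genuinely delicate step to be the equivalence $A \in \bar{G} \Leftrightarrow [\id]_M \in \hat{j}(A)$ of the second paragraph, which amounts to carefully tracing the construction of $\iota$ (and the identity $j^{-1}[\hat{H}] = H$) through the proof of Theorem~\ref{dualitygen}; the rest is routine bookkeeping with \L{}o\'{s}'s theorem and Silver's criterion.
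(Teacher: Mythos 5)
Your proof is correct and follows essentially the same route as the paper: you build the factor map $k([f]_{\bar{G}}) = \hat{j}(f)([\id]_M)$ from the generic ultrapower $N = V[H]^Z/\bar{G}$ into $M[\hat{H}]$, using the key equivalence $A \in \bar{G} \Leftrightarrow [\id]_M \in \hat{j}(A)$, show $k$ is the identity, and then read off precipitousness, completeness, and normality from the resulting identity $i = \hat{j}$. The only cosmetic difference is in establishing $k = \id$: you prove $k$ is onto all of $M[\hat{H}]$ directly by representing an arbitrary $\tau^{\hat{H}}$, whereas the paper proves $k$ is onto the ordinals of $M[\hat{H}]$ (via $k([f]_N) = [f]_M$ for $f : Z \to \ord$ in $V$) and concludes $k = \id$ from the general fact that an elementary embedding of transitive classes surjective on ordinals is the identity; both are fine, so this is a minor variation rather than a genuinely different approach.
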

\begin{proof}
Suppose $H * \bar{G} \subseteq  \mathbb{P} * \p(Z) / \dot J$ is generic, and let $G * h = \iota[H * \bar{G}]$ and $\hat{H} = \{ p : [p]_K \in h \}$.  For $A \in J^+$, $A \in \bar{G}$ if and only if $[\id]_M \in \hat{j}(A)$.  If $i : V[H] \to N = V[H]^Z / \bar{G}$ is the canonical ultrapower embedding, then there is an elementary embedding $k : N \to M[\hat{H}]$ given by $k([f]_N) = \hat{j}(f)([\id]_M)$, and $\hat{j} = k \circ i$.  Thus $N$ is well-founded, so $J$ is precipitous.  If $f : Z \to \ord$ is a function in $V$, then $k([f]_N) = j(f)([\id]_M) = [f]_M$.  Thus $k$ is surjective on ordinals, so it must be the identity, and $N = M[\hat{H}]$.  Since $i = \hat{j}$ and $\hat{j}$ extends $j$, $i$ and $j$ have the same critical point, so the completeness of $J$ is the same as that of $I$.  Finally, since $[\id]_N = [\id]_M$, $I$ is normal in $V$ if and only if $J$ is normal in $V[H]$, because $j \restriction \bigcup Z = \hat{j} \restriction \bigcup Z$, and normality is equivalent to $[\id] = j[\bigcup Z]$.
\end{proof}

Theorem~\ref{dualitygen} is optimal in the sense that it characterizes exactly when an elementary embedding coming from a precipitous ideal can have its domain enlarged via forcing:

\begin{proposition}
Let $I$ be a precipitous ideal on $Z$ and $\mathbb{P}$ a Boolean algebra.  The following are equivalent.
\begin{enumerate}
\item In some generic extension of a $\p(Z)/I$-generic extension, there is an elementary embedding $\hat j : V[H] \to M[\hat H]$, where $j : V \to M$ is the elementary embedding arising from $I$ and $H$ is  $\mathbb P$-generic over $V$.
\item There are  $p \in \mathbb P$, $A \in I^+$, and a $\p(A)/I$-name $\dot K$ for an ideal on $j(\mathbb P \restriction p)$ such that $\p(A)/I * j(\mathbb P \restriction p)/ \dot K$ satisfies the hypothesis of Theorem \ref{dualitygen}.
%\item There are $p \in \mathbb P$, $A \in I^+$, a $\mathbb P$-name $\dot J$ for an ideal on $A$, a $\p(A)/I$-name for an ideal $K$ on $j(\mathbb P \restriction p)$, and an isomorphism $\iota : \mathcal B(\mathbb P \restriction p * \p(A)/J) \cong \mathcal B(\p(A)/I * j(\mathbb P \restriction p) / K)$, such that $\iota p' = || j(p') \in j(H) ||$ for all $p' \leq p$.
\end{enumerate}
\end{proposition}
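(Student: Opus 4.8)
The plan is to prove the two implications separately; $(2)\Rightarrow(1)$ will be an almost immediate consequence of Theorem~\ref{dualitygen} and Proposition~\ref{ultequal}, while $(1)\Rightarrow(2)$ requires constructing the witnessing data. Replacing $\mathbb P$ by $\mathcal B(\mathbb P)$, I assume $\mathbb P$ is a complete Boolean algebra, so that infima of subsets of $\mathbb P$ exist and $j(\mathbb P)$ is complete in $M$; note also that for $A\in I^+$ the ideal $I\restriction A$ is precipitous, since $\p(A)/I$ is the cone $(\p(Z)/I)\restriction[A]_I$ and a generic $G$ for it generates a $\p(Z)/I$-generic $\tilde G$ with $V^A/G=V^Z/\tilde G=:M$, so that the generic ultrapower embedding of $V$ by $I\restriction A$ is literally the embedding $j:V\to M$ arising from $I$. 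For $(2)\Rightarrow(1)$: given $p,A,\dot K$ as in (2), apply Theorem~\ref{dualitygen} with $\mathbb P\restriction p,\,A,\,I\restriction A$ in place of $\mathbb P,\,Z,\,I$ to obtain a $(\mathbb P\restriction p)$-name $\dot J$ and an isomorphism $\iota:\mathcal B((\mathbb P\restriction p)*\p(A)/\dot J)\cong\mathcal B(\p(A)/I*j(\mathbb P\restriction p)/\dot K)$. Forcing over $V$ with the left-hand poset yields $H'*\bar G$; let $H$ be the filter it generates in $\mathbb P$, so $p\in H$ and $V[H]=V[H']$. Transporting along $\iota$ to a generic $G*h$ for the right-hand poset, Proposition~\ref{ultequal} yields $\hat j:V[H]\to M[\hat H]$ extending $j$, inside $V[G*h]=V[G][h]$, which is a generic extension of $V[\tilde G]=V[G]$; this is exactly statement (1).

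For $(1)\Rightarrow(2)$, fix in $V$ a $\p(Z)/I$-name $\dot{\mathbb R}$ for the ``further forcing'' of (1) and a $(\p(Z)/I*\dot{\mathbb R})$-name $\dot{\hat H}$ for the resulting $j(\mathbb P)$-generic filter over $M$, and use (1) to choose a condition $c_0=([\mathbf a]_I,\dot r_0)$ forcing that $\dot{\hat H}$ is $j(\mathbb P)$-generic over $M$ and $\dot H:=j^{-1}[\dot{\hat H}]$ is $\mathbb P$-generic over $V$ (the existence of the extension $\hat j$ then follows from Silver's criterion, since $j[\dot H]\subseteq\dot{\hat H}$ by the definition of $\dot H$). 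Put $A=\mathbf a\in I^+$. Below $c_0$, $\dot{\hat H}$ is forced to be an $M$-ultrafilter on $j(\mathbb P)$, since it meets every two-element antichain $\{q,\neg q\}\in M$; hence, for a $\p(Z)/I$-generic $G$ with $[\mathbf a]_I\in G$ and writing $\mathbb R=(\dot{\mathbb R}\restriction\dot r_0)^G$, the map $q\mapsto||q\in\dot{\hat H}||\in\mathcal B(\mathbb R)$ is a Boolean homomorphism $b:j(\mathbb P)\to\mathcal B(\mathbb R)$. To choose $p$: in $V$ let $S=\{q\in\mathbb P:c_0\Vdash q\in\dot H\}$. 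This is a filter on $\mathbb P$, and if some extension of $c_0$ forced $\bigwedge S\notin\dot H$ it would force $\neg\bigwedge S=\bigvee_{s\in S}\neg s\in\dot H$, so $s\notin\dot H$ for some $s\in S$, contradicting $s\in S$; hence $p:=\bigwedge S\in S$, so $p\neq 0$ and $c_0\Vdash j(p)\in\dot{\hat H}$. Let $\dot K$ be the $\p(A)/I$-name for the restriction of $\ker b$ to $j(\mathbb P\restriction p)$, which is an ideal on $j(\mathbb P\restriction p)$ because $b$ restricts to a Boolean homomorphism of $j(\mathbb P\restriction p)$ carrying $j(p)$ to $1$.

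It then remains to verify the three clauses of the hypothesis of Theorem~\ref{dualitygen} for $\p(A)/I*j(\mathbb P\restriction p)/\dot K$. Fix $G$ and $\mathbb R$ as above and a $j(\mathbb P\restriction p)/\dot K$-generic $h$ over $V[G]$, and set $\hat H=\{q:[q]_K\in h\}$. For clauses (1) and (2): given a maximal antichain $A'\in M$ of $j(\mathbb P\restriction p)$, respectively $B\in V$ of $\mathbb P\restriction p$, the forced genericity of $\dot{\hat H}$ over $M$ below $j(p)$, respectively of $\dot H$ over $V$ below $p$, gives $\bigvee_{x\in A'}b(x)=1$, respectively $\bigvee_{q\in B}b(j(q))=1$, in $\mathcal B(\mathbb R)$; since $b$ is a homomorphism, distributivity in $\mathcal B(\mathbb R)$ shows that $\{[x]_K:x\in A'\}$, respectively $\{[j(q)]_K:q\in B\}$, is predense in $j(\mathbb P\restriction p)/\dot K$, so $h$ meets it and $\hat H\cap A'\neq\emptyset$, respectively $j^{-1}[\hat H]\cap B\neq\emptyset$; that $\hat H$ and $j^{-1}[\hat H]$ are filters is routine, and since $j(p)$ is the top of $j(\mathbb P\restriction p)$ it lies in $\hat H$, so $p\in j^{-1}[\hat H]$. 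For clause (3): if $0\neq q\leq p$ and $1\Vdash_{\p(A)/I}j(q)\in\dot K$, then $c_0\Vdash q\notin\dot H$, so $c_0\Vdash\neg q\in\dot H$, so $\neg q\in S$ and $p=\bigwedge S\leq\neg q$; but then $q\leq p\leq\neg q$ forces $q=0$, a contradiction.

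The step I expect to be the main obstacle is clause (3), and correspondingly the correct choice of $p$: the embedding supplied by (1) may be committed to expelling various nonzero conditions of $\mathbb P$ from $\dot H$, so clause (3) can genuinely fail for $\mathbb P$ itself, and one is forced to descend below the infimum $p=\bigwedge S$ of everything the embedding must place into $\dot H$ — which is also precisely where completeness of $\mathbb P$ enters. A secondary, purely bookkeeping point is checking that restriction of $I$ to the positive set $A$ leaves the generic ultrapower of $V$ unchanged and preserves precipitousness, so that Theorem~\ref{dualitygen} and Proposition~\ref{ultequal} apply to $I\restriction A$ verbatim.
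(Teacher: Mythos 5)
Your proof is correct and follows essentially the same route as the paper: the same definition of $\dot K$ (the kernel of $q\mapsto\|q\in\dot{\hat H}\|$ restricted to a suitable cone), and the same reason clause (3) holds (choosing the cone's apex so that nothing below it is forced out of $\dot H$). The only cosmetic difference is that the paper picks $p_0$ existentially from the non-density of $\{q:\Vdash j(q)\notin\dot{\hat H}\}$, whereas you take the canonical choice $p=\bigwedge S$ at the cost of passing to $\mathcal B(\mathbb P)$; your verification of clauses (1)--(2) via the Boolean homomorphism $b$ and the paper's verification via contradiction with genericity are two phrasings of the same observation.
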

\begin{proof}
$(2) \Rightarrow (1)$ is trivial.  To show $(1) \Rightarrow (2)$, let $\dot{\mathbb Q}$ be a $\p(Z)/I$-name for a partial order, and suppose $A \in I^+$ and $\dot{H}_0$ are such that $\Vdash_{\p(A)/I * \dot{\mathbb Q}}$ ``$\dot{H}_0$ is $j(\mathbb P)$-generic over $M$ and $j^{-1}[\dot{H}_0]$ is $\mathbb P$-generic over $V$.''  By the genericity of $j^{-1}[\dot{H}_0]$, the set of $p \in \mathbb P$ such that $\Vdash_{\p(A)/I * \dot{\mathbb Q}} j(p) \notin \dot{H}_0$ is not dense.  So let $p_0$ be such that for all $p \leq p_0$, $|| j(p) \in \dot{H}_0 || \not = 0$.  In $V^{\p(A)/I}$, define an ideal $K$ on $j(\mathbb P \restriction p_0)$ by $K = \{ p \in j(\mathbb P \restriction p_0) : 1 \Vdash_\mathbb{Q} p \notin \dot{H}_0 \}$.  We claim $K$ satisfies the hypotheses of Theorem \ref{dualitygen}.  Let $G * h$ be $\p(A)/I * j(\mathbb P \restriction p_0)/\dot K$-generic.  In $V[G*h]$, let $\hat{H} = \{ p \in j(\mathbb{P} \restriction p_0) : [p]_K \in h \}$.

\begin{enumerate}
\item If $D \in M$ is open and dense in $j(\mathbb{P} \restriction p_0)$, then $\{ [d]_K : d \in D$ and $d \notin K \}$ is dense in $j(\mathbb{P} \restriction p_0)/K$.  For otherwise, there is $p \in j(\mathbb{P} \restriction p_0) \setminus K$ such that $p \wedge d \in K$ for all $d \in D$.  By the definition of $K$, we can force with $\mathbb{Q}$ over $V[G]$ to obtain an $M$-generic filter $H_0 \subseteq j(\mathbb{P})$ with $p \in H_0$.  But $H_0$ cannot contain any elements of $D$, so it is not generic over $M$, a contradiction.  Thus if $h \subseteq j(\mathbb{P} \restriction p_0)/K$ is generic over $V[G]$, then $\hat{H}$ is $j(\mathbb{P} \restriction p_0)$-generic over $M$.
\item If $\mathcal A \in V$ is a maximal antichain in $\mathbb{P} \restriction p_0$, then $\{ [j(a)]_K : a \in \mathcal A$ and $j(a) \notin K \}$ is a maximal antichain in $j(\mathbb{P}\restriction p_0)/K$.  For otherwise, there is $p \in j(\mathbb{P}\restriction p_0) \setminus K$ such that $p \wedge j(a) \in K$ for all $a \in \mathcal A$.  We can force with $\mathbb{Q}$ over $V[G]$ to obtain a filter $H_0 \subseteq j(\mathbb{P})$ with $p \in H_0$.  But $H_0$ cannot contain any elements of $j[\mathcal A]$, so $j^{-1}[H_0]$ is not generic over $V$, a contradiction.
\item If $p \in \mathbb{P} \restriction p_0$, $|| j(p) \in \dot{H}_0 ||_{\p(A)/I * \dot{\mathbb Q}} \not=0$, so $1 \nVdash_{\p(Z)/I} j(p) \in \dot K$.
\end{enumerate}
\end{proof}

\begin{lemma}
\label{generated}
Suppose the ideal $K$ in Theorem~\ref{dualitygen} is forced to be principal.  Let $\dot{m}$ be such that $\Vdash_{\p(Z)/I} \dot{K} = \{ p \in j(\mathbb{P}) : p \leq \neg \dot{m} \}$.  Suppose $f$ and $A$ are such that $A \Vdash \dot{m} = [f]$, and $\dot{B}$ is a $\mathbb{P}$-name for $\{ z \in A : f(z) \in H \}$.  Let $\bar{I}$ be the ideal generated by $I$ in $V[H]$.  Then $\bar{I} \restriction B = J \restriction B$, where $J$ is given by Theorem~\ref{dualitygen}.
\end{lemma}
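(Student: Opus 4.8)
The proof breaks into the two inclusions $\bar I \restriction B \subseteq J \restriction B$ and $J \restriction B \subseteq \bar I \restriction B$. Throughout I use the following facts extracted from the proof of Theorem~\ref{dualitygen}: writing $\mathcal B = \mathcal B(\p(Z)/I * j(\mathbb P)/\dot K)$ and $e(p) = \|j(p) \in \hat H\|$, the ideal $J$ is characterized in $V[H]$ by $A \in J$ iff $1 \Vdash_{\mathcal B / e[H]} [\id]_M \notin \hat j(A)$, and $\iota(p,\dot A) = e(p) \wedge \|[\id]_M \in \hat j(\dot A)\|$ is a dense embedding of $\mathbb P * \p(Z)/\dot J$ into $\mathcal B$; in particular $\mathcal B/e[H] \cong \p(Z)/J$ over $V[H]$. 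Since $\dot K$ is forced principal, $j(\mathbb P)/\dot K$ is the relative algebra $j(\mathbb P) \restriction \dot m$, and every generic $\hat H$ arising from $\p(Z)/I * j(\mathbb P)/\dot K$ contains $\dot m$.

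The inclusion $\bar I \restriction B \subseteq J \restriction B$ follows from the stronger statement $\bar I \subseteq J$: if $C \subseteq A_0$ with $A_0 \in I$, then $\hat j(C) \subseteq \hat j(A_0) = j(A_0)$ since $A_0 \in V$, and as $A_0 \in I$ already $1 \Vdash_{\p(Z)/I} [\id]_M \notin j(A_0)$, hence $1 \Vdash_{\mathcal B/e[H]} [\id]_M \notin \hat j(C)$, i.e.\ $C \in J$.

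For $J \restriction B \subseteq \bar I \restriction B$, fix $C \subseteq B$ with $C \notin \bar I$; I must show $C \notin J$, i.e.\ that some $p \in H$ forces $\dot C \in \dot J^+$. Fix a nice $\mathbb P$-name $\dot C = \bigcup_z \{\check z\} \times A_z$ for $C$ with $\Vdash \dot C \subseteq \dot B$, so that for $z \in A$ the set $A_z$ is an antichain consisting of elements $\leq f(z)$ (and $A_z = \emptyset$ otherwise), and $z \in C$ iff $A_z$ meets $H$. Some $p^* \in H$ decides the statement $\dot C \in \dot J$ (the set of such conditions being dense); I claim $p^* \Vdash \dot C \in \dot J$ is impossible, which gives $p^* \Vdash \dot C \in \dot J^+$ and hence $C \notin J$. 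Unwinding the definition of $\dot J$, the assumption $p^* \Vdash \dot C \in \dot J$ says that for every $\mathcal B$-generic $G * h$ over $V$ with $p^* \in j^{-1}[\hat H]$ one has $[\id]_M \notin j(\dot C)^{\hat H}$; I contradict this by producing such a $G * h$ with $[\id]_M \in j(\dot C)^{\hat H}$. The key observation is that $C$ is contained in the ground-model set $S = \{ z \in A : p^* \wedge \bigvee A_z \neq 0 \}$ — if $a \in A_z \cap H$ then $\bigvee A_z$ and $p^*$ both lie in the generic ultrafilter on $\mathcal B(\mathbb P)$ determined by $H$, hence are compatible — so $S \in I^+$. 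Let $G$ be $\p(Z)/I$-generic over $V$ with $S \in G$, giving $j \colon V \to M$. In $M$, the antichain $\mathbf A := \big(j(\la A_z : z \in Z \ra)\big)_{[\id]_M} = [z \mapsto A_z]_M$ consists of elements $\leq \dot m^G = [f]_M$, and $S \in G$ yields $j(p^*) \wedge \bigvee \mathbf A = [z \mapsto p^* \wedge \bigvee A_z]_M \neq 0$, so there is $q^* \in \mathbf A$ with $q^* \wedge j(p^*) \neq 0$. Taking a $j(\mathbb P)$-generic $\hat H$ over $V[G]$ through $q^* \wedge j(p^*)$, we get $\dot m^G \in \hat H$, so $h := \hat H \cap (j(\mathbb P) \restriction \dot m^G)$ makes $G * h$ a genuine $\p(Z)/I * j(\mathbb P)/\dot K$-generic over $V$; by the hypotheses of Theorem~\ref{dualitygen}, $H' := j^{-1}[\hat H]$ is $\mathbb P$-generic over $V$ and $j$ extends to $\hat j \colon V[H'] \to M[\hat H]$. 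Now $p^* \in H'$, while $q^* \in \mathbf A \cap \hat H$ witnesses $[\id]_M \in j(\dot C)^{\hat H} = \hat j(\dot C^{H'})$. This contradicts $p^* \Vdash \dot C \in \dot J$, since $p^* \in H'$ forces $\dot C^{H'} \in \dot J^{H'}$, which by the characterization of $J$ gives $1 \Vdash_{\mathcal B/e[H']} [\id]_M \notin \hat j(\dot C^{H'})$, and $G * h$ is a $\mathcal B/e[H']$-generic over $V[H']$.

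I expect the main delicate point to be the bookkeeping that translates ``$C \notin J$ in $V[H]$'' into the statement about $\mathcal B$ via $\iota$, and in particular keeping track of which $\mathbb P$-generic — the fixed $H$, or the $H' = j^{-1}[\hat H]$ manufactured in the argument — each occurrence of $\hat j$ is being applied to; once $\mathcal B/e[H]$ is identified with $\p(Z)/\dot J^H$ through $\iota$ this is routine, but it must be set up carefully.
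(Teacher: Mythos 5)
Your proof is correct and follows essentially the same route as the paper: show $\bar I \subseteq J$ directly, and for the converse pass to a ground-model $I$-positive set containing $C$, force with $\p(Z)/I$ through it, and then choose a $j(\mathbb P)$-generic $\hat H$ that simultaneously contains $j(p^*)$ (so $p^* \in j^{-1}[\hat H]$), contains $\dot m$ (so it induces a genuine $j(\mathbb P)/\dot K$-generic), and contains an element forcing $[\id]_M \in \hat j(\dot C)$. The paper works with the Boolean values $b_z = \|z \in \dot C\|$ and bakes $f(z)$ into the positive set $C' = \{z : p_1 \wedge b_z \wedge f(z) \neq 0\}$, whereas you use the antichains $A_z$ of a nice name and observe that $q^* \leq [f]_M$ automatically once $\Vdash \dot C \subseteq \dot B$; these are the same observation packaged differently, with the minor caveat that you should note the nice name can be shrunk so that $1 \Vdash \dot C \subseteq \dot B$ (one only has a $p_0 \in H$ forcing the inclusion a priori).
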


\begin{proof}
Clearly $J \supseteq \bar{I}$.  Suppose that $p_0 \Vdash$ ``$\dot{C} \subseteq \dot{B}$ and $\dot{C} \in \bar{I}^+$,'' and let $p_1 \leq p_0$ be arbitrary.  Without loss of generality, $\mathbb{P}$ is a complete Boolean algebra.  For each $z \in Z$, let $b_z = || z \in \dot{C} ||$.  In $V$, define $C' = \{ z : p_1 \wedge b_z \wedge f(z) \not= 0 \}$.  $p_1 \Vdash \dot{C} \subseteq C'$, so $C' \in I^+$.  If $G \subseteq \p(Z)/I$ is generic with $C' \in G$, then $j(p_1) \wedge b_{[\id]} \wedge \dot{m} \not= 0$.  Take $\hat{H} \subseteq j(\mathbb{P})$ generic over $V[G]$ with $j(p_1) \wedge b_{[\id]} \wedge \dot{m} \in \hat{H}$.  Since $b_{[\id]} \Vdash^M_{j(\mathbb{P})} [\id] \in \hat{j}(C)$, $p_1 \nVdash \dot{C} \in \dot J$ as $p_1 \in H = j^{-1}[\hat{H}]$.  Thus $p_0 \Vdash \dot C \in \dot J^+$.
\end{proof}

\begin{corollary}
\label{dualitynicecase}
If $I$ is a $\kappa$-complete precipitous ideal on $Z$ and $\mathbb{P}$ is $\kappa$-c.c.,\ then there is a canonical isomorphism $\iota : \mathbb{P} * \p(Z)/ \bar{I} \cong \p(Z)/I * j(\mathbb{P})$.
\end{corollary}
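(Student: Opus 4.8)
The plan is to apply Theorem~\ref{dualitygen} with $\dot K$ the canonical $\p(Z)/I$-name for the trivial ideal $\{0_{j(\mathbb P)}\}$ on $j(\mathbb P)$, so that $j(\mathbb P)/\dot K$ is just $j(\mathbb P)$ and, for a generic $G * h$, $\hat H = h$. Throughout I would use that since $I$ is $\kappa$-complete and precipitous, every generic ultrapower embedding $j : V \to M$ arising from $I$ has $\crit(j) \geq \kappa$: for a partition $\la A_\delta : \delta < \gamma \ra \in V$ of $Z$ with $\gamma < \kappa$, $\kappa$-completeness puts $\bigcup\{ A_\delta : A_\delta \in I \}$ into $I$, so the classes $[A_\delta]$ with $A_\delta \notin I$ form a maximal antichain of $\p(Z)/I$ of size $<\kappa$ that $G$ must meet; hence $j$ fixes every ordinal below $\kappa$.

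First I would verify the three hypotheses of Theorem~\ref{dualitygen}. Hypothesis (3) is immediate: for $0 \neq p \in \mathbb P$, $j(p) \neq 0_{j(\mathbb P)}$ by elementarity, so $1 \Vdash_{\p(Z)/I} j(p) \notin \dot K$. Hypothesis (1) is also immediate, since $\hat H = h$ is $j(\mathbb P)$-generic over the $\p(Z)/I$-generic extension $V[G]$, hence over $M \subseteq V[G]$. The substance is hypothesis (2), which is where the $\kappa$-c.c.\ of $\mathbb P$ meets $\crit(j) \geq \kappa$. If $A$ is a maximal antichain of $\mathbb P$ in $V$, then $|A| < \kappa \leq \crit(j)$, so $j(A) = j[A]$, a maximal antichain of $j(\mathbb P)$ in $M$; I would then note that $j[A]$ is still maximal in $j(\mathbb P)$ as computed in $V[G]$, since incompatibility in a fixed poset is downward absolute, so an element of $j(\mathbb P) \subseteq M$ refuting its maximality in $V[G]$ would already do so in $M$, and two distinct members $j(a_1), j(a_2)$ have no common lower bound in $V[G]$, as such a bound would lie in $j(\mathbb P) \subseteq M$ and contradict $j(a_1) \perp^M j(a_2)$. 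Thus $h$ meets $j[A]$, so $j^{-1}[h]$ meets $A$; as $A$ ranges over all maximal antichains of $\mathbb P$ in $V$, the usual argument (taking $\mathbb P$ to be a complete Boolean algebra, as one may) shows $j^{-1}[h]$ is $\mathbb P$-generic over $V$, which is (2).

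With the hypotheses in hand, Theorem~\ref{dualitygen} produces a $\mathbb P$-name $\dot J$ for an ideal on $Z$ together with a canonical isomorphism $\mathcal B(\mathbb P * \p(Z)/\dot J) \cong \mathcal B(\p(Z)/I * j(\mathbb P))$. To finish, I would identify $J$ with $\bar I$ via Lemma~\ref{generated}: here $\dot K$ is forced to be principal, generated by $\neg \dot m$ with $\dot m = 1_{j(\mathbb P)}$, which is represented by the constant function $f \equiv 1_{\mathbb P}$ with $A = Z$; the set $\dot B = \{ z \in Z : f(z) \in \dot H \}$ is then forced to equal $Z$, so Lemma~\ref{generated} gives $\bar I \restriction Z = J \restriction Z$, i.e.\ $J$ is exactly the ideal generated by $I$ in the $\mathbb P$-extension. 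Substituting, the isomorphism becomes $\mathbb P * \p(Z)/\bar I \cong \p(Z)/I * j(\mathbb P)$, as desired. The step I expect to require real care is seeing that $j[A]$ stays maximal in $V[G]$ — equivalently, that $j$ embeds $\mathbb P$ regularly into $j(\mathbb P)$ over the ideal forcing — but this is just the familiar fact that a $\kappa$-c.c.\ poset has its maximal antichains preserved by an embedding with critical point at least $\kappa$; the rest is routine bookkeeping through Theorem~\ref{dualitygen} and Lemma~\ref{generated}.
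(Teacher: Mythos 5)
Your proof is correct and follows essentially the same route as the paper: take $\dot K$ to be the $\p(Z)/I$-name for the trivial ideal, verify the three hypotheses of Theorem~\ref{dualitygen} using $\crit(j)\geq\kappa$ together with $j(A)=j[A]$ for maximal antichains $A$ of the $\kappa$-c.c.\ poset $\mathbb P$, and then apply Lemma~\ref{generated} with $\dot m = 1_{j(\mathbb P)}$, $f\equiv 1_{\mathbb P}$, $A=Z$ to get $\dot J = \bar I$. The paper states (2) more briskly by reading off genericity of $j^{-1}[\hat H]$ directly from genericity of $\hat H$ over $M$, whereas you additionally argue that $j[A]$ stays maximal in $V[G]$; this is a harmless (and correct) redundancy, not a different approach.
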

\begin{proof}
If $G * \hat{H} \subseteq \p(Z)/I * j(\mathbb{P})$ is generic, then for any maximal antichain $A \subseteq \mathbb{P}$ in $V$, $j[A] = j(A)$, and $M \models j(A)$ is a maximal antichain in $j(\mathbb{P})$.  Thus $j^{-1}[\hat{H}]$ is $\mathbb{P}$-generic over $V$, and clearly for each $p \in \mathbb{P}$, we can take $\hat{H}$ with $j(p) \in \hat{H}$.  Taking a $\p(Z)/I$-name $\dot K$ for the trivial ideal on $j(\mathbb P)$, Theorem~\ref{dualitygen} implies that there is a $\mathbb P$-name $\dot{J}$ for an ideal on $Z$ and an isomorphism $\iota : \mathcal{B}(\mathbb{P} * \p(Z) / \dot J) \to \mathcal{B}(\p(Z)/I * j(\mathbb{P}))$, and Lemma~\ref{generated} implies that $\Vdash_{\mathbb P} \dot J = \bar{I}$.
\end{proof}

\section{A local saturation module}
\label{localsatmodule}
In this section, we show how to transform saturated ideals with certain properties into a restriction of the nonstationary ideal, while retaining saturation.  Some key ideas are taken from \cite{MR2151585}.  Given a set of ordinals $S$, let $\mathbb C(S)$ denote the forcing for shooting a club through $\sup(S)$ by initial segments.

\begin{lemma}
\label{itershoot}Assume GCH, $\mu$ is regular, $\kappa = \mu^+$, and $S \subseteq \kappa \cap \cof(\mu)$ is stationary.  Let $\langle \mathbb P_\alpha,\dot{\mathbb Q}_\beta : \alpha\leq \lambda$, $\beta < \lambda \rangle$ be an iteration with ${<}\kappa$-supports such that for each $\alpha$, there is a $\mathbb P_\alpha$-name $\dot S_\alpha$ for a subset of $\kappa$ such that $\Vdash_{\mathbb P_\alpha} \dot{\mathbb Q}_\alpha = \mathbb C(\dot S_\alpha \cup \check S \cup \cof({<}\mu))$.  Then:
\begin{enumerate}
\item $\mathbb P_\lambda$ is $\mu$-closed.
\item $\mathbb P_\lambda$ is $\kappa$-distributive.
\item $\mathbb P_\lambda$ preserves every stationary $T \subseteq S$.
\item The set $\overline{\mathbb P_\lambda} = \{ p \in \mathbb P_\lambda : (\forall \alpha < \lambda)(\exists r \subseteq \kappa) p{\restriction} \alpha \Vdash_\alpha p(\alpha) = \check r \}$, is dense in $\mathbb P_\lambda$.
\end{enumerate}
\end{lemma}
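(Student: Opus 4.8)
The plan is to prove item (1) directly and to obtain (2), (3) and (4) together by a simultaneous induction on $\lambda$; in treating a given $\lambda$ I may freely use all four items for every $\gamma<\lambda$.

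Item (1) is routine. Each $\dot{\mathbb Q}_\alpha$ is $\mu$-closed: given a descending ${<}\mu$-chain of closed bounded subsets of $\dot S_\alpha\cup\check S\cup\cof({<}\mu)$, its union together with the union's supremum is a lower bound, and that supremum has cofinality ${<}\mu$, hence lies in $\cof({<}\mu)$. Since the iteration uses ${\leq}\mu$-sized supports, a descending ${<}\mu$-chain of conditions in $\mathbb P_\lambda$ has support of size ${\leq}\mu<\kappa$ and a coordinatewise lower bound, so $\mathbb P_\lambda$ is $\mu$-closed.

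The core is a uniform ``master condition'' construction, which I run to get (2), (3) and the hard case of (4) at once. Fix $p\in\mathbb P_\lambda$ and the data appropriate to the task: for (2), dense open sets $\langle D_i:i<\mu\rangle$; for (3), a stationary $T\subseteq S$ and a name $\dot C$ for a club in $\kappa$; for (4), nothing further. Using GCH (so $\mu^{<\mu}=\mu$), take $M\prec\langle H_\theta,\in,\mathbb P_\lambda,S,\dots\rangle$ of size $\mu$ with ${}^{<\mu}M\subseteq M$, containing $p$ and the data, and --- by the usual reflection, since $S$ (resp.\ $T$) is stationary --- with $\delta:=\sup(M\cap\kappa)\in S$ (resp.\ $\in T$); then $\delta\in\cof(\mu)$ and $\delta\notin M$. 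Note that if $q\in\overline{\mathbb P_\lambda}\cap M$ then $\sup\bigcup_{\alpha\in\sprt(q)}q(\alpha)$ is an ordinal ${<}\kappa$ definable from $q$, hence lies in $M\cap\kappa$, hence is ${<}\delta$: so \emph{every} coordinate value of such a $q$ is bounded below $\delta$, whether or not the coordinate itself lies in $M$. Working inside $M$, I build a descending chain $\langle p_i:i<\mu\rangle$ below $p$ with each $p_i\in\overline{\mathbb P_\lambda}\cap M$ that meets, along the way, each of the (at most $\mu$) relevant dense sets of $M$ --- each $D_i$, and for (3) the sets deciding elements of $\dot C$ above prescribed ordinals ${<}\delta$ --- and that at stage $i+1$ drives the height at every coordinate of $\sprt(p_i)$ past a prescribed $\gamma_i\in M\cap\kappa$, the $\gamma_i$ chosen cofinal in $\delta$. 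Keeping each $p_i$ in $\overline{\mathbb P_\lambda}$ is possible because when a new coordinate $\alpha<\lambda$ enters the support one may, by the $\kappa$-distributivity of $\mathbb P_\alpha$ and density of $\overline{\mathbb P_\alpha}$ (inductive hypotheses, since $\alpha<\lambda$), extend inside $M$ so as to decide its value by a check --- after first deciding its supremum, so that only ${\leq}\mu$ dense sets need be met; limit stages $i<\mu$ are absorbed by $\mu$-closure. Finally set $q(\alpha)=\bigcup_{i<\mu}p_i(\alpha)\cup\{\delta\}$ for $\alpha\in\bigcup_i\sprt(p_i)$. Then $\sprt(q)$ has size ${\leq}\mu<\kappa$ and every value of $q$ is decided, so $q\in\overline{\mathbb P_\lambda}$; $q$ is a genuine condition precisely because $\delta\in S$, so $\delta$ lies in every set $\dot S_\alpha\cup\check S\cup\cof({<}\mu)$, and because the driving together with the boundedness remark make $\delta$ the supremum at each coordinate, so $q(\alpha)$ end-extends every $p_i(\alpha)$; hence $q\le p_i$ for all $i$. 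Now (4) follows at once; (2) follows since $q\le p_{i+1}\in D_i$ and $D_i$ is open; and (3) follows since $q$ forces $\dot C\cap\delta$ cofinal in $\delta$, whence (as $\dot C$ is forced closed) $\delta\in\dot C$, while $\delta\in T$.

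Two comments on difficulty. The full construction is only needed when $\sprt(p)$ is cofinal in a $\lambda$ of cofinality $\mu$: if $\sup\sprt(p)<\lambda$ one reduces to a smaller stage by induction, and if $\lambda$ is a successor or of cofinality ${<}\mu$ one uses instead a shorter descending chain closed off at a supremum of cofinality ${<}\mu$, with no submodel needed. The genuinely delicate structural point --- the reason everything is threaded through one induction rather than run in a single pass --- is that the value of a $\mathbb P_\alpha$-name for a condition of $\mathbb C(\cdot)$ is an object of size up to $\mu$, so it is not decidable from the $\mu$-closure of $\mathbb P_\alpha$ alone: one needs the $\kappa$-distributivity of $\mathbb P_\alpha$, which is exactly item (2). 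I expect this interdependence, together with the simultaneous ``catching of tails'' at the ever-growing set of support coordinates so that the single ordinal $\delta\in S$ can serve as a common closing-off point, to be the main obstacle; it is precisely this that forces the hypotheses that $S$ be stationary and consist of ordinals of cofinality $\mu$.
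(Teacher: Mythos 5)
Your overall strategy is very close to the paper's: fix a suitably closed elementary submodel $M$ of size $\mu$ with $\delta=\sup(M\cap\kappa)\in S$ (resp.\ $T$), build a $\leq$-descending chain of length $\mu$ inside $M$ meeting the relevant dense sets while ``driving the heights,'' and then assemble a master condition whose coordinate values cap off at $\delta$, which is permitted precisely because $\delta\in S$. This is the same engine the paper uses for (2), (3), and the $\cf(\lambda)=\mu$ case of (4). The observation that for $q\in\overline{\mathbb P_\lambda}\cap M$ all coordinate values are bounded below $\delta$ is correct and is the reason the cap is an end-extension at every coordinate.

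There is, however, a gap in the way you weave (4) into the same pass, and it is more than cosmetic. You require \emph{each} $p_i$ to lie in $\overline{\mathbb P_\lambda}$, and you propose to arrange this by processing, one at a time, each coordinate that enters the support, deciding its value by a check using the inductive density of $\overline{\mathbb P_\alpha}$ for $\alpha<\lambda$. But when $\cf(\lambda)=\mu$ and $\sprt(p)$ is cofinal in $\lambda$ (the very case you single out as hard), the support of $p_i$ has size $\mu$, so this per-coordinate ``closing off'' of a single $p_i$ would itself be a $\mu$-length descending subchain of $\mathbb P_\lambda$. $\mathbb P_\lambda$ is only $\mu$-closed, and a $\mu$-chain of coordinatewise end-extensions closes off at a supremum of cofinality $\mu$ that is not automatically in $\cof({<}\mu)$; it must land in $S$ (or the appropriate $\dot S_\alpha$). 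The construction provides exactly \emph{one} ordinal $\delta\in S$ at which to do this, not one per stage $i<\mu$. So ``each $p_i\in\overline{\mathbb P_\lambda}$'' is in effect asking to prove (4) for $\lambda$ at every step of the very construction that is supposed to prove (4) for $\lambda$; this is circular, and invoking (4) only for $\alpha<\lambda$ does not get you out, because the support of $p_i$ cannot be confined below any fixed $\alpha<\lambda$.

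The paper avoids this in two ways. For (2) and (3) it makes no attempt to keep the chain in $\overline{\mathbb P_\lambda}$: the master condition is defined by letting $q(\beta)$ be the \emph{name} for $\bigcup_\alpha q_\alpha(\beta)\cup\{N\cap\kappa\}$, and one verifies by induction on $\beta$ that $q\restriction\beta$ forces this to be a legal closed bounded set; no checked values are needed. For (4), in the $\cf(\lambda)=\mu$ case it fixes a cofinal sequence $\langle\lambda_i:i<\mu\rangle$ and only arranges $p_i\restriction\lambda_i\in\overline{\mathbb P_{\lambda_i}}$ --- a genuine application of the inductive hypothesis at a stage $\lambda_i<\lambda$ --- so that each coordinate's value is \emph{eventually} checked along the chain, and the single cap at $\delta$ then yields $q\in\overline{\mathbb P_\lambda}$. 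That eventual-decidedness, not stagewise membership in $\overline{\mathbb P_\lambda}$, is what the union argument actually needs. One further caution: your remark that ``no submodel is needed'' when $\lambda$ is a successor or $\cf(\lambda)<\mu$ is only sensible for (4); items (2) and (3) concern $\mu$-many dense sets of $\mathbb P_\lambda$ and a name for a club, and the submodel argument (or an equivalent bookkeeping) is needed for every $\lambda$, not only when $\cf(\lambda)=\mu$.
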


\begin{proof}
(1) is easy.  For (2) and (3), fix a stationary $T \subseteq S$, let $p \in \mathbb P_\lambda$, and let $\dot f$ be a name for a function from $\mu$ to the ordinals.  Let $\theta$ be a large enough regular cardinal, and let $N \prec H_\theta$ be elementary such that $N^{<\mu} \subseteq N$, $N \cap \kappa \in T$, $| N | = \mu$, and $p, \dot{f}, \mathbb P_\lambda \in N$.  List the dense open subsets of $\mathbb P_\lambda$ in $N$ as $\langle D_\alpha : \alpha < \mu \rangle$.  Note that for all $\alpha < \kappa$, the set of $q$ such that for all $\beta \in \sprt(q)$, $q \restriction \beta \Vdash \sup q(\beta) > \check\alpha$ is dense.
Note also that for all $q \in \mathbb P_\lambda \cap N$, $\sprt(q) \subseteq N$.
%$\alpha < \mu$, $\sprt q_\alpha \subseteq N$, 
%and for each $\beta \in \lambda \cap N$, the set of $q$ such that $q {\restriction} \beta$ decides $\sup q(\beta)$ is dense.  
%and for each $\alpha < N \cap \kappa$, the set of $q$ such that $q {\restriction} \beta$ decides $\sup q(\beta)$ is dense. 
Using $\mu$-closure, build a descending chain $\langle q_\alpha : \alpha < \mu \rangle \subseteq N$ below $p$ such that each $q_\alpha \in D_\alpha$.  
%Note that for all $\alpha < \mu$, $\sprt q_\alpha \subseteq N$.
Let $q$ be a function with domain $N \cap \lambda$ such that for all $\beta$, $q(\beta)$ is the canonical $\mathbb P_\beta$-name for $\bigcup_{\alpha < \mu}q_\alpha(\beta) \cup \{ N \cap \kappa \}$.  By induction we see that $q$ is a condition in $\mathbb P_\lambda$ below each $q_\alpha$:  If $q \restriction \beta$ is a condition below each $q_\alpha \restriction \beta$, then $q \restriction \beta \Vdash_{\beta}$ ``$\langle q_\alpha(\beta) : \alpha < \mu \rangle$ is a chain of bounded closed subsets of $(\dot S_\beta \cup \check S \cup \cof({<}\mu)) \cap N \cap \kappa$ ordered by end-extension, and $\{ \sup q_\alpha(\beta) : \alpha < \mu \}$ is unbounded in $N \cap \kappa$.''  Hence $q {\restriction} \beta \Vdash q(\beta) \leq q_\alpha(\beta)$ for all $\alpha < \mu$.  Limit steps are trivial.  Thus $q$ decides $\dot f$ and forces $T \cap \dot C \not= \emptyset$.

%For (3), let $T \subseteq S$ be stationary, let $\dot C$ be a $\mathbb P_\lambda$-name for a club subset of $\kappa$, and let $p \in \mathbb P_\lambda$.  Let $N \prec H_\theta$ be such that $p,\mathbb P_\lambda,\dot C \in N$, $|N| = \mu$, $N^{<\mu} \subseteq N$, and $N \cap \kappa \in T$.  Build a master condition $q$ as in (2) and note that $q \Vdash \dot C$ is unbounded in $N \cap \kappa$.

For (4), we proceed by induction on $\lambda$.  Suppose $\lambda = \beta +1$ and the result holds for $\mathbb P_\beta$.  If $p \in \mathbb P_\lambda$, then by $\kappa$-distributivity we may extend $p \restriction \beta$ to some $q \in \overline{\mathbb P_\beta}$ such that $q \Vdash p(\beta) = \check r$ for some $r \subseteq \kappa$.  If $\cf(\lambda) = \delta < \mu$, choose an increasing sequence $\langle \lambda_i  : i<\delta \rangle$ cofinal in $\lambda$.  Let $p \in \mathbb P_\lambda$, and build a descending chain $\langle p_i : i < \delta \rangle$ below $p$ such that each $p_i \restriction i \in \overline{\mathbb P_{\lambda_i}}$.  The function $p'$ such that for all $\alpha \in \bigcup_{i<\delta} \sprt(p_i)$, $p'(\alpha)$ is a name for $\bigcup_{i<\delta} p_i(\alpha) \cup \sup(\bigcup_{i<\delta} p_i(\alpha))$ is a condition below $p$ in $\overline{\mathbb P_\lambda}$.  If $\cf(\lambda) > \mu$, then for any $p \in \mathbb P_\lambda$, $p \in \mathbb P_\beta$ for some $\beta < \lambda$, so we just apply the induction hypothesis.  Finally, if $\cf(\lambda) = \mu$, then given a $p \in \mathbb P_\lambda$, we take an elementary substructure $N$ with $p \in N$ as in the previous claims.  Using the induction hypothesis, we build a descending chain of length $\mu$ of elements below $p$ contained in $\overline{\mathbb P_\lambda}$, as in the case $\cf(\lambda)<\mu$. We then construct a master condition $q$ below this chain as in the previous claims, which will be in $\overline{\mathbb P_\lambda}$.
\end{proof}

In a context like above where the fixed objects $\kappa,\mu,S$ are clear, we will abbreviate the forcing $\mathbb C(T \cup S \cup \cof({<}\mu))$ by $\mathbb C(T)$.  An iteration of such forcings with ${<}\kappa$-support will be called an \emph{$S$-iteration}. 

\begin{lemma}[Foreman-Komjath]
\label{absorbshoot}
Assume GCH, $\mu$ is regular, $\kappa = \mu^+$, and $S \subseteq \kappa \cap \cof(\mu)$ is stationary.  Let $\la \mathbb P_\alpha,\dot{\mathbb Q}_\alpha : \alpha < \lambda \ra$ be an $S$-iteration.  There is a sequence $\la \pi_\alpha : \alpha \leq \lambda \ra$ such that:
\begin{enumerate}
\item $\pi_\alpha : \mathcal B(\mathbb C(\emptyset) \times \add(\kappa,\alpha)) \to \mathcal B(\mathbb C(\emptyset) \times \mathbb P_\alpha)$ is a projection.
\item For $\alpha < \beta \leq \lambda$, $\pi_\alpha = \pi_\beta \restriction \mathcal B(\mathbb C(\emptyset) \times \add(\kappa,\alpha))$.
\end{enumerate}
\end{lemma}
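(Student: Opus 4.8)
The plan is to build, by recursion on $\alpha\le\lambda$, a coherent $\subseteq$-increasing sequence of regular embeddings $e_\alpha:\mathbb C(\emptyset)\times\mathbb P_\alpha\to\mathcal B(\mathbb C(\emptyset)\times\add(\kappa,\alpha))$ that are the identity on the left-hand factor $\mathbb C(\emptyset)$ and that respect the natural regular inclusions $\mathbb P_\alpha\subseteq\mathbb P_\beta$, $\add(\kappa,\alpha)\subseteq\add(\kappa,\beta)$ for $\alpha<\beta$. Given this, one lets $\pi_\alpha$ be the canonical retraction of $\mathcal B(\mathbb C(\emptyset)\times\add(\kappa,\alpha))$ onto the complete subalgebra $\ran(e_\alpha)$, identified with $\mathcal B(\mathbb C(\emptyset)\times\mathbb P_\alpha)$; one checks routinely that such a retraction is a projection in the present sense and that the coherence of the $e_\alpha$ yields clause (2) (note that $\pi_0$ is forced to be the identity self-projection of $\mathcal B(\mathbb C(\emptyset))$). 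By the characterization of regular embeddings recorded among the general forcing facts above, producing $e_\alpha$ amounts to producing, uniformly and coherently, inside the extension by $\mathbb C(\emptyset)\times\add(\kappa,\alpha)$, a $V$-generic filter $C\times G_\alpha$ for $\mathbb C(\emptyset)\times\mathbb P_\alpha$, where $C$ is the generic for the left-hand copy of $\mathbb C(\emptyset)$, into which every condition is forced by some condition. In short, one must absorb the $S$-iteration $\mathbb P_\alpha$ into $\add(\kappa,\alpha)$ over the model $V[C]$, keeping the generic club $C$ available throughout.

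At a limit $\alpha$ of cofinality $\ge\kappa$ there is nothing to do, since $\add(\kappa,\cdot)$ and $\mathbb P_\cdot$ are direct limits there and $e_\alpha=\bigcup_{\beta<\alpha}e_\beta$ works. The content is at successors and at limits of cofinality $<\kappa$. For a successor step from $\alpha$ to $\alpha+1$, factor $\mathbb C(\emptyset)\times\add(\kappa,\alpha)=(\mathbb C(\emptyset)\times\mathbb P_\alpha)*\dot{\mathbb R}_\alpha$ via $e_\alpha$, so that $\mathbb C(\emptyset)\times\add(\kappa,\alpha+1)=(\mathbb C(\emptyset)\times\mathbb P_\alpha)*(\dot{\mathbb R}_\alpha\times\add(\kappa,1))$; by Silver's criterion it then suffices to show that in any $\mathbb C(\emptyset)\times\mathbb P_\alpha$-generic extension $W$, the forcing $\dot{\mathbb R}_\alpha\times\add(\kappa,1)$ projects onto the iterand $\dot{\mathbb Q}_\alpha=\mathbb C(\dot S_\alpha\cup S\cup\cof({<}\mu))$. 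Here $W$ still contains the generic club $C$, and $\dot{\mathbb Q}_\alpha$ is, by Lemma~\ref{itershoot} and GCH, a $\mu$-closed, $\kappa$-distributive forcing of size $\kappa$ shooting a club through a set containing $\cof({<}\mu)$ and the still-stationary (Lemma~\ref{itershoot}(3)) set $S$. The limit steps of cofinality $\delta<\kappa$ reduce similarly: one extends $\bigcup_{\beta<\alpha}e_\beta$ to the ``new'' conditions of $\mathbb C(\emptyset)\times\add(\kappa,\alpha)$ whose support is cofinal in $\alpha$ of size $<\kappa$, which --- using $\mu$-closure of everything in sight when $\delta<\mu$, and the density of the explicitly-given conditions $\overline{\mathbb P_\alpha}$ of Lemma~\ref{itershoot}(4) when $\delta=\mu$ --- again comes down to the same absorption.

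This absorption --- showing that $\add(\kappa,1)$, over a model carrying a generic club $C$ through $S\cup\cof({<}\mu)$, projects onto $\mathbb C(\dot T\cup S\cup\cof({<}\mu))$ for an arbitrary name $\dot T$ for a subset of $\kappa$ --- is the main obstacle, and it is where the method of Foreman and Komjath~\cite{MR2151585} is used. One defines the projection by a recursion of length $\kappa$ along the tree of conditions of $\add(\kappa,1)$, enumerating below each image condition all of its extensions in the target so as to witness the definition of a projection. At successor levels of the tree this is unproblematic; the one obstruction is at levels $\lambda$ of cofinality $\mu$, where the supremum $\delta$ of the partial club built along a branch has cofinality $\mu$ and so must be forced to lie in the target set, i.e.\ into $\dot T\cup S$, rather than being automatically placed there by $\cof({<}\mu)$. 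This is precisely why the stationary set $S$ was inserted into every iterand and why the left-hand $\mathbb C(\emptyset)$ is present: one runs the recursion with the generic club $C$ as a scaffold, always extending the partial club so that its maximum is a point of $C$, so that at a limit level the supremum $\delta$ is a limit point of $C$, hence $\delta\in C\subseteq S\cup\cof({<}\mu)$; when $\cf(\delta)=\mu$ this puts $\delta\in S$, inside the target, and the recursion proceeds. (Accommodating a name $\dot T$ rather than a fixed set, and re-synchronizing the scaffold as the $\mathbb C(\emptyset)$-part of the condition grows, turns this into a two-dimensional recursion, but the principle is unchanged.) Verifying that this recursion genuinely yields a projection --- in particular that the scaffolding never prevents an extension of an image condition from being realized --- is the technical heart; granted it, the coherence of the resulting $e_\alpha$, and hence clause (2) for the $\pi_\alpha$, falls out of the bookkeeping.
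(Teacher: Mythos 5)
Your plan captures the right high-level ideas — the Foreman--Komjath scaffold trick of forcing first with $\mathbb C(\emptyset)$ so that the generic club $C$ is available throughout, reducing to an absorption of each iterand $\mathbb C(\dot T\cup S\cup\cof({<}\mu))$ into $\add(\kappa,1)$ (which, once $C$ is present, follows from the $\kappa$-closed dense set of conditions with maximum in $C$ together with Lemma~\ref{folk}), and the role of $\overline{\mathbb P_\lambda}$ at limit steps. But the route you propose — building a $\subseteq$-increasing chain of regular embeddings $e_\alpha$ by recursion on $\alpha$ and then reading off $\pi_\alpha$ as the canonical retraction onto $\ran(e_\alpha)$ — leaves the crucial coherence clause (2) genuinely open, and I do not think it ``falls out of the bookkeeping.''

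The problem is that $e_{\alpha+1}\supseteq e_\alpha$ does \emph{not} by itself imply $\pi_{\alpha+1}\restriction\mathcal B(\mathbb C(\emptyset)\times\add(\kappa,\alpha))=\pi_\alpha$. Writing $\mathcal A_\gamma=\mathcal B(\mathbb C(\emptyset)\times\add(\kappa,\gamma))$, the canonical retraction $\pi_\beta(b)$ is $e_\beta^{-1}$ of $\bigwedge\{c\in\ran(e_\beta):c\geq b\}$, and for $b\in\mathcal A_\alpha$ one always gets $\pi_\beta(b)\leq\pi_\alpha(b)$ (since $\ran(e_\alpha)\subseteq\ran(e_\beta)$), but equality requires, roughly, that $\ran(e_\beta)\cap\mathcal A_\alpha=\ran(e_\alpha)$, and that $\bigwedge\{c\in\ran(e_\beta):c\geq b\}$ in fact lands in $\mathcal A_\alpha$. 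This is an additional property that has to be built into the recursion and reverified at limits; it is not automatic from abstract absorption isomorphisms produced anew at each step. There is a second, related soft spot at limit stages $\alpha$ of cofinality $<\kappa$: you must extend $\bigcup_{\beta<\alpha}e_\beta$ to conditions of $\mathbb P_\alpha$ with support cofinal in $\alpha$, and the natural move — taking infima along a cofinal chain — requires that the values $e_\beta(p\restriction\beta)$ actually lie in the $\kappa$-closed dense set of the target poset rather than merely in its completion, and that the resulting map remain regular; neither is addressed. By contrast, the paper sidesteps both issues by giving a single explicit, uniform definition of $\pi_\lambda$ on the dense set of ``good flat'' conditions of $\add(\kappa,\lambda)$, with the value $\pi_\lambda(p)$ computed coordinate-by-coordinate from the entries of $p$ alone via the fixed bijection $f:\kappa\to[\kappa]^{<\kappa}$; coherence in $\lambda$ is then visibly built in because the recipe never consults coordinates $\geq\alpha$ when applied to $p\restriction(\alpha\times\xi)$. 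The real content of the paper's argument — the notion of a good condition, the density of good conditions, and the verification that the map is a projection on them — is exactly what substitutes for the coherence and regularity verifications your sketch defers.
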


\begin{proof}
It suffices to show that, after forcing with $\mathbb C(\emptyset)$, which makes $S$ contain almost all ordinals of cofinality $\mu$, there exists a system of projections that commutes as desired, defined on a dense subset of $\add(\kappa,\lambda)$ and mapping into $\overline{\mathbb P_\lambda}$.  Note that since $\mathbb C(\emptyset)$ adds no new $<\kappa$-sequences, if $G \subseteq \mathbb C(\emptyset)$ is generic, then $\add(\kappa,\lambda)^V =\add(\kappa,\lambda)^{V[G]}$, and if $\mathbb P_\lambda^{V[G]}$ is $S$-iteration defined in $V[G]$ using the same sequence of names for subsets of $\kappa$, then we see inductively that $\overline{\mathbb P_\lambda}^V = \overline{\mathbb P_\lambda}^{V[G]}$.

Let us work in a generic extension by $\mathbb C(\emptyset)$, and let $C \subseteq \kappa$ be the generic club.  Note that in $V[C]$, there is a $\kappa$-closed dense subset of $\overline{\mathbb P_\lambda}$; namely the set of those $p$ such that for all $\alpha \in \sprt(p)$, $\max p(\alpha) \in C$.

Say a condition $p \in \add(\kappa,\lambda)$ is \emph{flat} if $\dom p = X \times \xi$ for some $X \subseteq \lambda$ and some $\xi < \kappa$.  Clearly, the set of flat conditions is dense.
Fix some bijection $f : \kappa \to [\kappa]^{<\kappa}$ such that $f(0) = \emptyset$.  We will define the map $\pi_\lambda$, and it will be clear from the construction that the same definition can be run at a different $\lambda'$, and the desired coherence condition (2) above will hold.

Let $X \in [\lambda]^{<\kappa}$, and let $p : X \to [\kappa]^{<\kappa}$.  Let $q \in \overline{\mathbb P_\lambda}$, with $\sprt(q) = X$.  Inductively define $q \wedge p \in \overline{\mathbb P_\lambda}$ on $\alpha\in X$ by putting $(q \wedge p)(\alpha) = p(\alpha)$ if:
\begin{enumerate}
\item $p(\alpha)$ is a closed bounded set end-extending $q(\alpha)$.
\item $\max p(\alpha) \in C$.
\item $(q \wedge p)\restriction \alpha \Vdash_\beta \check p(\alpha) \in \dot{\mathbb Q}_\alpha$.
\end{enumerate}
Define $(q \wedge p)(\alpha) = q(\alpha)$ otherwise.  
Let $p \in \add(\kappa,\lambda)$ be a flat condition with domain $X \times \xi$.  For $i < \xi$, let $p_i = p \restriction (X \times i)$; we will define $\pi(p)$ as a limit of the $\pi(p_i)$.  Let $\pi(p_0) = 1_{\mathbb P_\alpha}$.  Given $\pi(p_i)$, consider the sequence $q_i = \langle f(p(\beta,i)) : \beta \in X \rangle$.  Let $\pi(p_{i+1}) = \pi(p_i) \wedge q_i$.  At limit $j$, we take $\pi(p_j) = \inf_{i<j} \pi(p_i)$.

For $p,q \in \overline{\mathbb P_\lambda}$, let us say that $q$ is a \emph{horizontal extension} of $p$ if $q(\alpha) = p(\alpha)$ for $\alpha \in \sprt(p)$.  Call a flat condition $p \in \mathbb \add(\kappa,\lambda)$ with domain $X \times \xi$ \emph{good} if for every $i< \xi$ and every $\alpha \in X$, if $\pi(p_{i+1}) \restriction \alpha$ has a horizontal extension in $\overline{\mathbb P_\alpha}$ deciding whether $f(p(\alpha,i)) \in \dot{\mathbb Q}_\alpha$, then already $\pi(p_{i+1}) \restriction \alpha$ decides this.  We claim that the set of good conditions is dense, and that $\pi$ restricted to this set is a projection.

Let $p \in \add(\kappa,\lambda)$ be flat with domain $X_0 \times \xi$.  We can recursively add points to $X_0$ until we have a good condition $p'_1 \supseteq p_1$.  To see this, take an increasing enumeration $\langle \alpha_i : i < \ot(X_0) \rangle$.  Suppose we have $p'_1 \restriction (\alpha_j \times 1)$ for $j < i$.  Let $p''_1 \restriction (\alpha_i \times 1)$ be the union of these.  If there is a horizontal extension $r$ of $\pi(p''_1 \restriction (\alpha_i \times 1))$ that decides whether $p_1(\alpha_i) \in \dot{\mathbb Q}_{\alpha_i}$, let $p'_1 \restriction (\alpha_i \times 1)$ be such that $f(p'_1(\alpha,0)) = r(\alpha)$ for all $\alpha \in \dom r \setminus X$.  We take $p'_1$ to be the union of these over $i <\ot(X_0)$.  Let $\dom p'_1 = X_1 \times 1$.

Next we extend $p_2$ to have domain $X_1 \times 2$, by putting $p''_2(\alpha,1) = p'_1(\alpha,0)$ for $\alpha \in X_1 \setminus X_0$.  Then we apply the same procedure to add points to $X_1$ to produce $X_2$ and $p'_2$, this time also taking care to define $p'_2(\alpha,0) = 0$ when a new point $\alpha$ is introduced.  This will ensure $p'_2$ is flat, and also not alter the fact that $p'_2 \restriction X_2 \times 1$ is good.  We continue in this way up to $\xi$, adding zeros to the lower positions when necessary and simply taking unions at limits.  
%(Note that at a limit stage, if the projections are giving us a strictly increasing sequence of closed sets at coordinate $\alpha$, then the initial segments and thus their limit must be forced to be in $\dot{\mathbb R}_\alpha$. If the sequence freezes below the limit, then goodness is ensured below the limit.)  
The resulting condition will extend $p$, be flat with domain $X_\xi \times \xi$, and be good.

Suppose $q \leq p$ are good flat conditions, where $\dom p = X \times \zeta$, and $\dom q = Y \times \eta$.  
%Note that $q \restriction Y \times \zeta$ is a horizontal extension of $p$.  
We show by induction on $i < \zeta$, and in each case by induction on $\alpha < \lambda$, that $\pi(q_i) \restriction \alpha$ is a horizontal extension of $\pi(p_i) \restriction \alpha$.  Suppose this is true for $j < i$ and $\beta < \alpha$.  If $i$ is a limit, the induction proceeds trivially.  Suppose $i$ is a successor and $\alpha \in Y \setminus X$.  Then $\pi(p_i)$ is trivial at $\alpha$, so $\pi(q_i) \restriction (\alpha+1)$ is a horizontal extension.  Otherwise, we have $\pi(p_i) \restriction \alpha$ and $\pi(q_i) \restriction \alpha$ must either both not decide, or both agree whether the set $f(p(\alpha,i-1)) = f(q(\alpha,i-1))$ is in $\dot{\mathbb Q}_\alpha$.  Since by induction $\pi(p_{i-1})(\alpha) = \pi(q_{i-1})(\alpha)$, we must have $\pi(p_i)(\alpha) = \pi(q_i)(\alpha)$, showing that we may continue the induction along $\lambda$ in the case of successor $i$.  We conclude in particular that $\pi(q) \leq \pi(p)$.

Now suppose $p \in \add(\kappa,\lambda)$, $\dom p = X \times \xi$, and $q \leq \pi(p)$ is in $\overline{\mathbb P_\lambda}$.  For each $\alpha \in \sprt(q)$, there is $r_\alpha \in [\kappa]^{<\kappa}$ such that $q {\restriction} \alpha \Vdash_\alpha q(\alpha) = \check r_\alpha$.  If $\alpha \in X$, define $p'(\alpha,\xi) = f^{-1}(r_\alpha)$.  If $\alpha \in \dom q \setminus X$, let $p'(\alpha,i) = 0$ for all $i < \xi$, and $p'(\alpha,\xi) = f^{-1}(r_\alpha)$.  Since $\pi(p' \restriction (\sprt(q) \times \xi)) = \pi(p)$, we have $\pi(p') = q$.
\end{proof}

\begin{remark}
\label{niceproj}
Suppose we are in the situation as in the previous lemma.  Suppose $C \times G_\lambda$ is $(\mathbb C(\emptyset) \times \mathbb P_\lambda)$-generic, and let $G_\alpha$ be the generic for the subforcing $\mathbb P_\alpha$ for $\alpha < \lambda$. 
Then for all $p \in \add(\kappa,\lambda)$, $\pi_\lambda(p) \in G_\lambda$ iff $\pi_\alpha(p \restriction \alpha) \in G_\alpha$ for all $\alpha < \lambda$.  It follows that for all $\alpha < \lambda$, the identity map is a regular embedding from the quotient $\add(\kappa,\alpha)/G_\alpha$ into $\add(\kappa,\lambda)/G_\lambda$.  Therefore, if $\lambda$ is regular and $\alpha^{<\kappa} < \lambda$ for all $\alpha < \lambda$, then $\add(\kappa,\lambda)/G_\lambda$ is $(\lambda \cap \cof(\kappa))$-layered.
\end{remark}

\begin{theorem}
\label{localize}
Suppose the following:
\begin{enumerate}
\item GCH, $\mu$ is regular, and $\kappa = \mu^+$.
\item $I$ is a normal $\kappa^+$-saturated ideal on $\kappa$.
\item There is a stationary $A \in I^*$ and a nonstationary $B \subseteq \kappa^+$ such that 
%$S := \kappa \cap \cof(\mu) \setminus A$ is stationary, and 
$\Vdash_{\p(\kappa)/I} j(\check A) = \check B$.
\item There is a projection $\pi :  \p(\kappa)/I \to \col(\mu,\kappa) \times \add(\kappa,\kappa^+)$.
% \mathbb B(\mu,\kappa,\kappa^+)$.
%\item There is a regular embedding $e : \mathbb B(\mu,\kappa,\kappa^+) \to \p(\kappa)/I$.
\end{enumerate}
Let $S = \kappa \cap \cof(\mu) \setminus A$.  Then $S$ is stationary, and there is an $S$-iteration $\mathbb P$ of length $\kappa^+$ such that \[\mathcal B(\mathbb P * \p(A) / \ns_\kappa) \cong \p(\kappa) / I.\]
\end{theorem}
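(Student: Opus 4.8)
The plan is to apply the Duality Theorem (Theorem~\ref{dualitygen}) to $\mathbb{P}$ and a carefully chosen $\p(\kappa)/I$-name $\dot K$ for an ideal on $j(\mathbb{P})$. I will arrange that $\Vdash_{\p(\kappa)/I}$ ``$j(\mathbb{P})/\dot K$ is the trivial forcing,'' so that the isomorphism produced by the theorem reads $\mathcal{B}(\mathbb{P}*\p(\kappa)/\dot J)\cong\mathcal{B}(\p(\kappa)/I*j(\mathbb{P})/\dot K)=\p(\kappa)/I$, and then the remaining work is to show that the ideal $\dot J$ supplied by the theorem is forced to equal $\ns_\kappa\restriction A$. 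First, some reductions: since $\cof(\mu)\cap\kappa\in I^*$ by the Fact on saturated ideals and $A\cap\cof(\mu)\in I^*$, I may assume $A\subseteq\cof(\mu)$. Then $S$ is stationary --- otherwise a club $C$ with $C\cap\cof(\mu)\subseteq A$ would, for any generic embedding $j:V\to M$ arising from $I$, force $j(\check A)$ to contain $j(C)\cap\{\alpha<j(\kappa):\cf^M(\alpha)=\mu\}$, which is stationary in $j(\kappa)$ in $M$, contradicting hypothesis (3), since $B$ is nonstationary in $V$ and hence (witnessed by a club of $V$) in $M$. It is standard that here $j(\kappa)=\kappa^+$.

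Next I construct $\mathbb{P}$. By GCH there are only $\kappa^+$ many $I$-null subsets of $\kappa$, so with a bookkeeping function of length $\kappa^+$ I choose, at stage $\alpha$, a name $\dot S_\alpha$ for $(\cof(\mu)\cap\kappa)$ with the $A$-part of the $\alpha$-th relevant $I$-null set removed; then $\dot{\mathbb{Q}}_\alpha=\mathbb C(\dot S_\alpha)$ shoots a club disjoint from that set, and since I only ever delete $I$-null (hence $I^*$-conull) pieces of $A$, the stationarity of $A$ itself is never destroyed. By Lemma~\ref{itershoot}, $\mathbb{P}$ is then a $\mu$-closed, $\kappa$-distributive $S$-iteration preserving every stationary $T\subseteq S$. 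The bookkeeping is simultaneously coordinated with $j$ so that $j(\mathbb{P})$, computed in $M$, is forcing equivalent to $\mathbb{P}$ followed by a tail iteration over the coordinates in $[\kappa^+,j(\kappa^+))$; since hypothesis (3) gives $j(\check S)=\{\alpha<\kappa^+:\cf^M(\alpha)=\mu\}\setminus B$ and $B$ is nonstationary, this tail can be steered to shoot a club through $\kappa^+\setminus B$.

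Now fix a $\p(\kappa)/I$-generic $G$ with embedding $j:V\to M$. Using hypothesis (4) I project $G$ to a $\col(\mu,\kappa)\times\add(\kappa,\kappa^+)$-generic; by Lemma~\ref{folk} the $\mu$-closed factor $\col(\mu,\kappa)$ absorbs an extra $\mathbb C(\emptyset)$-generic, and by Lemma~\ref{absorbshoot} the $\mathbb C(\emptyset)\times\add(\kappa,\kappa^+)$-part projects onto $\mathbb C(\emptyset)\times\mathbb{P}$, yielding a $\mathbb{P}$-generic filter $G_{\mathbb{P}}\in V[G]$. Still in $V[G]$ I build a filter $\hat H$ that is $j(\mathbb{P})$-generic over $M$ with $j[G_{\mathbb{P}}]\subseteq\hat H$: at coordinates below $\kappa^+$ I take the $j$-images of the generic clubs coming from $G_{\mathbb{P}}$ and extend each to a club in $\kappa^+$, using that $j(\mathbb{P})$ is $\kappa^+$-distributive in $M$ and $M^\kappa\cap V[G]\subseteq M$; at the tail coordinates I fill in generically so that the union of the resulting clubs avoids $B$. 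Let $\dot K$ name the complement of $\hat H$, a maximal ideal on $\mathcal B(j(\mathbb{P}))$, so $j(\mathbb{P})/\dot K$ is the two-element Boolean algebra. Hypotheses (1) and (2) of Theorem~\ref{dualitygen} hold by this construction --- the essential point being that $\hat H$ is genuinely $M$-generic --- and (3) holds because for each $p\in\mathbb{P}$ we may take $G_{\mathbb{P}}\ni p$, so $j(p)\in\hat H$ is possible.

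Theorem~\ref{dualitygen} then yields a $\mathbb{P}$-name $\dot J$ with $\mathcal{B}(\mathbb{P}*\p(\kappa)/\dot J)\cong\mathcal{B}(\p(\kappa)/I*j(\mathbb{P})/\dot K)=\p(\kappa)/I$, and by Proposition~\ref{ultequal}, $\dot J$ is forced to be a normal, precipitous, and (as a factor of the $\kappa^+$-c.c.\ algebra $\p(\kappa)/I$) $\kappa^+$-saturated ideal on $\kappa$. Since $\dot J$ extends the ideal generated by $I$ and $\kappa\setminus A\in I$, $\dot J$ concentrates on $A$; combined with normality (every normal ideal on $\kappa$ contains $\ns_\kappa$), this gives $\ns_\kappa\restriction A\subseteq\dot J$. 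For the reverse inclusion, a computation with the definition of $\dot J$ in the proof of Theorem~\ref{dualitygen}, in the spirit of Lemma~\ref{generated}, identifies $\dot J\restriction A$ with $\bar I\restriction A$, which the bookkeeping in the construction of $\mathbb{P}$ has forced to equal $\ns_\kappa\restriction A$ exactly (a club has been shot disjoint from every $I$-null subset of $A$, and no larger sets have been killed). Hence $\dot J=\ns_\kappa\restriction A$, so $\p(\kappa)/\dot J\cong\p(A)/\ns_\kappa$ and $\mathcal{B}(\mathbb{P}*\p(A)/\ns_\kappa)\cong\p(\kappa)/I$. The hard part will be the coordinated construction of the names $\dot S_\alpha$ and of $\hat H$: one must arrange both that $j(\mathbb{P})$ splits as $\mathbb{P}$ followed by a tail that --- over $M$, yet using only objects available in $V[G]$ --- can be forced to shoot a club through $\kappa^+\setminus B$, and that the bookkeeping is complete enough that $\dot J\restriction A$ comes out to be the nonstationary ideal rather than something strictly larger. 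Reconciling the $<\kappa$-support iteration, the projection of hypothesis (4) via Lemmas~\ref{folk} and~\ref{absorbshoot}, and the generic ultrapower is the technical core of the proof.
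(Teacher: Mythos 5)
Your high-level strategy matches the paper's: build an $S$-iteration $\mathbb P$, arrange an $M$-generic $\hat H\subseteq j(\mathbb P)$ in $V[G]$ with $j[G_{\mathbb P}]\subseteq\hat H$ via Lemma~\ref{absorbshoot}, take $\dot K$ to be the prime ideal with complement $\hat H$ so the quotient $j(\mathbb P)/\dot K$ is trivial, and apply Theorem~\ref{dualitygen}. But there are two substantive gaps.

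First, the bookkeeping is insufficient. You enumerate only $I$-null subsets of $A$ in $V$ and shoot clubs to kill them. But the iteration $\mathbb P$ is not $\kappa^+$-distributive — each stage adds a new club subset of $\kappa$ — so $\p(\kappa)^{V[G_{\mathbb P}]}$ properly extends $\p(\kappa)^V$. More to the point, the ideal relevant at stage $\beta$ is not $I$ but an extension $I_\beta$ (the one that Theorem~\ref{dualitygen} produces once $\mathbb P_\beta$ has been absorbed into $\p(\kappa)/I$), and the final claim $J=\ns_\kappa\restriction A$ requires that every $J$-measure-one set — including $\mathbb P$-names not reducible to $V$-sets — has a club shot inside it. This forces the construction to be genuinely recursive: you must define $\mathbb P_\alpha$, the projections $\pi_\alpha:\col(\mu,\kappa)\times\add(\kappa,\alpha)\to\mathbb P_\alpha$, the master conditions $m_\alpha\in\overline{j(\mathbb P_\alpha)}$, and the names $\dot I_\alpha$ all by simultaneous induction, at each stage enumerating $\p(\kappa)^{V^{\mathbb P_\beta}}$ and folding those names into the bookkeeping via a pairing function so that each $\mathbb P_\beta$-name for an $\dot I_\beta$-measure-one set eventually gets a club shot through it. Your one-shot scheme with a pre-fixed list of $V$-sets does not close the loop.

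Second, the identification $\dot J\restriction A=\bar I\restriction A$ is wrong, and so Lemma~\ref{generated} is not the right tool. Lemma~\ref{generated} needs $\dot K$ principal, and even then only gives $\bar I\restriction B=J\restriction B$ for a specific $B$ depending on the condition generating $K$, not $B=A$. Your $\dot K$ is a prime ideal, not a principal one (the filter $\hat H$ has no infimum in $\mathcal B(j(\mathbb P))^{V[G]}$). Moreover $J$ is genuinely larger than $\bar I$: the clubs added by $\mathbb P$ make formerly $I$-positive sets nonstationary, and these lie in $J$ but not in the upward closure $\bar I$. The correct argument is direct, as in the paper: given a $\mathbb P$-name $\dot X$ forced into $I_{\kappa^+}^*$, reduce it to a $\mathbb P_\beta$-name by the $\kappa^+$-c.c., note it must already be $I_\beta$-measure-one, and observe that the bookkeeping guarantees some later stage shoots a club $C$ with $C\cap A\subseteq X$; this gives $J\subseteq\ns_\kappa\restriction A$, and normality plus $\kappa\setminus A\in I$ give the other inclusion. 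Finally, the description of $\hat H$ as taking $j$-images ``at coordinates below $\kappa^+$'' and filling in a tail is also too loose — the coordinates $j(\gamma)$ for $\gamma<\kappa^+$ are cofinal in $j(\kappa^+)$, not an initial segment, which is exactly why the master condition $m_\beta=\langle(j(\gamma),C_\gamma\cup\{\kappa\}):\gamma<\beta\rangle$ and the coherence of the $\pi_\beta$'s from Lemma~\ref{absorbshoot} must be carried through the induction rather than handled after the fact. You flagged this coordination as ``the hard part,'' and it is: it is where the actual content of the proof lies.
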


\begin{proof}
If $j : V \to M \subseteq V[G]$ is any generic ultrapower arising from $I$, then $\cof(\mu)^M = \cof(\mu)^{V[G]}$.  Thus since $B$ not almost all of $\cof(\mu) \cap j(\kappa)$, $S$ is stationary.
In $V$, we will construct an $S$-iteration $\mathbb P$ of length $\kappa^+$, and simultaneously construct a sequence of projections from $\p(\kappa)/I$ to $\mathbb P$ and a sequence of $\mathbb P$-names for ideals.

Suppose $G \subseteq \p(\kappa)/I$ is generic, and $j : V \to M \subseteq V[G]$ is the generic ultrapower embedding.  Since $B$ is nonstationary, the forcing $\mathbb C(j(S) \cup \cof({<}\mu))^M$ has a $\kappa^+$-closed dense set in $V[G]$, as does $\add(j(\kappa),j(\kappa^+))^M$.  Since $\mathbb C(j(S) \cup \cof({<}\mu)) \times \add(j(\kappa),j(\kappa^+))$ has the $j(\kappa^+)$-c.c.\ in $M$, and $j(\kappa^+)< (\kappa^{++})^V$, there are only $j(\kappa)$-many dense open subsets that live in $M$.  Using this and the closure of $M$, we can build in $V[G]$ a filter $H$ that is generic over $M$.  Fix a $\p(\kappa)/I$-name for such an object.

In $V$, let $\langle S^0_\gamma : \gamma < \kappa^+ \rangle$ enumerate $\p(\kappa)$.  We begin our $S$-iteration by first forcing with $\mathbb C(S^0_0)$ if $S^0_0 \in I^*$, and forcing with $\mathbb C(\kappa)$ otherwise.  Since $\col(\mu,\kappa)$ absorbs $\mathbb C(S \cup \cof({<}\mu))$, Lemma \ref{absorbshoot} gives a projection $\mathbb \pi_1 : \col(\mu,\kappa) \times \add(\kappa,1) \to \mathbb C(\emptyset) \times \mathbb C(S^0_0)$, so that a generic for this first step is absorbed by $\p(\kappa)/I$.  Suppose $C_0$ is a generic club for $\mathbb P_1$, and $G \subseteq \p(\kappa)/I$ is a generic absorbing $C_0$ via $\pi_1$.  Note that $C_0 \cup \{ \kappa \}$ is a condition in $j(\mathbb P_1)$.  %For any closed bounded $d \subseteq j(S^0_0)$ with $\max d \leq \kappa$, there is an isomorphism
%$\iota :  j(\mathbb P_1) \restriction d \to  j(\mathbb P_1) \restriction C_1 \cup \{ \kappa \} $
%given by $p \mapsto d \cup (p \setminus \kappa)$.  
The generic filter $H$ yields, via the projection of Lemma \ref{absorbshoot}, a club $\hat C_0 \subseteq j(\kappa)$ that is $j(\mathbb P_1) \restriction C_0 \cup \{\kappa\}$-generic over $M$.
%With $\iota$, we can modify it to a generic $\hat C_0$ that is an end-extension of $C_0$.  
Thus we can extend the embedding to $j_1 : V[C_0] \to M[\hat C_0]$.  By Theorem \ref{dualitygen}, we have in $V$ a $\mathbb P_1$-name $\dot I_1$ for a normal ideal extending $I$ such that $\mathcal B(\mathbb P_1 * \p(\kappa) / \dot I_1) \cong \p(\kappa)/I$, where $I_1$ is the collection of $X \subseteq \kappa$ for which it is forced that $\kappa \notin j_1(X)$.

Let $\alpha \mapsto \langle \alpha_0,\alpha_1 \rangle$ denote the G\"odel pairing function; note that $\alpha_0,\alpha_1 \leq \alpha$.  Let $\alpha < \kappa^+$ and assume inductively that:
\begin{enumerate}
\item In $V$, we have an $S$-iteration $\la \mathbb P_\beta,\dot{\mathbb Q}_\beta : \beta < \alpha \ra$.
\item  At each stage $\beta < \alpha$, we have chosen an enumeration $\langle \dot S^\beta_\gamma : \gamma < \kappa^+ \rangle$ of $\p(\kappa)^{V^{\mathbb P_\beta}}$.
\item We have a commuting system of projections
$\pi_\beta : \col(\mu,\kappa) \times \add(\kappa,\beta) \to \mathbb P_\beta$, for $\beta \leq \alpha$, as given by Lemma \ref{absorbshoot}.  This implies that $\p(\kappa)/I$ absorbs $\mathbb P_\alpha$.  Let $C_\beta$ denote the generic club added at stage $\beta$.
\item For $\beta < \alpha$, it is forced that $m_\beta = \la (j(\gamma), C_\gamma \cup \{ \kappa \}) : \gamma < \beta \ra$ is a condition in $\overline{j(\mathbb P_\beta)}$.
\item For $\beta<\alpha$, $H$ yields a $j(\mathbb P_\beta) \restriction m_\beta$-generic sequence $\la \hat C_\gamma : \gamma < j(\beta) \ra$.  By the coherence of the projections from Lemma \ref{absorbshoot}, we have that for $\beta'<\beta$, the sequence associated to $\beta$ is an end-extension of that associated to $\beta'$.
\end{enumerate}

Since we use ${<}\kappa$-supports, for all $\beta <\alpha$ and all $p$ in the generic filter corresponding to $\la C_\gamma : \gamma < \beta \ra$, $m_\beta \leq j(p)$.  Thus for all $\beta < \alpha$, we have elementary embeddings $j_\beta : V[\la C_\gamma : \gamma < \beta \ra] \to M[\la \hat C_\gamma : \gamma < j(\beta) \ra] \subseteq V[G]$, which are forced to extend one another.  Theorem \ref{dualitygen} gives that for each $\beta < \alpha$, there is a $\mathbb P_\beta$-name $\dot I_\beta$ for a normal ideal on $\kappa$ equal to the set of $X \subseteq \kappa$ for which it is forced that $\kappa \notin j_\beta(X)$, and $\mathcal B(\mathbb P_\beta * \p(\kappa) / \dot I_\beta) \cong \p(\kappa)/I$.

If $\alpha = \beta+1$, then we continue the construction by letting $\dot{\mathbb Q}_\beta$ be a $\mathbb P_\beta$-name for $\mathbb C(\dot S^{\beta_0}_{\beta_1})$ if $\Vdash_\beta \dot S^{\beta_0}_{\beta_1} \in \dot I_\beta^*$ and $\mathbb C(\kappa)$ otherwise.  We then choose an enumeration of $\p(\kappa)^{V^{\mathbb P_\alpha}}$.  The first three induction hypotheses are easily preserved.  For (4), first note that it is preserved at successors because $\overline{\mathbb P_\alpha}$ is dense in $\mathbb P_\alpha$, and because $\kappa$ is forced to be in $j(S^{\beta_0}_{\beta_1})$.   At limits, it is preserved by the closure of $M$ and because $j(\mathbb P_\alpha)$ uses ${<}j(\kappa)$-supports.  Thus (5) makes sense and follows from Lemma \ref{absorbshoot}.

Regarding the final stage $\kappa^+$, note that if $\mathcal A \subseteq j(\mathbb P_{\kappa^+})$ is a maximal antichain in $M$, then $\mathcal A \subseteq j(\mathbb P_{\beta})$ for some $\beta < \kappa^+$.  Thus the filter induced by $\la \hat C_\gamma : \gamma < \beta \ra$ meets $\mathcal A$.  Thus $\la \hat C_\gamma : \gamma < j(\kappa^+) \ra$ is $j(\mathbb P_{\kappa^+})$-generic over $M$, and we may extend the embedding to $j_{\kappa^+} : V[\la C_\gamma : \gamma < \kappa^+ \ra] \to M[\la \hat C_\gamma : \gamma < j(\kappa^+) \ra] \subseteq V[G]$.
%The final case of $\mathbb P = \mathbb P_{\kappa^+}$ is the same as in the limit case, except that we do not have $j[\kappa^+] \in M$.  However, 
%the partial master conditions $m_\beta = \langle (j(\alpha),C_\alpha \cup \{\kappa\}) : \alpha < \beta \rangle$ are in $M$ for every $\beta < \kappa^+$.  
%if $\mathcal A \subseteq j(\mathbb P_{\kappa^+})$ is a maximal antichain in $M$, then $\mathcal A \subseteq j(\mathbb P_{\beta})$ for some $\beta < \kappa^+$ since $j[\kappa^+]$ is cofinal in $j(\kappa^+)$.  The adjustment of $\sigma_\beta[H]$ to be compatible with $m_\beta$ is generic over $M$ and thus meets $\mathcal A$, since it is the result of applying an isomorphism that lives in $M$.  

%So the application of all the necessary adjustments to $\sigma_{\kappa^+}$ to be compatible with all partial master conditions is generic for $j(\mathbb P_{\kappa^+})$ over $M$.

Suppose $\dot X$ is a $\mathbb P_{\kappa^+}$-name for a set in $I_{\kappa^+}^*$.  Then $\dot X$ is a $\mathbb P_\beta$-name for some $\beta < \kappa^+$ by the $\kappa^+$-c.c.  If $\nVdash_{\mathbb P_\beta} \dot X \in I_\beta^*$, then we can take a generic $G \subseteq \p(\kappa)/I$ such that $\kappa \notin j_\beta(X)$.  But $G$ also determines an embedding $j_{\kappa^+}$ extending $j_\beta$, and by assumption it is forced that $\kappa \in j_{\kappa^+}(X)$, a contradiction.  Now in $V^{\mathbb P_\beta}$, $X = S^\beta_\gamma$ for some $\gamma$, and there is $\alpha \geq \beta$ such that $\langle \alpha_0,\alpha_1 \rangle = \langle \beta,\gamma \rangle$.  Thus $\mathbb Q_\alpha$ shoots a club $C$ through $X \cup (\kappa \setminus A)$, so that in the final model, $C \cap A \subseteq X$.
\end{proof}

\section{Obtaining saturated ideals}
\label{buildingblock}

Let $\mu<\kappa$ be regular cardinals.  Deviating slightly from convention, we define:
$$\col(\mu,{<}\kappa) :=   \prod^{{<}\mu \text{-support}}_{\substack{\mu \leq \alpha<\kappa \\ \alpha \text{ regular}}} \col(\mu,\alpha).$$
Now define: 
$$\mathbb P(\mu,\kappa) := \prod^{\text{Easton support}}_{\substack{\mu \leq \alpha<\kappa \\ \alpha \text{ regular}}} \col(\alpha,{<}\kappa).$$
It is convenient to identity $\mathbb P(\mu,\kappa)$ with a collection of partial functions on $\kappa^3$.
%\begin{enumerate}
%\item $\{ \alpha : (\exists \beta \exists \gamma)\langle \alpha,\beta,\gamma \rangle \in \dom p \}$ is an Easton subset of the regular cardinals below $\kappa$.
%\item $\forall \alpha \{ \beta : (\exists \gamma) \langle \alpha,\beta,\gamma, \rangle \in \dom p \} \subset [\alpha,\kappa)$ and has size $<\alpha$.
%\item $\forall \alpha \forall \beta\{ \gamma : \langle \alpha,\beta,\gamma, \rangle \in \dom p \}$ is a bounded subset of $\alpha$.
%\item $\la \alpha,\beta,\gamma \ra \in \dom p \rightarrow p(\alpha,\beta,\gamma)<\beta$.
%\end{enumerate}
If $\mu<\delta < \kappa$ and $\delta$ is regular, then $\mathbb P(\mu,\delta) = \mathbb P(\mu,\kappa) \cap V_\delta$.  Thus if $\kappa$ is Mahlo, then $\mathbb P(\mu,\kappa)$ is $S$-layered, where $S$ is the stationary set of regular cardinals below $\kappa$.

The key to our construction is Shioya's argument \cite{shioyaeaston}, which shows that $\mathbb P(\mu,\kappa)$ can absorb future versions of itself.

\begin{lemma}
\label{newproj}
Assume GCH.  Suppose $\kappa < \lambda$ are regular cardinals, and $\mathbb Q$ is a $\kappa$-c.c.\ partial order of size $\leq \kappa$.  There is a projection $\pi : \mathbb Q \times \mathbb P(\kappa,\lambda) \to \mathbb Q * \dot{\mathbb P}(\kappa,\lambda)$.
\end{lemma}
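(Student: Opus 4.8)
I would carry out Shioya's self‑absorption argument \cite{shioyaeaston}: peel off $\mathbb{Q}$ from both sides, normalize the two copies of $\mathbb{P}(\kappa,\lambda)$ to L\'evy collapses via Lemma~\ref{folk}, and then construct the projection between the collapses by hand.

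\emph{Reduction.} Since $\mathbb{Q}$ regularly embeds into $\mathbb{Q}\times\mathbb{P}(\kappa,\lambda)$ (as the left coordinate) and into $\mathbb{Q}*\dot{\mathbb{P}}(\kappa,\lambda)$ (as the first iterand), it is enough to produce, in $W:=V^{\mathbb{Q}}$, a projection $\rho$ from the ground model poset $\mathbb{P}(\kappa,\lambda)^V$ onto $\mathbb{P}(\kappa,\lambda)^W$. Indeed, a $\mathbb{Q}$-name $\dot\rho$ for such a $\rho$ yields a projection $\mathbb{Q}\times\mathbb{P}(\kappa,\lambda)\to\mathbb{Q}*\dot{\mathbb{P}}(\kappa,\lambda)$ by $(q,p)\mapsto(q,\dot\rho(\check p))$, where the refinement axiom is checked by passing below the given condition to decide a witness to the refinement property of $\dot\rho$.

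\emph{Normalization.} Since $|\mathbb{Q}|\le\kappa$ and $\mathbb{Q}$ is $\kappa$-c.c., $\mathbb{Q}$ preserves cofinalities and cardinals $\ge\kappa$, GCH at cardinals $\ge\kappa$, and $\kappa^{<\kappa}=\kappa$; so the index sets and cardinal arithmetic involved in $\mathbb{P}(\kappa,\lambda)$ are unchanged in $W$, and by GCH and the $\kappa$-c.c.\ of $\mathbb{Q}$ there remain only $\kappa$ bounded subsets of $\kappa$ in $W$. Each of $\mathbb{P}(\kappa,\lambda)^V$ and $\mathbb{P}(\kappa,\lambda)^W$ is $\kappa$-closed, of size $\lambda$, and collapses every cardinal in $(\kappa,\lambda)$ to $\kappa$ (computed in $V$, respectively $W$); so by Lemma~\ref{folk} and the standard uniqueness of L\'evy collapses, $\mathcal{B}(\mathbb{P}(\kappa,\lambda)^V)\cong\mathcal{B}(\col(\kappa,{<}\lambda)^V)$ in $V$, hence in $W$, and $\mathcal{B}(\mathbb{P}(\kappa,\lambda)^W)\cong\mathcal{B}(\col(\kappa,{<}\lambda)^W)$ in $W$. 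Thus it suffices to produce, in $W$, a projection $\col(\kappa,{<}\lambda)^V\to\col(\kappa,{<}\lambda)^W$. Moreover, by Easton's Lemma --- with $\col(\kappa,{<}\lambda)^V$ as the distributive factor and $\mathbb{Q}$ as the chain‑condition factor, using that $\kappa$-closed forcing preserves the $\kappa$-c.c.\ of $\mathbb{Q}$ --- $\col(\kappa,{<}\lambda)^V$ is still $\kappa$-distributive over $W$, so $V$, $W$, and every $\col(\kappa,{<}\lambda)^V$-generic extension of $W$ have the same bounded subsets of $\kappa$.

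\emph{The projection.} The essential point is that $\col(\kappa,{<}\lambda)^V$ is \emph{not} $\kappa$-closed over $W$: a single new real gives a descending $\omega$-chain in it with no lower bound. So this is not a forcing equivalence, and the projection must be built explicitly. I would build it by a recursion along the regular $\beta\in[\kappa,\lambda)$ that reinterprets each $\col(\kappa,\beta)^V$-condition as a $\col(\kappa,\beta)^W$-condition, using the generic information already available in the lower factors to realize the $\kappa$-many possible new bounded domains, keeping the images cofinal and coherent; the ${<}\kappa$-support bookkeeping survives because supports have size $<\kappa$ and only $\kappa$ patterns need be tracked at each coordinate. The step I expect to be the main obstacle is verifying the refinement axiom: given a $\col(\kappa,{<}\lambda)^W$-condition whose coordinates use bounded subsets of $\kappa$ not lying in $V$, one must exhibit a $\col(\kappa,{<}\lambda)^V$-condition whose image lies below it. This is where the hypotheses $|\mathbb{Q}|\le\kappa$ and ``$\mathbb{Q}$ is $\kappa$-c.c.'' are used essentially --- they bound the number of new patterns by $\kappa$ and keep $\col(\kappa,{<}\lambda)^V$ $\kappa$-distributive, so that a $\kappa$-length construction inside each factor captures all of them.
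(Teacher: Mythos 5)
There is a genuine gap, and it sits exactly where you push the normalization step: the claim that $\mathcal{B}(\mathbb{P}(\kappa,\lambda)^V)\cong\mathcal{B}(\col(\kappa,{<}\lambda)^V)$. Lemma~\ref{folk} does not give this: it applies to a $\kappa$-closed poset that \emph{forces its own cardinality to become $\kappa$}, whereas $|\mathbb{P}(\kappa,\lambda)|=\lambda$ and $\lambda$ is preserved by $\mathbb{P}(\kappa,\lambda)$ (it is $\lambda$-c.c.\ when $\lambda$ is Mahlo, and in general $\lambda$ is not collapsed). The ``standard uniqueness of L\'evy collapses'' you appeal to is the L\'evy--Solovay absorption theorem, which requires $\lambda$ inaccessible and the absorbed forcing to have size ${<}\lambda$; neither is assumed in the lemma, which is stated for arbitrary regular $\lambda$, and $\mathbb{P}(\kappa,\lambda)$ has size exactly $\lambda$. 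More to the point, $\mathbb{P}(\kappa,\lambda)$ is deliberately \emph{not} the L\'evy collapse: the whole reason the paper works with Shioya's Easton product of collapses rather than with $\col(\kappa,{<}\lambda)$ is that the coordinate structure of $\mathbb{P}$ is what lets one absorb ``future versions of itself,'' and your normalization step throws that structure away. Once you replace $\mathbb{P}(\kappa,\lambda)$ by an amorphous Boolean algebra, the explicit projection you propose to ``build by recursion'' has no handle to grab: you correctly note that $\mathbb{P}(\kappa,\lambda)^V$ is only distributive, not closed, over $W=V^{\mathbb{Q}}$, so Lemma~\ref{folk} cannot be run on the $W$ side either, and the refinement axiom --- which you acknowledge is the main obstacle --- is left entirely open.

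The paper avoids all of this by never leaving the coordinate-wise presentation. Identify $\mathbb{P}(\kappa,\lambda)$ with partial functions on $\lambda^3$ whose values at a point $\vec\alpha$ are ordinals below $\vec\alpha(1)$. By GCH and the $\kappa$-c.c.\ of $\mathbb{Q}$, one can fix an enumeration $\langle\tau_\alpha:\alpha<\lambda\rangle$ of the $\mathbb{Q}$-names for ordinals such that, for every regular $\eta\in[\kappa,\lambda]$, every name forced below $\check\eta$ appears as some $\tau_\alpha$ with $\alpha<\eta$. The projection is then defined by sending $(q,p)$ to $(q,\bar p)$, where $\bar p$ is the canonical $\mathbb{Q}$-name for the function with the same domain as $p$ satisfying $\Vdash\bar p(\vec\alpha)=\tau_{p(\vec\alpha)}$. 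The refinement property is then verified directly: given $\langle q_1,\dot p_1\rangle\leq\pi(q_0,p_0)$, the $\kappa$-c.c.\ bounds $\dom\dot p_1$ by an Easton set $X$ in $V$, and for each $\vec\alpha\in X\setminus\dom p_0$ one chooses, using the enumeration, an ordinal $\delta<\vec\alpha(1)$ with $\tau_\delta$ equal to $\dot p_1(\vec\alpha)$ when defined; the resulting ground-model condition $p_2$ satisfies $q_1\Vdash\bar p_2\leq\dot p_1$. This is both simpler and strictly more general than the route you outline, and it uses the hypotheses exactly where you anticipated they would be used (GCH plus $\kappa$-c.c.\ to count names regular cardinal by regular cardinal), but without any appeal to L\'evy collapse uniqueness.

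Your reduction to producing a $\mathbb{Q}$-name for a projection $\mathbb{P}(\kappa,\lambda)^V\to\mathbb{P}(\kappa,\lambda)^W$ is sound in principle and would work if you could construct $\dot\rho$, but in practice the construction of $\dot\rho$ coordinate-by-coordinate in $\mathbb{P}$-coordinates is exactly the paper's construction of $\pi$; interposing the (false) normalization only obscures this.
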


\begin{proof}
Using GCH and the chain condition, choose an enumeration $\{ \tau_\alpha : \alpha < \lambda \}$ of $\mathbb Q$-names for ordinals such that whenever $\eta \in [\kappa, \lambda]$ is regular and $\Vdash \sigma < \check \eta$, there is $\alpha < \eta$ such that $\Vdash \sigma = \tau_\alpha$.  Let $\pi$ be defined by $\la q,p \ra \mapsto \langle q, \dot p\rangle$, where $\dot p$ is the canonical $\mathbb Q$-name for the function with the same domain as $p$, and such that for all $\vec\alpha \in \dom p$, $\Vdash \dot p(\vec \alpha) = \tau_{p(\vec \alpha)}$.

Suppose $\langle q_1, \dot p_1 \rangle \leq \pi(q_0,p_0)$.  By the $\kappa$-c.c., there is a set $X \subseteq \lambda^3$ such that $\Vdash \dom \dot p_1 \subseteq \check X$, where:
\begin{itemize}
\item $\{ \alpha : \exists \beta \exists \gamma\langle \alpha,\beta,\gamma \rangle \in X \} \subseteq [\kappa,\lambda)$ and is Easton.
\item $\forall \alpha \{ \la \beta,\gamma \ra :\langle \alpha,\beta,\gamma \rangle \in X \} \subseteq [\alpha,\lambda) \times \alpha$ and has size $<\alpha$.
\end{itemize}
Define $p_2$ such that $p_2 \restriction \dom p_0 = p_0$, and if $\vec \alpha \in  X  \setminus \dom p_0$, then $p_2(\vec \alpha) = \delta$, where $\delta<\vec \alpha(1)$ is such that the following is forced about the $\mathbb Q$-name $\tau_\delta$:
``If $\vec \alpha \in \dom \dot{p}_1$, then $\tau_\delta = \dot{p}_1( \vec \alpha)$.''  If $\vec \alpha \in \dom p_0$, then $q_1 \Vdash \dot{p}_0(\vec\alpha) =  \dot{p}_1(\vec \alpha) =  \dot{p}_2(\vec\alpha)$.  If $\vec \alpha \in \dom p_2 \setminus \dom p_0$, then $q_1 \Vdash \vec \alpha \in \dom \dot{p}_1 \rightarrow \dot{p}_1(\vec\alpha) = \dot{p}_2(\vec\alpha)$.  Thus $q_1 \Vdash \dot{p}_2 \leq \dot{p}_1$.
\end{proof}

\begin{lemma}
\label{ahiso}
Suppose the following:
\begin{enumerate}
\item $\mu<\kappa\leq\delta<\lambda$ are regular cardinals, and $\kappa$ and $\lambda$ are Mahlo.
\item $j : V \to M$ is an almost-huge embedding derived from a $(\kappa,\lambda)$-tower.
\item $\dot{\mathbb Q}$ is a $\mathbb{P}(\mu,\kappa)$-name for a $\kappa$-closed, $\delta$-c.c.\ poset of size $\leq \delta$.
\end{enumerate}
Let $G * h * H$ be $\mathbb P(\mu,\kappa) * \dot{\mathbb Q} * \dot{\mathbb P}(\delta,\lambda)$-generic.  In $V[G*h*H]$, there is a normal $\kappa$-complete ideal $I$ on $[\delta]^{<\kappa}$ such that $\p([\delta]^{<\kappa})/I$ projects to $\col(\mu,\kappa) \times \col(\kappa,\delta) \times \add(\kappa,\lambda)$, and is $S$-layered, where $S$ is the set of $V$-regular $\alpha$, $\delta \leq \alpha < \lambda$.  Any generic embedding arising from forcing with this ideal extends $j$.
\end{lemma}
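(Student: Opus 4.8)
The plan is to obtain $I$ from the component of the $(\kappa,\lambda)$‑tower witnessing the almost‑hugeness of $\kappa$ that concentrates on $[\delta]^{<\kappa}$, together with the duality machinery of Section~\ref{preliminaries}. Let $j_U\colon V\to M_U$ be the corresponding ultrapower, so $\crit(j_U)=\kappa$, $j_U(\kappa)=\lambda$, $[\mathrm{id}]_{M_U}=j_U[\delta]$, and $j=k\circ j_U$ with $\crit(k)>\lambda$; in particular $M_U$ and $V$ agree on $V_\lambda$. Since $\mathbb P(\mu,\kappa)\subseteq V_\kappa$ and $\crit(j_U)=\kappa$, we have $j_U(\mathbb P(\mu,\kappa))=\mathbb P(\mu,\lambda)$, which is $S$‑layered in $V$ (hence $\lambda$‑c.c.)\ because $\lambda$ is Mahlo, and is the same poset in $M_U$. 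As $\mathbb P(\mu,\kappa)$ is a regular suborder of $\mathbb P(\mu,\lambda)$, Corollary~\ref{dualitynicecase} applied with the (principal, trivial‑quotient) dual of $U$ gives a $\mathbb P(\mu,\kappa)$‑name $\dot J$ for a normal, $\kappa$‑complete, precipitous ideal on $[\delta]^{<\kappa}$ with
\[ \mathbb P(\mu,\kappa)\ast\p([\delta]^{<\kappa})/\dot J\;\cong\;\mathbb P(\mu,\lambda), \]
and by Proposition~\ref{ultequal} forcing with $\p([\delta]^{<\kappa})/J$ over $V[G_0]$, where $G_0$ is $\mathbb P(\mu,\kappa)$‑generic, is precisely the generic ultrapower extending $j_U$.

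The central step is to analyze the quotient $\mathbb P(\mu,\lambda)/G_0$ and show that, as a forcing over $V[G_0]$, it is a three‑step iteration $\dot{\mathbb Q}\ast\dot{\mathbb P}(\delta,\lambda)\ast\dot{\mathbb A}$ in which $\dot{\mathbb A}$ is $S$‑layered and projects onto $\col(\mu,\kappa)\times\col(\kappa,\delta)\times\add(\kappa,\lambda)$. Viewing $\mathbb P(\mu,\lambda)$ as the product of its coordinates $\col(\alpha,\beta)$ indexed by regular $\mu\le\alpha\le\beta<\lambda$, I would: use Shioya's self‑absorption of the family (Lemma~\ref{newproj}) so that the coordinates with $\alpha\ge\kappa$, which form a copy of $\mathbb P(\kappa,\lambda)$, swallow a future $\mathbb P(\delta,\lambda)$ over the small $\kappa$‑c.c.\ forcing to which $\dot{\mathbb Q}$ reduces; use Lemma~\ref{folk} to reduce $\dot{\mathbb Q}$ --- $\kappa$‑closed of size $\le\delta$ --- to a single Levy collapse $\col(\kappa,\delta)$ sitting among those coordinates; and gather what remains (the $\mu$‑coordinate $\col(\mu,{<}\kappa)$, the ``overflow'' coordinates with $\alpha<\kappa\le\beta$, and the unused part of $\mathbb P(\kappa,\lambda)$) as $\dot{\mathbb A}$. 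The $S$‑layeredness and projections of $\dot{\mathbb A}$ I would verify by the bookkeeping of Lemma~\ref{iterlayer} and Remark~\ref{niceproj}, using that every initial segment of $\mathbb P(\mu,\lambda)$ below an ordinal $<\lambda$ has size $<\lambda$ --- so $\add(\kappa,\lambda)$ appears as the layered quotient of the tail exactly as in Remark~\ref{niceproj}, while $\col(\mu,\kappa)$ comes from the $\mu$‑coordinate and $\col(\kappa,\delta)$ from the collapse used to absorb $\dot{\mathbb Q}$. I expect this to be the main obstacle: one must keep the regular embeddings coherent and control chain conditions, closure degrees, and Easton supports simultaneously through the absorptions, and in particular must place the not‑obviously‑nice overflow coordinates into $\dot{\mathbb A}$ without destroying its layeredness.

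Granting the factorization, let $G_1\ast G_2$ be the $\dot{\mathbb Q}\ast\dot{\mathbb P}(\delta,\lambda)$‑generic it produces, so $V[G_0][G_1][G_2]=V[G\ast h\ast H]$, where now $\lambda=\delta^+$ (because $\mathbb P(\delta,\lambda)$ Levy‑collapses at its $\delta$‑coordinate and the remaining posets preserve it) and $\dot{\mathbb A}$ is $\delta^+$‑c.c.\ there. Since $\dot{\mathbb Q}$ is $\kappa$‑closed and $\dot{\mathbb P}(\delta,\lambda)$ is ${<}\delta$‑closed, the generic ultrapower $i_0\colon V[G_0]\to N_0$ by $J$ restricts, by Silver's criterion (together with the Shioya‑style absorption of $\dot{\mathbb P}(\delta,\lambda)$ into $i_0(\dot{\mathbb P}(\delta,\lambda))$), to a generic ultrapower $i\colon V[G\ast h\ast H]\to N$ whose associated forcing is $\dot{\mathbb A}$. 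A final application of Theorem~\ref{dualitygen} and Lemma~\ref{generated} --- equivalently, a repetition of the reasoning behind Proposition~\ref{ultequal} over $V[G\ast h\ast H]$ --- identifies $i$ with forcing by $\p([\delta]^{<\kappa})/I$ for a normal $\kappa$‑complete precipitous ideal $I\in V[G\ast h\ast H]$ with $\p([\delta]^{<\kappa})/I\cong\dot{\mathbb A}$. Then $I$ is $S$‑layered and projects onto $\col(\mu,\kappa)\times\col(\kappa,\delta)\times\add(\kappa,\lambda)$ by the factorization, and any generic embedding it yields extends $j_U$, hence, post‑composed with the generic analogue of $k$, extends $j$.
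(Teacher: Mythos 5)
The first step of your argument is where it breaks. You take $U$ to be the ultrafilter on $[\delta]^{<\kappa}$ derived from the almost‑huge $j$, i.e.\ $U=\{X: j[\delta]\in j(X)\}$, and assert that the associated ultrapower $j_U\colon V\to M_U$ satisfies $j_U(\kappa)=\lambda$ with $\crit(k)>\lambda$ for the factor map $k\colon M_U\to M$. This is false: the ordinal $j_U(\kappa)$ is represented by functions $[\delta]^{<\kappa}\to\kappa$, of which there are only $\kappa^{\delta^{<\kappa}}=2^\delta=\delta^+$ under GCH, so $j_U(\kappa)\le\delta^+<\lambda$ (recall $\lambda$ is Mahlo above $\delta$). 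The full target $\lambda$ is only reached in the direct limit along the whole tower, which is exactly what makes the representation $x=j(f)(j[\alpha])$ with $\alpha$ ranging over all of $\lambda$ indispensable; no single ultrafilter component has $j_U(\kappa)=\lambda$. Consequently $j_U(\mathbb P(\mu,\kappa))=\mathbb P(\mu,j_U(\kappa))\ne\mathbb P(\mu,\lambda)$, and the application of Corollary~\ref{dualitynicecase} to the dual of $U$ gives $\mathbb P(\mu,\kappa)*\p([\delta]^{<\kappa})/\dot J\cong\mathbb P(\mu,j_U(\kappa))$, a Boolean algebra of size at most $\delta^+$. Such a $J$ cannot project onto $\add(\kappa,\lambda)$ (which has size $\lambda$), and the proposed factorization of $\mathbb P(\mu,\lambda)/G_0$ into $\dot{\mathbb Q}*\dot{\mathbb P}(\delta,\lambda)*\dot{\mathbb A}$ cannot be tied back to $J$: the duality you invoked accounts only for the small piece $\mathbb P(\mu,j_U(\kappa))$, leaving the bulk of $\mathbb P(\mu,\lambda)$ unexplained.

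The paper sidesteps this by never passing to a single ultrafilter. It works with the full almost‑huge $j$: first Lemma~\ref{ahext} is used to extend $j$ to $j\colon V[G*K]\to M[\hat G*\hat K]$, where $\hat G\subseteq\mathbb P(\mu,\lambda)$ simultaneously projects to $G*h*H$ and to $G*K\subseteq\mathbb P(\mu,\kappa)*\dot{\mathbb P}(\kappa,\lambda)$; the ideal $I$ is then \emph{defined} from the extended $j$ and the quotient $\mathbb R=\mathbb P(\mu,\lambda)/(G*h*H)$ as $I=\{X\subseteq[\delta]^{<\kappa}: 1\Vdash_{\mathbb R}j[\delta]\notin j(X)\}$, and the isomorphism $\p([\delta]^{<\kappa})/I\cong\mathcal B(\mathbb R)$ is proved directly (via Lemma~\ref{forcingiso}, not the duality machinery) by showing that the factor map $k\colon N\to M[\hat G*\hat h*\hat H]$ from the generic ultrapower is surjective on ordinals; surjectivity uses exactly the tower representation $\beta=j(f)(j[\alpha])$ together with a surjection $b\colon\delta\to\alpha$ in the extension, which is the step that exploits the full almost‑hugeness and has no analogue at a single component $U$. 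If you want to route through Theorem~\ref{dualitygen}, you would need to first extend $j$ through $\mathbb P(\mu,\kappa)$ and take the ideal it induces on the quotient (which is what the paper effectively does), rather than starting from the ground‑model $U$.
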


\begin{proof}
Let us first claim that there is a projection from $\mathbb P(\mu,\lambda)$ to 
$$\mathbb P(\mu,\kappa) * [(\dot{\mathbb Q} * \dot{\mathbb P}(\delta,\lambda)) \times \dot\col(\kappa,\delta) \times \dot\add(\kappa,\lambda)] \times \col(\mu,\kappa),$$ which is the identity on $\mathbb P(\mu,\kappa)$.  Note that there is a natural projection from $\mathbb P(\mu,\lambda)$ to $\mathbb P(\mu,\kappa) \times \mathbb P(\kappa,\lambda) \times  \col(\mu,\kappa)$, given by 
$$p \mapsto \langle p \restriction \kappa^3, p \restriction [\kappa,\lambda) \times \lambda^2,p(\mu,\kappa,\cdot) \rangle.$$
By Lemma \ref{newproj}, the first two factors project to $\mathbb  P(\mu,\kappa) * \dot{\mathbb P}(\kappa,\lambda)$.  Since by Lemma \ref{folk}, $\col(\alpha,\beta) \cong \col(\alpha,\beta) \times \mathbb R$, whenever $\alpha$ is regular and $\mathbb R$ is $\alpha$-closed and of size $\leq \beta$, we see that in $V^{\mathbb P(\mu,\kappa)}$, $\mathbb P(\kappa,\lambda)$ is forcing-equivalent to $\mathbb P(\kappa,\lambda) \times \mathbb P(\delta,\lambda) \times \col(\kappa,\delta) \times \add(\kappa,\lambda)$.  The first factor projects to $\mathbb Q$ by Lemma \ref{folk} again.  By Lemma \ref{newproj}, we get a projection from the first two factors to $\mathbb Q * \dot{\mathbb P}(\delta,\lambda)$.  This finishes the argument for the claim.

Now let $G * h * H$ be as hypothesized, and let $\hat G \subseteq \mathbb P(\mu,\lambda)$ be a generic projecting to $G * h * H$.  By the above argument, $\hat G$ also projects to $G * K \subseteq \mathbb P(\mu,\kappa)* \dot{\mathbb P}(\kappa,\lambda)$, which in turn projects to $G * h * H$.  Since $\mathbb P(\kappa,\lambda)$ is clearly $(\kappa,\lambda)$-nice, we get by Lemma \ref{ahext} an extended elementary embedding $j : V[G * K] \to M[\hat G * \hat K]$, such that $\ord^{<\lambda} \cap V[\hat G] \subseteq M[\hat G * \hat K]$.  By elementarity, there is some $\hat h * \hat H$ such that the embedding restricts to $j : V[G * h * H] \to M[\hat G * \hat h * \hat H]$.  By the closure of the relevant forcings, the latter model also has the same $\ord^{<\lambda}$ as $V[\hat G]$.

 In $V[G*h*H]$, let $\mathbb R$ be the quotient forcing $\mathbb P(\mu,\lambda)/(G*h*H)$.  We define the ideal $I$ as $\{ X \subseteq [\delta]^{<\kappa} : 1 \Vdash_{\mathbb R} j[\delta] \notin j(X) \}$.  Let $e : \p([\delta]^{<\kappa})/I \to \mathcal{B}(\mathbb R)$ be defined by $e([X]_I) = || j[\delta] \in j(X) ||$.  It is routine to check that $I$ is normal and $\kappa$-complete and that $e$ is a Boolean embedding.  Since $\mathbb P(\mu,\lambda)$ is $\lambda$-c.c., $I$ is $\lambda$-saturated.  It follows from this that $e$ is a complete embedding.  For let $\{ [X_\alpha]_I : \alpha < \delta \}$ be a maximal antichain in $\p([\delta]^{<\kappa})/I$.  Then $[\nabla_{\alpha<\delta} X_\alpha]_I = [[\delta]^{<\kappa}]_I$, so $1 \Vdash_{\mathbb R}  j[\delta] \in j(\nabla_{\alpha<\delta} X_\alpha)$.  Thus it is forced that $j[\delta] \in j(X_\alpha)$ for some $\alpha < \delta$.

To show the isomorphism, let $U \subseteq \p([\delta]^{<\kappa}) / I$ be generic over $V[G*h*H]$, and let $j_U : V[G*h*H] \to N$ be the generic ultrapower embedding.  Forcing further with $\mathbb R/e[U]$ yields an embedding $j : V[G*h*H] \to M[\hat G * \hat h * \hat H]$ as above.  We have that $X \in U$ if and only if $j[\delta] \in j(X)$, so we can define an elementary embedding $k : N \to M[\hat G * \hat h * \hat H]$ by $k([f]_U) = j(f)(j[\delta])$, and we have $j = k \circ j_U$.  Note that for $\alpha \leq \delta$, $k(\alpha) = k(\ot(j_U(\alpha) \cap [\id]_U)) = \ot(j(\alpha) \cap j[\delta]) = \alpha$.  Thus $\crit(k) \geq \lambda$.

%Since $N \models \mu^+ = \delta$ and $j_U[\lambda] \in N$, we have $k(\lambda) = \lambda$ and $j[\lambda] = k(j_U[\lambda])$.  

Let $\beta$ be any ordinal.  Since $j: V \to M$ was derived from a $(\kappa,\lambda)$-tower, there is some $\alpha < \delta$ and some $f \in V$ such that $\beta = j(f)(j[\alpha])$.  Let $b : \delta \to \alpha$ be a surjection in $V[G*h*H]$.  Then 
$$\beta = j(f)(j(b)[j[\delta]]) = k( j_U(f)(j_U(b)[[\id]_U])).$$
  Thus $\beta \in \ran(k)$, so $k$ does not have a critical point and $N = M[\hat{G} * \hat h * \hat{H}]$.
For any generic $\hat{G}$, if $U = e^{-1}[\hat G]$, then $\hat G =  j_U(G)$.  For any generic $U$, if $\hat G = j_U(G)$, then $U = \{ X \subseteq [\delta]^{<\kappa} : j[\delta] \in j_U(X) \} = e^{-1}[\hat G]$.  Thus Lemma~\ref{forcingiso} implies that $\p([\delta]^{<\kappa}) / I \cong \mathcal{B}(\mathbb R)$.
It follows that $\p([\delta]^{<\kappa}) / I$ projects to $\col(\mu,\kappa) \times \col(\kappa,\delta) \times \add(\kappa,\lambda)$ as desired.

It remains to show that $\p([\delta]^{<\kappa}) / I$ is $S$-layered, where $S$ is the stationary set of $V$-regular cardinals between $\delta$ and $\lambda$.  This follows because the projection $\pi$ of Lemma \ref{newproj} has the following property: For any $p$ in the quotient $\mathbb R$, any $\alpha < \lambda$, and any $q \leq p \restriction \alpha$ of rank $\leq \alpha$ which is also in the quotient, we have that $p \cup q$ is also in the quotient.  Thus $\la \mathbb R \cap V_\alpha : \alpha < \lambda \ra$ witnesses that $\mathbb R$ is $S$-layered.
\end{proof}

Although we are ultimately interested in saturated ideals on regular cardinals, rather than sets of the form $[\delta]^{<\kappa}$, the ideals on such sets will be useful for us because of their resilience under collapses:

\begin{lemma}
\label{collapsedelta}
Assume GCH.  Suppose $\kappa<\delta$ are regular, and $I$ is a normal ideal on $[\delta]^{<\kappa}$ such that $\p([\delta]^{<\kappa})/I$ projects to $\col(\kappa,\delta)$, and is $S$-layered for some stationary subset $S \subseteq \delta^+$.  If $g \subseteq \col(\kappa,\delta)$ is generic, then in $V[G]$, there is an $S$-layered ideal $J$ on $\kappa$ such that $\p(\kappa)/J \cong (\p([\delta]^{<\kappa})/I)/g$.  Furthermore, any generic ultrapower arising from forcing with $J$ over $V[g]$ extends one arising from forcing with $I$ over $V$.
\end{lemma}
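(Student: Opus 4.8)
The plan is to recognize the quotient $\mathbb R := \big(\p([\delta]^{<\kappa})/I\big)/g$ as an ideal forcing on $\kappa$ via Theorem~\ref{dualitygen}, using $\col(\kappa,\delta)$ as the outer forcing ``$\mathbb P$'' there and a name $\dot K$ chosen so that the inner quotient $j(\mathbb P)/\dot K$ is trivial. Write $Z = [\delta]^{<\kappa}$. The projection $\pi$ exhibits $\col(\kappa,\delta)$ as (isomorphic to) a regular suborder of the complete Boolean algebra $\p(Z)/I$ (Lemma~1(3); $\p(Z)/I$ is complete since $I$ is $\delta^+$-saturated, an $S$-layered poset being $\delta^+$-c.c.), so $\p(Z)/I \cong \col(\kappa,\delta) * \mathbb R$. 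By the Fact, $I$ is precipitous, and any generic ultrapower $j : V \to M \subseteq V[G]$ arising from $I$ has $\crit(j) = \kappa$, has $[\id]_G = j[\delta]$, satisfies $M^\delta \cap V[G] \subseteq M$, and has $j(\kappa) > \delta$ because $j[\delta] \in j(Z) = [j(\delta)]^{<j(\kappa)}$ has order type $\delta$; also $|Z| = \delta$ by GCH.

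The heart of the argument is to extend $j$, inside $V[G]$, to an elementary $\hat j : V[g] \to M[\hat g]$ with $g = \pi[G]$ and $\hat g$ being $j(\col(\kappa,\delta)) = \col(j(\kappa), j(\delta))$-generic over $M$. I would proceed as follows. Since $\bigcup g : \kappa \to \delta$ and $j \restriction \delta$ are $\le\!\delta$-sequences lying in $V[G]$, they lie in $M$, so $q_0 := (j\restriction\delta) \circ \bigcup g = \bigcup\{j(p) : p \in g\}$ is a condition of $\col(j(\kappa),j(\delta))$ in $M$ with $q_0 \le j(p)$ for all $p \in g$. By GCH and elementarity, $M$ has at most $j(\delta^+) = (j(\delta)^+)^M$ dense open subsets of $\col(j(\kappa),j(\delta))$; but every ordinal $< j(\delta^+)$ is $[f]_G$ for some $f : Z \to \delta^+$ in $V$, and there are only $(\delta^+)^\delta = \delta^+$ such $f$, so this family of dense sets has size at most $(\delta^+)^V$ in $V[G]$. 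One then enumerates it in length $(\delta^+)^V$ and builds a descending chain $\la q_\xi : \xi < (\delta^+)^V \ra$ below $q_0$ meeting each; at a limit $\eta$ one has $\cf(\eta) \le \delta < j(\kappa)$, so a cofinal subsequence of $\la q_\xi : \xi < \eta \ra$ is a $\le\!\delta$-sequence of elements of $M$, hence lies in $M$ and has a lower bound there by the $<\!j(\kappa)$-closure of $\col(j(\kappa),j(\delta))$ in $M$. The generated filter $\hat g$ is generic over $M$ and contains $q_0$, so $j[g] \subseteq \hat g$, and Silver's criterion gives $\hat j : V[g] \to M[\hat g]$ with $\hat j \restriction V = j$. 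This is where I expect the main difficulty to lie: a priori one must meet $j(\delta^+)$-many dense sets using only the $\delta$-closure of $M$, and it is precisely the saturation of $I$ that pins $|j(\delta^+)|$ down to $\delta^+$ and makes a recursion of length $(\delta^+)^V$ succeed, via cofinal subsequences of length $\le \delta$.

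Granting the extension, I let $\dot K$ be the $\p(Z)/I$-name for the prime ideal $\{p \in j(\col(\kappa,\delta)) : p \notin \hat g\}$, so $j(\col(\kappa,\delta))/\dot K$ is forced trivial, and check the hypotheses of Theorem~\ref{dualitygen} for $(I, \col(\kappa,\delta), \dot K)$: (1) $\hat H = \hat g$ is generic over $M$; (2) $j^{-1}[\hat g] = g$ (if $p \notin g$, pick $q \in g$ with $q \perp p$; then $j(q) \in \hat g$ and $j(p) \perp j(q)$, so $j(p) \notin \hat g$), which is $\col(\kappa,\delta)$-generic over $V$; (3) for each $p \in \col(\kappa,\delta)$ some condition of $\p(Z)/I$ forces $p \in \dot g$, hence forces $j(p) \in \hat g$, so $1 \nVdash j(p) \in \dot K$. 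Theorem~\ref{dualitygen} then yields a $\col(\kappa,\delta)$-name $\dot J_0$ for an ideal on $Z$ together with $\mathcal B(\col(\kappa,\delta) * \p(Z)/\dot J_0) \cong \mathcal B(\p(Z)/I * j(\col(\kappa,\delta))/\dot K) = \p(Z)/I$. Forcing with $g$ gives $\p(Z)/J_0 \cong \mathbb R$ in $V[g]$; since $|Z|^{V[g]} = \kappa$, I fix a bijection $e : \kappa \to Z$ in $V[g]$ and set $J = \{A \subseteq \kappa : e[A] \in J_0\}$, obtaining $\p(\kappa)/J \cong \p(Z)/J_0 \cong \mathbb R$.

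Finally, $\mathbb R$ is $S$-layered in $V[g]$ by Lemma~\ref{iterlayer}(2), applied to the $S$-layered poset $\p(Z)/I$ ($S \subseteq \delta^+$) and its regular suborder $\col(\kappa,\delta)$ of size $\delta < \delta^+$; hence $J$ is $S$-layered. For the last assertion, forcing with $\p(\kappa)/J$ over $V[g]$ is, via $e$ and the isomorphism of Theorem~\ref{dualitygen} (with $\dot K$ trivial), the same as forcing with $\p(Z)/I$ over $V$ through the factor $\col(\kappa,\delta) * \mathbb R$, producing a $\p(Z)/I$-generic $G \supseteq g$; by Proposition~\ref{ultequal} the resulting generic ultrapower of $V[g]$ is, identified via $e$, the model $M[\hat g]$ with ultrapower embedding the $\hat j$ above, which extends the $I$-ultrapower $j : V \to M$ determined by $G$. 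That is exactly the claim that the $J$-ultrapower of $V[g]$ extends an $I$-ultrapower of $V$.
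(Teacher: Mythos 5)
Your proof is correct and follows essentially the same route as the paper: build a master condition below $j[g]$, use the counting argument $|j(\delta^+)|^{V[G]}\le\delta^+$ together with $M^\delta\cap V[G]\subseteq M$ to construct $\hat g$ in $V[G]$, lift the embedding, apply Theorem~\ref{dualitygen} and Proposition~\ref{ultequal}, and invoke Lemma~\ref{iterlayer}(2) for $S$-layeredness. You spell out more of the genericity construction (the cofinal-subsequence argument at limit stages) than the paper does, which is fine.

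One small but substantive divergence is worth flagging: in the final transfer step, you fix an arbitrary bijection $e:\kappa\to Z=[\delta]^{<\kappa}$ and push $J_0$ forward to $\kappa$, whereas the paper instead fixes a bijection $f:\kappa\to\delta$, transfers $J'$ to a \emph{normal fine} ideal on $[\kappa]^{<\kappa}$, and then uses normality to see it concentrates on $\kappa$. Both produce an $S$-layered ideal whose Boolean algebra is the desired quotient, so both prove the lemma as literally stated; but the paper's route additionally preserves normality of the resulting ideal on $\kappa$, which is not automatic under an arbitrary bijection $e$. That normality is what is implicitly relied upon in Section~\ref{radinsection}, where this lemma is used to produce ideals satisfying the (normality-requiring) hypotheses of Theorem~\ref{localize}. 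So while your argument is valid for the lemma as written, if you want it to serve its downstream purpose you should route the transfer through a bijection $\kappa\to\delta$ and $[\kappa]^{<\kappa}$ as the paper does.
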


\begin{proof}
Let $g \subseteq \col(\kappa,\delta)$ be generic.  Further forcing yields a generic $G \subseteq \p([\delta]^{<\kappa})/I$ and an ultrapower embedding $j : V \to M \subseteq V[G]$.  The quotient $(\p([\delta]^{<\kappa})/I)/g$ is $S$-layered by Lemma \ref{iterlayer}. Since $j(\col(\kappa,\delta))$ is $j(\kappa)$-directed-closed and $j[g] \in M$ is a directed set of size $<j(\kappa)$, there is a condition $m \in j(\col(\kappa,\delta))$ below $j[g]$.

A counting argument shows that $j(\delta^+) < \delta^{++}$, so there are only $\delta^+$-many dense subsets of $j(\col(\kappa,\delta))$ in $M$.  Since $j(\kappa) > \delta$ and $M^\delta \cap V[G] \subseteq M$, we can build a filter $\hat g \subseteq  j(\col(\kappa,\delta))$ in $V[G]$ that is generic over $M$, with $m \in \hat g$.  Thus we may extend the embedding to $j : V[g] \to M[\hat g]$.  By Theorem \ref{dualitygen}, we get a normal $\kappa$-complete ideal $J'$ on $[\delta]^{<\kappa}$ in $V[g]$ such that $\p([\delta]^{<\kappa})/J' \cong  (\p([\delta]^{<\kappa})/I)/g$.  By Proposition \ref{ultequal}, any generic embedding coming from $J'$ extends one coming from $I$.

Now since $|\delta| = \kappa$ in $V[g]$, a bijection $f : \kappa \to \delta$ yields an ideal $J$ on $[\kappa]^{<\kappa}$ given by $J = \{ X : \{ f[z] : z \in X \} \in J' \}$, and clearly $\p([\kappa]^{<\kappa}) / J \cong \p([\delta]^{<\kappa}) / J'$.  By normality, $\kappa$ is a $J$-measure-one set, so $J$ is essentially an ideal on $\kappa$.
\end{proof}

\subsection{Interlude: Woodin's theorem}
Here we discuss how our techniques yield a new proof of Woodin's theorem that if $\kappa$ is almost-huge and $\mu<\kappa$ is regular, then there is a forcing that preserves cardinals $\leq \mu$ and forces that $\kappa = \mu^+$ and the nonstationary ideal on $\kappa$ is locally saturated.

%Let $\kappa$ be almost-huge and $\mu<\kappa$ regular.  
Let $\lambda$ be least such that there exists a $(\kappa,\lambda)$-tower $T$.  It is easy to see that $\lambda$ is inaccessible but not Mahlo.  Therefore, if $A = \{ \alpha < \kappa : \alpha$ is inaccessible$\}$, and $j : V \to M$ is the embedding derived from $T$, then $\kappa \in j(A)$ and $j(A)$ is nonstationary.
Typical Easton-support products up to $\lambda$ will not be $\lambda$-c.c., so we will have to slightly modify our forcing.  For regular $\alpha<\beta$, let
$$\mathbb Q(\alpha,\beta) = \prod^{<\alpha\text{-support}}_{\substack{\alpha \leq \gamma<\beta \\ \gamma \text{ regular}}} \col(\gamma,{<}\beta).$$
The usual $\Delta$-system argument shows that $\mathbb Q(\alpha,\beta)$ is $\beta$-c.c.\ when $\beta$ is inaccessible.  The argument for Lemma \ref{newproj} shows that $j(\mathbb Q(\mu,\kappa)) = \mathbb Q(\mu,\lambda)$ projects to $\mathbb Q(\mu,\kappa) * [\dot{\col}(\kappa,{<}\lambda) \times \dot{\add}(\kappa,\lambda)] \times \col(\mu,\kappa)$.  The argument for Lemma \ref{ahiso} shows that if $G * H \subseteq \mathbb Q(\mu,\kappa) * \dot{\col}(\kappa,{<}\lambda)$ is generic, then in $V[G*H]$, there is a normal saturated ideal $I$ on $\kappa$ such that:
\begin{enumerate}
\item $A \in I^*$.
\item Any generic embedding arising from $I$ will extend $j$. 
\item $\p(\kappa)/I$ projects to $\col(\mu,\kappa) \times \add(\kappa,\kappa^+)$.
\end{enumerate}
Thus Theorem \ref{localize} implies that there is a cardinal-preserving forcing extension in which $\ns_\kappa \restriction A$ is $\kappa^+$-saturated.

\section{The preparatory model}
\label{preparation}

We build the preparatory model towards Theorem \ref{global} in three rounds.  We warn the reader that we will continually change the reference of ``$V$'' to mean whatever ground model on which we are currently focused.  Let $\theta$ be a huge cardinal.  A standard reflection argument shows that there is a large set $X \subseteq \theta$ such that for all $\alpha < \beta$ in $X$, $\alpha$ is almost-huge with target $\beta$.  In the first round, we arrange that for all such $\alpha<\beta$, there is an $A \subseteq \alpha$ and an $(\alpha,\beta)$-tower $T$ such that $\kappa \in j_T(A)$ and $j_T(A)$ is nonstationary.  We then collapse many cardinals so that $\theta$ is still very large, the set of cardinals below $\theta$ is almost equal to $X$, and there are many saturated ideals with the properties that make Theorem \ref{localize} applicable.  In the second round, we introduce square at every cardinal while preserving many superstrong cardinals and the desired saturated ideals.  In the third round, we arrange local saturation on the first few successors of Mahlo cardinals, while still preserving many superstrongs.  
Regarding the structure of the proof:
\begin{itemize}
\item We need to force squares before local saturation, because a standard argument shows that the forcing we use for $\square_\kappa$ also forces $\diamondsuit_{\kappa^+}(S)$ for any given stationary $S \subseteq \kappa^+$, thus ruining local saturation at $\kappa^+$ (see \cite[Lemma 3.11]{MR3724382}).  However, the reader who is interested only in local saturation and not in the square principles holding simultaneously may opt to simply skip Section \ref{squaresection}, as the third round of forcing may be carried out on the basis of the first round's preparations.  
\item The forcing of the third round is a product, but it could also be done as an iteration.  The latter may be interesting if one wants to preserve larger cardinals while getting local saturation at all successors of regulars, but the former harmonizes more with the techniques of the second round and the Radin forcing of Section \ref{radinsection}, which seem not to have such flexibility.
\end{itemize}

\subsection{Many saturated ideals that get stationarity wrong}
As described above, Woodin obtained 
%In \cite{MR2151585}, 
local saturation at a single successor cardinal by exploiting a precise degree of almost-hugeness: A $(\kappa,\lambda)$-tower was chosen with $\lambda$ non-Mahlo.  Such towers %always exist whenever almost-huge cardinals exist, but they 
will not serve our purposes here since we want the target to also be almost-huge.  In order to get almost-huge towers of very large height that also map some large subset of the critical point to a nonstationary set, we use a forcing argument supplied by Toshimichi Usuba.

\begin{lemma}[Usuba]
There is a forcing $\mathbb P$ such that whenever $\kappa$ is almost-huge in $V$ with Mahlo target $\lambda$, then in $V^{\mathbb P}$, there is a $(\kappa,\lambda)$-tower $T$ and an $A \subseteq \kappa$ such that $\kappa \in j_T(A)$ and $j_T(A)$ is nonstationary.
\end{lemma}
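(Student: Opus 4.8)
The plan is to let $\mathbb P$ be the Easton-support reverse-Easton iteration $\la \mathbb P_\alpha,\dot{\mathbb Q}_\alpha\ra$ in which $\dot{\mathbb Q}_\alpha$ is trivial unless $\alpha$ is inaccessible, in which case $\dot{\mathbb Q}_\alpha$ names $\add(\alpha,1)$; we may assume GCH holds in the ground model (prepending a standard GCH-forcing to $\mathbb P$ if necessary), so $\mathbb P$ preserves cardinals and cofinalities, preserves GCH, and preserves the inaccessibility of every inaccessible. Fix $\kappa$ almost-huge with Mahlo target $\lambda$, and let $j:V\to M$ be a tower embedding as in Lemma~\ref{ahmin}: $\crit(j)=\kappa$, $j(\kappa)=\lambda$, $M^{<\lambda}\subseteq M$. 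Because $V_\lambda^M=V_\lambda$ and the iteration up to $\lambda$ is defined purely from $V_\lambda$-data, $j(\mathbb P_\kappa)=\mathbb P_\lambda$, and $\mathbb P_\lambda=\mathbb P_\kappa*\dot{\mathbb R}$ where $\dot{\mathbb R}$ names the tail $\mathbb P_{[\kappa,\lambda)}$.

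The first real step is to verify that $\dot{\mathbb R}$ is forced to be $(\kappa,\lambda)$-nice, witnessed by the increasing sequence of initial-segment iterations $\mathbb P_{[\kappa,\beta)}$ for $\beta\leq\lambda$: each has size $<\lambda$ and is a regular suborder of the next, and since every iterand $\add(\delta,1)$ with $\delta\geq\kappa$ inaccessible is $\kappa$-directed-closed with infima of compatible pairs, and supports are Easton, compatible conditions and $<\kappa$-directed sets have infima in each $\mathbb P_{[\kappa,\beta)}$. The identity map $\mathbb P_\lambda=j(\mathbb P_\kappa)\to\mathbb P_\kappa*\dot{\mathbb R}$ is then a projection fixing $\mathbb P_\kappa$, and $j(\mathbb P_\kappa)=\mathbb P_\lambda$ is $\lambda$-c.c.\ since $\lambda$ is Mahlo, so Lemma~\ref{ahext} applies with $\mathbb P_\kappa,\dot{\mathbb R}$ in the roles of $\mathbb P,\dot{\mathbb Q}$. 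Letting $G$ be $\mathbb P$-generic over $V$ and $G_\lambda=G\restriction\lambda$, this yields in $V[G_\lambda]$ an extension $\hat j:V[G_\lambda]\to M^*$ with $(M^*)^{<\lambda}\cap V[G_\lambda]\subseteq M^*$, and, since the lift is obtained by a master-condition construction, $\hat j$ is again a tower embedding.

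The crux is to re-run the construction of the generic in the proof of Lemma~\ref{ahext} with extra bookkeeping in order to control $\hat j(A)$, where $A\subseteq\kappa$ is the set added by the stage-$\kappa$ factor $\add(\kappa,1)$. In that construction the master condition at the coordinate of $\hat j(\dot{\mathbb R})$ corresponding to stage $\kappa$ is the function $A$ itself, viewed as a condition in the stage-$\lambda$ factor $\add(\lambda,1)^{M^*}$ whose generic is $\hat j(A)$; it forces $\hat j(A)\cap\kappa=A$ but leaves the values of $\hat j(A)$ on $[\kappa,\lambda)$ free. I would interleave the building of a set $D\subseteq(\kappa,\lambda)$: after meeting the $\xi$-th maximal antichain in the enumeration from that proof, extend the current stage-$\lambda$ condition to give value $1$ at $\kappa$ and value $0$ at a fresh point $\gamma_\xi$, and at limit stages additionally give value $0$ at $\sup_{\eta<\xi}\gamma_\eta$; standard bookkeeping makes $D=\{\gamma_\xi:\xi<\lambda\}$ a club of $\lambda$, and by construction $\kappa\in\hat j(A)$ and $\hat j(A)\cap D=\emptyset$. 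Now let $T$ be the $(\kappa,\lambda)$-tower derived from $\hat j$, so $T\in V[G_\lambda]$, $T\subseteq V_\lambda$, and $j_T=\hat j$ over $V[G_\lambda]$. The tail $\mathbb P_{[\lambda,\infty)}$ is $\lambda$-closed, hence adds no elements of $V_\lambda$, so $\la V_\lambda,\in,T\ra$ is the same structure in $V[G_\lambda]$ and in $V[G]$; by Lemma~\ref{ahmin}(4), the statement that $T$ generates an embedding with the listed properties is first-order over this structure, so $T$ remains a $(\kappa,\lambda)$-tower in $V[G]$. Since $A\subseteq\kappa$ and computing $j_T(A)$ uses only $T$, $\kappa$, $\lambda$ and data in $V_\lambda$, the value $j_T(A)$ computed in $V[G]$ agrees with $\hat j(A)$; hence $\kappa\in j_T(A)=\hat j(A)$, and $j_T(A)$ is disjoint from $D$, which is still a club of $\lambda$ in $V[G]$. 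So $j_T(A)$ is nonstationary in $V[G]$, as required.

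The step I expect to be the main obstacle is lifting the almost-huge embedding up to $\lambda$—and, crucially, no further—while retaining enough freedom to force $\hat j(A)$ nonstationary. This works because $j(\mathbb P_\kappa)$ splits as $\mathbb P_\kappa*\dot{\mathbb R}$ with $\dot{\mathbb R}$ $(\kappa,\lambda)$-nice so that Lemma~\ref{ahext} supplies the lift with a master condition at each level; because the tower lives inside $V_\lambda$, so the $\lambda$-closed remainder of $\mathbb P$ cannot disturb the first-order fact that $T$ generates the desired embedding; and because the choice of $M^*$-generic in Lemma~\ref{ahext}'s argument is free enough to diagonalize a club out of $\hat j(A)$ while pinning $\kappa\in\hat j(A)$.
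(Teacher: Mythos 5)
Your overall strategy is the same as the paper's: you use the identical forcing $\mathbb P$ (the Easton-support iteration adding a Cohen subset at each inaccessible), lift $j$ to $V[G_\lambda]$ by building an $M$-generic filter with master conditions, use the freedom in choosing the stage-$\lambda$ Cohen generic $\hat g$ to set $\hat g(\kappa)=1$ while running $\hat g$ along a club of zeros, and then observe that the $(\kappa,\lambda)$-tower survives the $\lambda$-closed tail $\mathbb P/G_\lambda$ because it is first-order over $\langle V_\lambda,\in,T\rangle$. The small organizational difference is that the paper peels off the stage-$\kappa$ factor first, lifting to $j:V[G_{\kappa+1}]\to M[G_\lambda*\hat g]$ by building $\hat g$ directly as an $\add(\lambda,1)^{M[G_\lambda]}$-generic (which is where all the diagonalization lives and is very transparent), and only afterwards invokes ``the method of the proof of Lemma~\ref{ahext}'' for the remaining tail; you instead try to fold the diagonalization into a single application of Lemma~\ref{ahext}.

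The gap is your assertion that $\dot{\mathbb R}=\dot{\mathbb P}_{[\kappa,\lambda)}$ is forced to be $(\kappa,\lambda)$-nice, witnessed by the $\mathbb P_{[\kappa,\beta)}$. The $(\kappa,\lambda)$-niceness definition requires that any two compatible elements have an \emph{infimum}, and this generally fails for nontrivial iterations, as opposed to products. For a concrete obstruction: let $\kappa<\alpha$ be consecutive inaccessibles in the relevant interval, let $g$ denote the stage-$\kappa$ generic, and take $p=\langle\emptyset,\sigma\rangle$ and $q=\langle\emptyset,\tau\rangle$ where $\sigma$ names $\langle 0\rangle$ if $g(0)=0$ and $\langle 1\rangle$ otherwise, while $\tau$ names $\langle 0\rangle$ if $g(1)=0$ and $\langle 1\rangle$ otherwise. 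Then $p$ and $q$ are compatible, but their set of common lower bounds has the two incomparable maximal elements $\langle\langle 0,0\rangle,\langle 0\rangle\rangle$ and $\langle\langle 1,1\rangle,\langle 1\rangle\rangle$, so there is no infimum. Consequently Lemma~\ref{ahext} does not literally apply to $\dot{\mathbb R}$; its proof has to be \emph{adapted} to the iteration setting (using directedness of the filter pieces $j[H_\alpha]$ and $\kappa$-directed-closure of the iterands in place of infima of compatible pairs), which is why the paper is careful to write ``the method of the proof of Theorem~\ref{ahext}'' rather than to cite the lemma as a black box. Your construction of the club and the argument for why $\kappa\in j_T(A)$ and $j_T(A)\cap D=\emptyset$ survive to $V[G]$ are fine; you just need to replace the appeal to $(\kappa,\lambda)$-niceness by the adapted master-condition argument, or, as the paper does, handle the stage-$\lambda$ Cohen factor separately (where everything is visibly a $\lambda$-directed-closed poset with infima) before running the master-condition argument on the rest.
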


\begin{proof}
Let $\mathbb P$ be the Easton-support iteration of adding a Cohen subset of $\alpha$ whenever $\alpha$ is inaccessible.  Let $j : V \to M$ be an almost-huge embedding generated by a $(\kappa,\lambda)$-tower in $V$, where $\lambda$ is Mahlo.  Let $G$ be generic for $\mathbb P$, and let $G_\alpha = G \cap \mathbb P_\alpha$.  Then $j$ can be extended to $j : V[G_\kappa] \to M[G_\lambda]$.

It is easy to show that the Cohen-generic function $g : \kappa \to 2$ added at stage $\kappa$ has the property that if $A = \{ \alpha : g(\alpha) = 1 \}$ and $S \in V$ is a stationary subset of $\kappa$, then $S \cap A$ and $S \setminus A$ are both stationary.  We now build a subset of $\lambda$ that is $\add(\lambda)$-generic over $M[G_\lambda]$ with some specific properties.  Since $j(\lambda) < \lambda^+$, we can list all dense open subsets of $\add(\lambda)^{M[G_\lambda]}$ that live in $M[G_\lambda]$ as $\langle D_\alpha : \alpha < \lambda \rangle$.  We construct an extension $\hat g$ of $g$ as $\bigcup_{\alpha <\lambda} \hat g_\alpha$ and along the way choose a continuous, increasing, cofinal sequence of ordinals $\langle \beta_\alpha : \alpha < \lambda \rangle \subseteq \lambda$ with the following properties:
\begin{enumerate}
\item $\dom(\hat g_0) = \kappa +1$, $\hat g_0 \restriction \kappa = g$ and $\hat g_0(\kappa) = 1$.
\item For $\alpha > 0$, $\dom(\hat g_\alpha) = \beta_\alpha +1$, and $\hat g_\alpha(\beta_\alpha) = 0$.
\item For all $\alpha$, $\hat g_{\alpha+1} \in D_\alpha$.
\item For limit $\alpha$, $\beta_\alpha = \sup_{\gamma < \alpha} \beta_\gamma$.
\end{enumerate}

Clearly, $\hat g$ is generic over $M[G_\lambda]$, and $\{ \alpha : \hat g(\alpha) = 1 \}$ is disjoint from the club $\{ \beta_\alpha : \alpha < \lambda \}$.  So we extend the embedding to $j : V[G_{\kappa+1}] \to M[G_\lambda * \hat g]$.

The method of the proof of Theorem~\ref{ahext} lets us build a filter $H \subseteq j(\mathbb P_{\lambda}) / (G_\lambda * \hat g)$-generic over $M[G_\lambda * \hat g]$, with $j[G_\lambda] \subseteq G_\lambda * \hat g * H$, so we can extend the embedding to $j : V[G_\lambda] \to M[G_\lambda * \hat g * H]$. By the $\lambda$-c.c.\ of $\mathbb P_\lambda$ and the $\lambda$-closure of $j(\mathbb P_{\lambda}) / G_\lambda$, we have that $\ord^{<\lambda} \cap V[G_\lambda] \subseteq M[G_\lambda * \hat g * H]$.  The appropriate $(\kappa,\lambda)$-tower inducing $j$ exists in $V[G_\lambda]$, and it is preserved by the $\lambda$-closed forcing $\mathbb P / G_\lambda$. 
\end{proof}

It is not hard to show that the forcing $\mathbb P$ of the previous lemma preserves huge cardinals as well.  Let us therefore work in a model satisfying the conclusion of the previous lemma, and in which there is a huge cardinal $\theta$.  Let $\mathcal U$ be an ultrafilter on $\theta$ derived from an embedding witnessing $\theta$ is huge.  $X \in \mathcal U$ be such that for $\alpha <\beta$ in $X$, $\alpha$ is almost-huge with target $\beta$.  Let $\la \alpha_i : i < \theta \ra$ enumerate the closure of $X \cup \{ \omega \}$.  Let $\mathbb P(\kappa,\lambda)$ denote the product forcing defined in Section \ref{buildingblock}.  Let us force with the following Easton-support iteration $\la \mathbb P_i,\dot{\mathbb Q}_i : i < \theta \ra$:
\begin{itemize}
\item If $i$ is 0 or a successor, let $\Vdash_i \dot{\mathbb Q}_i = \dot{\mathbb P}(\alpha_i,\alpha_{i+1})$.
\item If $\alpha_i$ is a non-Mahlo limit of $X$, let $\Vdash_i \dot{\mathbb Q}_i = \dot{\mathbb P}(\alpha_i^+,\alpha_{i+1})$.
\item If $\alpha_i$ is a Mahlo limit of $X$, let $\Vdash_i \dot{\mathbb Q}_i = \dot{\mathbb P}(\alpha_i,\alpha_{i+1})$.
\end{itemize}
It is routine to check that after forcing with this iteration, the set of cardinals below $\theta$ are the ordinals $\alpha_i$ and those of the form $(\alpha_i^+)^V$ for $\alpha_i$ a non-Mahlo limit of $X$.

Let $\mu < \delta$ be either successor cardinals or a Mahlo cardinals after forcing with $\mathbb P_\theta$.  Let $i<\theta$ be such that either $\mu = \alpha_i$ or $\mu = (\alpha_i^+)^V$, and define $i'$ similarly with respect to $\delta$.  Let $G_i \subseteq \mathbb P_i$ be generic, and let $\kappa = \alpha_{i+1}$.  Since $|\mathbb P_i| \leq \mu$, any almost-huge embedding with critical point $\kappa$ in $V$ extends to one in $V[G_i]$.  Consider the forcing $\mathbb P_{i'+1}/G_i$.  It takes the form $\mathbb P(\mu,\kappa) * \dot{\mathbb Q} * \dot{\mathbb P}(\delta,\lambda)$, where $\lambda = \alpha_{i'+1} \in X$ and $\dot{\mathbb Q}$ is forced to be $\kappa$-closed, $\delta$-c.c., and of size $\leq \delta$.  The hypotheses of Lemma \ref{ahiso} are satisfied, so there exists an ideal as in the conclusion after forcing with $\mathbb P_{i'+1}$.  As the tail-end is $\lambda$-closed, this still holds in the extension by $\mathbb P_\theta$.

Furthermore, many almost-huge cardinals below $\theta$ are preserved.  For let $j : V \to M$ witness the hugeness of $\theta$.  If $T$ is the almost-huge tower derived from $j$, then $T \in M$.  We have $j(\mathbb P_\theta) \cap V_\theta = \mathbb P_\theta$, so reflection gives us that, if $\mathcal U$ is the ultrafilter on $\theta$ derived from $j$, then there are $\mathcal U$-many $\alpha<\theta$ that are almost-huge with embedding $j_\alpha$, with $j_\alpha(\mathbb P_\alpha)= \mathbb P_\theta$.  Reflecting again yields a set $Y \in \mathcal U$ such that for all $\alpha < \beta$ in $Y$, there is an $(\alpha,\beta)$-tower $T$ with $j_T(\mathbb P_\alpha) = \mathbb P_\beta$.  The proof of Lemma \ref{ahext} shows that such embeddings can be extended through the forcing.  Let us record what we have as:

\begin{lemma}
\label{firstprep}
It is consistent relative to a huge cardinal that there is an inaccessible $\theta$ and a sequence $\la S_\alpha : \alpha < \theta \ra$ such that:
\begin{enumerate} 
\item $V_\theta$ satisfies GCH and that there is a proper class of almost-huge cardinals with Mahlo targets.\
\item \label{statprops} Whenever $\mu$ is regular and $\kappa = \mu^+$, $S_\kappa$ is a stationary subset of $\kappa \cap \cof(\mu)$.
\item \label{idealprops} For every pair of cardinals $\mu<\delta<\theta$ which are either successor or Mahlo, if $\kappa = \mu^+$ and $\lambda= \delta^+$, then there is $\kappa$-complete normal ideal $I$ on $[\delta]^{<\kappa}$ such that: 
\begin{enumerate}
\item $\p([\delta]^{<\kappa})/I$ is $S_\lambda$-layered.
\item There is a stationary $A \subseteq \kappa \cap \cof(\mu)$ and a nonstationary $B \subseteq \lambda$ such that for any generic embedding $j$ arising from $I$, $\kappa \in j(A) = B$.
\item $\p([\delta]^{<\kappa})/I$ projects to $\col(\mu,\kappa) \times \col(\kappa,\delta) \times \add(\kappa,\lambda)$, in a way such that the quotient is forced to be $S_\lambda$-layered.
\end{enumerate}\end{enumerate}
\end{lemma}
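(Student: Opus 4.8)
The statement merely records the outcome of the construction sketched in the preceding discussion, so the plan is to carry out that construction and verify each clause. First I would pass to a model where Usuba's lemma applies and a huge cardinal survives: force with the Easton-support iteration $\mathbb P$ that adds a Cohen subset to each inaccessible (this realizes the conclusion of Usuba's lemma and, as noted, preserves hugeness). Working there, fix a normal ultrafilter $\mathcal U$ on a huge cardinal $\theta$ derived from a hugeness embedding, choose $X\in\mathcal U$ so that $\alpha<\beta$ in $X$ implies $\alpha$ is almost-huge with Mahlo target $\beta$, let $\langle\alpha_i:i<\theta\rangle$ enumerate the closure of $X\cup\{\omega\}$, and force with the Easton-support iteration $\langle\mathbb P_i,\dot{\mathbb Q}_i:i<\theta\rangle$ of the discussion, interleaving copies of $\dot{\mathbb P}(\cdot,\cdot)$ with endpoints dictated by whether $\alpha_i$ is zero or a successor, a non-Mahlo limit of $X$, or a Mahlo limit of $X$. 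A routine bookkeeping computation then shows that in $(V_\theta)^{\mathbb P_\theta}$ the cardinals are exactly the $\alpha_i$ together with the ordinals $(\alpha_i^+)^V$ for non-Mahlo limits $\alpha_i$ of $X$, that GCH holds, and that $\theta$ remains inaccessible ($\mathbb P_\theta$ is $\theta$-c.c.\ with initial segments of size $<\theta$). For a successor cardinal $\alpha=\mu^+$ of the extension I would set $S_\alpha$ to be the collection of $V$-regular cardinals in $[\mu,\alpha)$, and let $S_\alpha$ be arbitrary otherwise; since the stage of the iteration that makes $\mu^+=\alpha$ is $\mu$-closed it turns every $V$-regular cardinal of $(\mu,\alpha)$ into an ordinal of size, and by the same closure of cofinality, $\mu$, while being $\alpha$-c.c.\ and hence keeping $S_\alpha$ (which is stationary in $V$ as $\alpha$ was Mahlo there) stationary — this is clause (2).

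For clause (3), fix cardinals $\mu<\delta<\theta$ of the extension that are each successor or Mahlo, put $\kappa=\mu^+$ and $\lambda=\delta^+$, and choose indices with $\kappa=\alpha_{i+1}$ and $\lambda=\alpha_{i'+1}\in X$. Writing $G_i$ for a generic on $\mathbb P_i$, the forcing $\mathbb P_i$ has size $\le\mu<\kappa$, so a $(\kappa,\lambda)$-tower as furnished by Usuba's lemma lifts to $V[G_i]$; moreover $\mathbb P_{i'+1}/G_i$ factors as $\mathbb P(\mu,\kappa)*\dot{\mathbb Q}*\dot{\mathbb P}(\delta,\lambda)$ with $\dot{\mathbb Q}$ forced to be $\kappa$-closed, $\delta$-c.c., and of size $\le\delta$. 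Lemma~\ref{ahiso} then yields, after $\mathbb P_{i'+1}$, a normal $\kappa$-complete ideal $I$ on $[\delta]^{<\kappa}$ such that $\mathcal P([\delta]^{<\kappa})/I$ is $S_\lambda$-layered — observe that $S_\lambda$ is precisely the set of $V$-regular cardinals in $[\delta,\lambda)$ appearing in that lemma — and projects to $\col(\mu,\kappa)\times\col(\kappa,\delta)\times\add(\kappa,\lambda)$ with an $S_\lambda$-layered quotient (using Remark~\ref{niceproj} and Lemma~\ref{iterlayer}). As $\mathbb P_\theta/\mathbb P_{i'+1}$ is $\lambda$-closed, neither $\mathcal P([\delta]^{<\kappa})/I$ nor any of these properties changes in $(V_\theta)^{\mathbb P_\theta}$, which gives (3a) and (3c). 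For (3b) I would take the set $A_0\subseteq\kappa$ provided by Usuba's lemma (so $\kappa\in j_T(A_0)$ and $j_T(A_0)$ is nonstationary) and put $A=A_0\cap\cof(\mu)$ and $B=j_T(A_0)\cap\cof(\mu)$: the point is that any generic ultrapower $\hat j$ arising from $I$ extends $j_T$, and in the target of $\hat j$ the ordinal $\kappa$ is collapsed by the image of $\mathbb P(\mu,\kappa)$, so $\cf(\kappa)=\mu$ there and hence $\kappa\in\hat j(A)=B$, with $B$ a nonstationary subset of $\lambda$ that does not depend on the generic.

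Finally, clause (1): if $j:V\to M$ witnesses the hugeness of $\theta$ then $j(\mathbb P_\theta)\cap V_\theta=\mathbb P_\theta$, so two applications of reflection through $\mathcal U$ produce a set $Y\in\mathcal U$ such that for all $\alpha<\beta$ in $Y$ there is an $(\alpha,\beta)$-tower $T$, with $\beta$ Mahlo, satisfying $j_T(\mathbb P_\alpha)=\mathbb P_\beta$; the proof of Lemma~\ref{ahext} shows that every such embedding lifts through $\mathbb P_\theta$, so each $\alpha\in Y\cap\theta$ is again almost-huge with Mahlo target in $(V_\theta)^{\mathbb P_\theta}$, giving a proper class of such cardinals there. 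I expect the two delicate points to be the verification that the middle factor $\dot{\mathbb Q}$ really has the $\kappa$-closure, $\delta$-chain condition, and size bound demanded by Lemma~\ref{ahiso} — this is exactly where the precise placement of the break points $\alpha_i$ and $(\alpha_i^+)^V$ in the iteration must be chosen carefully — and the cofinality bookkeeping inside the generic ultrapower needed to push the ``wrong'' stationary set $A$ of clause (3b) into $\cof(\mu)$; everything else is routine given Lemmas~\ref{ahext}, \ref{ahiso} and Usuba's lemma.
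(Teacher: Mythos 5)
Your overall strategy matches the paper's: force Usuba's iteration, reflect through a hugeness ultrafilter to get the set $X$, run the Easton-support iteration interleaving $\mathbb P(\cdot,\cdot)$ with break points dictated by $X$, apply Lemma~\ref{ahiso}, and reflect again to retain almost-huge cardinals. The choice of $S_\lambda$ as the $V$-regular cardinals in $[\delta,\lambda)$ and the layeredness bookkeeping are fine. However, your verification of clause (3b) contains a genuine error.

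You set $A = A_0 \cap \cof(\mu)$ and $B = j_T(A_0) \cap \cof(\mu)$, where $A_0$ is the set from Usuba's lemma. The statement requires $A$ and $B$ to be fixed sets in the final model $V^{\mathbb P_\theta}$ with $\kappa \in j(A) = B$ for \emph{every} generic embedding $j$. But $\cof(\mu)$ computed in $V^{\mathbb P_\theta}$ does not contain $\kappa$, since $\kappa = \mu^+$ is regular there; so with this definition $\kappa \notin B$, contradicting the requirement. If instead you compute $\cof(\mu)$ in the target of $\hat j$, then indeed $\kappa \in \hat j(A)$ (as $\kappa$ gets cofinality $\mu$ there), but now $B$ depends on the generic: the forcing $\mathcal P([\delta]^{<\kappa})/I$ collapses $\kappa$ and other cardinals to $\mu$, so it changes the cofinality of ordinals in $[\mu,\lambda)$, and $j_T(A_0) \cap \cof(\mu)^{\text{target}}$ is not generic-independent. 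Either way the argument breaks.

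The fix is not to intersect with $\cof(\mu)$ at all. Shrink $A_0$ so that it consists only of $V^{\mathbb P}$-inaccessible cardinals (possible since $\kappa \in j_T(A_0)$ means $A_0$ is in the normal measure derived from $j_T$, and the inaccessibles form a measure-one set; the nonstationarity of $j_T(A_0)$ only improves under shrinking). After forcing with $\mathbb P(\mu,\kappa)$ and the rest of the iteration, every such inaccessible of $(\mu,\kappa)$ is collapsed to $\mu$ and acquires cofinality exactly $\mu$, so $A_0 \subseteq \kappa \cap \cof(\mu)$ automatically in $V^{\mathbb P_\theta}$, and it remains stationary because the collapsing stage is $\kappa$-c.c.\ and the tail is sufficiently distributive. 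Now take $A = A_0$ and $B = j_T(A_0)$. Since any generic embedding $\hat j$ from $I$ extends $j_T$ (as Lemma~\ref{ahiso} provides) and $A_0$ lies in the domain of $j_T$, we get $\hat j(A) = j_T(A_0) = B$ independently of the generic; $B$ is nonstationary in $\lambda$ in $V^{\mathbb P}$ and stays nonstationary after the $\lambda$-c.c.\ forcing $\mathbb P_\theta$; and $\kappa \in j_T(A_0) = B$ by Usuba's lemma.
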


\subsection{Squares}
\label{squaresection}
For a cardinal $\kappa$, $\square_\kappa$ holds if there is a sequence $\langle C_\alpha : \alpha < \kappa^+ \rangle$ such that if $\alpha$ is a limit ordinal,
\begin{enumerate}
\item $C_\alpha$ is a club subset of $\alpha$.
\item If $\beta \in \lim C_\alpha$, then $C_\beta = C_\alpha \cap \beta$.
\item $\ot C_\alpha \leq \kappa$.
\end{enumerate}
We will refer to a sequence satisfying (1) and (2) as a \emph{coherent sequence of clubs} and a sequence satisfying all three as a \emph{$\square_\kappa$-sequence}.  A weaker notion, $\square(\kappa^+)$, holds when there is a coherent sequence of clubs with the property that there is no ``thread'' $C \subseteq \kappa^+$, a club such that if $\alpha \in \lim C$, then $C_\alpha = C \cap \alpha$.

There is some tension between squares and saturated ideals.  The following two propositions show that if $\mu$ has uncountable cofinality and $\square_\mu$ holds, then there cannot be a saturated ideal on $\mu^+$ whose associated forcing is either weakly homogeneous or proper.
\begin{proposition}[Zeman]
Suppose $I$ is a normal $\kappa$-complete ideal on $Z$, $\p(Z)/I$ is weakly homogeneous, and $\p(Z)/I$ preserves that $\kappa$ has uncountable cofinality.  Then $\square(\kappa)$ fails.
\end{proposition}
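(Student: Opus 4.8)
The plan is to assume toward a contradiction that $\square(\kappa)$ holds, fix a threadless coherent sequence of clubs $\vec C = \langle C_\alpha : \alpha < \kappa\rangle$, use a generic ultrapower to produce a thread through $\vec C$ in a generic extension, and then invoke weak homogeneity to pull that thread back into $V$, contradicting threadlessness.

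First I would force with $\mathbb B = \p(Z)/I$, obtaining a generic ultrapower $j : V \to M$ with $\crit(j) = \kappa$. Although precipitousness is not assumed, the part of $M$ lying below $[\id]_G$ is automatically wellfounded: by normality (together with genericity) every $M$-ordinal below $[\id]_G$ is $j$ of a genuine ordinal below $\kappa$, and $\kappa$-completeness rules out infinite $\in^M$-descending chains among these, so $M \restriction [\id]_G$ collapses to $\kappa$. Since $\crit(j) = \kappa$ fixes $H_\kappa$, we get $j(\vec C) \restriction \kappa = \vec C$; and since $M$ sees $j(\vec C)$ as a coherent sequence of clubs of length $j(\kappa) > \kappa$, the entry $D := j(\vec C)(\kappa) = [\alpha \mapsto C_\alpha]_G$ becomes, after collapsing, a genuine club subset of $\kappa$ in $V[G]$, and coherence of $j(\vec C)$ at $\kappa$ gives $C_\beta = j(\vec C)(\beta) = D \cap \beta$ for all $\beta \in \lim D$. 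Thus $D$ is a thread through $\vec C$, living in $V[G]$.

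The crucial point is that such a thread is unique. If $D$ and $D'$ both threaded $\vec C$, then $\lim D \cap \lim D'$ would be club in $\kappa$ — this is exactly where I use that $\mathbb B$ preserves $\cf(\kappa) > \omega$, since two clubs in an ordinal of cofinality $\omega$ can be disjoint — and $D \cap \beta = C_\beta = D' \cap \beta$ for cofinally many $\beta$, forcing $D = D'$. Consequently $1_{\mathbb B} \Vdash$ ``there is a unique thread $\dot D$ through $\vec C$'', and for each $\gamma < \kappa$ the statement ``$\gamma$ belongs to the unique thread through $\vec C$'' has all its parameters in $V$. Weak homogeneity of $\mathbb B$ then forces its Boolean value to be $0$ or $1$, so $D_0 := \{\gamma < \kappa : 1_{\mathbb B} \Vdash \check\gamma \in \dot D\}$ lies in $V$ and $1_{\mathbb B} \Vdash \dot D = \check D_0$. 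Since being a thread through $\vec C$ is absolute, $D_0$ threads $\vec C$ in $V$, contradicting the choice of $\vec C$.

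The main obstacle is conceptual rather than computational: one has to spot the right three-step chain — a thread exists in $V[G]$ by the generic ultrapower, it is unique because $\cf(\kappa) > \omega$ is preserved, hence it is definable in $V[G]$ from ground-model parameters alone, and weak homogeneity then forces it into $V$. The only point needing a little care is the wellfoundedness of $M$ below $[\id]_G$, which happily comes for free from $\kappa$-completeness, so precipitousness of $I$ is genuinely not needed.
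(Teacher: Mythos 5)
Your proposal is correct and follows essentially the same route as the paper's proof: use the generic ultrapower to extract $j(\vec C)(\kappa)$ as a thread, observe that preservation of $\cf(\kappa)>\omega$ forces the thread to be unique (two threads have club intersection), and then invoke weak homogeneity to conclude the thread is already in $V$. The only cosmetic difference is that you sketch a wellfoundedness argument below $\kappa$ directly, where the paper simply cites Foreman's handbook chapter for wellfoundedness of the generic ultrapower up to $\kappa^+$.
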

\begin{proof}
Suppose $\langle C_\alpha : \alpha < \kappa \rangle$ is a coherent sequence of clubs.  Let $G \subseteq \p(Z)/I$ be generic, and let $j : V \to M$ be the associated embedding.  By \cite[Section 2.6]{MR2768692} , $M$ is well-founded up to $\kappa^+$.  $C^* = j(\vec C)(\kappa)$ is a thread of $\langle C_\alpha : \alpha < \kappa \rangle$.  Suppose $C'$ is another thread.  Then $C'' = C' \cap C^*$ is a club in $\kappa$, and whenever $\alpha \in \lim C''$, $C'' \cap \alpha = C' \cap \alpha = C^* \cap \alpha = C_\alpha$.  Thus $C' = C^*$, hence $C^*$ is definable from parameters in the ground model.  By weak homogeneity, $C^* \in V$.
\end{proof}

\begin{proposition}
\label{proper}
Suppose $I$ is a normal $\kappa$-complete ideal on $Z$, and $\p(Z)/I$ is a proper forcing.  Then every stationary subset of $\kappa \cap \cof(\omega)$ reflects.
\end{proposition}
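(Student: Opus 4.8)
The idea is to force with $\mathbb{P} := \p(Z)/I$ and derive a contradiction from the associated generic ultrapower. Suppose toward a contradiction that some stationary $S \subseteq \kappa \cap \cof(\omega)$ does not reflect; we may assume $\kappa > \omega_1$, since otherwise there is no ordinal of uncountable cofinality below $\kappa$ to reflect into. Let $G$ be $\mathbb{P}$-generic over $V$ and let $j : V \to M \subseteq V[G]$ be the generic ultrapower embedding, so that $\crit(j) = \kappa$, $j(\kappa) > \kappa$, and, since $j \restriction \kappa = \id$, $j(S) \cap \kappa = S$. Note that $\mathbb{P}$ is not assumed precipitous, so $M$ need not be well-founded.

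The first point is that $M$ is nevertheless correct about subsets of $\kappa$. Since $I$ is normal and $\kappa$-complete, a pressing-down argument shows that the $M$-ordinals below $[\id]_G$ are exactly the genuine ordinals below $\kappa$; hence $[\id]_G$ lies in the well-founded part of $M$ and collapses to $\kappa$, and every member of $\p(\kappa)^M$ likewise lies in the well-founded part and is a genuine subset of $\kappa$. Consequently the predicates ``$D$ is a club in $\kappa$'' and ``$\cf(\kappa) > \omega$'' are absolute between $M$ and $V[G]$, and $M$ sees $\kappa$ as an ordinal below $j(\kappa)$.

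Next I invoke two standard facts about the proper forcing $\mathbb{P}$. First, a proper forcing does not change the cofinality of $\kappa$ to $\omega$, so $\cf^{V[G]}(\kappa) > \omega$; since $M \subseteq V[G]$ this gives $\cf^M(\kappa) > \omega$. Second, a proper forcing preserves the stationarity of $S$: given a $\mathbb{P}$-name $\dot E$ for a club in $\kappa$, choose a countable $N \prec H_\theta$ with $\dot E, \mathbb{P}, S, \kappa \in N$ and $\sup(N \cap \kappa) \in S$ (possible as $S$ is stationary), and an $(N,\mathbb{P})$-generic condition forces $\sup(N \cap \kappa) \in \dot E \cap S$; so $S$ is stationary in $V[G]$ and thus meets every club of $\kappa$ lying in $M$. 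Now by elementarity $M \models$ ``$j(S)$ is a non-reflecting stationary subset of $j(\kappa) \cap \cof(\omega)$,'' and applying this non-reflection at the ordinal $\kappa$ — legitimate since $M$ sees $\kappa$ as below $j(\kappa)$ and of uncountable cofinality — yields a $D \in M$ with $M \models$ ``$D$ is a club in $\kappa$ disjoint from $j(S)$.'' By the second paragraph $D$ is a genuine club in $\kappa$ in $V[G]$, and $D \cap j(S) = D \cap (j(S) \cap \kappa) = D \cap S = \emptyset$, contradicting the stationarity of $S$ in $V[G]$.

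The step requiring the most care is the absoluteness of $\p(\kappa)^M$ in the second paragraph: one must be sure that the potential ill-foundedness of the generic ultrapower cannot descend to the subsets of $\kappa$ computed in $M$, and this is exactly where the normality of $I$ is essential; the remainder is a routine combination of the generic ultrapower with the two preservation properties of proper forcing.
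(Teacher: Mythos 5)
Your argument is correct and is essentially the proof in the paper: force with $\p(Z)/I$, use properness to keep $S$ stationary and $\cf(\kappa)>\omega$ in $V[G]$, and read off reflection from the elementarity of $j$ applied at the ordinal $\kappa < j(\kappa)$ (the paper states this directly rather than by contradiction, but it is the same argument). You additionally spell out why the potential ill-foundedness of the generic ultrapower $M$ is harmless here---normality and $\kappa$-completeness force $\p(\kappa)^M$ to consist of genuine subsets of $\kappa$, so ``club'' and ``uncountable cofinality'' are computed correctly---a point the paper leaves tacit.
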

\begin{proof}
Let $S \subseteq \kappa \cap \cof(\omega)$ be stationary.  Let $G \subseteq \p(Z)/I$ be generic and let $j : V \to M$ be the associated embedding.  Then $j(S) \cap \kappa = S$, and $S$ is still stationary in $V[G]$.  By elementarity, $S \cap \alpha$ is stationary in $\alpha$ for some $\alpha<\kappa$.
\end{proof}

We will need the following to show the preservation of squares under some cardinal collapses: 
\begin{lemma}
\label{refine}
Let $\kappa$ be a cardinal and $\zeta < \kappa^+$.  Suppose there is a coherent sequence of clubs $\langle C_\alpha : \alpha < \kappa^+ \rangle$ such that for all $\alpha$, $\ot C_\alpha \leq \zeta$.  Then $\square_\kappa$ holds.
\end{lemma}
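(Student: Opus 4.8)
The plan is to thin each $C_\alpha$ to a club $D_\alpha\subseteq C_\alpha$ of order type at most $\kappa$ in such a way that $\langle D_\alpha:\alpha<\kappa^+\rangle$ is still coherent, hence a $\square_\kappa$-sequence. We may assume $\zeta>\kappa$, since otherwise $\langle C_\alpha\rangle$ already works (and the case $\kappa=\omega$ is trivial, as $\square_\omega$ holds outright: every club of order type $\omega$ has no interior limit points, so coherence is vacuous). The role of the hypothesis $\zeta<\kappa^+$ is twofold: for every $\alpha$ the order type $\gamma:=\ot C_\alpha$ has $\cf(\gamma)\le\kappa$, and whenever $\gamma$ is a limit of limit ordinals, the limit indices $j<\gamma$ with $\cf(j)<\kappa$ are cofinal in $\gamma$. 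The second fact is what will keep the order types of the $D_\alpha$ below $\kappa$.

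We define $D_\alpha$ by recursion on $\ot C_\alpha$, so that $D_\alpha$ is a function of $C_\alpha$ ($\alpha$ being $\sup C_\alpha$), maintaining the invariants that $D_\alpha$ is a club in $\alpha$ contained in $C_\alpha$, that the family is coherent, and that $\ot D_\alpha<\kappa$ unless $\cf(\ot C_\alpha)=\kappa$. Write $\gamma=\ot C_\alpha$. If $\gamma$ is not a limit of limit ordinals, say $\gamma=\delta+\omega$, then $\lim C_\alpha$ is bounded below $\alpha$; we let $D_\alpha$ consist of $D_c$ for a limit point $c$ of $C_\alpha$ whose index has cofinality $<\kappa$ (take the point of index $\delta$ if $\cf(\delta)<\kappa$, otherwise a point of index of countable cofinality below $\delta$; if $\delta=0$ there are no limit points and we just take a cofinal $\omega$-sequence of successor-indexed points of $C_\alpha$), together with a cofinal $\omega$-sequence of successor-indexed points of $C_\alpha$ above $c$. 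This ``jumps over'' every limit point of $C_\alpha$ above $c$, so the limit points of $D_\alpha$ are exactly those of $D_c$ together with $c$, coherence is inherited from $D_c$, and $\ot D_\alpha=\ot D_c+1+\omega<\kappa$. If instead $\gamma$ is a limit of limit ordinals, then $\lim C_\alpha$ is cofinal in $\alpha$; fixing a cofinal chain $\langle\beta_i:i<\cf(\gamma)\rangle$ through limit points of $C_\alpha$ whose successor terms have index of cofinality $<\kappa$, the sets $D_{\beta_i}$ end-extend one another by the coherence of $\langle C_\alpha\rangle$ together with the inductive coherence of the $D$'s, and we set $D_\alpha=\overline{\bigcup_i D_{\beta_i}}$ (closure in $\alpha$). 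Since each $\ot D_{\beta_i}<\kappa$ and they are nested, $\ot D_\alpha\le\kappa$, with equality only when $\cf(\gamma)=\kappa$; and coherence of $D_\alpha$ reduces, for $y\in\lim D_\alpha$, to coherence already known at a $\beta_i>y$. At a successor $\alpha=\beta+1$ set $D_\alpha=\{\beta\}$. One then checks directly that $\langle D_\alpha\rangle$ is a $\square_\kappa$-sequence.

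The main obstacle is making the recursion consistent, i.e.\ the coherent choice, across all the clubs handled in the second case, of the cofinal chains $\langle\beta_i\rangle$: the chain chosen for $C_\alpha$ must agree with the chains chosen for the $C_{\beta_i}$ on the relevant initial segments, so that the $D_{\beta_i}$ genuinely end-extend one another and $D_\alpha$ is closed, and the ``jump over'' in the first case must not clash with any demand — coming from a larger club of which $\alpha$ is a limit point — that $D_\alpha$ be an initial segment of a longer coherent piece. Running the recursion by order type ensures each club is treated once and that the constraint on $D_\alpha$ refers only to clubs of strictly smaller order type, and the coherence of $\langle C_\alpha\rangle$ forces any two larger clubs to impose the same demand at a given point; the chains can then be organized by a simultaneous recursion, or equivalently by first fixing, using $\zeta<\kappa^+$, a coherent system of clubs of order type $\le\kappa$ on the ordinals $\le\zeta$ (a ``local'' $\square_\kappa$-sequence, which is available in ZFC precisely because everything below $\kappa^+$ is small) and routing $\beta_i$ through its members. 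The genuinely delicate point throughout is the order-type bound: a naive thinning is pushed to order types such as $\kappa+\omega$, and it is exactly the detour through indices of cofinality $<\kappa$, possible because $\ot C_\alpha<\kappa^+$, that avoids this.
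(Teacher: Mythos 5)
Your proposal eventually lands on the same key ingredient the paper uses — a ``local'' $\square_\kappa$-sequence $\langle D_\xi : \xi \le \zeta \rangle$, available in ZFC because $\zeta < \kappa^+$ — but you bury it in the last sentence as a patch for a much heavier recursive construction, and you never carry the patch out. As written, the recursion has the gap you yourself identify: in the limit-of-limits case, the sets $D_{\beta_i}$ end-extend one another only if each $\beta_i$ lies in $\lim D_{\beta_{i+1}}$, and the thinnings destroy most limit points, so this fails for a generic choice of chain; it is not secured by coherence of $\langle C_\alpha\rangle$ and induction alone. The non-limit-of-limits case is fine on its own, but it too depends on ad hoc $\omega$-tails that must be reconciled globally. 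The paper sidesteps all of this with a one-line uniform definition: fix the local square sequence $\langle D_\xi : \xi \le \zeta\rangle$ and set $C'_\alpha = \{\beta \in C_\alpha : \ot(C_\alpha \cap \beta) \in D_{\ot C_\alpha}\}$, i.e.\ pull $D_{\ot C_\alpha}$ back along the order isomorphism of $C_\alpha$ with $\ot C_\alpha$. Then $\ot C'_\alpha = \ot D_{\ot C_\alpha} \le \kappa$ automatically, and coherence of $\langle C'_\alpha\rangle$ follows at once from coherence of $\langle C_\alpha\rangle$ and coherence of $\langle D_\xi\rangle$, with no recursion, no case analysis, and no choices to synchronize. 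Once you have the local square sequence in hand, the entire recursive apparatus is unnecessary; your ``routing $\beta_i$ through its members'' is, unwound, exactly this pullback. So the idea is right, but the proof as you present it is incomplete, and the correct completion makes almost all of your argument redundant.
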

\begin{proof}
It is easy to show by induction that for each $\xi < \kappa^+$, there is a short square sequence $\la D_\alpha : \alpha \leq \xi \ra$, i.e. a sequence satisfying all requirements for $\square_\kappa$ except that its length is $<\kappa^+$.  Fix one for $\xi = \zeta$.  For $\alpha < \kappa^+$, let $C'_\alpha = \{ \beta \in C_\alpha: \ot(C_\alpha \cap \beta) \in D_{\ot(C_\alpha)} \}$.  For each $\alpha$, $\ot(C'_\alpha) = \ot(D_{\ot(C_\alpha)}) \leq \kappa$.  Suppose $\beta$ is a limit point of $C'_\alpha$.  Then $\beta$ is a limit point of $C_\alpha$, so $C_\beta = C_\alpha \cap \beta$.  Also, $\ot(C_\beta)$ is a limit point of $D_{\ot(C_\alpha)}$, so $D_{\ot(C_\alpha)} \cap \ot(C_\beta) = D_{\ot(C_\beta)}$, and therefore $C'_\beta = C'_\alpha \cap \beta$.
\end{proof}

For a cardinal $\delta$, let $\mathbb{S}_\delta$ be the collection of bounded approximations to a $\square_\delta$ sequence.  That is, a condition is a sequence $\langle C_\alpha : \alpha \in \eta \rangle$ such that $\eta < \delta^+$ is a successor ordinal, each $C_\alpha$ is a club subset of $\alpha$ of order type $\leq \delta$, and whenever $\beta$ is a limit point of $C_\alpha$, $C_\alpha \cap \beta = C_\beta$.  An induction argument shows that conditions can be extended to arbitrary length, so the forcing introduces a $\square_\delta$-sequence.  The first and third claims of the following lemma are well-known, and the second follows from a general theorem of Ishiu and Yoshinobu \cite{MR1879973}.  We give a proof for the reader's convenience.

\begin{lemma}Let $\delta$ be a cardinal.
\label{squarestrat}
\begin{enumerate}
\item $\mathbb{S}_\delta$ is $(\delta+1)$-strategically closed.
\item If $\square_\delta$ holds, then $\mathbb S_\delta$ is $\delta^+$-strategically closed.
\item For every regular $\lambda \leq \delta$, there is a $\mathbb{S}_\delta$-name for a ``threading'' partial order $\mathbb{T}_\delta^\lambda$ that adds a club $C \subseteq (\delta^+)^V$ of order type $\lambda$ and such that whenever $\alpha$ is a limit point of $C$, $C \cap \alpha = C_\alpha$.  Furthermore, $\mathbb{S}_\delta * \dot{\mathbb{T}}_\delta^\lambda$ has a $\lambda$-closed dense subset of size $2^\delta$.
\end{enumerate}
\end{lemma}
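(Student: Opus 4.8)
The plan is to treat the three parts in order, since the last reuses the extension manipulations from the first.

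\emph{Part (1).} I would exhibit an explicit winning strategy for \emph{Good} in the game of length $\delta+1$. Write a condition of length $\bar\ell+1$ as having ``top coordinate'' $\bar\ell$. When \emph{Bad} has just played $p$ with top coordinate $\bar\ell$, \emph{Good} answers with the one-step extension that puts the one-point club $\{\bar\ell\}$ at the new coordinate $\bar\ell+1$; a singleton has no limit points, so this is always legal. At a limit stage $\gamma\le\delta$, with $\langle p_i : i<\gamma\rangle$ played and $\bar\ell_i$ the top coordinate of $p_i$, \emph{Good} forms $q=\bigcup_{i<\gamma}p_i$ (whose domain is $\eta=\sup_i\bar\ell_i$) and extends it by putting the club $\{\bar\ell_i : i<\gamma\}$ at coordinate $\eta$. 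Since the $\bar\ell_i$ strictly increase and \emph{Good} has filled the earlier limit stages, $i\mapsto\bar\ell_i$ is continuous at limits; hence the limit points of $\{\bar\ell_i : i<\gamma\}$ are precisely the $\bar\ell_j$ for limit $j<\gamma$, and at each such coordinate $q$ carries --- by the induction hypothesis --- exactly $\{\bar\ell_i : i<j\}=\{\bar\ell_i : i<\gamma\}\cap\bar\ell_j$. So the new club coheres with $q$, and its order type is $\gamma\le\delta$, which is the one inequality that needs checking. (\emph{Bad}'s top coordinates are always successors past the supremum of the earlier play, hence never limit points of $\{\bar\ell_i:i<\gamma\}$, so they impose no constraint.) A routine induction shows the strategy never fails.

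\emph{Part (2).} The bookkeeping of Part (1) collapses once $\gamma$ may exceed $\delta$, since then $\ot\{\bar\ell_i:i<\gamma\}>\delta$. This is where $\square_\delta$ enters: a fixed $\square_\delta$-sequence $\vec E=\langle E_\alpha:\alpha<\delta^+\rangle$ serves as a template for top clubs of order type $\le\delta$. The strategy --- which is a special case of a theorem of Ishiu and Yoshinobu \cite{MR1879973}, whose argument I would follow --- has \emph{Good}, at its successor moves, not take a trivial extension but jump a long way past \emph{Bad}, the jumps chosen so that a partial order-isomorphism $f$, recording the correspondence between \emph{Good}'s top coordinates and a subset of $\delta^+$ along which $\vec E$ is being copied, remains extendible; then at a limit stage with new top coordinate $\eta$, \emph{Good} extends $f$ and plays $f^{-1}[E_{f(\eta)}]$ at $\eta$. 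Coherence of $\vec E$ transports to coherence of this club with the part of $q$ below $\eta$ (whose only relevant coordinates are \emph{Good}'s earlier tops, which \emph{Bad} cannot have altered), and $\ot E_{f(\eta)}\le\delta$ gives the order-type bound. The genuine difficulty --- and the step I expect to be the main obstacle --- is arranging \emph{Good}'s jumps so that $f$ can always be continued against arbitrary play by \emph{Bad}: one must ensure the supremum of \emph{Good}'s jumps is an ordinal all of whose $E$-limit points lie inside \emph{Good}'s ``territory,'' and it is precisely this bookkeeping that the Ishiu--Yoshinobu machinery carries out.

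\emph{Part (3).} Working in the $\mathbb{S}_\delta$-extension, with generic square sequence $\langle C_\alpha:\alpha<(\delta^+)^V\rangle$, I would let $\dot{\mathbb{T}}_\delta^\lambda$ name the poset of closed bounded $c\subseteq(\delta^+)^V$ with $\ot c<\lambda$ and $c\cap\alpha=C_\alpha$ for all $\alpha\in\lim c$, ordered by end-extension. First one checks that such a $c$ can be extended past any prescribed $\nu<(\delta^+)^V$: pick $\nu'>\nu$ of cofinality $\omega$ and append a cofinal $\omega$-sequence in $\nu'$ above $\max c$ --- having no limit points, the appended block meets the coherence requirement, and order types stay below $\lambda$ by its regularity; likewise $\ot c$ can be driven up to any value below $\lambda$. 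Hence a generic filter produces $C\subseteq(\delta^+)^V$ of order type exactly $\lambda$, cofinal in and threading $\langle C_\alpha\rangle$, so $\cf((\delta^+)^V)=\lambda$ afterwards. For the $\lambda$-closed dense subset, let $D$ consist of the conditions $(p,\check c)$ with $p\in\mathbb{S}_\delta$ of length $\bar\ell+1$, $\bar\ell$ a limit ordinal, $\ot C^p_{\bar\ell}<\lambda$, and $c=C^p_{\bar\ell}\cup\{\bar\ell\}$; density of $D$ follows from the extension manipulations just described together with Part (1) (used to decide the $\dot{\mathbb{T}}_\delta^\lambda$-name). Each member of $D$ is coded by $p$, a coherent sequence of length $<\delta^+$ and hence a subset of $(\delta^+)^V$ of size $\le\delta$, so $|D|\le 2^\delta$. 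Finally $D$ is $\lambda$-closed: for a descending chain $\langle(p_i,\check c_i):i<\zeta\rangle$ in $D$ with $\zeta<\lambda$, the condition consisting of $\bigcup_i p_i$ together with the club $\bigcup_i c_i$ at the top coordinate $\sup_i\bar\ell_i$ (of order type $<\lambda$ by regularity, and coherent because the $c_i$ end-extend one another) is a lower bound, lying in $D$ after appending its supremum to the thread.
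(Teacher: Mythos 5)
Your Parts (1) and (3) follow essentially the same route as the paper's proof: in (1), \emph{Good} records the tops of all earlier plays as the new top club at limit stages, and continuity of the top sequence gives coherence and the bound $\ot\le\delta$; in (3), the dense set consists of conditions whose $\mathbb T$-part is decided and tied to the top club of the $\mathbb S_\delta$-part, and end-extension gives $\lambda$-closure. Your version of (3) is if anything a bit more careful than the paper's about which pairs $(p,\check c)$ belong to the dense set (you insist $c=C^p_{\bar\ell}\cup\{\bar\ell\}$, which makes the legality of the lower bound at limits immediate), and your (1) differs only cosmetically (your successor moves go to a successor top coordinate rather than a limit one, which is harmless for (1)).

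The real gap is in Part (2). You correctly identify that a $\square_\delta$-sequence $\vec D$ should be used as a template for the clubs \emph{Good} plays at limits, and you correctly cite Ishiu--Yoshinobu, which is the reference the paper gives as well. But you then propose a machinery with a ``partial order-isomorphism $f$'' and ``jumps,'' and you declare that ``arranging \emph{Good}'s jumps so that $f$ can always be continued against arbitrary play by \emph{Bad}'' is ``the genuine difficulty'' which you expect the reference to carry out. In fact no such bookkeeping is needed, and the strategy is only a one-line modification of your Part (1): at limit stage $\lambda<\delta^+$, \emph{Good} plays the club $\{\bar\ell_\beta:\beta\in D_\lambda\}$ at the new top coordinate $\sup_{\beta<\lambda}\bar\ell_\beta$, where $\bar\ell_\beta=\max(\dom p_\beta)$. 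Because \emph{Good} plays the supremum at every limit, the map $\beta\mapsto\bar\ell_\beta$ is increasing and continuous at limits, so the limit points of $\{\bar\ell_\beta:\beta\in D_\lambda\}$ are exactly the $\bar\ell_j$ with $j\in\lim D_\lambda$; at such a coordinate $\bar\ell_j$ the condition already carries the club $\{\bar\ell_\beta:\beta\in D_j\}=\{\bar\ell_\beta:\beta\in D_\lambda\cap j\}$, by \emph{Good}'s earlier limit move and the coherence of $\vec D$, and this equals $\{\bar\ell_\beta:\beta\in D_\lambda\}\cap\bar\ell_j$. The order type is $\ot D_\lambda\le\delta$. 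The role you assign to $f$ is simply played by the stage index $\beta$ itself, and there is nothing to arrange: \emph{Bad}'s intermediate coordinates are never limit points of \emph{Good}'s played clubs, so they impose no constraint. You should replace the appeal to a deferred argument with this explicit strategy.
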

\begin{proof}
For (1), let us pit the players \emph{Good} and \emph{Bad} against each other.  Let \emph{Bad} play any condition $p_0$.  If \emph{Bad} plays $p_\alpha$, let \emph{Good} play any condition $p_{\beta+1}$ strictly longer than $p_\beta$, where $\max(\dom p_{\beta+1})$ is a limit ordinal.  At limit stages $\lambda$, \emph{Good} plays $\bigcup_{\gamma<\lambda} p_\gamma \cup \langle \lambda, \{ \alpha : (\exists \beta < \lambda) \max(\dom p_\beta) = \alpha \} \rangle$.  The fact that \emph{Good} plays at all limit stages ensures coherence.  This strategy succeeds in producing conditions in $\mathbb S_\delta$ for $(\delta+1)$-many turns, as the order types never get too large.

For (2), assume there is a square sequence $\langle D_\alpha : \alpha < \delta^+ \rangle$.  \emph{Good} plays a similar strategy, except at limit $\lambda$, she plays $\bigcup_{\gamma<\lambda} p_\gamma \cup \langle \lambda, \{ \alpha : (\exists \beta < \lambda) \max(\dom p_\beta) = \alpha$ and $\beta \in D_\lambda \} \rangle$.  This strategy allows the game to continue for $\delta^+$-many turns.

For (3), define $\mathbb{T}_\delta^\lambda$ as the collection of bounded approximations to the desired set.  By the strategic closure of $\mathbb S_\delta$, the collection of $\langle p, \dot q \rangle \in \mathbb{S}_\delta * \dot{\mathbb{T}}_\delta^\lambda$ such that for some $x \in V$, $p \Vdash \dot q = \check x$ is dense.  If $\langle \langle p_\alpha, \check x_\alpha \rangle : \alpha < \beta < \lambda \rangle$ is a decreasing sequence of such conditions, let $x_\beta = \bigcup_{\alpha<\beta} x_\alpha$, and let $p_\beta = \bigcup_{\alpha<\beta} p_\beta \cup \langle \sup_{\alpha<\beta}(\dom p_\alpha), \check x_\beta \rangle$.  This is a condition because for all limit points $\gamma$ of $x_\beta$, $x_\beta \cap \gamma = p_\beta(\gamma)$.
\end{proof}

The next lemma will be applied to show that when $\kappa^{<\kappa} = \kappa$, forcing with $\mathbb S_\kappa$ preserves saturated ideals on $\kappa$, provided their quotient algebras are moreover $S$-layered for some stationary $S \subseteq \kappa^+$.

\begin{lemma}
\label{stratpres}
If $\kappa^{<\mu} = \kappa$, $\mu\leq\kappa$ is regular, and $\mathbb P$ is $(\kappa+1)$-strategically closed, then $\mathbb P$ preserves stationary subsets of $\kappa^+ \cap \cof(\mu)$.
\end{lemma}
\begin{proof}
Let $\sigma$ be a strategy witnessing that $\mathbb P$ is $(\kappa+1)$-strategically closed, and let $p_0 \in \mathbb P$.  Let $S \subseteq \kappa^+ \cap \cof(\mu)$ be stationary and $\dot C$ be a $\mathbb P$-name for a club. Let $\theta$ be a large regular cardinal, and let $\langle M_\alpha : \alpha < \kappa^+ \rangle$ be an increasing continuous sequence of elementary submodels of $\langle H_\theta, \in, \mathbb P, \sigma, S \rangle$, each of size $\kappa$, having transitive intersection with $\kappa^+$, and such that $M_\alpha^{<\mu} \subseteq M_\alpha$ for all successor $\alpha$.  Let $\alpha^*$ be such that $M_{\alpha^*} \cap \kappa^+ = \alpha^* \in S$.  Let %$\lambda = \cf(\alpha^*)$, and let 
$\langle \beta_i : i < \mu \rangle$ be an increasing sequence converging to $\alpha^*$.  Build a descending chain $\langle p_i : i < \mu \rangle \subseteq M_{\alpha^*}$ below $p_0$ such that at odd $i$, $p_i$ decides some ordinal $\geq \beta_i$ to be in $\dot C$, and even stages are chosen by following $\sigma$.  Since $M_{\alpha^*}^{<\mu} \subseteq M_{\alpha^*}$, the construction continues, and there is a condition $p_{\mu}$ below all conditions chosen.  $p_\mu \Vdash \alpha^* \in \dot C \cap \check S$.
\end{proof}

We can now perform our second round of forcing.  We simply force with the Easton-support product of $\mathbb S_\delta$, over all infinite cardinals $\delta<\theta$.  First we check that this preserves superstrong cardinals with Mahlo target.  For a set of ordinals $X$, let $\mathbb P_X$ denote the sub-product where we restrict to indices in $X$.   Let $\kappa$ be superstrong with target $\lambda$.  $j(\mathbb P_\kappa) = \mathbb P_\lambda$, and $\mathbb P_\lambda$ is $\lambda$-c.c.  Since $\mathbb P_{\lambda \setminus \kappa}$ is $(\kappa+1)$-strategically-closed and $| \mathbb P_\kappa | = \kappa$, Easton's Lemma implies that $\mathbb P_{\lambda \setminus \kappa}$ is still $\kappa^+$-distributive after forcing with $\mathbb P_\kappa$.  By Lemma \ref{ssext}, $\kappa$ is still superstrong with target $\lambda$ after forcing with $\mathbb P_\lambda$.  $\mathbb P_{\theta\setminus\lambda}$ does not add sets of rank $<\lambda$, so the superstrongness is preserved.

Now we argue that the conclusion of Lemma \ref{firstprep} still holds after forcing square everywhere below $\theta$, but with the proper class of almost-huge cardinals replaced with a proper class of superstrong cardinals.  It will be important for the argument that we force square to hold everywhere with a product rather than an iteration.

Suppose $\mu < \delta<\theta$ are successor cardinals or Mahlo, and let $\kappa = \mu^+$ and $\lambda = \delta^+$.  Let $I$ be the ideal on $[\delta]^{<\kappa}$ as in Lemma \ref{firstprep}.  The forcing $\mathbb P_{\theta \setminus \delta}$ is $(\delta+1)$-strategically closed.  Thus it preserves the stationarity of $S_\lambda$, adds no subsets of $[\delta]^{<\kappa}$, and preserves that $I$ has all the properties as in Lemma \ref{firstprep}.  If $\kappa < \delta$, consider the forcing $\mathbb P_{[\kappa,\delta)}$.  It is a coordinate-wise projection of
$$\prod^{E-\sprt}_{\kappa \leq \nu < \delta} \mathbb{S}_\nu * \dot{\mathbb{T}}_\nu^\kappa.$$
By Lemma \ref{squarestrat}, this poset is $\kappa$-closed and has size $\delta$.  Therefore, it is absorbed by $\col(\kappa,\delta)$, and thus by the Boolean algebra $\p([\delta]^{<\kappa})/I$.  If $g \subseteq \col(\kappa,\delta)$ is generic, then as in Lemma \ref{collapsedelta}, forcing with the quotient $(\p([\delta]^{<\kappa})/I)/g$ yields an embedding $j : V[g] \to M[\hat g]$.  If $h$ is the projected generic for $\mathbb P_{[\kappa,\delta)}$, then the embedding restricts to $j : V[h] \to M[\hat h]$.  By Theorem \ref{dualitygen}, there is a normal $\kappa$-complete ideal $J$ on $[\delta]^{<\kappa}$ such that $\p([\delta]^{<\kappa})/J$ is isomorphic to the quotient $(\p([\delta]^{<\kappa})/I)/h$.  This Boolean algebra is still $S_\lambda$-layered by Lemma \ref{iterlayer}.  If we make sure to use a projection that leaves a copy of $\col(\kappa,\delta)$ behind in the quotient (which can always be done as it is isomorphic to its square), we have that $\p([\delta]^{<\kappa})/J$ still projects to $\col(\mu,\kappa) \times \col(\kappa,\delta) \times \add(\kappa,\lambda)$.  Since generic embeddings via $J$ extend those via $I$, the desired property of mapping a stationary $A \subseteq \kappa$ to a nonstationary $B \subseteq \lambda$ still holds.

Now we move to the extension by $\mathbb S_\mu$, and it is here that we use the fact we have already forced with $\mathbb P_{\theta \setminus \kappa}$.  The forcing $\mathbb{S}_\mu * \dot{\mathbb{T}}_\mu^\mu$ is $\mu$-closed and of size $\kappa$, so it is absorbed by $\col(\mu,\kappa)$ and thus by the ideal $\p([\delta]^{<\kappa})/J$.  Again, let us use a projection that leaves a copy of $\col(\mu,\kappa)$ behind.  $G$ be generic for $\p([\delta]^{<\kappa})/J$ and let $j : V \to M$ be the generic ultrapower.  Notice that since $\square_\delta$ holds in $V$ and $\lambda = j(\kappa) = (\mu^+)^{V[G]}$, $\square_\mu$ holds in $V[G]$ by Lemma \ref{refine}.  The projected generic $g * h \subseteq \mathbb{S}_\mu * \dot{\mathbb{T}}_\mu^\mu$ yields a condition $m \in  j(\mathbb S_\mu) = \mathbb S_\mu^{V[G]}$ that is below all conditions in $g$.  Because $\square_\mu$ already holds, $\mathbb S_\mu^{V[G]}$ is in fact $\lambda$-strategically closed in $V[G]$ (but not necessarily in $M$).  Since $(2^\lambda)^M = j(\kappa^+) \leq j(\lambda)<(\lambda^+)^V$, we can use this strategic closure and the ${<}\lambda$-closure of $M$ to build an $M$-generic $\hat g$ with $m \in \hat g$.  By Theorem \ref{dualitygen}, there is an ideal $J'$ on $[\delta]^{<\kappa}$ in $V[g]$ such that $\p([\delta]^{<\kappa})/J' \cong (\p([\delta]^{<\kappa})/J)/g$.  Since $\mathbb S_\mu$ is $\kappa$-distributive, $\p([\delta]^{<\kappa})/J'$ still projects to $\col(\mu,\kappa) \times \col(\kappa,\delta) \times \add(\kappa,\lambda)$, and it is $S_\lambda$-layered.

Finally, consider the remaining forcing $\mathbb P_\mu$.  Since $\mu$ is either Mahlo or a successor, it is either $\mu$-c.c., or of the form $\mathbb P_\nu \times \mathbb S_\nu$, where $\mu = \nu^+$.   Since $\mathbb P_\mu$ has size $\mu$ and $J'$ is $\kappa$-complete, if $\bar J'$ is the ideal generated by $J'$ after forcing with $\mathbb P_\mu$, then every $\bar J'$-positive set contains a $J$-positive set from the ground model.  Thus $\p([\delta]^{<\kappa})/ \bar J'$ remains $S_\lambda$-layered, and it projects to the version of $\col(\mu,\kappa) \times \col(\kappa,\delta) \times \add(\kappa,\lambda)$ from the ground model.  If $\mu$ is Mahlo, then by Lemma \ref{newproj}, this projects to version of the same forcing as defined in the extension.  If $\mu$ is a successor, then it projects to the version as defined in the extension by $\mathbb S_\nu$, for the latter two factors because $|\mathbb S_\nu| = \mu$, and for $\col(\mu,\kappa)$ because $\mathbb S_\nu$ adds no $\nu$-sequences.  Then this version projects to the version as defined in the further extension by $\mathbb P_\nu$, because $|\mathbb P_\nu| = \nu$.  Let us record what we have done:

\begin{lemma}
\label{secondprep}
It is consistent relative to a huge cardinal that there is an inaccessible $\theta$ and a sequence $\la S_\alpha : \alpha < \theta \ra$ such that:
\begin{enumerate} 
\item $V_\theta$ satisfies GCH, $\square_\kappa$ for every infinite cardinal $\kappa$, and that there is a proper class of superstrong cardinals with Mahlo targets.
\item Clauses (\ref{statprops}) and (\ref{idealprops}) of Lemma \ref{firstprep} hold. 
\end{enumerate}
\end{lemma}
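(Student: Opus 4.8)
The plan is to start from the model of Lemma~\ref{firstprep} and force over $V_\theta$ with $\mathbb{P}$, the Easton-support product of $\mathbb{S}_\delta$ over all infinite cardinals $\delta < \theta$; for a set of cardinals $X$ let $\mathbb{P}_X$ be the sub-product with indices in $X$. Since each factor $\mathbb{S}_\delta$ generically introduces a $\square_\delta$-sequence, and by the product structure together with the $(\delta+1)$-strategic closure of $\mathbb{S}_\delta$ (Lemma~\ref{squarestrat}) the part of $\mathbb{P}$ supported on indices $\geq \delta$ adds no new bounded subset of $\delta^+$, the extension satisfies $\square_\kappa$ for every infinite cardinal $\kappa$. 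GCH survives by the usual nice-name counting, using the chain condition at Mahlo and successor indices and the strategic closure of the high part. For the large cardinals, fix $\kappa$ superstrong with Mahlo target $\lambda$ in $V_\theta$ and split $\mathbb{P}$ as $\mathbb{P}_\lambda \times \mathbb{P}_{\theta\setminus\lambda}$ with $\mathbb{P}_\lambda = \mathbb{P}_\kappa \times \mathbb{P}_{[\kappa,\lambda)}$. Then $j(\mathbb{P}_\kappa) = \mathbb{P}_\lambda$ is $\lambda$-c.c., while $\mathbb{P}_{[\kappa,\lambda)}$ is $(\kappa+1)$-strategically closed, so by Easton's Lemma it remains $\kappa^+$-distributive over $V_\theta^{\mathbb{P}_\kappa}$; hence the quotient $j(\mathbb{P}_\kappa)/G$ is $\kappa^+$-distributive, and Lemma~\ref{ssext} gives that $\kappa$ is still superstrong with target $\lambda$ after $\mathbb{P}_\lambda$. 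As $\mathbb{P}_\lambda$ is $\lambda$-c.c.\ with sufficiently closed factors, $\lambda$ stays Mahlo, and the tail $\mathbb{P}_{\theta\setminus\lambda}$ adds nothing of rank $<\lambda$; so superstrongness with Mahlo target is preserved, and since there is a proper class of such $\kappa$ below $\theta$ in the ground model, the same holds in the extension.

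The main work, and the main obstacle, is checking that clauses (\ref{statprops}) and (\ref{idealprops}) of Lemma~\ref{firstprep} persist: for each pair $\mu < \delta$ of successor or Mahlo cardinals (with $\kappa = \mu^+$, $\lambda = \kappa^+$), the saturated ideal $I$ on $[\delta]^{<\kappa}$ must survive, still $S_\lambda$-layered and still projecting onto $\col(\mu,\kappa)\times\col(\kappa,\delta)\times\add(\kappa,\lambda)$ with $S_\lambda$-layered quotient. The strategy is to peel $\mathbb{P}$ into stages relative to the pair: the high part $\mathbb{P}_{\theta\setminus\delta}$, which is $(\delta+1)$-strategically closed, hence adds no subset of $[\delta]^{<\kappa}$ and preserves the stationarity of $S_\lambda$; the middle block $\mathbb{P}_{[\kappa,\delta)}$, which is a coordinatewise projection of $\prod^{E\text{-sprt}}_{\kappa\leq\nu<\delta}\mathbb{S}_\nu * \dot{\mathbb{T}}_\nu^\kappa$ and hence, by Lemma~\ref{squarestrat}, $\kappa$-closed of size $\delta$, so absorbed by $\col(\kappa,\delta)$ and thus by $\p([\delta]^{<\kappa})/I$; the stage $\mathbb{S}_\mu$, likewise $\mu$-closed of size $\kappa$ after threading, hence absorbed by $\col(\mu,\kappa)$; and finally the low remainder $\mathbb{P}_\mu$ of size $\mu$. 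At each absorbed stage one applies Theorem~\ref{dualitygen} together with Lemma~\ref{generated} to transport $I$ through the quotient to a new $\kappa$-complete normal ideal on $[\delta]^{<\kappa}$ over the intermediate model, uses Lemma~\ref{iterlayer} to keep $S_\lambda$-layeredness of the quotients (taking care to leave behind a copy of the relevant $\col$, since it is isomorphic to its own square), and invokes Proposition~\ref{ultequal} so that the generic embeddings still extend and the ``$A\mapsto B$ with $B$ nonstationary'' requirement of clause (\ref{idealprops}) is preserved.

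The delicate point is the $\mathbb{S}_\mu$ stage. Here one forces with a quotient of $\p([\delta]^{<\kappa})/J$ that leaves $\col(\mu,\kappa)$ behind and extracts a generic ultrapower $j : V \to M$; since $\square_\delta$ holds and $\lambda = j(\kappa)$ is $(\mu^+)$ of the ultrapower extension, Lemma~\ref{refine} yields $\square_\mu$ there, which by Lemma~\ref{squarestrat}(2) upgrades $j(\mathbb{S}_\mu) = \mathbb{S}_\mu^{V[G]}$ to $\lambda$-strategic closure — precisely what is needed, together with the closure of $M$ and the count $(2^\lambda)^M < (\lambda^+)^V$, to build the required $M$-generic filter for $j(\mathbb{S}_\mu)$ inside $V[G]$ and run Theorem~\ref{dualitygen}. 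It is exactly at this step that forcing square with a product rather than an iteration is essential, since it allows the stages to be split off cleanly and $\mathbb{S}_\mu$ to act as the ``outermost'' piece. Finally, as $\mathbb{P}_\mu$ has size $\mu$ and the resulting ideal is $\kappa$-complete, it only generates a small ideal; one checks by cardinality (and, at Mahlo $\mu$, by Lemma~\ref{newproj}) that the projections onto the collapse forcings as computed in the final model are preserved. Collecting these observations gives the sequence $\la S_\alpha : \alpha < \theta \ra$ and the model as required.
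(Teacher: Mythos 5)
Your proposal is correct and follows essentially the same route as the paper's proof: forcing with the Easton-support product of $\mathbb{S}_\delta$, preserving superstrongness via Easton's Lemma and Lemma~\ref{ssext}, peeling $\mathbb{P}$ into the tail $\mathbb{P}_{\theta\setminus\delta}$, the middle block $\mathbb{P}_{[\kappa,\delta)}$, the factor $\mathbb{S}_\mu$, and the small remainder $\mathbb{P}_\mu$, and in particular handling the $\mathbb{S}_\mu$ stage by observing that $\square_\delta$ in $V$ together with Lemma~\ref{refine} gives $\square_\mu$ in the generic ultrapower extension, upgrading $j(\mathbb{S}_\mu)$ to $\lambda$-strategic closure. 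You also correctly identify the role of using a product rather than an iteration, which matches the paper's remark.
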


\subsection{Frequent local saturation} In order to provide the necessary set-up for the application of Radin forcing, we begin to introduce local saturation in the neighborhood of Mahlo cardinals, many of which will become singular in the end, leave room for some collapsing in between them, and retain superstrongness.

\begin{lemma}
\label{thirdprep}
Over a model satisfying the conclusion of Lemma \ref{secondprep}, there is a cofinality-preserving forcing extension in which there is an inaccessible $\theta$ and a sequence $\la S_\alpha : \alpha < \theta \ra$ such that: such that: 
\begin{enumerate}
\item \label{gchpres} $V_\theta$ satisfies GCH, $\square_\kappa$ for every infinite cardinal $\kappa$, and that there is a proper class of superstrong cardinals with Mahlo targets.
\item \label{statpres} Whenever $\mu$ is Mahlo and $\kappa = \mu^{+n}$ for $1 \leq n \leq 4$, $S_\kappa$ is a stationary subset of $\kappa \cap \cof(\mu^{+n-1})$.
\item \label{fewlocalsat} If $\mu<\theta$ is a Mahlo cardinal  and $1 \leq n \leq 3$, there is a stationary $A \subseteq \mu^{+n}$ such that $\p(A)/\ns$ is $S_{\mu^{+n+1}}$-layered.
\item \label{twomahlos} For every two Mahlo cardinals $\mu<\delta<\theta$, if $\kappa = \mu^{+4}$, and $\lambda= \delta^+$, then there is $\kappa$-complete normal ideal $I$ on $[\delta]^{<\kappa}$ such that: 
\begin{enumerate}
\item $\p([\delta]^{<\kappa})/I$ is $S_\lambda$-layered.
\item There is a stationary $A \subseteq \kappa \cap \cof(\mu^{+3})$ and a nonstationary $B \subseteq \lambda$ such that for any generic embedding $j$ arising from $I$, $\kappa \in j(A) = B$.
\item \label{projpres} $\p([\delta]^{<\kappa})/I$ projects to $\col(\mu^{+3},\kappa) \times \col(\kappa,\delta) \times \add(\kappa,\lambda)$.
\end{enumerate}
\end{enumerate}
\end{lemma}

\begin{proof}
Let $\la S_\alpha : \alpha < \theta \ra$ be the sequence given by Lemma \ref{firstprep}.  Let $\mu<\theta$ be Mahlo and let $\kappa = \mu^{+n}$ for $1 \leq n \leq 3$.  Substituting $\delta = \kappa$ in clause (\ref{idealprops}) of Lemma \ref{firstprep}, we have a normal ideal $I$ on $\kappa$ satisfying the hypotheses of Theorem \ref{localize}.  Let $A \subseteq \kappa$ be the stationary set that is forced to be mapped to a nonstationary set $B$.  By shrinking $A$ if necessary, we may assume that $S_\kappa \setminus A$ is stationary.  Let $\mathbb P_\kappa$ be the $(\kappa \cap \cof(\mu^{+n-1}) \setminus A)$-iteration as in the conclusion of Theorem \ref{localize}.  Let $X = \{ \kappa < \theta : \kappa = \mu^{+n}$ for $\mu$ Mahlo and $1 \leq n \leq 3 \}$.  We force with the product:
%$$\mathbb Q := \prod^{E-\sprt}_{\substack{\mu < \theta \\ \mu \text{ Mahlo}}} \mathbb P_{\mu^+} \times \mathbb P_{\mu^{++}}.$$
$$\mathbb Q := \prod^{E-\sprt}_{\kappa \in X} \mathbb P_\kappa.$$

For clause (\ref{gchpres}), for the preservation of cofinalities (and thus cardinals and squares) and the GCH, the key is to note that for any regular cardinal $\mu$, (a) $\mathbb Q \restriction \mu^+$ is $(\mu^+ \cap \cof(\mu))$-layered, and (b) $\mathbb Q \restriction [\mu^+,\theta)$ is $\mu^+$-distributive since it is of the form $(\mu^+$-distributive$)\times ( \mu^+$-closed).  Thus by Easton's Lemma, the latter retains its distributivity after forcing with $\mathbb Q \restriction \mu^+$.  The desired superstrongness is preserved using Lemma \ref{ssext}.
% by the same argument as in previous subsection.  
%The key is that if $\kappa$ is Mahlo, then $\mathbb Q \restriction \kappa$ is $\kappa$-c.c.\ and of size $\kappa$, and $\mathbb Q \restriction [\kappa,\theta)$ is $\kappa^+$-distributive since it is of the form $(\kappa^+$-distributive)$\times (\kappa^+$-closed).

For clause (\ref{statpres}), we just need to check the preservation of $S_{\mu^{+n}}$ for $\mu$ Mahlo and $1 \leq n \leq 4$.  If $\mu$ is Mahlo and $1 \leq n \leq 3$, then $\mathbb P_{\mu^{+n}}$ preserves the stationarity of $S_{\mu^{+n}}$ by Lemma \ref{itershoot}, and thus so does $\mathbb Q \restriction \mu^{+n+1}$.  Since the tail $\mathbb Q \restriction [\mu^{+n+1},\theta)$ remains $\mu^{+n+1}$-distributive, $S_{\mu^{+n}}$ remains stationary.  If $n = 4$, then $\mathbb Q$ factors as $(\mu^{+n}$-c.c.$)\times(\mu^{+n}$-closed).

For clause (\ref{fewlocalsat}), let $\mu<\theta$ be Mahlo and $1 \leq n \leq 3$.  After forcing with $\mathbb P_{\mu^{+n}}$, the desired conclusion holds for some stationary $A \subseteq \mu^{+n}$, and thus it holds after forcing with $\mathbb Q \restriction [\mu^{+n},\theta)$ by the distributivity of the tail.  Temporarily let $V$ denote an extension by $\mathbb Q \restriction [\mu^{+n},\theta)$.  In this model, the forcing $\mathbb Q \restriction \mu^{+n}$ is still $(\mu^{+n} \cap \cof(\mu^{+n-1}))$-layered.  If $j : V \to M \subseteq V[G]$ is a generic embedding arising from forcing with $\p(A) / \ns$, then $M \models$ ``$j(\mathbb Q \restriction \mu^{+n})$ is $(j(\mu^{+n}) \cap \cof(\mu^{+n-1}))$-layered,'' and this holds in $V[G]$ as well since $M$ is closed under $\mu^{+n-1}$-sequences from $V[G]$.  Since $\mathbb Q \restriction \mu^{+n}$ is $\mu^{+n}$-c.c., the ideal generated by $\ns$ of the ground model is $\ns$ of the extension.  By Corollary \ref{dualitynicecase}, 
$$\mathbb Q \restriction \mu^{+n} * \p(A) / \overline{\ns} \cong \p(A) / \ns * j(\mathbb Q \restriction \mu^{+n}).$$
By Lemma \ref{iterlayer}, the right-hand side is $S_{\mu^{+n+1}}$-layered, and since $|\mathbb Q \restriction \mu^{+n}| = \mu^{+n}$, it is forced that the quotient $\p(A) / \overline{\ns}$ is $S_{\mu^{+n+1}}$-layered.

For clause (\ref{twomahlos}), let $\mu<\delta$ be Mahlo cardinals below $\theta$, and let $\kappa = \mu^{+4}$. The subforcing $\mathbb Q \restriction [\kappa,\delta)$ is $\kappa$-closed and of size $\delta$.  By the same arguments as in the previous subsection, there is an ideal $J$ on $[\delta]^{<\kappa}$ with the desired properties after forcing with $\mathbb Q \restriction [\kappa,\delta)$, and this holds after forcing further with the tail $\mathbb Q \restriction [\delta,\theta)$ by distributivity.  Now consider adjoining a generic for the forcing $\mathbb Q \restriction \kappa$.  By precisely the same argument as for (\ref{fewlocalsat}), the Booelan algebra associated to the generated ideal $\bar J$ is still $S_{\delta^+}$-layered.  Subclause (b) holds by the fact that a generic embedding arising from $\bar J$ will extend one arising from $J$, per Proposition \ref{ultequal}.  

For subclause (c), we have that 
$$\mathbb Q \restriction \kappa * \p([\delta]^{<\kappa}) / \bar{J} \cong \p([\delta]^{<\kappa}) / J * j(\mathbb Q \restriction \kappa).$$
Let $H * \bar G$ be generic for the left-hand side.  If we transfer this generic to one for the right-hand side $G * \hat H$, via the isomorphism $\iota$ of Theorem \ref{dualitygen}, we get that $G = \bar G \cap \p([\delta]^{<\kappa})^V$.  Furthermore, since $j$ is the identity on $\mathbb Q \restriction \kappa$, $H = \hat H \cap (\mathbb Q \restriction \kappa)$.  By the layeredness of $j(\mathbb Q \restriction \kappa)$, $\mathbb Q \restriction \kappa$ is a regular suborder, and $H$ is generic over $V[G]$.  Thus the map $\la q,Y \ra \mapsto \la q,\check Y \ra$ is a regular embedding of $(\mathbb Q \restriction \kappa) \times \p([\delta]^{<\kappa})/J$ into $\mathbb Q \restriction \kappa * \p([\delta]^{<\kappa}) / \bar{J}$.  Hence, if $H \subseteq \mathbb Q \restriction \kappa$ is generic over $V$, then in $V[H]$ there is a projection from $\p([\delta]^{<\kappa}) / \bar{J}$ to $[\col(\mu^{+3},\kappa) \times \col(\kappa,\delta) \times \add(\kappa,\lambda)]^V$.  This is equal to $\col(\mu^{+3},\kappa)^{V[H]} \times [ \col(\kappa,\delta) \times \add(\kappa,\lambda)]^V$.  Since $\mathbb Q \restriction \kappa$ is $\kappa$-c.c.\ and of size $\kappa$, Lemma \ref{newproj} gives that the latter factor projects to $ [ \col(\kappa,\delta) \times \add(\kappa,\lambda)]^{V[H]}$.
\end{proof}

We would like to point out that in the argument for (\ref{projpres}) above, the layeredness of $\mathbb Q \restriction \kappa$ played a substantial role.  In general, the $\kappa$-c.c.\ alone is not enough to carry out such arguments.  See \cite[Theorems 7.3 and 7.4]{MR3934477} for further discussion.

\section{The final model}
\label{radinsection}

In this section, we present a version of Radin forcing with interleaved posets and show how it forces a model of Theorem \ref{global} over a model satisfying the conclusion of Lemma \ref{thirdprep}.  We will actually have two Radin forcings, $\mathbb R$ and $\mathbb R'$, and a projection $\pi : \mathbb R' \to \mathbb R$.  $\mathbb R'$ will be slightly simpler, and will closely resemble the forcing used by Cummings in \cite{MR1041044}.  We defer to his article for some key lemmas.  $\mathbb R$ will inherit some important properties of $\mathbb R'$ and produce the desired model.

$\mathbb R$ will shoot a club through the measurable cardinals below a sufficiently strong cardinal, and between successive points $\alpha<\beta$ of this club, interleave a generic for a poset $\mathbb Q(\alpha,\beta)$ that first collapses $\beta$ to have cardinality $\alpha^{+4}$ and then iterates club shooting on $\alpha^{+4}$ to make $\ns_{\alpha^{+4}}$ locally saturated.  $\mathbb R'$ will be similar, except that it interleaves a generic for $\mathbb P(\alpha,\beta)$, a simple product of collapses and Cohen forcing which projects to $\mathbb Q(\alpha,\beta)$.  We assume the posets satisfy that following properties, which as the reader may check, will be sufficient to carry out Cummings' argument for the Prikry Property and related lemmas.
For Mahlo cardinals $\alpha<\beta$, $\mathbb P(\alpha,\beta)$, $\mathbb Q(\alpha,\beta)$, and $\pi_{\alpha,\beta}$ are such that:
\begin{enumerate}
%\item For all infinite $\alpha$, $2^{2^{\alpha}} < \alpha^{+n}$.
\item $\mathbb P(\alpha,\beta)$ is a partial order that is $(2^\alpha)^{++}$-closed and of size $\leq 2^\beta$.
\item $\pi_{\alpha,\beta} : \mathbb P(\alpha,\beta) \to \mathbb Q(\alpha,\beta)$ is a projection.
%\item For $\alpha < \beta$ Mahlo, $| \mathbb P(\alpha,\beta) | \leq 2^\beta$.
\item If $\beta$ is measurable and $j : V \to M$ is an embedding derived from a normal measure on $\beta$, then:
\begin{enumerate}
\item \label{absoluteP} $\mathbb P(\alpha,\beta) = \mathbb P(\alpha,\beta)^M$.
\item There is a filter $G \subseteq \mathbb P(\beta,j(\beta))^M$ that is generic over $M$.
\end{enumerate}
\end{enumerate}

Suppose $j : V \to M$ is an elementary embedding with critical point $\kappa$ derived from an extender $E$.  Let $\mathcal U$ be the normal measure on $\kappa$ derived from $j$, and let $i_{0,1} : V \to N_1$ be the ultrapower embedding by $\mathcal U$.  Let $i_{1,2} : N_1 \to N_2$ be the ultrapower embedding of $N_1$ by $i_{0,1}(\mathcal U)$, and let $i_{0,2} = i_{1,2} \circ i_{0,1}$.   It follows from (\ref{absoluteP}) above and elementarity that $\mathbb P(\kappa,i_{0,1}(\kappa))^{N_1} = \mathbb P(\kappa,i_{0,1}(\kappa))^{N_2}$.  Let 
$$\mathbb P^* = \{ f : [\kappa]^2 \to V_\kappa : \dom f \in \mathcal U^2 \wedge \forall \alpha \forall\beta f(\alpha,\beta) \in \mathbb P(\alpha,\beta) \}.$$  
Suppose $G \subseteq \mathbb P(\kappa,i_{0,1}(\kappa))^{N_1}$ is generic over $N_1$.  Let 
$$G^* = \{ f \in \mathbb P^* : i_{0,2}(f)(\kappa,i_{0,1}(\kappa)) \in G \}.$$
Notice that $\mathcal U$ can be read off from $G^*$.  Let us define a measure sequence $u$ constructed from $(E,G)$.  Let $u(0) = \kappa$, and if $G^* \in M$, let $u(1) = G^*$.  For $\alpha > 1$, inductively let $u(\alpha) = \{ X \subseteq V_\kappa : u \restriction \alpha \in j(X) \}$, in case $u \restriction \alpha \in M$.  
%We will say that the pair $(E,G)$ \emph{constructs} $u$.  
We will say that $(E,G)$ is an \emph{acceptable pair} if in addition, $E$ is a $(\kappa,\lambda)$-extender, with $[\lambda]^\kappa \subseteq M$ and $|\lambda| \leq (2^\kappa)^+$.  This implies that $M^\kappa \subseteq M$ and that if $k : N_1 \to M$ is the factor embedding, then $k[G]$ generates an $M$-generic filter for $\mathbb P(\kappa,j(\kappa))^M$.

Given a measure sequence $u$ constructed by an acceptable pair $(E,G)$, we say that a set $X \subseteq V_{u(0)}$ is \emph{$u$-measure-one} if there is $f \in u(1)$ such that $\dom f = A^2$ and $A \subseteq X$, and for all $\alpha$ such that $1 < \alpha < \len u$, $X \in u(\alpha)$.
We inductively define some well-behaved classes:  Let $U_0$ be the class of $x$ such that $x$ is either a measure sequence $w$ of length $>1$ constructed by an acceptable pair, or $x=w(0)$ for such a measure sequence $w$.  Given $U_n$, let $U_{n+1}$ be the class of $x \in U_n$ such that for some measure sequence $w \in U_n$ of length $>1$, either $x = w$ or $x = w(0)$, and $U_n \cap V_{w(0)}$ is $w$-measure-one. 
%$$U_{n+1} = \{ x \in U_n : \exists w \in U_n (\len w > 1 \wedge (x = w \vee x = w(0)) \wedge U_n \cap V_{w(0)} \text{ is }w\text{-measure-one} \}.$$  
Let $U_\infty = \bigcap_{n<\omega} U_n$.  If $u \in U_\infty$ and $\len u > 1$, then by countable completeness, $U_\infty \cap V_{u(0)}$ is $u$-measure-one, and $u \restriction \alpha \in U_\infty$ for $1 < \alpha < \len u$.   It is worth noting at this point:

\begin{lemma}[Cummings]
Suppose $E$ is a $(\kappa,(2^{\kappa})^+)$-extender witnessing that $\kappa$ is $(\kappa+2)$-strong, (i.e. if $j : V \to M$ is the embedding by $E$, then $V_{\kappa+2} \subseteq M$).  Let $i : V \to N$ be the embedding by the normal measure  $\mathcal U$ on $\kappa$ derived from $E$, and suppose $G$ is $\mathbb P(\kappa,i(\kappa))^N$-generic over $N$.  Then $(E,G)$ constructs a measure sequence $u \in U_\infty$ of length $(2^\kappa)^+$.
%, and $u \restriction \alpha \in U_\infty$ for every $\alpha \leq \len u$.
\end{lemma}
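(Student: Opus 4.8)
The plan is to check in turn that $(E,G)$ is an acceptable pair, that the construction it determines runs for exactly $(2^\kappa)^+$ steps, and that the resulting sequence lies in $U_\infty$.

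\emph{Acceptability and the start of the construction.} Since $V_{\kappa+2}\subseteq M$ and the target $\lambda:=(2^\kappa)^+$ of $E$ is a regular cardinal above $\kappa$, we get $M^\kappa\subseteq M$ by the standard closure argument for extender ultrapowers. Hence any $\kappa$-sized set of ordinals below $\lambda$, being the range of a $\kappa$-sequence of ordinals, lies in $M$, so $[\lambda]^\kappa\subseteq M$, and of course $|\lambda|\le(2^\kappa)^+$; thus $(E,G)$ is an acceptable pair, and the transfer of $G$ along the factor embedding $k\colon N\to M$ (with $k\circ i_{0,1}=j$) to an $M$-generic filter for $\mathbb P(\kappa,j(\kappa))^M$ is exactly as recorded in the definition of acceptable pair. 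The construction now begins: $u(0)=\kappa$ and $u(1)=G^*$, and since every member of $\mathbb P^*$ is a function $[\kappa]^2\to V_\kappa$ we have $G^*\subseteq\mathbb P^*\subseteq V_{\kappa+1}$, so $u(1)\in V_{\kappa+2}\subseteq M$ and $u(2)$ is defined.

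\emph{The length is $(2^\kappa)^+$.} For $\len u\ge\lambda$ I would show by induction that $u\restriction\alpha\in M$ for every $\alpha<\lambda$. At a successor step this is immediate, since $u(\alpha)$ is by definition a subset of $\p(V_\kappa)=V_{\kappa+1}$ and hence an element of $V_{\kappa+2}\subseteq M$, and $M$ is closed under concatenation. The one real point is a limit stage $\kappa<\alpha<\lambda$, where the whole sequence $u\restriction\alpha$ must be placed in $M$. Here $V_{\kappa+2}\subseteq M$ together with GCH forces $M$ to compute $\kappa^+$ and $\kappa^{++}$ correctly and to contain $\p(\kappa^+)$: $M$ has a bijection of $\kappa^+$ with $V_{\kappa+1}$ (as $(2^\kappa)^M=\kappa^+$ and $V_{\kappa+1}^M=V_{\kappa+1}$), so any subset of $\kappa^+$ is coded by a subset of $V_{\kappa+1}$ and therefore lies in $V_{\kappa+2}\subseteq M$; decoding well-orderings then gives $(\kappa^{++})^M=\kappa^{++}$. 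Granting this, each $u(\beta)$ with $\beta\ge1$ is a subset of $V_{\kappa+1}$, so reindexing $\alpha$ by a bijection with $|\alpha|^M\le\kappa^+$ in $M$ and G\"odel-pairing, $u\restriction\alpha$ is coded by a single subset of $\kappa^+$, hence by an element of $V_{\kappa+2}\subseteq M$. Thus the construction survives every stage below $\lambda$. The reverse inequality $\len u\le(2^\kappa)^+$ is the classical bound on the length of a measure sequence on $\kappa$ (standard; see \cite{MR1041044}): $u\restriction(2^\kappa)^+$ would have to be $j(F)(a)$ for some $a\in[(2^\kappa)^+]^{<\omega}$, and a reflection argument on the parameters of such a representation yields a contradiction. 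Hence $\len u=(2^\kappa)^+$.

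\emph{Membership in $U_\infty$.} I would prove $u\in U_n$ for all $n$ by induction, the content at stage $n$ being that $U_n\cap V_\kappa$ is $u$-measure-one. For the clauses ``$U_n\cap V_\kappa\in u(\alpha)$'' with $1<\alpha<\len u$, one needs $M$ to recognize each $u\restriction\alpha$ as a measure sequence with critical point $\kappa$ constructed by an acceptable pair belonging to $U_n^M$; the witnessing pair is assembled inside $M$ from a restriction of $E$ to fewer than $(2^\kappa)^+$ of its generators\,---\,which again lies in $M$ by the coding argument above\,---\,together with the generic data encoded in $u(1)=G^*$, after which the inductive hypothesis applies (and simultaneously yields $u\restriction\alpha\in U_\infty$ for $1\le\alpha<\len u$). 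The $u(1)$-component, largeness of $U_n\cap V_\kappa$ for $\mathcal U$, is a routine direct verification. I expect the main obstacle to be pinning down the length exactly: the coding argument at limit stages below $(2^\kappa)^+$, where the hypotheses $V_{\kappa+2}\subseteq M$ and GCH do all the work, together with the complementary fact that the construction cannot continue past $(2^\kappa)^+$, which is the delicate part of Radin's analysis of measure sequences in terms of the underlying extender.
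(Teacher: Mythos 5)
The paper itself offers no proof of this lemma---it cites Cummings \cite[Section 3.1]{MR1041044}---so your outline can only be judged as a reconstruction of the standard argument, and in that respect it is largely faithful. Your acceptability verification is correct, and your lower bound on the length (coding $u\restriction\alpha$ for $\alpha<(2^\kappa)^+$ as an element of $V_{\kappa+2}\subseteq M$ using GCH) is the right idea and is sound. The exact upper bound you defer to Cummings, which matches the paper's own deferral.

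The genuine gap is the $U_\infty$ verification, which you compress into one paragraph even though it is the technical heart of the lemma. To obtain $U_n\cap V_\kappa\in u(\alpha)$ for $1<\alpha<\len u$ you need $M$ to see an acceptable pair constructing $u\restriction\alpha$ that moreover belongs to $U_{n-1}$ in $M$'s sense. You correctly name the candidate---a sub-extender $E'$ of $E$ with support $<(2^\kappa)^+$, coded into $V_{\kappa+2}\subseteq M$, together with $G$ read off from $u(1)=G^*$---but you do not verify (i) that $(E',G)$ actually constructs $u\restriction\alpha$ in $M$, which requires a factor-embedding argument: writing $j=k\circ j_{E'}$, one must check that each $u(\beta)$, $\beta<\alpha$, lands in the $E'$-ultrapower and is fixed by $k$, so that $\{X : u\restriction\beta\in j_{E'}(X)\}=\{X : u\restriction\beta\in j(X)\}$; (ii) that $(E',G)$ is acceptable from $M$'s point of view; or (iii) how to organize the two inductions (on $n$ and on $\alpha$) so that $u\restriction\alpha\in U_n$ holds uniformly, rather than only $u\in U_n$. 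You also wave off the $u(1)$-clause as ``routine,'' but its verification is obtained by reflecting precisely the preceding facts below $\kappa$ via $j$, so it inherits whatever is left unproved. None of this is a wrong step; the ingredients are the right ones, but the proposal names them without assembling the argument, which is exactly the part of Cummings' Section 3.1 that carries the weight.
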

\begin{proof}See \cite[Section 3.1]{MR1041044}.
\end{proof}

Now we are ready to define the forcing $\mathbb R_u$ relative to a $u \in U_\infty$.  Let $\kappa = u(0)$.  $p$ is a condition in $\mathbb R_u$ iff $p = \la X_i : i \leq n \ra$, where $n \geq 1$ and there exists an increasing sequence of Mahlo cardinals $\kappa_0 < \dots < \kappa_n = \kappa$ such that:
\begin{enumerate}
\item $X_0 = \la \kappa_0 \ra$.
\item For $0<i<n$, $X_i$ is either a pair $\la \kappa_i,p_i \ra$ with $p_i \in \mathbb Q(\kappa_{i-1},\kappa_i)$, or a quadruple $\la w_i,A_i,H_i,h_i \ra$, where:
\begin{enumerate}
\item $w_i \in U_\infty$, $\len w_i > 1$, and $\kappa_i = w_i(0)$.
\item $A_i$ is $w_i$-measure-one and contained in $V_{\kappa_i} \setminus V_{\kappa_{i-1}}$.
\item If $(E_i,G_i)$ is an acceptable pair that constructs $w_i$, then $H_i \in G_i^*$.
\item $h_i$ is a function with domain $A_i \cap \kappa_i$, and $(\forall \alpha) h_i(\alpha) \in \mathbb P(\kappa_{i-1},\alpha)$.
\item $\dom H_i = [\dom h_i]^2$.
\end{enumerate}
\item $X_n$ is a quadruple $\la w_n,A_n,H_n,h_n \ra$ with the same properties as above, and $w_n = u$.
\end{enumerate}
If $\la X_i : i \leq n \ra$ is a condition with associated sequence of cardinals $\la \kappa_i : i \leq n \ra$, put $\kappa_{X_i} = \kappa_i$.  Let $p = \la X_i : i \leq m \ra$ and $q = \la Y_i : i \leq n \ra$.  We put $p \leq q$ when:
\begin{enumerate}
\item $m \geq n$ and $X_0 = Y_0$.
\item $\{ \kappa_{X_i} : i \leq m \} \supseteq \{ \kappa_{Y_i} : i \leq n \}$.
\item For $0<i<m$, if $X_i = \la \kappa_{X_i},p_i \ra$,
%\in \mathbb P(\kappa_{X_{i-1}},\kappa_{X_i})$, 
then one of the following occurs:
\begin{enumerate}
\item There is $j < n$ such that $\kappa_{X_i} = \kappa_{Y_j}$.  In this case, $Y_j$ is a pair $\la \kappa_{Y_j},q_j \ra$, $\kappa_{X_{i-1}} =  \kappa_{Y_{j-1}}$ also, and $p_i \leq q_j$.%, where $Y_j = \la \kappa_j,q_j \ra$.
%= Y_j  \in \mathbb P(\kappa_{Y_{j-1}},\kappa_{Y_j})$.
\item \label{projpoints} There is no $j < n$ such that $\kappa_{X_i} = \kappa_{Y_j}$.  For the least $j\leq n$ such that $\kappa_{X_{i}} <  \kappa_{Y_j}$, $Y_j$ is a quadruple $\la w,A,H,h \ra$ with $\kappa_{X_i} \in A$.  If $\kappa_{X_{i-1}} = \kappa_{Y_{j-1}}$, then $p_i \leq \pi_{\kappa_{X_{i-1}},\kappa_{X_{i}}}(h(\kappa_{X_i}))$, and if $\kappa_{X_{i-1}} > \kappa_{Y_{j-1}}$, then $p_i \leq \pi_{\kappa_{X_{i-1}},\kappa_{X_{i}}}(H(\kappa_{X_{i-1}},\kappa_{X_i}))$.
\end{enumerate}
\item For $0<i \leq m$, if $X_i$ is a quadruple $\la w,A,H,h \ra$,  then one of the following occurs:
\begin{enumerate}
\item There is $j \leq n$ such that $Y_j = \la w,A',H',h' \ra$.  In this case, $A \subseteq A'$ and for all $(\alpha,\beta) \in \dom H$, $H(\alpha,\beta) \leq H'(\alpha,\beta)$.  If $\kappa_{X_{i-1}} = \kappa_{Y_{j-1}}$, then for all $\alpha \in \dom h$, $h(\alpha) \leq h'(\alpha)$.  If $\kappa_{X_{i-1}} > \kappa_{Y_{j-1}}$, then for all $\alpha \in \dom h$, $h(\alpha) \leq H'(\kappa_{X_{i-1}},\alpha)$.
\item There is no $j \leq n$ such that $\kappa_{X_i} = \kappa_{Y_j}$, and for the least $j \leq n$ such that $\kappa_{X_i} < \kappa_{Y_j}$, $Y_j$ is a quadruple $\la v,A',H',h' \ra$ such that $w \in A'$, $A \subseteq A'$, and $H(\alpha,\beta) \leq H'(\alpha,\beta)$ for all $(\alpha,\beta) \in \dom H$.   If $\kappa_{X_{i-1}} = \kappa_{Y_{j-1}}$, then for all $\alpha \in \dom h$, $h(\alpha) \leq h'(\alpha)$.  If $\kappa_{X_{i-1}} > \kappa_{Y_{j-1}}$, then for all $\alpha \in \dom h$, $h(\alpha) \leq H'(\kappa_{X_{i-1}},\alpha)$. 
\end{enumerate}
\end{enumerate}

Now the forcing $\mathbb R'_u$ is the same except that for every $\alpha<\beta$ Mahlo, we replace $\mathbb Q(\alpha,\beta)$ with $\mathbb P(\alpha,\beta)$ and $\pi_{\alpha,\beta}$ with the identity function.  The forcings $\mathbb R'_u$ are of the type studied in \cite{MR1041044}.  In our more general class of forcings $\mathbb R_u$, the simpler posets $\mathbb P(\alpha,\beta)$ guide us along, until we have decided that $\alpha<\beta$ are  successive points of the Radin sequence, and then we project to $\mathbb Q(\alpha,\beta)$.

We say $p \leq^* q$ when $p \leq q$ and $\len p = \len q$.  It is easy to see that if $u \in U_\infty$, $p = \la \la \kappa_0 \ra, X_1 \ra$, then $\la \mathbb R'_u \restriction p, \leq^* \ra$ is $(2^{\kappa_0})^{++}$-closed, and $\la \mathbb R_u \restriction p, \leq^* \ra$ is $(2^{\kappa_0})^{++}$-strategically closed.

\begin{lemma}
\label{prikryproj}
If $u \in U_\infty$, then there is a length-preserving projection $\pi : \mathbb R_u' \to \mathbb R_u$.  Moreover, if $q \leq \pi(p)$, then there is $p' \leq p$ such that $\pi(p') \leq^* q$.
\end{lemma}

\begin{proof}
Suppose $p = \la X_i : i \leq n \ra \in \mathbb R_u'$.  Let $\pi(p) = \la Y_i : i \leq n \ra$, where $X_i = Y_i$ if $i = 0$ or $X_i$ is a quadruple, and if $X_i = \la \kappa_i,p_i \ra$, then $Y_i = \la \kappa_i, \pi_{\kappa_{i-1},\kappa_i}(p_i) \ra$.  $\pi$ is order-preserving because each $\pi_{\alpha,\beta}$ is, and by requirement (\ref{projpoints}) in the definition of the ordering for $\mathbb R_u$.

Suppose $p = \la X_i : i \leq n \ra \in \mathbb R'_u$, $q = \la Y_i : i \leq m \ra \in \mathbb R_u$, and $q \leq \pi(p)$.  We need to find a condition $p' = \la Z_i : i \leq m \ra \leq p$ such that $\pi(p') \leq q$. If $i = 0$ or $Y_i$ is a quadruple, let $Z_i = Y_i$.   Suppose $i<m$ is such that $Y_i$ is a pair $\la \kappa,q_i \ra$, and let $\mu = \kappa_{Y_{i-1}}$.  If there is $j < n$ such that $\kappa = \kappa_{X_j}$, then $X_j$ is a pair $\la \kappa,p_j \ra$, $\mu = \kappa_{X_{j-1}}$, and $q_i \leq \pi_{\mu,\kappa}(p_j)$.  Find $p'_i \leq p_j$ such that $\pi_{\mu,\kappa}(p'_i) \leq q_i$, and put $Z_i = \la \kappa,p'_i \ra$.
If there is no such $j<n$, then let $j \leq n$ be least such that $\kappa_{Y_i} < \kappa_{X_j}$ and $X_j$ is a quadruple $\la w,A,H,h \ra$.  If $\mu = \kappa_{X_{j-1}}$, let $p_i = h(\kappa)$, and otherwise let $p_i = H(\mu,\kappa)$.
We must have that $q_i \leq \pi_{\mu,\kappa}(p_i)$. Find $p'_i \leq p_i$ such that $\pi_{\mu,\kappa}(p'_i) \leq q_i$, and put $Z_i = \la \kappa,p'_i \ra$.
It is straightforward to check that $p'$ is as desired.
\end{proof}

The following two lemmas are easy to verify:

\begin{lemma}
\label{radinfactor}
Suppose $u \in U_\infty$ and $p = \la X_i : i \leq n \ra \in \mathbb R_u$.  Suppose $m_0<m_1< n$ are such that $X_{m_0}$ is a quadruple $\la w,A,H,h \ra$, and $X_i$ is a pair $\la \kappa_i,p_i \ra$ for $m_0 < i \leq m_1$.  Then $\mathbb R_u \restriction p$ is isomorphic to
$$ \mathbb R_{w} \restriction \la X_i : i \leq m_0 \ra \times \mathbb Q(\kappa_{m_0}, \kappa_{m_0+1}) \restriction p_{m_0+1} \times \dots \times \mathbb Q(\kappa_{m_1-1}, \kappa_{m_1})  \restriction p_{m_1}$$
$$\times \mathbb R_u \restriction \la \la \kappa_{m_1} \ra, X_{m_1+1},\dots,X_n \ra.$$
\end{lemma}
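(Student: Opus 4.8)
The plan is to exhibit the isomorphism explicitly by cutting a condition below $p$ at the two cardinals $\kappa_{m_0}$ and $\kappa_{m_1}$, after first isolating a rigidity property of the block between them. First I would observe that if $r \le p$ has cardinal sequence $\la \lambda_0 < \dots < \lambda_k \ra$, then the only cardinals of $r$ lying in $(\kappa_{m_0},\kappa_{m_1}]$ are $\kappa_{m_0+1},\dots,\kappa_{m_1}$ themselves; that is, the pair coordinates $X_{m_0+1},\dots,X_{m_1}$ of $p$ persist in $r$ with only their $\mathbb P$-components possibly shrunk. This follows from clauses (3) and (4) of the ordering: whenever a coordinate of the smaller condition carries a cardinal not already present in the larger one, the least coordinate of the larger condition above that cardinal must be a quadruple, and here $X_{m_0+1},\dots,X_{m_1}$ are all pairs; a putative new cardinal cannot sit at the first coordinate either, since $r$ and $p$ share it, forcing $\lambda_0 = \kappa_0 < \kappa_{m_0}$. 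I would also record the standing fact, from clause (4)(a), that measure sequences are preserved on matched coordinates: the coordinate of $r$ carrying $\kappa_{m_0}$ is then a quadruple with measure sequence $w$, and (a short argument with clauses (3)--(4) rules out anything above it) the top coordinate of $r$ is a quadruple with measure sequence $u$ and cardinal $\kappa_n$.

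Given this, for $r = \la Z_i : i \le k\ra \le p$ let $a \le b \le k$ be the indices with $\kappa_{Z_a} = \kappa_{m_0}$ and $\kappa_{Z_b} = \kappa_{m_1}$; by rigidity $b = a + (m_1 - m_0)$ and $Z_{a+t} = \la \kappa_{m_0+t}, p^r_{m_0+t}\ra$ with $p^r_{m_0+t} \le p_{m_0+t}$ for $1 \le t \le m_1 - m_0$. Define
\[
\Phi(r) = \Big(\, \la Z_i : i \le a\ra,\ \la p^r_{m_0+1},\dots,p^r_{m_1}\ra,\ \la\, \la \kappa_{m_1}\ra, Z_{b+1},\dots,Z_k \,\ra \,\Big).
\]
I would check that $\la Z_i : i \le a\ra$ is a condition in $\mathbb Q_w \restriction \la X_i : i \le m_0\ra$ (its top carries measure sequence $w$; being a condition in $\mathbb Q_w$ and the ordering restricted to these coordinates refer only to those coordinates and to $w$, hence are inherited from $r \le p$), that the middle tuple belongs to the displayed product of the $\mathbb P(\kappa_{i-1},\kappa_i)\restriction p_i$, and that $\la\la\kappa_{m_1}\ra, Z_{b+1},\dots,Z_k\ra$ is a condition in $\mathbb Q_u \restriction \la\la\kappa_{m_1}\ra, X_{m_1+1},\dots,X_n\ra$ --- here replacing the pair $X_{m_1}$ by the trivial stem $\la\kappa_{m_1}\ra$ is harmless, since every coordinate requirement and every ordering clause for coordinates above $\kappa_{m_1}$ mentions only the predecessor \emph{cardinal} $\kappa_{m_1}$, not the nature of that coordinate. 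The inverse is concatenation: from a triple $(s, \la q_1,\dots,q_{m_1-m_0}\ra, t)$ one forms $s$, followed by $\la\la\kappa_{m_0+1},q_1\ra,\dots,\la\kappa_{m_1},q_{m_1-m_0}\ra\ra$, followed by $t$ with its trivial first coordinate deleted; one verifies this is a condition below $p$ that $\Phi$ sends to the given triple. That $\Phi$ and its inverse are order-preserving is then a clause-by-clause comparison.

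The main obstacle is purely the bookkeeping at the two seams. At $\kappa_{m_0}$ one verifies that the quadruple coordinate of $p$ carrying $\kappa_{m_0}$, together with everything below it, is exactly the $\mathbb Q_w$-factor --- in particular new cardinals inserted below $\kappa_{m_0}$ are drawn from its measure-one set and stay in the $\mathbb Q_w$-part --- while the pair $X_{m_0+1}$ just above it carries no measure-one set and so forbids insertion, which is precisely what makes the middle block a genuine product rather than a further iteration. At $\kappa_{m_1}$ one must track the alternatives in clauses (3)(b) and (4) that switch between comparing with $h$-values and with $H$-values according to whether the predecessor cardinal is preserved, and check that recording $\kappa_{m_1}$ as a trivial stem in the upper factor (rather than as the pair $X_{m_1}$) does not disturb these comparisons. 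Neither point needs a new idea --- as with the analogous factoring in Cummings' treatment --- so the verification is a somewhat lengthy but routine check.
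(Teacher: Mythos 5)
The paper gives no proof of this lemma; it is one of three "straightforward to check" lemmas immediately following the definition of $\mathbb Q_u$ and of $\leq$, and the intended comparison is with the analogous factoring in Cummings' original treatment. Your proposal supplies a correct and complete argument along the natural lines, and in particular it correctly identifies the one nontrivial observation that makes everything else mechanical: the \emph{rigidity} of the middle block. Since the ordering clauses (3)(b) and (4)(b) permit new coordinates of $r\leq p$ only at cardinals whose least cover in $p$ is a quadruple, and $X_{m_0+1},\dots,X_{m_1}$ are pairs, no cardinals can be inserted in the interval $(\kappa_{m_0},\kappa_{m_1}]$; clause (2) forces the cardinals $\kappa_{m_0+1},\dots,\kappa_{m_1}$ to persist, and the exclusion of cases (3)(b),(4)(a),(4)(b) forces the matching coordinates of $r$ to remain pairs with shrunk $\mathbb P$-components. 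This is exactly what turns the middle segment into a genuine product. Your check that the trivial stem $\la\kappa_{m_1}\ra$ may replace $X_{m_1}$ in the upper factor is the correct reading of the definitions: every condition-validity requirement and every ordering clause for coordinates at index $i$ refers to $X_{i-1}$ only through the cardinal $\kappa_{i-1}$, never through whether $X_{i-1}$ is a pair or quadruple, so the substitution is transparent. The seam at $\kappa_{m_0}$ is handled correctly as well: the matched coordinate of $r$ at $\kappa_{m_0}$ must be a quadruple with measure sequence $w$ by clause (4)(a), so truncating there lands in $\mathbb Q_w$, and any newly inserted coordinates below $\kappa_{m_0}$ are compared only against indices $\leq m_0$ of $p$, hence lie entirely within the $\mathbb Q_w$ factor. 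The definition of $\Phi$, the verification that the concatenation map is its two-sided inverse, and the clause-by-clause check of order preservation are exactly the routine bookkeeping the paper alludes to. The argument is sound.
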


\begin{lemma}
Suppose $u \in U_\infty$ and $G \subseteq \mathbb R_u$ is generic.  Let $\kappa = u(0)$ and let $C = \{ \alpha : (\exists p = \la X_i : i \leq n \ra \in G)(\exists i < n) \alpha = \kappa_{X_i} \}$.  Then $C$ is club in $\kappa$.
\end{lemma}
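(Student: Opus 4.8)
The plan is to establish separately that $C$ is unbounded in $\kappa$ and closed below $\kappa$, each via a density argument. A preliminary remark: any two conditions in $G$, being compatible, must have the same first coordinate (every $\leq$-extension keeps the first coordinate fixed), so there is an ordinal $\kappa_0$ with $\la\kappa_0\ra$ the first entry of every member of $G$; hence $C\subseteq[\kappa_0,\kappa)$, $\kappa_0=\min C$, and it suffices to treat limit points of $C$ lying strictly above $\kappa_0$. For $x$ a member of a $w$-measure-one set (so $x$ is an ordinal or a measure sequence) I write $\bar x$ for $x$ itself when $x$ is an ordinal and for $x(0)$ when $x$ is a measure sequence; this is the ordinal $x$ would contribute to $C$ if used to refine a block.

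For unboundedness, fix $\beta<\kappa$ and let $D_\beta$ be the set of $q=\la X_i:i\leq n\ra$ admitting some $i<n$ with $\kappa_{X_i}>\beta$; since cardinals are never deleted along $\leq$, meeting $D_\beta$ forces $C\cap(\beta,\kappa)\neq\emptyset$. Given $p=\la X_i:i\leq n\ra$, we may assume $\kappa_{X_{n-1}}\leq\beta$, and look at the top block $X_n=\la u,A_n,H_n,h_n\ra$. As $A_n$ is $u$-measure-one it contains a base $B$ lying in the normal measure $\mathcal U$ coded by $u(1)$, and the Mahlo cardinals below $\kappa$ form a set in $\mathcal U$, so we may pick a Mahlo $\gamma\in B\cap(\beta,\kappa)$. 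Splitting $X_n$ at $\gamma$ — inserting the pair $\la\gamma,h_n(\gamma)\ra$ (legitimate since $\gamma\in A_n\cap\kappa=\dom h_n$, so $h_n(\gamma)\in\mathbb P(\kappa_{n-1},\gamma)$) and replacing $X_n$ by $\la u,A_n',H_n',h_n'\ra$ with $A_n'=\{x\in A_n:\bar x>\gamma\}$, $h_n'(\delta)=H_n(\gamma,\delta)$, and $H_n'=H_n{\restriction}[A_n'\cap\kappa]^2$ — yields a condition in $D_\beta$ below $p$; the verifications that $A_n'$ is again $u$-measure-one, that the new values land in the prescribed posets, and that $H_n'$ stays in $G_u^*$ are routine unwindings of the definitions.

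For closure, fix a limit point $\alpha\in(\kappa_0,\kappa)$ of $C$ and suppose for contradiction that $\alpha\notin C$. Let $D_\alpha$ be the set of $q=\la X_i:i\leq n\ra$ such that either $\alpha=\kappa_{X_i}$ for some $i$, or there is $i$ with $\kappa_{X_{i-1}}<\alpha<\kappa_{X_i}$ for which either $X_i$ is a pair or else $X_i=\la w,A,H,h\ra$ with $\sup\{\bar x:x\in A,\ \bar x<\alpha\}<\alpha$. I claim $D_\alpha$ is dense below any $p_0\in G$. Given $p\leq p_0$, we may assume $\alpha$ is not among the cardinals of $p$, so $\kappa_{X_{i-1}}<\alpha<\kappa_{X_i}$ for a unique $i$; if $X_i$ is a pair we are done, and if $X_i=\la w,A,H,h\ra$ then $\{x\in A:\bar x<\alpha\}$ cannot be $w$-measure-one, since a witnessing base is a set of ordinals, which here would be a subset of $\alpha$ and hence bounded in $w(0)$, contradicting membership in the uniform normal measure derived from $w$; consequently $A'=\{x\in A:\bar x\geq\alpha\}$ is $w$-measure-one, and shrinking $X_i$ to use $A'$ together with the corresponding restrictions of $h$ and $H$ gives the desired extension. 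Now take $q\in G\cap D_\alpha$. The first alternative in the definition of $D_\alpha$ is impossible ($\alpha=\kappa_{X_j^q}$ with $j<n^q$ would place $\alpha$ in $C$, and $j=n^q$ would make $\alpha=\kappa$), so $q$ witnesses the second alternative with some $\kappa_{X_{i-1}}<\alpha<\kappa_{X_i}$ and a bound $\beta:=\sup\{\bar x:x\in A,\ \bar x<\alpha\}<\alpha$ (or with $X_i$ a pair). Directly from the ordering, every cardinal of any extension of $q$ lying in $(\kappa_{X_{i-1}},\kappa_{X_i})$ is of the form $\bar x$ for some $x\in A$ (and there are none at all when $X_i$ is a pair); combining this with compatibility inside $G$ shows $C\cap(\kappa_{X_{i-1}},\alpha)$ is contained in $(\kappa_{X_{i-1}},\beta]$, hence bounded below $\alpha$, contradicting that $\alpha>\kappa_{X_{i-1}}$ is a limit point of $C$.

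The point calling for the most care is this closure argument: one must recognize that inside each ``block'' $(\kappa_{X_{i-1}},\kappa_{X_i})$ of a condition in $G$ the ordinals that $G$ can ever contribute to $C$ are precisely those of the form $\bar x$ for $x$ ranging over the measure-one set of that block's quadruple entry (with no contributions at all from a pair entry), and that genericity shrinks this set away from any fixed $\alpha<\kappa_{X_i}$ exactly because a $w$-measure-one set, having a base in the uniform normal measure on $w(0)$, cannot concentrate below an ordinal smaller than $w(0)$.
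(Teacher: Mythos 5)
Your proof is correct. The paper declares this lemma (together with the preceding two) to be ``straightforward to check'' and gives no argument, so there is no paper proof to compare against; your write-up supplies the missing details along the expected lines.

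A few remarks confirming the key steps. For unboundedness, your ``splitting at $\gamma$'' argument is the right move: picking a Mahlo $\gamma$ in the base $B\in\mathcal U$ of the top measure-one set above $\beta$, inserting the pair $\la\gamma,h_n(\gamma)\ra$, and updating the top block with $h_n'(\delta)=H_n(\gamma,\delta)$ matches exactly the ordering clauses 3(b) and 4(a)/(b); the verification that the trimmed set $\{x\in A_n:\bar x>\gamma\}$ is still $u$-measure-one uses that $V_{\gamma+\omega}$ is null for each $u(\beta)$ with $\beta>1$ (since $u\restriction\beta$ has $\kappa$ as an element and is hence not in $j(V_{\gamma+\omega})=V_{\gamma+\omega}$), and that $B\cap(\gamma,\kappa)\in\mathcal U$, which is what you implicitly use. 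For closure, the crucial observation you isolate --- that every cardinal introduced by an extension of $q$ into an interval $(\kappa_{X_{i-1}},\kappa_{X_i})$ must be $\bar x$ for some $x$ in the measure-one set of $X_i$ (or nothing, when $X_i$ is a pair) --- is precisely what the ordering clauses 3(b) and 4(b) enforce, and the density of shrinking the measure-one set away from a fixed $\alpha<w(0)$ follows from normality/uniformity of the measure coded by $w(1)$ exactly as you say; the final bookkeeping step, that a set of ordinals in $\mathcal U_w$ cannot be bounded below $w(0)$, is the right reason the portion below $\alpha$ is nonstationary (in fact null), hence bounded after shrinking. One small point worth making explicit if you were to polish this: you should also note that $\{x\in A:\bar x\geq\alpha\}$ retains measure one with respect to every $w(\beta)$ for $\beta>1$, not just $w(1)$; this follows from the same rank observation used in the unboundedness step, since the discarded portion lies in $V_{\alpha+\omega}$.
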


The most important result concerning these forcings is the following, known as the Prikry Property.  The proof takes some work, and we refer the reader to \cite[Section 3.4]{MR1041044} for a complete account.
%We say $p \leq^* q$ when $p \leq q$ and $\len p = \len q$.  It is easy to see that if $u \in U_\infty$, $p = \la X_i : i \leq n \ra \in \mathbb R'_u$, and $X_0 = \la \kappa_0 \ra$, then $\la \mathbb R'_u \restriction p, \leq^* \ra$ is $(2^{\kappa_0})^{++}$-closed, and $\la \mathbb R_u \restriction \pi(p), \leq^* \ra$ is $(2^{\kappa_0})^{++}$-strategically closed, where $\pi : \mathbb R_u' \to \mathbb R_u$ is the projection given by Lemma \ref{prikryproj}.

\begin{theorem}[Cummings]
\label{prikry}
Suppose $u \in U_\infty$, $p \in \mathbb R'_u$, and $\sigma$ is a sentence in the forcing language of $\mathbb R'_u$.  Then there is $q \leq^* p$ deciding $\sigma$.
\end{theorem}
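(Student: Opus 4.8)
The plan is to follow Cummings' proof of the Prikry Property \cite[Section 3]{MR1041044} essentially line by line, checking only that the Background Hypothesis supplies everything his argument uses about the interleaved posets $\mathbb{P}(\alpha,\beta)$, and that the extra generality of these posets (beyond the $\add$-type collapses appearing in his paper) never intervenes. Accordingly, the proof runs by Cummings' induction: the induction hypothesis is Theorem~\ref{prikry}, below every condition, for every Radin forcing $\mathbb{Q}_w$ of strictly lower rank in his inductive ordering, and it is invoked — via the factorization Lemma~\ref{radinfactor} — to handle the forcings $\mathbb{Q}_w$ that sit below points newly added to the generic club.

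Fix $p = \la X_i : i \leq n \ra \in \mathbb{Q}_u$ with associated Mahlo cardinals $\kappa_0 < \dots < \kappa_n = \kappa = u(0)$, top entry $X_n = \la u, A, H, h \ra$, and fix a sentence $\sigma$. The core of the argument is the standard one-point-extension lemma. A condition $r \leq p$ is obtained from $p$ by shrinking each measure-one set to a measure-one subset, descending in every $H$-value, $h$-value, and interleaved $p_i$, and inserting finitely many new blocks — each either a pair $\la \alpha, p_\alpha \ra$, with $p_\alpha$ below the governing $h$- or $H$-value, or a quadruple $\la w, A_w, H_w, h_w \ra$ with $w$ drawn from the relevant measure-one set and $\len w > 1$ — subject to the coherence constraints of clauses (3) and (4) of the ordering. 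By Lemma~\ref{radinfactor}, the part of $r$ sitting strictly below a block newly inserted at $w$ is a finite product of $\mathbb{Q}_w$ with posets of the form $\mathbb{P}(\cdot,\cdot)$; the induction hypothesis applies to the $\mathbb{Q}_w$-factor, and the $(2^\alpha)^{++}$-closure of the $\mathbb{P}(\cdot,\cdot)$-factors supplied by the Background Hypothesis handles the rest. Hence, below any fixed such $w$, one can find a $\leq^*$-extension deciding $\sigma$.

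The decisive step is then to stabilize this over a measure-one set. Coding, for each $w$ in (a shrinking of) $A$, the chosen $\leq^*$-decision below $w$ by a single function on $A$ lying in $V_\kappa$, and invoking $u$-measure-one-ness — normality of the measure read off from $u(1)$ together with membership in each $u(\gamma)$ for $1 < \gamma < \len u$ — one obtains a $\leq^*$-extension $q \leq^* p$ and a $u$-measure-one $A' \subseteq A$ such that inserting any $w \in A'$ below $q$ (with its auxiliary data shrunk as prescribed) either forces $\sigma$, or forces $\neg\sigma$, or decides nothing at that level, uniformly in $w$; the same uniformity is arranged simultaneously at the lower quadruple blocks and at the ordinals in the domains of the $h$'s. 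Throughout, the countably many shrinking requirements on the $H$'s and $h$'s are amalgamated along ${<}\kappa$-directed systems, using that each $\mathbb{P}(\kappa_{i-1},\alpha)$ is $(2^{\kappa_{i-1}})^{++}$-closed of size $\leq 2^\alpha$ — exactly as in Cummings. From this one-point lemma the full statement follows by the usual fusion: one builds a $\leq^*$-decreasing $\omega$-sequence of conditions, each refining the uniformity at the next level of one-point extensions, takes a $\leq^*$-lower bound $q_\omega$ by the $(2^{\kappa_0})^{++}$-closure of $\la \mathbb{Q}_u \restriction p, \leq^* \ra$, and observes that the set of conditions deciding $\sigma$ is dense, so there is $r \leq q_\omega$ deciding it; by the uniformity of $q_\omega$ and induction on the finite number of blocks of $r$ beyond those of $q_\omega$, already $q_\omega$ decides $\sigma$.

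The main obstacle is entirely one of bookkeeping: one must interleave three species of refinement — passing to $u$-measure-one subsets of the $A$'s, descending in the $(2^\alpha)^{++}$-closed interleaved posets, and descending in the lower Radin forcings $\mathbb{Q}_w$ for measure-one-many $w$ — while preserving all the coherence built into the ordering and keeping the outcomes uniform enough for the final diagonalization to close off. No genuinely new idea is required: Cummings' proof uses the interleaved posets only through their closure, their size bounds, and the $M$-genericity of their images under the relevant ultrapower embeddings, and these are precisely the content of the Background Hypothesis.
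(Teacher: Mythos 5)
Your proposal takes essentially the same approach as the paper: the paper's entire proof is a citation to \cite[Section 3.4]{MR1041044} together with the remark that the Background Hypothesis lets Cummings' argument run unchanged, and your sketch is a faithful elaboration of that, correctly identifying the three points where the interleaved posets enter — their strategic/directed closure, their size bounds, and the genericity of their images — as exactly what the Background Hypothesis supplies. The only slight looseness is in describing the final amalgamation as a $\leq^*$-decreasing $\omega$-fusion (Cummings organizes the diagonalization a bit differently, via an induction that treats all finite lower parts at once), but since the paper itself gives no detail beyond the reference, this is an elaboration rather than a departure.
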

%\begin{proof}See \cite[Section 3.4]{MR1041044}.
%\end{proof}

\begin{corollary}
\label{prikrycor}
The Prikry Property also holds for $\mathbb R_u$:
If $u \in U_\infty$, $p \in \mathbb R_u$, and $\sigma$ is a sentence in the forcing language of $\mathbb R_u$, then there is $q \leq^* p$ deciding $\sigma$. 
\end{corollary}

\begin{proof}
Let $\pi : \mathbb R_u' \to \mathbb R_u$ be the projection given by Lemma \ref{prikryproj}.  Let $q_0 \in \mathbb R_u$ and let $\sigma$ be a sentence in the forcing language of $\mathbb R_u$.  Let $p_0 \in \mathbb R'$ be such that $\pi(p_0) \leq^* q_0$.  Let $p_1 \leq^* p_0$ decide whether $\sigma$ holds in the submodel $V^{\mathbb R_u}$, %and assume it forces that $\sigma$ holds.  Note that $\pi(p_1) \leq^* q_0$.  Let $G \subseteq \mathbb R_u$ be generic with $\pi(p_1) \in G$.  A further forcing yields a generic $G' \subseteq \mathbb R'_u$ such that $\pi[G'] = G$.  Since $p_1 \in G'$, $V[G] \models \sigma$.  Since $G$ was an arbitrary generic with the element $\pi(p_1)$, $\pi(p_1) \Vdash \sigma$.
and let us assume it forces that $\sigma$ holds.  If $\pi(p_1)$ does not force $\sigma$, let $q_1 \leq \pi(p_1)$ force $\neg \sigma$.  Let $p_2 \leq p_1$ be such that $\pi(p_2) \leq q_1$.  But then $p_2$ forces both that $\sigma$ and $\neg\sigma$ hold in the submodel $V^{\mathbb R_u}$, a contradiction.  Thus $\pi(p_1) \leq^* q_0$ and $\pi(p_1)$ decides $\sigma$.
\end{proof}

\begin{corollary}
\label{radincards}
Suppose $u \in U_\infty$, $p = \la X_i : i \leq n \ra \in \mathbb R_u$, and $X_0 = \la \kappa_0 \ra$.  Then $\mathbb R_u \restriction p$ adds no subsets of $(2^{\kappa_0})^+$.  Thus if $\mathbb P$ is a forcing of size $\leq (2^{\kappa_0})^+$, then $\mathbb R_u$ adds no subsets of $(2^{\kappa_0})^+$ over $V^{\mathbb P}$.
\end{corollary}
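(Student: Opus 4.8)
The plan is to get the first assertion from the Prikry property together with the strong closure of $\leq^*$, and then to obtain the second from Easton's Lemma rather than by re-running the first argument. Write $\gamma = (2^{\kappa_0})^+$. For the first part, fix a $(\mathbb Q_u \restriction p)$-name $\dot A$ for a subset of $\gamma$ and let $q \le p$ be arbitrary; by the definition of the order, $q$ has first coordinate $X_0 = \la \kappa_0 \ra$, just as $p$ does. I would build a $\le^*$-descending sequence $\la q_\alpha : \alpha \le \gamma \ra$ with $q_0 = q$ as follows: given $q_\alpha$, apply Theorem~\ref{prikry} to $q_\alpha$ and the sentence ``$\check\alpha \in \dot A$'' to get $q_{\alpha+1} \le^* q_\alpha$ deciding it; at a limit $\lambda \le \gamma$, since $\lambda < (2^{\kappa_0})^{++}$ and $\la \mathbb Q_u \restriction p, \le^* \ra$ is $(2^{\kappa_0})^{++}$-closed (by the lemma above on $\le^*$-closure), let $q_\lambda$ be a $\le^*$-lower bound of $\la q_\beta : \beta < \lambda \ra$, all of whose terms lie in $\mathbb Q_u \restriction p$. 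Then $q_\gamma \le p$ decides ``$\check\alpha \in \dot A$'' for every $\alpha < \gamma$, so, setting $A = \{\alpha < \gamma : q_\gamma \Vdash \check\alpha \in \dot A\} \in V$, we get $q_\gamma \Vdash \dot A = \check A$. As $q \le p$ was arbitrary, the conditions forcing $\dot A$ into $V$ are dense below $p$, so $\mathbb Q_u \restriction p$ adds no subset of $\gamma$; equivalently, $\mathbb Q_u \restriction p$ is $(2^{\kappa_0})^{++}$-distributive.

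For the second part, I would simply invoke Easton's Lemma with $\mathbb Q := \mathbb Q_u \restriction p$ and the given $\mathbb P$, taking $\kappa := (2^{\kappa_0})^{++}$ in the statement of that lemma. Indeed $\mathbb Q$ is $\kappa$-distributive in $V$ by the previous paragraph, and since ``$|\mathbb P| \le (2^{\kappa_0})^+$'' is absolute, every antichain of $\mathbb P$ in $V^{\mathbb Q}$ has size $\le (2^{\kappa_0})^+ < \kappa$, so $\Vdash_{\mathbb Q} \mathbb P$ is $\kappa$-c.c. Easton's Lemma then yields $\Vdash_{\mathbb P} \mathbb Q_u \restriction p$ is $(2^{\kappa_0})^{++}$-distributive, i.e.\ $\mathbb Q_u \restriction p$ adds no subset of $(2^{\kappa_0})^+$ over $V^{\mathbb P}$, which is the intended meaning of the second sentence (the relevant condition of $\mathbb Q_u$ has bottom cardinal $\kappa_0$).

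The argument of the first paragraph is the substantive step, and its only delicate point is bookkeeping: one must check that each $q_\alpha$ remains below $p$ with unchanged bottom cardinal $\kappa_0$, so that the closure lemma applies at every limit and so that the Prikry property of Theorem~\ref{prikry} is available throughout. The chief reason for routing the second part through Easton's Lemma rather than repeating the first argument over $V^{\mathbb P}$ is that the Prikry property is exactly the hypothesis one would have to re-verify for $\mathbb Q_u$ in the extension $V^{\mathbb P}$, and that re-verification --- not any genuinely new idea --- would be the main obstacle on the direct route; Easton's Lemma sidesteps it entirely.
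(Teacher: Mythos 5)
Your first paragraph is correct and is the standard argument: decide each sentence ``$\check\alpha \in \dot A$'' by a $\leq^*$-extension, and use the $(2^{\kappa_0})^{++}$-closure of $\leq^*$ (on $\mathbb Q_u \restriction p$, where all conditions share the bottom coordinate $\kappa_0$) to take lower bounds at limits. The gap is the closing sentence of that paragraph, ``equivalently, $\mathbb Q_u \restriction p$ is $(2^{\kappa_0})^{++}$-distributive,'' on which your second paragraph depends. This equivalence is false. ``Adds no subsets of $\gamma$'' is strictly weaker than ``$\gamma^+$-distributive'': the latter says no new \emph{ordinal-valued} functions on $\gamma$ are added, and the Prikry property lets you decide a $\{0,1\}$-valued sentence with one $\leq^*$-extension, but not an ordinal-valued term whose possible values range unboundedly. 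In fact $\mathbb Q_u \restriction p$ is not even $\omega_1$-distributive: the Radin club's first $\omega$ points above $\kappa_0$ are all Mahlo (hence above $(2^{\kappa_0})^{++}$) and form a new $\omega$-sequence of ordinals. So you cannot apply Easton's Lemma as stated, since its hypothesis is precisely the $\kappa$-distributivity of $\mathbb Q$.

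The repair is small but it is a change of substance: do not cite Easton's Lemma, re-run Easton's \emph{argument}. Let $G \times H$ be $\mathbb P \times (\mathbb Q_u \restriction p)$-generic and let $X \subseteq (2^{\kappa_0})^+$ lie in $V[G][H]$. Working in $V[H]$, choose a $\mathbb P$-name $\tau$ for $X$, taken to be a subset of $\mathbb P \times (2^{\kappa_0})^+$. Since $|\mathbb P| \leq (2^{\kappa_0})^+$, a ground-model bijection codes $\tau$ as a subset of $(2^{\kappa_0})^+$, which lies in $V$ by the first assertion; hence $\tau \in V$ and $X = \tau^G \in V[G]$. Here the driving hypothesis is $|\mathbb P| \leq (2^{\kappa_0})^+$, not the chain condition of $\mathbb P$ in $V^{\mathbb Q}$; the latter is what the paper's stated Lemma uses, but it is paired there with true $\kappa$-distributivity, which we do not have.
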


Now we specify the partial orders $\mathbb P(\alpha,\beta)$ and $\mathbb Q(\alpha,\beta)$ and the projections $\pi_{\alpha,\beta}$.  For Mahlo $\alpha<\beta$, let
$$\mathbb P(\alpha,\beta) = \col(\alpha^{+4},\beta) \times \col(\alpha^{+3},\alpha^{+4}) \times \add(\alpha^{+4},\beta^+).$$
In a model satisfying the conclusion of Lemma \ref{thirdprep}, we have by Lemma \ref{collapsedelta} that whenever $\alpha < \beta$ are Mahlo, $\col(\alpha^{+4},\beta)$ forces that there is a saturated ideal on $\alpha^{+4}$ whose associated forcing projects to $\col(\alpha^{+3},\alpha^{+4}) \times \add(\alpha^{+4},\beta^+)$.  By Lemma \ref{localize}, there is in this model a stationary $S \subseteq S_{\alpha^{+4}}$ (the latter being the one that witnesses layeredness of the ideal on $\alpha^{+3}$), and there is an $S$-iteration $\mathbb C_{\alpha^{+4}}$ of length $\beta^+$ that forces $\ns_{\alpha^{+4}}$ to be locally saturated.  By Lemma \ref{absorbshoot}, there is a projection 
$$\sigma_{\alpha,\beta} : \col(\alpha^{+3},\alpha^{+4}) \times \add(\alpha^{+4},\beta^+) \to \mathbb C_{\alpha^{+4}}.$$  Thus we let $\mathbb Q(\alpha,\beta) = \col(\alpha^{+4},\beta) * \dot{\mathbb C}_{\alpha^{+4}}$, and we let
$\pi_{\alpha,\beta}(p,q,r) = \la p,\dot\sigma_{\alpha,\beta}( \check q,\check r ) \ra.$
These specifications clearly fulfill the first two requirements for our partial orders.  For the third, if $\beta$ is measurable, $\alpha<\beta$ is Mahlo, and $j : V \to M$ is an embedding derived from a normal measure on $\beta$, then $\mathbb P(\alpha,\beta) = \mathbb P(\alpha,\beta)^M$ because $M$ correctly computes $\beta^+$.  By GCH, $\beta^+ <j(\beta^+) <\beta^{++}$.  Thus by the $\beta^+$-closure and $j(\beta^+)$-c.c.\ in $M$ of $\mathbb P(\beta,j(\beta))^M$, we can build filter $G \in V$ that is generic over $M$.

%We define $\mathbb P(\alpha,\beta)$ to be $\col(\alpha^{+4},\beta) * \dot{\mathbb C}_{\alpha^{+4}}$.  Then $\mathbb P(\alpha,\beta)$ is $\alpha^{+3}$-closed and of size $\beta^+$.  By GCH, whenever $\kappa$ is measurable as witnessed by $j : V \to M$, $\kappa^+ <j(\kappa^+) <\kappa^{++}$.  Thus by the $\kappa^+$-closure and $j(\kappa^+)$-c.c.\ in $M$ of $\mathbb P(\kappa,j(\kappa))^M$, we can build an $M$-generic filter $G \in V$.  Therefore the Background Hypothesis holds for these $\mathbb P(\alpha,\beta)$.

For the remainder of this article, when we refer to the forcings $\mathbb R_u$ and $\mathbb R_u'$, we mean those defined in our preparatory model using the above specifications.  The important feature of our version of Radin forcing, which is not shared by Cummings' version, is the chain condition:

\begin{lemma}
\label{radincc}
Suppose $u \in U_\infty$ and $\kappa = u(0)$.  Then $\mathbb R_u$ is $\kappa^+$-c.c.  Moreover, it preserves $\kappa^{++}$-saturated ideals on $\kappa^+$.
\end{lemma}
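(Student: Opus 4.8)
The plan is to establish the chain condition first and then deduce preservation of saturation. For the ``moreover'' clause, suppose $I$ is a normal $\kappa^{++}$-saturated ideal on $\kappa^+$ (so $I$ is precipitous, a generic ultrapower $j:V\to M\subseteq V[G]$ has $M^{\kappa^+}\cap V[G]\subseteq M$, and $j(\kappa^+)\le\kappa^{++}$). Granting that $\mathbb Q_u$ is $\kappa^+$-c.c., Corollary~\ref{dualitynicecase} gives a canonical isomorphism $\mathbb Q_u * \p(\kappa^+)/\bar I\cong\p(\kappa^+)/I * j(\mathbb Q_u)$. On the right, $\p(\kappa^+)/I$ is $\kappa^{++}$-c.c., while $j(\mathbb Q_u)$ is $j(\kappa^+)$-c.c.\ in $M$, hence $\kappa^{++}$-c.c.\ in $M$ and, by the closure of $M$, in $V[G]$; so the right-hand side is $\kappa^{++}$-c.c. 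Thus $\mathbb Q_u$ forces $\bar I$ to be a nontrivial $\kappa^{++}$-saturated ideal on $\kappa^+$.

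For the chain condition, suppose toward a contradiction that $\{p_\xi:\xi<\kappa^+\}$ is an antichain in $\mathbb Q_u$. Since $\kappa=u(0)$ is inaccessible, $|V_\kappa|=\kappa$, and for each fixed length there are fewer than $\kappa$ possibilities for a proper initial segment $\langle X_0,\dots,X_{n-1}\rangle$ of a condition: each coordinate below the top is built from a measure sequence, a measure-one set, and conditions in posets $\mathbb P(\gamma,\delta)$ with $\gamma<\delta<\kappa$, all of hereditary size $<\kappa$. Refining, I may assume every $p_\xi$ has the same length $n+1$ and the same initial segment $\vec X=\langle X_0,\dots,X_{n-1}\rangle$, so $p_\xi=\langle\vec X,\langle u,A^\xi,H^\xi,h^\xi\rangle\rangle$. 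Let $\nu=\kappa_{X_{n-1}}$, a Mahlo cardinal below $\kappa$, fix an acceptable pair $(E,G)$ constructing $u$, and let $\mathcal U$ be the derived normal measure on $\kappa$; recall $H^\xi\in G^*$ and $A^\xi\cap\kappa\in\mathcal U$ for every $\xi$.

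The crucial step is to work in $N=\mathrm{Ult}(V,\mathcal U)$. Since $h^\xi(\gamma)\in\mathbb P(\nu,\gamma)$ for $\gamma\in A^\xi\cap\kappa$, the function $h^\xi$ determines $\bar h^\xi:=[h^\xi]_{\mathcal U}\in\mathbb P(\nu,\kappa)^N$, using $[\mathrm{id}]_{\mathcal U}=\kappa$. By the arguments of Section~\ref{localsatmodule} --- in particular, by Lemma~\ref{absorbshoot} and Remark~\ref{niceproj} the relevant $S$-iteration $\dot{\mathbb C}_{\nu^{+4}}$ is a projection of a product with $\add(\nu^{+4},\kappa^+)$ and hence $\kappa^+$-c.c., and $\col(\nu^{+4},\kappa)$ has size $\kappa$ --- the poset $\mathbb P(\nu,\kappa)=\col(\nu^{+4},\kappa) * \dot{\mathbb C}_{\nu^{+4}}$ is $\kappa^+$-c.c.; this is a first-order fact and so holds in $N$. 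As $N^\kappa\subseteq N$, every $\kappa$-sized subset of $\mathbb P(\nu,\kappa)^N$ lies in $N$, so $\mathbb P(\nu,\kappa)^N$ is $\kappa^+$-c.c.\ outright. Hence $\{\bar h^\xi:\xi<\kappa^+\}$ is not an antichain; fix $\xi\ne\eta$ with $\bar h^\xi$ compatible with $\bar h^\eta$ in $\mathbb P(\nu,\kappa)^N$. By \L o\'s's theorem, $D:=\{\gamma<\kappa: h^\xi(\gamma)\text{ and }h^\eta(\gamma)\text{ have a common lower bound in }\mathbb P(\nu,\gamma)\}\in\mathcal U$.

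It remains to build a common extension of $p_\xi$ and $p_\eta$, contradicting the antichain assumption. Using that $G^*$ is downward directed and $H^\xi,H^\eta\in G^*$, fix $H'\in G^*$ with domain $[C']^2$, $C'\in\mathcal U$, and $H'\le H^\xi,H^\eta$ coordinatewise. Set $A'''=C'\cap D\cap(A^\xi\cap\kappa)\cap(A^\eta\cap\kappa)\in\mathcal U$, let $B=\{w\in A^\xi\cap A^\eta: w\in\kappa\Rightarrow w\in A'''\}$, which is $u$-measure-one with $B\cap\kappa=A'''$, put $H''=H'\restriction[A''']^2\in G^*$, and for $\gamma\in A'''$ choose $h''(\gamma)$ below both $h^\xi(\gamma)$ and $h^\eta(\gamma)$ in $\mathbb P(\nu,\gamma)$. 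Then $r=\langle\vec X,\langle u,B,H'',h''\rangle\rangle$ is a condition, and since $r$ shares the initial segment $\vec X$ with each $p_\xi$, the ``$\kappa_{X_{n-1}}=\kappa_{Y_{n-1}}$'' clauses of the ordering apply to the top coordinate and one reads off directly that $r\le p_\xi$ and $r\le p_\eta$. I expect the main obstacle to be the third step: verifying that $\mathbb P(\nu,\kappa)$ is $\kappa^+$-c.c.\ as computed inside the ultrapower. This is exactly where it is used that our interleaved posets are assembled from the layered forcings of Sections~\ref{localsatmodule} and~\ref{buildingblock}, and it is the feature absent from Cummings' construction.
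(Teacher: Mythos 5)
Your chain condition argument runs parallel to the paper's own: reduce to a common stem $\vec X$, push the top functions $h^\xi$ into $N=\mathrm{Ult}(V,\mathcal U)$ where $\bar h^\xi=[h^\xi]_{\mathcal U}\in\mathbb P(\nu,\kappa)^N$, use the layeredness/chain condition of $\mathbb P(\nu,\kappa)^N$ to find $\xi\neq\eta$ with $\bar h^\xi,\bar h^\eta$ compatible, and pull back by \L o\'s. Your explicit construction of the common refinement out of $D$, $H'$, $A'''$, $B$, $h''$ fills in a step the paper leaves to the reader, and looks correct.

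The ``moreover'' clause is where you have a genuine gap. You write that $j(\mathbb Q_u)$ is ``$\kappa^{++}$-c.c.\ in $M$ and, by the closure of $M$, in $V[G]$.'' But the closure you have from the Fact is $M^{\kappa^+}\cap V[G]\subseteq M$. That places every $\kappa^+$-sized subset of $j(\mathbb Q_u)$ from $V[G]$ into $M$, but it says nothing about $\kappa^{++}$-sized subsets; a $\kappa^{++}$-sized antichain of $j(\mathbb Q_u)$ living in $V[G]$ need not be an element of $M$, so $M$'s chain condition does not transfer by a bare appeal to closure. (In general, $\lambda$-closure of an inner model does not transfer $\lambda^+$-c.c.\ outward.) What does transfer, and what the paper actually uses, is the \emph{layeredness} of the poset $\mathbb P(\kappa_{n-1},\kappa)$ as computed in the inner ultrapower $\mathrm{Ult}(M,\mathcal U)$: in your notation, $M$ thinks this poset is $(j(\kappa^+)\cap\cof(\kappa))$-layered, the individual layers and their maximal antichains have $V[G]$-size at most $\kappa$, and by the closure of $M$ those small witnesses lie in $M$, so the layeredness -- and hence the $j(\kappa^+)=\kappa^{++}$-c.c.\ -- holds in $V[G]$. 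Then one re-runs your chain condition argument for $j(\mathbb Q_u)$ inside $V[G]$ using this transferred layeredness. This is exactly the ``layeredness transfers because the layers are small'' pattern you already invoked (a bit loosely) in the first half of your proof; the missing step is to repeat it here instead of asserting a direct transfer of the chain condition through closure.
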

\begin{proof}
Suppose $\la p_\alpha : \alpha < \kappa^+ \ra \subseteq \mathbb R_u$.  Let $p_\alpha = \vec x_\alpha \,^\frown \la u,A_\alpha,H_\alpha,h_\alpha \ra$.  We can assume there is a fixed $\vec x =  \la X_0,\dots,X_{n-1}\ra$ such that $\vec x_\alpha = \vec x$ for all $\alpha$.  Let $\mathcal U$ be the ultrafilter associated to $u(1)$ and let $j_{\mathcal U} : V \to M$ be the embedding by $\mathcal U$.  For each $\alpha$, $[h_\alpha]_{\mathcal U} \in \mathbb P(\kappa_{n-1},\kappa)$, which is $(\kappa^+ \cap \cof(\kappa))$-layered.  Let $\alpha < \beta$ be such that $h_\alpha,h_\beta$ represent compatible conditions.  Since $H_\alpha,H_\beta$ represent conditions in a filter, $p_\alpha$ and $p_\beta$ are compatible.
%\cof(\kappa_{n-1}^{+4}

To show that $\mathbb R_u$ preserves saturated ideals on $\kappa^+$, suppose $I$ is such an ideal and $i : V \to N \subseteq V[G]$ is a generic ultrapower via $I$.  Then the above argument can be carried out in $N$.  In particular, if $M'$ is the ultrapower of $N$ by $i(\mathcal U)$, then $N$ satisfies that $\mathbb P(\kappa_{n-1},\kappa)$ is $(i(\kappa^+) \cap \cof(\kappa))$-layered.  This is true in $V[G]$ as well since $N^\kappa \cap V[G] \subseteq N$.  Thus by Corollary \ref{dualitynicecase}, $\mathbb R_u$ forces that the ideal generated by $I$ is $\kappa^{++}$-saturated.
\end{proof}

\begin{lemma}[Cummings]
\label{measpresspecial}
Suppose $u \in U_\infty$, $u(0) = \kappa$, and $\len u \geq (2^{\kappa})^+$.  Then $\mathbb R'_u$ preserves the measurability of $\kappa$.
\end{lemma}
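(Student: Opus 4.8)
The plan is to lift a suitable embedding $j : V \to M$ with critical point $\kappa$ through $\mathbb Q_u$ and read off a normal ultrafilter in the extension via Silver's criterion. The argument follows Cummings \cite[Section 3.8]{MR1041044} in outline, but the chain condition of Lemma \ref{radincc} removes the need for his more delicate treatment of the interleaved posets.

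Since $\len u \geq (2^\kappa)^+$, the construction of $U_\infty$ supplies an acceptable pair $(E,G_0)$ that constructs $u$, in which $E$ is a $(\kappa,(2^\kappa)^+)$-extender witnessing that $\kappa$ is $(\kappa+2)$-strong. Let $j : V \to M = \mathrm{Ult}(V,E)$, so $V_{\kappa+2}\subseteq M$, $M^\kappa\subseteq M$, and the image of $G_0$ under the factor embedding generates an $M$-generic filter for $\mathbb P(\kappa,j(\kappa))^M$. The standard analysis of measure sequences \cite[Section 3.1]{MR1041044} gives $u\in j(U_\infty)$, $u\restriction\alpha\in M$ for $\alpha<\len u$, and that $u$ lies in the $j(u)$-measure-one sets occurring in $j(\mathbb Q_u)$; hence a condition of $j(\mathbb Q_u)$ may have $\kappa$ as a point of its associated club of cardinals, carrying the quadruple with measure sequence $u$.

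Fix a $\mathbb Q_u$-generic $G$ with associated club $C\subseteq\kappa$. For $p = \la X_0,\dots,X_{n-1},\la u,A_n,H_n,h_n\ra\ra \in G$, the condition $j(p)$ agrees with $p$ below its top coordinate and carries $j(u)$ on top; since $\crit(j)=\kappa$ we have $A_n\subseteq j(A_n)$, so we may insert $\kappa$ into the region governed by that top quadruple \emph{using the quadruple $\la u,A_n,H_n,h_n\ra$ itself}, obtaining $j(p)^+\leq j(p)$. By the factoring analysis behind Lemma \ref{radinfactor}, below a condition putting $\kappa$ into the club with measure sequence $u$, the poset $j(\mathbb Q_u)$ is isomorphic to a product $\mathbb Q_u\times\mathbb S\times\mathbb T$, where $\mathbb S$ is a product of collapses coming from the interval between $\kappa$ and the next cardinal of the condition and $\mathbb T$ is the Radin forcing at $j(\kappa)$ on conditions whose club begins with $\kappa$; under this isomorphism the $\mathbb Q_u$-coordinate of $j(p)^+$ is exactly $p$. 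Thus it suffices to build in $V[G]$ a filter on $\mathbb S\times\mathbb T$ that is generic over $M[G]$ and whose product with $G$ contains the traces of all the $j(p)^+$; equivalently, to meet every dense subset of $\mathbb S\times\mathbb T$ in $M[G]$. Now $\mathbb S$ is $(2^\kappa)^{++}$-closed and $\mathbb T$ has a $(2^{\kappa'})^{++}$-closed direct-extension order for its bottom cardinal $\kappa'>\kappa$, and satisfies the Prikry property of Theorem \ref{prikry}; meanwhile, by GCH and Lemma \ref{radincc}, $\mathbb Q_u$ is $\kappa^+$-c.c.\ of size $\kappa^+$, so in $M$ the poset $j(\mathbb Q_u)$ has size $j(\kappa)^{+}$ and there are at most $(2^\kappa)^{++}$ dense subsets of $\mathbb S\times\mathbb T$ in $M[G]$. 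Building a descending $\leq^*$-sequence through $\mathbb T$ (aided by the Prikry property) together with a descending sequence through $\mathbb S$ that between them meet all of these, we assemble the required generic $\hat G$ inside $V[G]$ with $j[G]\subseteq\hat G$. By Silver's criterion $j$ lifts to $\hat j : V[G]\to M[\hat G]$, and $\{X\subseteq\kappa:\kappa\in\hat j(X)\}$ is a normal ultrafilter on $\kappa$ in $V[G]$, so $\kappa$ remains measurable.

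The main obstacle is the content of the third paragraph: verifying the factoring in a form precise enough that $G$ may be used verbatim as the lower part of the lifted generic, and then checking the counting that lets the remaining generic over $M[G]$ be built inside $V[G]$. The latter is exactly where Lemma \ref{radincc} earns its keep: it caps the number of $M$-dense sets, so that the closure of the interleaved and tail forcings suffices and no passage to term-space forcing is required.
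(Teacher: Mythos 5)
Your third paragraph contains a genuine gap that cannot be repaired within the framework you propose. You claim to build, inside $V[G]$, a filter $\hat G$ on the upper factor $\mathbb T$ (the Radin forcing $\mathbb Q_{j(u)}$ restricted to conditions beginning at $\kappa$) that is $M[G]$-generic, and you propose to do this by ``building a descending $\leq^*$-sequence through $\mathbb T$ (aided by the Prikry property).'' But a $\leq^*$-sequence never changes the stem of a condition, and so can never meet dense sets of the form ``the generic Radin club in the interval $(\kappa,j(\kappa))$ has at least $m$ points.'' The Prikry Property (Theorem \ref{prikry}) only guarantees that each \emph{sentence} of the forcing language can be decided by a direct extension; it does not say that each dense set can be entered by a direct extension. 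Since Silver's criterion requires an honest $M$-generic filter, your approach cannot produce the lifted embedding $\hat j : V[G] \to M[\hat G]$ inside $V[G]$, and the conclusion does not follow.

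The paper avoids precisely this obstacle. It never constructs a lift of $j$ in $V[G]$. Instead it first replaces $u$ by $u^* = u\restriction\gamma^*$ where $\gamma^*$ is a repeat point (here the hypothesis $\len u \geq (2^\kappa)^+$ is used), showing $\mathbb Q_u \cong \mathbb Q_{u^*}$, which has size $\kappa^+$. Because $\mathbb Q_{u^*}$ is $\kappa^+$-c.c.\ of size $\kappa^+$, there are only $\kappa^+$-many inequivalent pairs $(p,\tau)$ of conditions and names for subsets of $\kappa$. It then builds a single $\leq^*$-descending sequence $\langle B_\alpha,F_\alpha,f_\alpha : \alpha<\kappa^+\rangle$ in the top coordinate of $j(\mathbb Q_u)$ (of length $\kappa^+$, safely within the $(2^\kappa)^{++}$-closure of $\leq^*$) that at stage $\alpha$ tries to decide ``$\kappa\in j(\tau)$'' for the $\alpha$-th pair. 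The ultrafilter $\mathcal F$ is then defined in $V[H]$ directly from the decisions encoded in this sequence; the Prikry Property gives ultra-ness, and a hypothetical further extension is invoked only to check $\kappa$-completeness, not to produce a lift inside $V[H]$. The moral is that decidability by $\leq^*$-extensions suffices to \emph{read off} an ultrafilter, which is strictly weaker than producing a lifted embedding, and this weaker goal is the right one for Prikry/Radin-type forcings.
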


\begin{proof}
See \cite[Section 3.8]{MR1041044}.
\end{proof}

Now fix a cardinal $\kappa$ which is $(\kappa+2)$-strong as witnessed by a $(\kappa,\kappa^{++})$-extender $E$, let $G$ be such that $(E,G)$ is an acceptable pair, and let $u$ be a measure sequence of length $\kappa^{++}$ constructed by $(E,G)$.  Let $H \subseteq \mathbb R_u$ be generic.  By Lemma \ref{measpresspecial}, $V_\kappa^{V[H]}$ is a model of ZFC.  Let $C \subseteq \kappa$ be the Radin club introduced by $H$, and let $\kappa_0 = \min C$.  Because of Lemma \ref{radinfactor}, Corollary \ref{radincards}, and the interleaved collapses, the set of limit cardinals in $\kappa \setminus \kappa_0$ in $V[H]$ is simply the set of limit points of $C$.  Let $h \subseteq \col(\omega,\kappa_0)$ be generic over $V[H]$.  We claim that $V_\kappa^{V[H][h]}$ is a model demonstrating Theorem \ref{global}.

First we check that $\square_\delta$ holds for all $\delta < \kappa$.  If $\mu$ is a successor cardinal of $V[H][h]$, then $\mu = \nu^+$ for some cardinal $\nu$ of $V$.  $\square_\nu$ holds in $V$.  Although $\nu$ may be collapsed, Lemma \ref{refine} implies that $\square_\eta$ holds in $V[H][h]$, where $\eta$ is the predecessor of $\mu$ in the final model.

%Now let $\alpha_0 < \alpha_1$ be two successive points of $C$.  Let $p \in \mathbb Q_u$ force this, and either decide that $\alpha_0 = \kappa_0$, or $\alpha_0$ is a limit point of $C$, or that some particular $\beta <\alpha_0$ is the predecessor of $\alpha_0$ in $C$.     If not, assume $\beta < \alpha_0$ is such that $p$ forces $\beta$ is

Now let $\mu_0 < \mu_1$ be two successive points of $C$.   Suppose first that $\mu_0$ is a limit point.  Let $p \in H$ force this, so that $p = \la X_i : i \leq n \ra$ is such that for some $m < n$, $X_m$ is a quadruple $\la w,A,H,h \ra$ with $w(0) = \mu_0$ and $X_{m+1}$ is a pair $\la \mu_1,p_{m+1} \ra$  By Lemma \ref{radinfactor}, 
$$\mathbb R_u \restriction p  \cong \mathbb R_w \restriction \la X_0,\dots,X_m \ra \times \mathbb Q(\mu_0,\mu_1) \restriction p_{m+1} \times \mathbb R_u \restriction \la \la \mu_1 \ra, X_{m+2},\dots,X_n \ra.$$

In $V$, $\ns_{\mu_0^+}$ is locally saturated, and this is preserved by $\mathbb R_w$ by Lemma \ref{radincc}.  The local saturation of $\ns_{\mu_0^{++}}$ is preserved since $|\mathbb R_w | = \mu_0^+$.  The upper factor $\mathbb Q(\mu_0,\mu_1) \times \mathbb R_u \restriction \la \la \mu_1 \ra, X_{m+2},\dots,X_n \ra$ preserves this, since it adds no further subsets to $\mu_0^{+3}$.

For $\mu_0^{+3}$, in $V$ there is some stationary $A \subseteq \mu_0^{+3}$ such that $\p(A)/\ns$ is $S_{\mu_0^{+4}}$-layered.  The factor $\mathbb Q(\mu_0,\mu_1)$ preserves the stationarity of $S_{\mu_0^{+4}}$ and adds no subsets of $\mu_0^{+3}$, and thus preserves that $\ns_{\mu_0^{+3}}$ is locally saturated.  This is preserved by the small lower factor $\mathbb R_w$, and by the upper factor $\mathbb R_u \restriction \la \la \mu_1 \ra, X_{m+2},\dots,X_n \ra$, which adds no further subsets of $\mu_0^{+4}$.

For $\mu_0^{+4}$, the local saturation of $\ns_{\mu_0^{+4}}$ is explicitly forced by $\mathbb Q(\mu_0,\mu_1)$.  This is preserved by the small lower factor $\mathbb R_w$, and then by the upper factor $\mathbb R_u \restriction \la \la \mu_1 \ra, X_{m+2},\dots,X_n \ra$, which adds no further subsets of $\mu_1^+ = (\mu_0^{+5})^{V[H]}$.

Now suppose that $\mu_0$ is a successor point  or the least point of $C$.  If $p \in H$ forces this, then we may assume that $p = \la X_i : i \leq n \ra$ and 
$$\mathbb R_u \restriction p  \cong \mathbb P \times \mathbb Q(\mu_0,\mu_1) \restriction p_{m+1} \times \mathbb R_u \restriction \la \la \mu_1 \ra, X_{m+2},\dots,X_n \ra,$$
where $\mathbb P$ is either trivial or $(\mu_0^+ \cap \cof(\mu_0))$-layered.  $\mathbb P$ preserves the local saturation of $\ns_{\mu_0^+}$, and this is preserved by the upper factor, which adds no subsets of $\mu_0^{++}$.  The local saturation of $\ns_{\mu_0^{+k}}$ for $2 \leq k \leq 4$ is forced for the same reasons as in the case that $\mu_0$ is a limit point.  

Finally, all of these saturation properties are preserved by the small forcing $\col(\omega,\kappa_0)$, which makes the class of successor cardinals in $V[H]$ above $\kappa_0$ equal to the class of all successor cardinals.
This concludes the proof of Theorem \ref{global}. 

%Note that the upper factor $\mathbb P(\mu^0,\mu^1) \times \mathbb Q_u \restriction \la \la \mu_1 \ra, X_{m+2},\dots,X_n \ra$ preserves that 

 %The upper factor $\mathbb P(\mu^0,\mu^1) \times \mathbb Q_u \restriction \la \la \mu_1 \ra, X_{m+2},\dots,X_n \ra$
 
% In $V$, for $1 \leq k \leq 3$, there is a stationary $A_k \subseteq \mu_0^{+k}$ such that 
%  $\ns_{\mu_0^{+n}}$ 
\bibliographystyle{amsplain.bst}
\bibliography{localsatsquare.bib}

\end{document}